\documentclass[a4paper,11pt]{amsart}
\usepackage[margin=2.5cm]{geometry}
\usepackage{amssymb,amsmath,mathtools,mathrsfs}
\usepackage[colorlinks,linkcolor=blue,citecolor=blue,urlcolor=blue]{hyperref}
\usepackage{enumitem,xcolor,booktabs,longtable}
\usepackage{moreenum}
\allowdisplaybreaks
\numberwithin{equation}{section}

\newcommand{\R}{\mathbb{R}}\newcommand{\N}{\mathbb{N}}
\newcommand{\E}{\mathbb{E}}\newcommand{\Prob}{\mathbb{P}}\newcommand{\Q}{\mathbb{Q}}
\newcommand{\Law}{\operatorname{Law}}\newcommand{\idx}{\operatorname{id}_x}

\newcommand{\RZ}{\mathbf{Z}}\newcommand{\ZZ}{\mathbb{Z}}
\newcommand{\CZ}{[\mathbf Z]}
\newcommand{\CC}{\mathscr{C}}
\newcommand{\dt}{\,\mathrm{d}t}
\newcommand{\dr}{\,\mathrm{d}r}
\newcommand{\dy}{\,\mathrm{d}y}
\newcommand{\du}{\,\mathrm{d}u}\renewcommand{\d}{\,\mathrm{d}}
\newcommand{\wsigz}{\sigma_{0}}
\newcommand{\bsigo}{\mathcal{B}_{1}}\newcommand{\bsigz}{\mathcal{B}_{0}}
\newcommand{\drifo}{\mathcal{A}_{1}}\newcommand{\drifz}{\mathcal{A}_{0}}
\newcommand{\Dext}{\overline{\mathscr D}}
\newcommand{\tnorm}[1]{|\!|\!|#1|\!|\!|}
\newcommand{\mfn}{\mathfrak n}
\newcommand{\one}{\mathbf 1}
\newcommand{\Frob}[1]{\|#1\|_{\mathrm{fr}}}
\newcommand{\Op}[1]{\|#1\|_{\mathrm{op}}}
\newcommand{\Opb}[1]{\big\|#1\big\|_{\mathrm{op}}}

\makeatletter
\newcommand{\stepnum}[2]{\phantomsection\def\@currentlabel{#1}\label{#2}}
\makeatother

\newtheorem{theorem}{Theorem}[section]
\newtheorem{lemma}[theorem]{Lemma}
\newtheorem{proposition}[theorem]{Proposition}

\newtheorem{openproblem}[theorem]{Open Problem}
\theoremstyle{definition}
\newtheorem{definition}[theorem]{Definition}
\newtheorem{assumption}[theorem]{Assumption}
\newtheorem{example}[theorem]{Example}
\theoremstyle{remark}
\newtheorem{remark}[theorem]{Remark}

\title[Systems of rough SDEs I]{Systems of rough stochastic differential equations I:\\
weak existence and Yamada--Watanabe}
\author{Florian Huber}

\email{dr.huber.florian@gmail.com}
\thanks{Parts of this paper were completed during the author's affiliation with the \'Ecole Polytechnique F\'ed\'erale de Lausanne (EPFL), Switzerland.}
\subjclass[2020]{Primary 60L20, 60H10}
\keywords{Rough stochastic differential equations, weak existence, rough martingale
problem, Yamada--Watanabe}
\date{\today}

\begin{document}
\begin{abstract}
For systems of rough stochastic differential equations in the sense of Friz--Hocquet--L\^e, with a
fixed deterministic rough driver, we prove two results.

The first is weak existence with unbounded coefficients. The It\^o data is assumed continuous and of
linear growth, with no H\"older or Lipschitz continuity and no ellipticity. The rough vector field is
a controlled pair of linear growth with bounded derivatives, subject either to a norm-curvature decay
or to diagonality with coordinatewise curvature decay; in the diagonal case a coordinatewise upper
bound on the diffusion matrix is added. The existing results ask the It\^o data and the rough field
to be bounded. We remove that boundedness, and ask in exchange for a curvature hypothesis on the
rough field, bounds on its derivatives, and a narrower index configuration. The conclusion is weaker
on two counts: the bounds that define a solution are imposed in the mean rather than uniformly on the
probability space, and the solution's moments stop at the order the initial law carries.

The second is the Yamada--Watanabe implication for rough equations: weak existence, together with
pathwise uniqueness, yields a strong solution and joint uniqueness in law. A solution here is not
defined by a pathwise identity but by two bounds on a remainder, one of them conditional, and an
arbitrary enlargement of the filtration need not preserve that one. The enlargements the
Yamada--Watanabe construction produces are immersions, however, and along an immersion both bounds
transfer unchanged.

The pathwise-uniqueness input the second result takes as a hypothesis is supplied on an affine
class in the companion paper \cite{HuberII}, and reduced there to one bound outside it.
{\color{blue}Passages in blue belong to this arXiv version only. They carry additional context and side results
that the journal version omits for length. Nothing else depends on them.\color{black}}
\end{abstract}
\maketitle
\tableofcontents

\section{Introduction}\label{sec:intro}

Let $B$ denote a $d$-dimensional Brownian motion, and let $\RZ=(Z,\ZZ)$ denote a rough path of
regularity $\gamma\in(\tfrac13,\tfrac12)$,  fixed for the remainder of this paper. We consider the equation
\[
\d X_{t}=b(t,X_{t})\dt+\sigma(t,X_{t})\,\d B_{t}+(f,f')(t,X_{t})\,\d\RZ_{t},
\qquad X_{0}\sim\nu,\quad X_{t}\in D,
\]
(which is \eqref{eq:rsde} below), interpreted in the sense of Friz, Hocquet and L\^e \cite{FHL}, on a closed state-space
$D\subseteq\R^{n}$. Our motivation arises from interacting particle systems. The picture to keep in mind is a system of particles carrying independent
noise and sitting inside a common exogenous signal that is not assumed to be a semimartingale and is
given rather than random.

The two halves of the equation are built on opposite principles. The It\^o term is integrated by
probabilistic cancellation: its increments are small in the mean because martingale increments
are centred. The rough term admits no such cancellation. $\RZ$
is deterministic, and is integrated by a pathwise Taylor expansion carried to second order,
which is what the postulated second level $\ZZ$ is for and why $\gamma>\tfrac13$ suffices. Neither
technique survives contact with the other, and what reconciles them is the stochastic sewing lemma of
\cite{Le20} in the form \cite{FHL}.

\subsection*{Notion of solution, and where the difficulty arises}

Reconciling the probabilistic and rough approach requires a slightly weaker notion of solution than what we are used to from SDEs. For a classical It\^o
equation, ``$X$ solves the equation'' is a pathwise identity between $X$, $B$, and a stochastic
integral. The solution to the RSDE here is understood via the Davie expansion of the increment
of $X$ over a short interval $[s,t]$. It has four terms: the drift, the It\^o term, and the first two rough Taylor terms, whose coefficients are taken at the left endpoint $s$.
One defines $X$ to be a solution by requiring that the remainder of this expression $J_{s,t}$ (the increment of $X$ minus the four previously mentioned terms) satisfies two bounds,
\[
\big\|\E[J_{s,t}\mid\mathcal F_{s}]\big\|_{L_{m}(\Prob)}=o(|t-s|),
\qquad
\big\|J_{s,t}\big\|_{L_{m}(\Prob)}=o(|t-s|^{1/2}),
\]
which is \eqref{eq:daviemoduli} below. The last two of the four terms are referred to as a rough germ of $X$ on $[s,t]$: a local approximation of the rough part of the increment,
with its coefficients taken at the left endpoint. The second bound directly relates to the size of the remainder. The first bounds the conditional mean, at the strictly better rate
$|t-s|$. Conditioning is what contributes the extra half power.

Two important features of this notion, we want to highlight.
\begin{itemize}
    \item \emph{The bounds rely on a filtration and not a law.} A conditional expectation is taken
with respect to a $\sigma$-field, so ``being a solution'' looks like a property of a stochastic basis rather
than of the joint law of $(X,B)$, which is the case for ordinary SDEs. Here it is what turns the Yamada--Watanabe implication from a formality into
a problem: that construction glues two solutions over a common Brownian motion and thereby
enlarges the filtration, while the free estimate, conditional Jensen, only says that
shrinking the filtration cannot hurt.
\item \emph{The solution rests on a Taylor expansion of the coefficients along the solution.} Unbounded coefficients
force the expansion to be carried relative to a weight, and it leaves one power of the weight
more than the estimates can absorb. A decay hypothesis on the curvature of the rough field is
what yields the missing cancellation, and it is why the weak-existence theorem requires it.
\end{itemize}

Three questions follow. Can weak solutions be built when the It\^o coefficients are unbounded? Does
pathwise uniqueness still yield uniqueness in law and a strong solution, the solution notion being
filtration-dependent? And where is that pathwise uniqueness to come from? We answer the first two here. The third
is the subject of the companion paper \cite{HuberII}, which answers it directly on an affine class
and, outside that class, reduces it to one bound. Composing the three answers gives a single
well-posedness theorem, proved there.

\subsection*{Results}
Two of the three results below are proved here and the third is the companion paper's. Each stands
on its own, and together they give strong well-posedness:

\begin{enumerate}[label=(\Alph*)]
    \item Weak existence at linear growth (Theorem~\ref{thm:weakex}).
     The existing theorem
\cite[Thm.~4.19]{FHL} requires $b,\sigma$ to be bounded. We assume instead that they are continuous and of linear
growth: no H\"older continuity, no Lipschitz continuity, no ellipticity, no diagonality. We assume $(f,f')$
to be a controlled rough vector field of linear growth with bounded derivatives. We require however one of two
curvature hypotheses. The first is norm-curvature decay \eqref{eq:Ecurvnormhyp},
which contains the affine fields. Under the first, we don't ask anything of $\sigma$ beyond continuity and
linear growth. The second is diagonality with coordinatewise
curvature decay \eqref{eq:Ecurvdiag}. Under the second (and only there) we add one coordinatewise
upper bound \eqref{eq:Eacoord} on the diffusion matrix. That bound is strictly stronger than the $\sigma$-half of linear growth.  We require the initial law to have $p>6$ moments
(\eqref{eq:Ep}). Then \eqref{eq:rsde} has an $L_{m}$-conditional solution on $\R^{n}$ for every
$m\in[2,p]$, and the rough martingale problem has a solution for every $m\in[2,p/3]$. We also
obtain an explicit bound on its $p$-th moment. The proof uses
\cite[Thm.~4.6]{FHL} for the approximating solutions. \S\ref{app:FHL46} checks its
hypotheses for the truncated field of Step~\ref{step:Ereg}.

The exchange with \cite[Thm.~4.19]{FHL} is not one-sided. In exchange for lifting the
boundedness of the It\^o data and of the rough field, we ask for, among other things: one of the two curvature
hypotheses of Assumption~\ref{ass:Erough}, the derivative bounds $\|D^{j}f\|_{\infty}<\infty$ for
$j=1,2,3$ and $\|D^{j}f'\|_{\infty}<\infty$ for $j=1,2$, the unweighted time-modulus for $Df$ of
\eqref{eq:Eroughderiv}, and the index configuration
$\tfrac13<\beta\le\gamma$, $\tfrac13<\beta'$ of Assumption~\ref{ass:Erough}, narrower than
\cite[Thm.~4.19]{FHL}'s. We also keep the standing Lipschitz bracket \eqref{eq:bracketLip}, which
restricts the admissible lift. Neither set of hypotheses contains
the other. 

The Lyapunov structure underlying the argument is that of the whole rough generator and not only of the
It\^o part (Remark~\ref{rem:Elyapunov}). One might hope to trade the linear growth of $b$ and $\sigma$
for an abstract Lyapunov condition. Our estimates are estimates of increments, and
they use the quantity $\|b(r,X_{r})\|_{L_{m}}$ itself, whereas a Lyapunov bound controls only the size of
the Lyapunov function along the path and says nothing about that quantity. Dissipative drifts of superlinear growth are therefore
not admitted here, $b(x)=-x|x|^{2}$ being the example. Open
Problem~\ref{op:lyapunov} discusses what would have to replace the bounds \eqref{eq:Vjet} on the
derivatives of the Lyapunov function. 

\item Yamada--Watanabe for rough equations (Theorem~\ref{thm:YWtransfer}).  Weak existence,
together with pathwise uniqueness, yields a strong solution and joint uniqueness in law. To our
knowledge, this implication had not been established for RSDEs. The issue is the
one from above: the gluing enlarges the filtration, and the conditional bounds defining a solution
may not survive an arbitrary enlargement of the filtration.

However, they survive immersions. Along an
immersion the conditional expectations in question are not only comparable but actually equal, the Davie
remainder $J_{s,t}$ being measurable at time $t$. Hence, both bounds actually transfer with the same
moduli (Lemma~\ref{lem:immersion}). 
Lemma~\ref{lem:lawdet} describes the solution set as a constraint on the joint
law alone and not on the basis. On a basis, whose filtration is generated by $(X,B)$, the Davie remainder is a functional of that pair, so both bounds defining a solution depend only on $\Law(X,B)$.
The argument is then set in the framework of
Kurtz \cite{Kurtz14}. Proposition~\ref{prop:MPequiv} identifies that constraint with a rough
martingale problem, at the cost of a factor $3$ in the Davie order, and \S\ref{sec:YWtransfer} uses
it only in the last of its three conclusions. We note that
\cite[Thm.~4.21]{FHL} reaches uniqueness in law by a different route, running the opposite way and
available only under bounded Lipschitz data (Remark~\ref{rem:FHLrelation}(\ref{rem:FHLYW})).
\item The pathwise-uniqueness input (\cite{HuberII}). Neither (A) nor (B) supplies the pathwise
uniqueness that (B) takes as a hypothesis. The companion paper \cite{HuberII} investigates it, and composing the three
results there gives strong well-posedness on the affine class treated there. 
\end{enumerate}

Theorem~\ref{thm:weakex} requires all three of Assumptions~\ref{ass:Eito},
\ref{ass:Erough} and \ref{ass:Einit}, the coordinatewise bound \eqref{eq:Eacoord} only in the
diagonal branch. Theorem~\ref{thm:YWtransfer} requires much less: the standing continuity
\eqref{eq:standingcts}; nothing of Assumption~\ref{ass:Einit}; and its own two hypotheses \ref{yw1}
and \ref{yw2}. Its part \ref{ywc} uses in addition the linear growth of $b$ and
$\sigma$ in Assumption~\ref{ass:Eito} and the linear growth \eqref{eq:MProughgrowth} of the rough
field, which follows from \eqref{eq:Eroughgrowth} of Assumption~\ref{ass:Erough}. \ref{ywa} and
\ref{ywb} use neither.

\subsection*{Relation to the literature}

The equation and the notion of weak solution are adapted from \cite{FHL}. An abstract Yamada--Watanabe theorem
is in \cite{Kurtz14}, and the stochastic sewing lemma in \cite{Le20}. 

\begin{remark}[Relation to {\cite{FHL}}]\label{rem:FHLrelation}
\begin{enumerate}
    \item \label{rem:EFHL} \emph{Comparison with {\cite[Thm.~4.19]{FHL}}}. \cite[Thm.~4.19]{FHL} assumes $b,\sigma$ bounded continuous and $(f,f')$ a deterministic controlled
vector field in $\mathscr D^{\beta,\beta'}_{Z}\mathcal C^{\varkappa}_{b}$, with $\varkappa$ denoting
the field-regularity index, denoted $\gamma-1$ in \cite{FHL}, and proves weak existence with
$L_{m,\infty}$-integrability for every $m\ge2$. That proof mollifies both the It\^o data, to bounded
Lipschitz coefficients, and the rough field, solves each mollified equation by
\cite[Thm.~4.6]{FHL}, reads tightness off the a priori estimate
\cite[Prop.~4.5]{FHL}, which is available precisely because the coefficients are bounded, and
identifies the rough term of the limit through the stability lemma \cite[Lem.~4.20]{FHL}. Steps~\ref{step:Ereg} and \ref{step:Etight} of \S\ref{subsec:Eproof}, and
Theorem~\ref{thm:weakex} replace the appeal to \cite[Prop.~4.5]{FHL} by Step~\ref{step:Emoment}.
That substitution is one headline difference.

Step~\ref{step:Etight} uses Lemma~\ref{lem:roughstab}, which is a variant of \cite[Lem.~4.20]{FHL}. The lemma follows indeed directly from \cite[Lem.~4.20]{FHL}'s proof: the controlled class
is replaced by the hypotheses of \cite[Thm.~3.4]{FHL}. That matters because the continuity condition the
controlled class carries is open on our raw filtration, which we do not assume right-continuous. One filtration per member, which the
step also needs, Skorokhod delivering a jointly converging pair on one space with one Brownian
filtration per member and none common to all $j$, is already allowed in \cite[Lem.~4.20]{FHL}.
Two further choices inside the step are forced by the
growth of the coefficients, and both stay inside the range of the lemma. The integrability index has
to be finite, an essential supremum over $\Omega$ being unavailable once the integrand pair has
linear growth. The H\"older indices have to be taken with a sum no larger than $\beta+\beta'$, the
branch that would give the better order forcing $\|\delta X\|_{L_{2m}}$, that is, the higher moment the proof exists to avoid. Separately, the half of condition~(d) of \cite[Def.~3.1]{FHL} that bounds
$\delta Y'$ itself, and not $\E_{s}\delta Y'$, is not a hypothesis of \cite[Lem.~4.20]{FHL}, but \cite[Prop.~4.3]{FHL} carries it as a standing hypothesis, so
it has to be supplied at the end of the step, where \cite[Prop.~4.5]{FHL} had supplied it for
free. The It\^o half of the identification is not inherited at all,
\cite[Lem.~4.20]{FHL} being the stability lemma for the rough integral and saying nothing about
$\int\sigma\,\d B$; Step~\ref{step:Etight} argues it here.

In exchange the conclusion is weaker in two aspects, both forced by the same removal of boundedness.
First, integrability only up to the exponent $p$ of Assumption~\ref{ass:Einit}, rather than for every
$m\ge2$,
since with unbounded coefficients no moment can exceed what the initial law supplies. Second, and more
consequentially, the solution notion itself: \cite[Thm.~4.19]{FHL} delivers the $L_{m,\infty}$ form
\eqref{eq:daviemoduliinfty}, and it can, since with $b,\sigma$ bounded and $(f,f')$ in
$\mathcal C^{\varkappa}_{b}$ the Davie remainder carries no unbounded state factor, whereas
Theorem~\ref{thm:weakex} delivers only \eqref{eq:daviemoduli} and can't go further. The
counterexample of Remark~\ref{rem:davieLinfty} lies inside its hypotheses and outside
\eqref{eq:daviemoduliinfty}.
\item \label{rem:FHLYW} \emph{Relation to {\cite[\S4.5]{FHL}}}. \cite{FHL} defines weak solutions of RSDEs and proves weak existence for bounded continuous $b,\sigma$
(\cite[Thm.~4.19]{FHL}) and uniqueness in law for bounded Lipschitz $b,\sigma$
(\cite[Thm.~4.21]{FHL}). The second is sometimes interpreted as a Yamada--Watanabe theorem, but it is not.
\cite[Thm.~4.21]{FHL} splits by criticality, where $\gamma$ denotes the
regularity index of the controlled vector field and $\alpha$ the H\"older index of the driver, so that
its $\alpha$ is our $\gamma$ and its $\gamma$ is a smoothness index of the field.
Subcritically, $\gamma>1/\alpha$, it iterates the fixed-point map $\Phi^{T,B}$ of
\cite[Thm.~4.6]{FHL}, whose dependence on the driving Brownian motion is retained, to produce a
measurable functional $\Psi$ with $Y=\Psi(B)$, and uniqueness in law is read off, the law of $\Psi(B)$
not depending on the basis. Critically, $\gamma=1/\alpha$, it mollifies the vector field,
realises the approximating sequence on one common basis by tightness and Skorokhod, and identifies the
limit by the Gy\"ongy--Krylov argument (see e.g.~\cite{GK96}); pathwise uniqueness,
\cite[Thm.~4.10]{FHL}, enters only here, to force two subsequential limits on the same
basis to coincide. Hence the logic runs the opposite way to Yamada--Watanabe: a strong-solution
functional is constructed first and uniqueness in law follows as a corollary. Subcritically no joint
law of two solutions is formed and no filtration enlarged, each of the two given solutions being
identified with $\Psi$ of its own Brownian motion; critically a common basis is used, but for two
subsequential limits of one approximating sequence and not for two given solutions, and again no
enlargement happens. This route is available only where the fixed-point map
converges, i.e.\ under bounded Lipschitz data. Theorem~\ref{thm:YWtransfer} is therefore not a variant
of \cite[Thm.~4.21]{FHL} but the Yamada--Watanabe implication itself, which to our knowledge had not
been established in this setting, and it makes uniqueness in law accessible outside of the Lipschitz class,
where no fixed-point map exists but pathwise uniqueness may still be proved by the modulus argument of
\cite{HuberII}.
\end{enumerate}

\end{remark}

\begin{remark}
    Our Lemma~\ref{lem:roughItoLn} is a variant of
\cite[Thm.~4.13]{FHL} with three hypotheses weakened: the boundedness of $b,\sigma$, which our
setting does not provide, and the controlled class of the integrand pair, which we replace by the
four quantities the proof uses, and the boundedness of $\varphi$, which we replace by bounds
on its derivatives.

At its Taylor step the proof of \cite[Thm.~4.13]{FHL} takes a limit of Riemann sums in $L_{1}$, while
the sewing limit it is matched against is taken in $L_{2}$. Both are limits in probability of one
sequence, so they agree almost surely and the identity of that theorem is almost sure.

{\color{blue}We nevertheless obtain the $L_{2}$ mode itself, in Proposition~\ref{prop:Taylorsew}, by
an argument that avoids the Riemann sum altogether. What it requires in return is that the
integrability index $\mfn $ exceed $2/\gamma$ rather than $4$, the stronger demand since
$\gamma<\tfrac12$; that condition holds at both places.\color{black}}
\end{remark}
\begin{remark}[Which version of {\cite{FHL}} is cited]\label{rem:FHLversion}
\cite{FHL} is not published, stands at its seventh arXiv version, and has renumbered across revisions:
the rough stochastic integral and the continuity of the integration map are Theorem~3.5 and
Corollary~3.6 in version~5 but Theorem~3.4 and Corollary~3.5 from version~6 on, and in the version~3--4 era
the fixed-point theorem was Theorem~4.7, stability Theorem~4.11 and weak solutions \S4.6, against 4.6,
4.9 and \S4.5 now. Versions~6 and~7 carry the same number for every item we cite, version~7
adding one remark at the end of its \S3. 
We cite version~7 throughout.
\end{remark}

\begin{remark}[Two neighbouring papers]\label{rem:YWneighbours}
\cite{Theewis25}, the Yamada--Watanabe--Engelbert theorem for SPDEs in Banach spaces, is the closest
abstract statement to the one used here. It generalises \cite{Kurtz14}'s machine to solutions living in
a Banach space, a contribution on the state space axis and not the axis that matters here. The obstacle Theorem~\ref{thm:YWtransfer} removes is that the solution notion, a family of
conditional bounds \eqref{eq:daviemoduli}, is originally a property of a filtration rather than of a
law, while the Yamada--Watanabe construction enlarges the filtration, and no abstract theorem addresses
that. Indeed, in the applications for which such theorems have previously been given, the notion
refers only to the generated filtrations, hence to the joint law, and the difficulty does not arise. That is so even where
the notion is not a pathwise identity, like for the backward equations treated in \cite{Kurtz14}, whose
solution notion is an identity between conditional expectations on the generated filtrations. What
\cite{Theewis25} would supply, and \cite{Kurtz14} equally provides, is the framework once
Lemmas~\ref{lem:filtdescend}, \ref{lem:lawdet} and \ref{lem:immersion}
are in place. We work in \cite{Kurtz14}'s
because our state space is finite-dimensional and its compatibility notion is the one
Proposition~\ref{prop:gluing} verifies.

\cite{FLZ25}, randomisation of rough stochastic differential equations, bears on the standing
convention of Remark~\ref{rem:fixeddriverI} rather than on the transfer theorem, studying what survives
when the deterministic driver $\RZ$ is replaced by a random rough path. The relevant point for us is negative: the constants of \eqref{eq:Emoment} and \eqref{eq:Epathwise} are of the form
$\exp\{C(1+(1+\|\RZ\|^{1/\gamma}_{\gamma}{{}+\|\CZ\|^{1/\gamma}_{1}})T)\}$ with $1/\gamma\in(2,3)$, hence not integrable against a
Gaussian tail, so none of the quantitative statements of this paper randomises
directly. Nor does Proposition~\ref{prop:gluing}: no property of $\RZ$ is used there, precisely because
$\RZ$ is deterministic and shared by both copies, so that the gluing \eqref{eq:glue} has nothing to
randomise over on the rough side. With a random common driver one would have to glue over
$(x_{0},B,\RZ)$ and verify compatibility of each solution with the pair, and that is the mechanism by
which the McKean--Vlasov setting of \cite{FHLMKV} is harder. We leave open which of these statements survive randomisation.
\end{remark}

\subsection*{Organisation}

\S\ref{sec:setting} fixes the equation and the conventions, the level-two bracket in
\S\ref{subsec:geom} and the second-level coefficient in \S\ref{subsec:gen}. \S\ref{sec:weaksol}
introduces the rough martingale problem and sandwiches it between two pathwise classes;
\S\ref{subsec:MPborel} builds the continuous version of $N^{\varphi}$ that \S\ref{sec:weakex} reads
the problem through, and also shows the solution set Borel and convex; those last two statements are
not quoted later, though the Borel argument is re-run for $\Gamma_{\nu}$ in \S\ref{sec:YWtransfer}.
\S\ref{subsec:graded} refines the problem in graded form.
\S\ref{sec:weakex} proves (A) and \S\ref{sec:YWtransfer} proves (B).
Remark~\ref{rem:YWcompose} composes (A) with (B). In the
appendix, \S\ref{app:Escalefree} and \S\ref{app:MPenvelope} carry two proofs deferred from
\S\ref{sec:weakex} and \S\ref{sec:weaksol}, and \S\ref{app:FHL46} records what an
imported result needs.

The one open problem of the body is stated where it arises; \cite{HuberII} collects those that
concern uniqueness.

\section{Setting}\label{sec:setting}

\subsection{The equation}\label{subsec:equation}

Let us fix $T>0$ and integers $n,d,e\ge1$. Throughout, $\RZ=(Z,\ZZ)\in\CC^{\gamma}([0,T];\R^{e})$ denotes
a deterministic rough path of regularity $\gamma\in(\tfrac13,\tfrac12)$, and $D\subseteq\R^{n}$
denotes a closed set, the state space.

More precisely, we study the following equation,
\begin{equation}\label{eq:rsde}
\d X_{t}=b(t,X_{t})\dt+\sigma(t,X_{t})\,\d B_{t}+(f,f')(t,X_{t})\,\d\RZ_{t},
\qquad X_{0}\sim\nu,\quad X_{t}\in D,
\end{equation}
where $b:[0,T]\times\R^{n}\to\R^{n}$ and $\sigma:[0,T]\times\R^{n}\to\R^{n\times d}$, $B$ denotes a
$d$-dimensional Brownian motion, $(f,f')$ a deterministic controlled vector field, and $\nu$ a Borel
probability measure on $D$.

\emph{Standing regularity.} Throughout the paper, we assume
the following,
\begin{equation}\label{eq:standingcts}
b,\ \sigma,\ f,\ Df\,f+f'\ \text{ are continuous in }(t,x)\ \text{ on }[0,T]\times\R^{n}.
\end{equation}
 Assumption~\ref{ass:Eito} requires continuity of $b$ and $\sigma$ explicitly. For $f$ and
$Df\,f+f'$ it follows from the controlled-path defect of Assumption~\ref{ass:Erough}, which gives
the continuity in $t$, and from \eqref{eq:Eroughgrowth} with the Lipschitz bound \eqref{eq:DGGLip} and the paragraph after it, whose constants are those of Assumption~\ref{ass:Erough} and so bound the untruncated field as well,
which cover both curvature branches and give a
Lipschitz bound in $x$ uniform in $t$. The two together give joint continuity. It enters at step~\ref{stp:imm-b} of
Lemma~\ref{lem:immersion} and in the proof of Lemma~\ref{lem:filtdescend},
where it lets us identify the two stochastic integrals as u.c.p.\ limits of
the same left-point Riemann sums, and in Lemma~\ref{lem:MPborel}, continuity making each Riemann
sum a continuous functional of the canonical path. It is, technically, not necessary for the first two: for a
merely progressively measurable integrand the two integrals still agree, being determined by the predictable
integrand and the integrator, though the left-point Riemann sums no longer converge and a different
argument is needed. Continuity is necessary for the construction in Lemma~\ref{lem:MPborel}.

The following three conventions are used throughout.

\emph{The driver is deterministic and is not part of the stochastic basis.} Only $B$ is random. This is
the setting of \cite{FHL}, in which the rough path is data. Randomising it, that is, replacing $\RZ$ by
the lift of e.g. a fractional Brownian motion, is a separate operation.  
\color{blue} It is performed after every estimate below
and never inside one. Remark~\ref{rem:fixeddriverI} explains why the distinction is important.\color{black}

 When we speak of a system of
$N$ equations we mean $n=N$, or $n=Nd_{0}$ together with a decomposition $X=(X^{1},\dots,X^{N})$ into
blocks. Here, the coefficients may depend on the whole configuration. Every general statement below is
proved on $\R^{n}$ and specialises. 

\emph{The state space is carried as a hypothesis.} $D=\R^{n}$ denotes the plain case, and
$D=\R^{n}_{+}$ a case that a forthcoming paper will address. We do not develop a general
invariance theory here. Still, $D$ is genuinely carried, entering Definitions~\ref{def:davie} and \ref{def:roughMP}
through the requirement that $X$ be $D$-valued and $\nu$ be supported on $D$. The existence
statements below are proved for $D=\R^{n}$, Theorem~\ref{thm:weakex} constructing an $\R^{n}$-valued
solution.

\subsection{Notation and conventions}\label{subsec:rp}\label{subsec:indexconv}

We follow \cite{FH20} for rough paths and \cite{FHL} for rough stochastic
differential equations. The rough path $\RZ=(Z,\ZZ)$ satisfies Chen's relation together with the
finiteness of the homogeneous norm, i.e.
\[
\ZZ_{s,t}-\ZZ_{s,u}-\ZZ_{u,t}=\delta Z_{s,u}\otimes\delta Z_{u,t},
\qquad
\|\RZ\|_{\gamma}:=\|Z\|_{\gamma}+\|\ZZ\|_{2\gamma}<\infty .
\]
The norm is inhomogeneous, so both levels are bounded by its first power,
$|\delta Z_{s,t}|\le\|\RZ\|_{\gamma}|t-s|^{\gamma}$ and
$|\ZZ_{s,t}|\le\|\RZ\|_{\gamma}|t-s|^{2\gamma}$.
Here $|\cdot|$ on $\R^{e}$ and on
$\R^{e\times e}$, denotes the Euclidean
and respectively, the Frobenius norm $\Frob{\cdot}$. Under this convention, a per-index bound contracts against a
driver increment by Cauchy--Schwarz. On a matrix or a multilinear map the two norms we use are written apart. For an
$\R^{m}$-valued $k$-linear map $T$ we denote by
$\Op{T}:=\sup_{|v_{1}|=\dots=|v_{k}|=1}|T(v_{1},\dots,v_{k})|$ the operator norm, which at
$k=2$ and symmetric $T$ is $\sup_{|v|=1}|T(v,v)|$.  Thus $\Op{D^{2}f_{t}(x)}=\sup_{|v|=1}|D^{2}f_{t}(x)(v,v)|$ in
\eqref{eq:Ecurvnormhyp} and likewise in \eqref{eq:Vjet}, while
$\Frob{\sigma}^{2}=\operatorname{tr}(\sigma\sigma^{\top})$ in \eqref{eq:Egrowth}. The
distinction is important. The second bound of \eqref{eq:Vjet} is sharp in the operator norm
and fails in the Frobenius norm for $n\ge2$. A subscript $\infty$ abbreviates $\sup_{x}\Op{A(x)}$ for a
matrix or tensor field $A$, as in $\|D^{2}\varphi\|_{\infty}$, and $\|\cdot\|_{L_{\mfn }}$ applied to a
matrix-valued random variable is the $L_{\mfn }$ norm of its Frobenius norm. A controlled vector
field is a pair
$(f,f')$ with $f:[0,T]\times\R^{n}\to\R^{n\times e}$ and
$f':[0,T]\times\R^{n}\to\R^{n\times e\times e}$ such that, for indices $\beta,\beta'>0$ and a finite
constant $C_{\RZ}$,
\begin{equation}\label{eq:cvfdefect}
\big|f_{t}(x)-f_{s}(x)-f'_{s}(x)\,\delta Z_{s,t}\big|\ \le\ C_{\RZ}\big(1+|x|\big)|t-s|^{\beta+\beta'},
\qquad
\big|f'_{t}(x)-f'_{s}(x)\big|\ \le\ C_{\RZ}\big(1+|x|\big)|t-s|^{\beta'},
\end{equation}
for all $s\le t$ and \emph{all} $x\in\R^{n}$. We write
$(f,f')\in\mathscr D^{\beta,\beta'}_{\RZ}$ for the associated space and denote by $C_{\RZ}$ the
smallest such constant. It is a weighted space, strictly larger than the
deterministic controlled vector fields of \cite[Def.~3.7]{FHL}, whose moduli take an unweighted
supremum over the state: a field whose temporal defect grows linearly in $x$, such as
$f_{t}=t^{\beta+\beta'}g$ with $g$ affine, lies in ours and not in theirs. No
result of \cite{FHL} is applied to $(f,f')$ itself, only to the truncated pair $(f_{R},f'_{R})$ of
Step~\ref{step:Ereg}, whose unweighted constant is finite at each fixed $R$, which is where
\cite{FHL} is applied.

For a path $Y$ and $s\le u\le t$ we denote by $\delta Y_{s,t}:=Y_{t}-Y_{s}$ the increment, and for a
two-parameter family $A$ by
\[
\delta A_{s,u,t}:=A_{s,t}-A_{s,u}-A_{u,t}
\]
the second-order defect. Further $\E_{s}$ is the conditional expectation
$\E[\,\cdot\mid\mathcal F_{s}]$.
By $\mathcal A\perp\!\!\!\perp\mathcal B\mid\mathcal C$ we denote conditional independence of the
$\sigma$-fields $\mathcal A$ and $\mathcal B$ given $\mathcal C$. We denote by $[\![u,v]\!]$ the
stochastic interval $\{(\omega,r):u(\omega)\le r\le v(\omega)\}$. In case the same delimiters carry a subscript, we mean the H\"older seminorm of \cite[Def.~3.7]{FHL}, as in $[\![\delta Df]\!]_{\beta'}$.

For tensors, we write $u\otimes v$ for the outer product, $S\!:\!T:=\sum_{i,j}S_{ij}T_{ij}$ for the full
contraction of two matrices, so that $D^{2}\varphi\!:\!(u\otimes v)=D^{2}\varphi(u,v)$. A dot contracts the single index the two factors share,
as in $f'^{\kappa\lambda}\cdot DV=\sum_{i}f'^{\kappa\lambda}_{i}\partial_{i}V$, and ordinary multiplication
between scalars. Angle brackets $\langle\,\cdot\,,\cdot\,\rangle$ denote the same
pairing written the other way round, except in $\langle M\rangle$ and $\langle M,N\rangle$, which denote
quadratic variation, and in $\langle\nu,\varphi\rangle=\int\varphi\,\d\nu$. We denote by
$\operatorname{Sym}$ the symmetric part of a matrix, and by
$\operatorname{id}$ the identity map, written $\idx$ when its argument is the state variable. For
symmetric matrices, $a\ge0$ and $a\succeq0$ both denote positive semi-definiteness.

 $x\lesssim y$ means $x\le Cy$ with a constant $C$ that is fixed in the surrounding argument, and
$\lesssim_{a}$ means that $C$ may depend on $a$. We write $x\asymp y$ when
$x\lesssim y\lesssim x$.

 The letters $i,j$ and $\kappa,\lambda$ denote coordinates only when carried as a
sub- or superscript on a field, a germ coefficient, an operator or a driver increment, Latin $i,j$ then
running over $1,\dots,n$ and Greek $\kappa,\lambda$ over $1,\dots,e$; elsewhere they are ordinary
symbols. Thus $\kappa$ denotes the exponent $\kappa:=\max\{k,3\}$ of
Proposition~\ref{prop:gradedequiv}. The capital $\Gamma$ is reused: it denotes the sewing constants
$\Gamma_{1},\Gamma_{2},\Gamma_{3}$ of Lemma~\ref{lem:SSL}, the germ coefficients
$\Gamma^{1,\kappa},\Gamma^{2,\kappa\lambda},\Gamma^{3,\kappa\lambda}$ of \eqref{eq:Vgerm}, and the
constraint set $\Gamma_{\nu}$ of Theorem~\ref{thm:YWtransfer} and
\cite{FHL}'s $\Gamma^{\gamma,\beta_{\star};m,\mfn}$, the sum of the four quantities named in
Step~\ref{step:Etight}. The derived field
$\mathsf G_{t}:=Df_{t}f_{t}+f'_{t}$ carries a sans-serif capital, the running quantity $\mathsf S$ of
\eqref{eq:Thetaatoms} is unrelated. Calligraphic $\mathcal G$ is
the generator of \S\ref{subsec:gen} where, together with the traditional SDE generator $\mathcal L$, acts on a test function, or
carries a Greek coordinate, a bracketed level or a power. Elsewhere it is a $\sigma$-field, with
$\mathcal G_{t}$, $\mathcal G^{j}$ and $\mathcal G^{\sharp}$ of \S\ref{sec:weakex}
included. The state dimension is $n$,
$x\in\R^{n}$, throughout. The outer index of the mixed norm $\|\cdot\|_{q,\mfn}$ of
\S\ref{subsec:rp}, and of the classes $\mathscr D^{\beta,\beta'}_{Z}L_{4,\mfn}$ and
$\Dext^{\beta,\beta'}_{Z}L_{4,\mfn}$ of \cite[Def.~3.1/4.12]{FHL}, is the separate letter $\mfn$.
It is $\mfn$ that \eqref{eq:rItoLn}, \eqref{eq:paramsIto} and \eqref{eq:paramsdischarge} constrain. The number of particles is $n$ in
Example~\ref{ex:Adiag}, where it is the state dimension as well. Every sum is
printed: a repeated index with no summation sign before it is free, and so is a Greek index carried by
a coefficient with no driver increment in the same term, the object being one object for each value of
the index or pair. Contracting such a per-index bound against $\delta Z^{\kappa}$,
$\ZZ^{\kappa\lambda}$ or $\CZ^{\kappa\lambda}$ costs, by Cauchy--Schwarz, a factor of at most $\sqrt e$
at level one and $e$ at level two, and costs nothing where the per-index bounds hold in $\ell^{2}$ over
the Greek indices with the same constant, as throughout \cite{HuberII} by that paper's Lipschitz bound. A
germ coefficient written without indices, e.g.~$\Gamma^{1}\delta Z$, denotes the full contraction.

\emph{The order of the pair.} $\kappa$ indexes the first factor of $\RZ\otimes\RZ$, that is,
$\ZZ^{\kappa\lambda}_{s,t}$ denotes the iterated integral of $\d Z^{\kappa}$ against $\d Z^{\lambda}$ in
that order, and against it \eqref{eq:davie} carries $Df^{\lambda}f^{\kappa}+f'^{\kappa\lambda}$. For an affine field $f^{\kappa}(x)=F^{\kappa}x$ this is the matrix
$F^{\lambda}F^{\kappa}+F'^{\kappa\lambda}$, and on functions it is
$\mathcal G^{\kappa}\mathcal G^{\lambda}+\mathcal G'^{\kappa\lambda}$, with $\mathcal G^{\lambda}$
applied to $\varphi$ first; the flow lemma of \cite{HuberII} is the one place where a coefficient reverses the
order. 

\begin{remark}\label{rem:fprimeI}
$f'$ denotes the Gubinelli derivative of the coefficient in time, and constrains nothing regarding
the dependence of $f$ on the state. The condition $f'\equiv0$ says that the rough vector field has no
first-order rough variation in time. $f_{t}(x)=f(x)$ implies it but not
conversely: since $|t^{a}-s^{a}|\le|t-s|^{a}$ for $a\in(0,1]$, a field
$f_{t}(x)=t^{a}g(x)$ with $g$ affine and $a=\beta+\beta'<1$ has $f'\equiv0$ and is not
autonomous.
\end{remark}

\begin{definition}[Davie solution; {\cite[Def.~4.2(c)]{FHL}}]\label{def:davie}
Let $(\Omega,\mathcal A,(\mathcal F_{t}),\Prob)$ be a stochastic basis carrying an
$(\mathcal F_{t})$-Brownian motion $B$ such that $B_{0}=0$, and let $X$ denote a continuous, $(\mathcal F_{t})$-adapted process with
values in $D$. We denote by $J$ the Davie remainder, i.e.
\begin{equation}\label{eq:davie}
J_{s,t}:=\delta X_{s,t}-\int_{s}^{t}\!b_{r}(X_{r})\dr-\int_{s}^{t}\!\sigma_{r}(X_{r})\,\d B_{r}
-f_{s}(X_{s})\,\delta Z_{s,t}-\big(Df_{s}f_{s}+f'_{s}\big)(X_{s})\,\ZZ_{s,t}.
\end{equation}
For $m\ge2$, we call $X$ an \emph{$L_{m}$-conditional solution} of \eqref{eq:rsde} if the two moduli
below hold, together with the integrability of the third clause,
\begin{equation}\label{eq:daviemoduli}
\begin{gathered}
\big\|\E[J_{s,t}\mid\mathcal F_{s}]\big\|_{L_{m}(\Prob)}=o(|t-s|),
\qquad
\big\|\E\big[|J_{s,t}|^{m}\mid\mathcal F_{s}\big]^{1/m}\big\|_{L_{m}(\Prob)}
=\big\|J_{s,t}\big\|_{L_{m}(\Prob)}=o(|t-s|^{1/2}),\\
J_{s,t}\in L_{m}(\Prob)\ \text{ at every pair}.
\end{gathered}
\end{equation}
The two moduli are required uniformly in $s\le t$, and not merely pointwise in $(s,t)$. They
give no bound on $\sup_{s\le t}\|J_{s,t}\|_{L_{m}}$, because the rate constrains the modulus only
near $0$. Replacing $L_{m}(\Prob)$ in the two moduli by $L_{\infty}(\Omega)$,
\begin{equation}\label{eq:daviemoduliinfty}
\big\|\E[J_{s,t}\mid\mathcal F_{s}]\big\|_{L_{\infty}(\Omega)}=o(|t-s|),
\qquad
\big\|\E\big[|J_{s,t}|^{m}\mid\mathcal F_{s}\big]^{1/m}\big\|_{L_{\infty}(\Omega)}=o(|t-s|^{1/2}),
\end{equation}
the third clause of \eqref{eq:daviemoduli} unchanged, defines the strictly stronger \emph{$L_{m,\infty}$-solution}. In the numbering of \cite{FHL},
\eqref{eq:daviemoduli} is condition~(c) of \cite[Def.~4.2]{FHL} at the mixed index $\mfn =m$, the mixed
norm reducing to the $L_{m}$ norm there by \cite[Prop.~2.3(i)]{FHL}, and \eqref{eq:daviemoduliinfty} is
the same condition at $\mfn =\infty$. \cite{FHL} states the two moduli at every pair $(s,t)$; we take them
uniformly in $s\le t$.
\end{definition}

\begin{remark}[The $L_{m,\infty}$ notion is not available at linear growth]\label{rem:davieLinfty}
There is a solution of \eqref{eq:rsde} satisfying every hypothesis of Theorem~\ref{thm:weakex} such that
\eqref{eq:daviemoduliinfty} fails at every pair $s<t$ with $\delta Z_{s,t}\ne0$, whereas
\eqref{eq:daviemoduli} holds at every $m$. Take e.g.~$n=e=d=1$, $b=\sigma=0$, $D=\R$, $f(x)=x$, $f'=0$,
$\nu=N(0,1)$, and any non-constant $\gamma$-H\"older $Z$ such that the lift is geometric,
$\ZZ_{s,t}=\tfrac12\delta Z^{2}_{s,t}$. Assumption~\ref{ass:Eito} holds trivially, $b$ and $\sigma$
vanishing; Assumption~\ref{ass:Erough} holds in branch \ref{Ecurvaffine}, the field being affine, so
that $D^{2}f\equiv0$ and $C_{\mathrm{nrm}}=0$ in \eqref{eq:Ecurvnormhyp}, and $C_{\RZ}=0$; and
Assumption~\ref{ass:Einit} holds at every $p$, $\nu$ being Gaussian. The equation is solved by
$X_{t}=X_{0}\exp\big(\delta Z_{0,t}\big)$, and since $(Df\,f+f')(x)=x$ we obtain the following
expression for the Davie remainder \eqref{eq:davie}
\begin{equation}\label{eq:davieCE}
J_{s,t}\ =\ X_{s}\Big(e^{\delta Z_{s,t}}-1-\delta Z_{s,t}-\tfrac12\delta Z^{2}_{s,t}\Big).
\end{equation}
Here $J_{s,t}$ is $\mathcal F_{s}$-measurable, since $Z$ is deterministic and no Brownian motion
enters the equation, so that $\E[J_{s,t}\mid\mathcal F_{s}]=J_{s,t}$. Both moduli of
\eqref{eq:daviemoduliinfty} then equal $\|X_{s}\|_{L_{\infty}(\Omega)}$ times a nonzero scalar, hence
$+\infty$ at every pair with $\delta Z_{s,t}\ne0$, $X_{s}$ being Gaussian and therefore unbounded on
$\Omega$.

It is indeed the essential supremum and not the rate that fails. A Taylor expansion of the
exponential yields the following for the scalar in \eqref{eq:davieCE}
\[
\Big|e^{\delta Z_{s,t}}-1-\delta Z_{s,t}-\tfrac12\delta Z^{2}_{s,t}\Big|
\ =\ O\big(|\delta Z_{s,t}|^{3}\big)\ =\ O\big(|t-s|^{3\gamma}\big),
\qquad 3\gamma>1 .
\]
Hence, \eqref{eq:daviemoduli} holds at every $m$, at a rate strictly better than what either modulus requires. \color{blue}What fails is that $J$ carries a coefficient factor evaluated along the path, here $X_{s}$ itself,
which is unbounded with respect to the essential supremum over $\Omega$. With the boundedness of $b$ and $\sigma$ and the controlled pair in $\mathcal C^{\varkappa}_{b}$ neither germ nor
remainder carries an unbounded state factor and 
\cite[Thm.~4.19]{FHL} assumes  bounded and \eqref{eq:daviemoduliinfty} is available for \cite[Thm.~4.6]{FHL}.\color{black}
\end{remark}

\begin{remark}[Stopping does not produce a solution of the truncated equation]\label{rem:stoppednosol}
\color{blue} In the following, we need to appeal to results stated for bounded coefficients.\color{black} Truncating the coefficient fields outside a ball $\{|x|\le R\}$ and stopping the
solution at the exit time $\tau_{R}$ is natural, but fails over an interval
straddling $\tau_{R}$: the left-hand side of \eqref{eq:davie} is then the truncated increment
$X_{\tau_{R}}-X_{s}$, while the germ on the right is still built from the full increments
$\delta Z_{s,t}$ and $\delta B_{s,t}$. \color{blue}The mismatch is an increment of the driver over
$[\tau_{R},t]$, of size $|t-\tau_{R}|^{\gamma}$ and not of the size \eqref{eq:daviemoduli} requires of a
remainder.\color{black} We present one case for which it fails, which is enough to not continue in this direction. Let
$\gamma\in(\tfrac13,\tfrac12)$, $T=2$, $n=e=d=1$, $D=\R$, $b=\sigma=0$, $f\equiv1$, $f'=0$, $X_{0}=0$,
and $Z_{t}:=t\ \ (t\le1)$, $Z_{t}:=1+(t-1)^{\gamma}\ \ (t\ge1)$, $\ZZ_{s,t}:=\tfrac12\delta Z^{2}_{s,t}$.
Then $Z$ is $\gamma$-H\"older and $X_{t}=Z_{t}$ solves \eqref{eq:davie} with $J\equiv0$. Truncate at
$R=1$, replacing $f$ by $\chi$ with $\chi=1$ on $[-1,1]$ and $\chi=0$ off $[-2,2]$, and stop at
$\tau_{1}=1$, so that $X^{\tau_{1}}_{t}=t\wedge1$. For $s<1<t$ one has $\chi(X^{\tau_{1}}_{s})=1$ and
$\chi'(X^{\tau_{1}}_{s})=0$, whence $J^{1}_{s,t}=(1-s)-\delta Z_{s,t}=-(t-1)^{\gamma}$. At
$s=1-\varepsilon$, $t=1+\varepsilon$ this gives $|J^{1}_{s,t}|=\varepsilon^{\gamma}$ at $|t-s|=2\varepsilon$, and since
$\gamma<\tfrac12<1$ both quotients $\varepsilon^{\gamma}/(2\varepsilon)^{1/2}$ and
$\varepsilon^{\gamma}/(2\varepsilon)$ diverge as $\varepsilon\downarrow0$; everything being
deterministic, both moduli of \eqref{eq:daviemoduli} are evaluated on this quantity. Hence
$X^{\tau_{1}}$ is not an $L_{m}$-conditional solution of the truncated equation at any $m$.
\end{remark}

We use the following elementary fact repeatedly: we may take an It\^o martingale to have zero Gubinelli derivative
\cite[Rem.~3.2]{FHL}, so it has no first- or second-order coefficient against $\RZ$
and contributes no cross term to a germ against $\RZ$. $\RZ$ is deterministic
throughout, so no independence hypothesis is involved.

\begin{definition}[Weak solution]\label{def:weaksol}
A weak solution of \eqref{eq:rsde} with initial law $\nu$ is a tuple
$\mathfrak X=(\Omega,\mathcal A,(\mathcal F_{t}),\Prob,X,B)$ such that $B$ is an
$(\mathcal F_{t})$-Brownian motion in $\R^{d}$, $X$ is a continuous $(\mathcal F_{t})$-adapted
$D$-valued process with $\Law(X_{0})=\nu$, and $X$ is an $L_{m}$-conditional solution of
\eqref{eq:rsde} for the given deterministic $\RZ$ and some $m\ge2$. We call it $L_{m}$-integrable if in addition
the following moment bound holds,
\[
\big\|\sup_{t\le T}|X_{t}|\big\|_{L_{m}(\Prob)}<\infty .
\]
\end{definition}

Definition~\ref{def:weaksol} is \cite[\S4.5]{FHL}'s notion at mixed index $\mfn =m$ rather than $\mfn =\infty$, and
guarantees only condition~(c) of \cite[Def.~4.2]{FHL}: condition~(a), the almost sure finiteness of
$\int_{0}^{T}|b_{r}(X_{r})|\dr$ and $\int_{0}^{T}|(\sigma\sigma^{\top})_{r}(X_{r})|\dr$, follows from
\eqref{eq:standingcts} and the continuity of $X$. Condition~(b), membership of $\big(f(X),Df\,f+f'(X)\big)$ in
$\mathscr D^{\bar\alpha,\bar\alpha'}_{Z}L_{m,\mfn }$ with $\alpha+(\alpha\wedge\bar\alpha)>\tfrac12$ and
$\alpha+(\alpha\wedge\bar\alpha)+\bar\alpha'>1$, $\alpha=\gamma$ the driver index, is not
guaranteed by Definition~\ref{def:weaksol}. We do not use condition~(b) for a solution in the sense of
Definition~\ref{def:weaksol}. Step~\ref{step:Etight} of the proof of
Theorem~\ref{thm:weakex} records it at the limit, where only the right-continuity in $s$ is left
open, only in order to place our solutions among \cite{FHL}'s.

\begin{remark}\label{rem:fixeddriverI}
Every quantitative estimate below carries a factor
$\exp\{C(1+\Lambda^{1/\gamma}_{\RZ}T)\}$ with
$\Lambda_{\RZ}:=1+\|\RZ\|_{\gamma}+\|\CZ\|_{1}$. Since
$\Lambda^{1/\gamma}_{\RZ}\le3^{1/\gamma-1}\big(1+\|\RZ\|^{1/\gamma}_{\gamma}+\|\CZ\|^{1/\gamma}_{1}\big)$,
a larger $C$ allows the sum in brackets in place of $\Lambda^{1/\gamma}_{\RZ}$. Its last term is
$0$ for a geometric lift. The factor is produced by a partition whose mesh $h$ satisfies $c\,\Lambda_{\RZ}h^{\gamma}\le\tfrac12$, of
which there are $O(1+\Lambda^{1/\gamma}_{\RZ}T)$; see \eqref{eq:Epathwise}.
For a deterministic driver this factor is a constant; after randomisation it need not be integrable. Hence, the randomisation must be
performed afterwards.
\end{remark}

\subsection{Geometricity, and the level-two bracket}\label{subsec:geom}

\begin{assumption}[Driver]\label{ass:driver}
$\RZ\in\CC^{\gamma}([0,T];\R^{e})$ with $\gamma\in(\tfrac13,\tfrac12)$, which enters
everywhere below through the Davie expansion \eqref{eq:davie}, which carries $\ZZ$ explicitly. The bracket is
$[\RZ]_{s,t}:=\delta Z_{s,t}\otimes\delta Z_{s,t}-2\operatorname{Sym}\ZZ_{s,t}$, and we assume
throughout that it is \emph{Lipschitz},
\begin{equation}\label{eq:bracketLip}
\|\CZ\|_{1}:=\sup_{s<t}\frac{\big|\CZ_{s,t}\big|}{|t-s|}\ <\ \infty ,
\qquad\text{that is, the bracket is of H\"older order }1 .
\end{equation}
By Chen's relation $\CZ$ is additive, $\CZ_{s,u}+\CZ_{u,t}=\CZ_{s,t}$, so
\eqref{eq:bracketLip} makes $\CZ$ the increment of a Lipschitz path and every integral against $\CZ$
below a Lebesgue--Stieltjes integral.
\end{assumption}

The Lipschitz hypothesis is no restriction in two cases of interest:
\[
\CZ\equiv0\quad\text{for a geometric lift},
\qquad
\CZ_{s,t}=(t-s)I\quad\text{for the It\^o lift}.
\]
\color{blue}
Note that it makes the bracket term of Definition~\ref{def:roughMP} a drift rather than a second
source of roughness.
\color{black}
Geometricity is nowhere assumed in this paper. Every result below is proved for an arbitrary
level-two lift satisfying Assumption~\ref{ass:driver}, that is, under \eqref{eq:bracketLip} alone.

\color{blue}
\begin{remark}
\eqref{eq:bracketLip} constrains the lift and not the path $Z$, and it leaves the intended
range of drivers untouched.

\begin{enumerate}
    \item Geometric lifts for fractional Brownian motion are trivially included.
    \item Lipschitz corrections of geometric lifts are included: Fix a
geometric $\RZ=(Z,\ZZ)$ and a symmetric matrix $C\in\R^{e\times e}$, and put
$\ZZ^{C}_{s,t}:=\ZZ_{s,t}-\tfrac12(t-s)C$.
\item What is excluded for convenience, but can be adapted rather easily: finite variation instead of Lipschitz. The It\^o lift of a continuous
semimartingale whose quadratic variation is not absolutely continuous with bounded density is of this
kind. For instance $Z:=\int_{0}^{\cdot}r^{-\epsilon}\d W_{r}$ with
$\epsilon\in(0,\tfrac16)$ and $W$ a Brownian motion has
$\CZ_{s,t}=\langle Z\rangle_{t}-\langle Z\rangle_{s}$ with
$\langle Z\rangle_{t}=t^{1-2\epsilon}/(1-2\epsilon)$, increasing and continuous but of unbounded
difference quotient at $0$, while $Z$ is $\gamma$-H\"older for every $\gamma<\tfrac{1-2\epsilon}2$, an
interval meeting $(\tfrac13,\tfrac12)$ exactly when $\epsilon<\tfrac16$. Every $\CZ$-integral remains Lebesgue--Stieltjes, and the arguments go
through with $\|\CZ\|_{1}|t-s|$ replaced by the variation $|\CZ|_{1\text{-var};[s,t]}$ and the uniform
greedy mesh by a greedy partition of that control. We do not carry this out.
\item Excluded: Infinite variation. The $\CZ$-integrals would need sewing, and \cite{HuberII} would lose the semimartingale
structure it reads $\widetilde X$ through. 
\end{enumerate}
\end{remark}
\color{black}
\subsection{The second-level coefficient}\label{subsec:gen}

For \eqref{eq:rsde} with $\RZ\in\CC^{\gamma}([0,T];\R^{e})$ and $(f,f')$ a deterministic controlled
vector field, we define the following operators on $C^{\infty}(\R^{n})$,
\begin{equation}\label{eq:gendef}
\mathcal L_{t}\varphi:=\tfrac12\sum_{i,j}a_{ij}(t,x)\partial^{2}_{ij}\varphi+\sum_{i}b_{i}(t,x)\partial_{i}\varphi,
\quad
\mathcal G^{\kappa}_{t}\varphi:=\sum_{i}f^{\kappa}_{i}(t,x)\partial_{i}\varphi,
\quad
\mathcal G'^{\kappa\lambda}_{t}\varphi:=\sum_{i}f'^{\kappa\lambda}_{i}(t,x)\partial_{i}\varphi,
\end{equation}
for $\kappa,\lambda=1,\dots,e$, where $a:=\sigma\sigma^{\top}$. We call the triple
$(\mathcal L,\mathcal G,\mathcal G')$ the rough generator of \eqref{eq:rsde}.

\begin{lemma}[The second-level coefficient is $\mathcal G^{\kappa}\mathcal G^{\lambda}+\mathcal G'^{\kappa\lambda}$]\label{lem:G2}
Let $X$ be an $L_{m}$-conditional solution of \eqref{eq:rsde} for some $m\ge2$, and
let $f_{t}\in C^{1}$.
For $\varphi\in C^{3}$ the Davie expansion of $\varphi(X)$ has first-level coefficient
$\mathcal G^{\kappa}_{s}\varphi$ against $\delta Z^{\kappa}_{s,t}$ and the following second-level coefficient
\begin{equation}\label{eq:G2}
\Gamma^{(2),\kappa\lambda}_{s}\varphi
=\big(\mathcal G^{\kappa}_{s}\mathcal G^{\lambda}_{s}+\mathcal G'^{\kappa\lambda}_{s}\big)\varphi
\end{equation}
against $\ZZ^{\kappa\lambda}_{s,t}$. For a non-geometric lift, there is one
further second-level term, namely
\begin{equation}\label{eq:G2bracket}
\tfrac12\big(\mathcal Q^{\kappa\lambda}_{s}\varphi\big)(X_{s})\,\CZ^{\kappa\lambda}_{s,t},
\qquad
\mathcal Q^{\kappa\lambda}_{s}\varphi=\textstyle\sum_{i,j}\big(f^{\kappa}_{s}\big)_{i}
\big(f^{\lambda}_{s}\big)_{j}\,\partial^{2}_{ij}\varphi ,
\end{equation}
which is generated by \eqref{eq:brackettaylor}. It vanishes for every $\varphi$
when $\CZ\equiv0$, and for every lift when $D^{2}\varphi\equiv0$. The three terms together are the germ \eqref{eq:MPgerm}.
\end{lemma}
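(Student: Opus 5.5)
The plan is a second-order Taylor expansion of $\varphi$ around $X_{s}$, with $\delta X_{s,t}$ replaced by its Davie expansion \eqref{eq:davie}; the coefficients against $\delta Z_{s,t}$ and $\ZZ_{s,t}$ are then collected. Abbreviate $x=X_{s}$ and write
\[
\delta X_{s,t}=I_{s,t}+M_{s,t}+f_{s}(x)\delta Z_{s,t}+\mathsf G_{s}(x)\ZZ_{s,t}+J_{s,t},
\]
where $I_{s,t}$ is the drift integral and $M_{s,t}$ the It\^o integral. Taylor's formula gives
\[
\delta\varphi(X)_{s,t}=D\varphi(x)\,\delta X_{s,t}+\tfrac12D^{2}\varphi(x)(\delta X_{s,t},\delta X_{s,t})+R_{s,t},
\qquad |R_{s,t}|\lesssim\|D^{3}\varphi\|\,|\delta X_{s,t}|^{3}.
\]

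\emph{Linear term.} It contributes $D\varphi(x)f^{\kappa}_{s}(x)\delta Z^{\kappa}_{s,t}=(\mathcal G^{\kappa}_{s}\varphi)(x)\delta Z^{\kappa}_{s,t}$ at first level. At second level it contributes $D\varphi(x)\big(Df^{\lambda}_{s}f^{\kappa}_{s}+f'^{\kappa\lambda}_{s}\big)(x)\ZZ^{\kappa\lambda}_{s,t}$, following the index convention fixed in \S\ref{subsec:indexconv}. It also produces the drift and martingale parts, which belong to the $\mathcal L$- and $\d B$-components of the germ and not to the rough coefficients.

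\emph{Quadratic term.} The only rough contribution at order $|t-s|^{2\gamma}$ is
\[
\tfrac12D^{2}\varphi(x)\big(f^{\kappa}_{s}\delta Z^{\kappa}_{s,t},f^{\lambda}_{s}\delta Z^{\lambda}_{s,t}\big)
=\tfrac12\sum_{i,j}(f^{\kappa}_{s})_{i}(f^{\lambda}_{s})_{j}\partial^{2}_{ij}\varphi(x)\,(\delta Z\otimes\delta Z)^{\kappa\lambda}_{s,t}.
\]
I would then substitute $\delta Z\otimes\delta Z=2\operatorname{Sym}\ZZ+\CZ$, which is the definition of the bracket in Assumption~\ref{ass:driver}; this is the identity \eqref{eq:brackettaylor}. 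Since $\mathcal Q^{\kappa\lambda}$ is symmetric in $(\kappa,\lambda)$, one has $\mathcal Q:\operatorname{Sym}\ZZ=\mathcal Q:\ZZ$, so the quadratic term equals
\[
\mathcal Q^{\kappa\lambda}_{s}\varphi(x)\ZZ^{\kappa\lambda}_{s,t}+\tfrac12\mathcal Q^{\kappa\lambda}_{s}\varphi(x)\CZ^{\kappa\lambda}_{s,t}.
\]
Next, the composition $\mathcal G^{\kappa}\mathcal G^{\lambda}\varphi=\sum_{i}f^{\kappa}_{i}\partial_{i}\big(\sum_{j}f^{\lambda}_{j}\partial_{j}\varphi\big)$ splits as $\mathcal Q^{\kappa\lambda}\varphi+D\varphi\,Df^{\lambda}f^{\kappa}$. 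Adding the $\ZZ$-parts of the linear and quadratic terms therefore gives exactly $\big(\mathcal G^{\kappa}_{s}\mathcal G^{\lambda}_{s}+\mathcal G'^{\kappa\lambda}_{s}\big)\varphi(x)$, which is \eqref{eq:G2}. What is left is the bracket term \eqref{eq:G2bracket}. It visibly vanishes if $\CZ\equiv0$ or if $D^{2}\varphi\equiv0$.

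\emph{Main obstacle: the remainder.} I must show that everything else lies in the remainder class of \eqref{eq:daviemoduli}, with $m$ replaced by a lower exponent because of the products. The leftover pieces are of four kinds.
\begin{itemize}
\item $D\varphi(x)J_{s,t}$, which is handled directly by the moduli.
\item Cross terms in the quadratic form. Those involving $I$ or $\ZZ$ are of order $|t-s|^{\gamma+1/2}$ or $|t-s|^{3\gamma}$, both $o(|t-s|)$. The term $D^{2}\varphi(x)(f\delta Z,M)$ has $\E_{s}=0$ and $L$-norm of order $|t-s|^{\gamma+1/2}$. The term $\tfrac12D^{2}\varphi(M,M)$ is the Itô correction and goes into the $\mathcal L$ part.
\item The Taylor remainder $R_{s,t}$. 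It is $O(|\delta X_{s,t}|^{3})$, and in $L_{m/3}$ this is $O(|t-s|^{3\gamma})$.
\item The error from $J$ inside the quadratic term.
\end{itemize}
For these estimates I need $\|\delta X_{s,t}\|_{L_{m}}\lesssim|t-s|^{\gamma}$, which I would read off from \eqref{eq:davie} together with linear growth. If unbounded coefficients make this awkward, I would localise using $\varphi\in C^{3}$ with bounded derivatives, in the spirit of the factor-$3$ loss noted for Proposition~\ref{prop:MPequiv}. Since the lemma only identifies the coefficients, I would state the remainder bound at order $o(|t-s|)$ conditionally and $o(|t-s|^{1/2})$ in $L_{m/3}$, and defer precise integrability to \eqref{eq:MPgerm}.
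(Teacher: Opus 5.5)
Your coefficient matching is exactly the paper's proof, which ends at that identity: a second-order Taylor expansion about $X_{s}$, substitution of \eqref{eq:davie}, the rewrite $\delta Z\otimes\delta Z=2\operatorname{Sym}\ZZ+\CZ$ with the symmetry of $\mathcal Q^{\kappa\lambda}$, and the product-rule split $\mathcal G^{\kappa}\mathcal G^{\lambda}\varphi=\mathcal Q^{\kappa\lambda}\varphi+D\varphi\,Df^{\lambda}f^{\kappa}$. The remainder control is not part of this lemma (the paper does it in Proposition~\ref{prop:MPequiv}\ref{mpe1}, Steps~7--13 in Appendix~\ref{app:MPenvelope}, at the cost of moments of order $3m$), and your sketch of it should be dropped or repaired: $|t-s|^{\gamma+1/2}$ is not $o(|t-s|)$ since $\gamma<\tfrac12$; the non-centred cross term $D^{2}\varphi(X_{s})(f_{s}(X_{s})\delta Z_{s,t},J_{s,t})$ only closes after taking $\E_{s}$ before the norm (Remark~\ref{rem:MPgrouping}); and matching the drift and It\^o parts leaves the mismatch terms of \eqref{eq:Ntilde} and \eqref{eq:PQtilde}.
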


\begin{proof}
Insert $\delta X_{s,t}=\int_{s}^{t}b\dr+\int_{s}^{t}\sigma\d B+\textstyle\sum_{\kappa}f^{\kappa}_{s}(X_{s})\delta Z^{\kappa}_{s,t}
+\textstyle\sum_{\kappa,\lambda}(Df^{\lambda}_{s}f^{\kappa}_{s}+f'^{\kappa\lambda}_{s})(X_{s})\ZZ^{\kappa\lambda}_{s,t}+J_{s,t}$ into the
Taylor expansion of $\varphi(X_{t})-\varphi(X_{s})$ and collect the terms of order $\ZZ$. The
first-order term contributes
$D\varphi\cdot(Df^{\lambda}f^{\kappa}+f'^{\kappa\lambda})$, and the second-order term contributes
$(f^{\kappa}\otimes f^{\lambda}):D^{2}\varphi$. Here we used
$\delta Z^{\kappa}\delta Z^{\lambda}=\ZZ^{\kappa\lambda}+\ZZ^{\lambda\kappa}$ for a geometric lift,
respectively the corresponding bracket-corrected identity \eqref{eq:brackettaylor} otherwise. Their sum
yields the following identity,
\[
\mathcal G^{\kappa}\mathcal G^{\lambda}\varphi+\mathcal G'^{\kappa\lambda}\varphi
=\underbrace{\sum_{i,j}f^{\kappa}_{i}(\partial_{i}f^{\lambda}_{j})\partial_{j}\varphi
+\sum_{i,j}f^{\kappa}_{i}f^{\lambda}_{j}\partial^{2}_{ij}\varphi}_{=\ \sum_{i}f^{\kappa}_{i}\partial_{i}(\sum_{j}f^{\lambda}_{j}\partial_{j}\varphi)}
\ +\ \sum_{i}f'^{\kappa\lambda}_{i}\partial_{i}\varphi .
\qedhere
\]
\end{proof}

\begin{remark}\label{rem:G2special}
In the autonomous case $f'\equiv0$ the coefficient \eqref{eq:G2} is
$\mathcal G^{\kappa}\mathcal G^{\lambda}$, and for $e=1$ simply $\mathcal G^{2}$.
The index order is that of \S\ref{subsec:indexconv}.
\end{remark}
\color{blue}
 There are three objects controlled by $\ZZ$ and each has its own Gubinelli derivative. The first is the map
$t\mapsto f_{t}(x)$, with increment $f'_{s}(x)\delta Z_{s,t}$ at leading order, so its Gubinelli
derivative is $f'$.

The second is the solution and by Definition~\ref{def:davie} the rough part of $\delta X_{s,t}$ is
$f_{s}(X_{s})\delta Z_{s,t}$ at leading order, so the Gubinelli derivative of $X$ is $f(X)$.

The third is the field along the path. The map
$t\mapsto f_{t}(X_{t})$ has Gubinelli derivative $Df\,f+f'$, which is exactly the coefficient of
$\ZZ_{s,t}$ in \eqref{eq:davie}.

\color{black}

Let us restate the central Lemma~\ref{lem:SSL}, and introduce the truncation
Definition~\ref{def:approxfamily}.

\begin{lemma}[Stochastic sewing; {\cite[Thm.~2.1]{Le20}}, {\cite[Thm.~2.9, Lem.~2.11]{FHL}}]\label{lem:SSL}
Let $A$ denote a two-index $L_{m}$-integrable field, $m\ge2$, such that $A_{s,s}=0$ and such that $A$ is
adapted, that is, $A_{s,t}$ is
$\mathcal F_{t}$-measurable. Suppose further that the following two defect bounds hold,
\[
\|\E_{s}\delta A_{s,u,t}\|_{L_{m}}\le\Gamma_{1}|t-s|^{\epsilon_{1}},
\qquad
\|\delta A_{s,u,t}\|_{L_{m}}\le\Gamma_{2}|t-s|^{\epsilon_{2}},
\]
for \emph{deterministic}
$\Gamma_{1},\Gamma_{2}$ and $\epsilon_{1}>1$, $\epsilon_{2}>\tfrac12$. Then there is, up to modification, exactly
one process $\mathcal A$ such that $\mathcal A$ is adapted, $\mathcal A_{0}=0$ and
$\mathcal A_{t}-A_{0,t}$
lies in $L_{m}$ for every $t$, and such that, for all $s\le t$ and for \emph{some} finite constants in place of $C\Gamma_{1}$ and $C\Gamma_{2}$,
\begin{equation}\label{eq:SSLconc}
\big\|\mathcal A_{t}-\mathcal A_{s}-A_{s,t}\big\|_{L_{m}}
\ \le\ C\big(\Gamma_{1}|t-s|^{\epsilon_{1}}+\Gamma_{2}|t-s|^{\epsilon_{2}}\big),
\qquad
\big\|\E_{s}\big[\mathcal A_{t}-\mathcal A_{s}-A_{s,t}\big]\big\|_{L_{m}}
\ \le\ C\,\Gamma_{1}|t-s|^{\epsilon_{1}},
\end{equation}
with $C=C(\epsilon_{1},\epsilon_{2},m)$. The process that exists satisfies the two
bounds with these constants; uniqueness holds in the wider class, where the two constants are only
required to be finite. Moreover, for each fixed $t$ the compensated Riemann
sums $A^{\pi}_{t}:=\sum_{[u,v]\in\pi,\,u\le t}A_{u,v\wedge t}$ converge to $\mathcal A_{t}$ in $L_{m}$ as the
mesh $|\pi|$ tends to $0$.

Suppose moreover that $t\mapsto A_{s,t}$ is almost
surely c\`adl\`ag \textup{(}respectively continuous\textup{)} on $[s,T]$ for each $s$, and that the
following third defect bound holds,
\begin{equation}\label{eq:SSLthird}
\Big\|\ \sup_{r\in[(s+t)/2,\,t]}\big|\delta A_{s,(s+t)/2,r}\big|\ \Big\|_{L_{m}}
\ \le\ \Gamma_{3}\,|t-s|^{\frac1m+\epsilon_{3}}
\qquad(s\le t)
\end{equation}
for finite $\Gamma_{3}\ge0$ and $\epsilon_{3}>0$. Then $\mathcal A$ has a c\`adl\`ag
\textup{(}respectively continuous\textup{)} version, and for that version the convergence just stated
holds along any sequence of partitions of mesh tending to $0$ \emph{uniformly} in $t\in[0,T]$ and at
the rate $|\pi|^{(\epsilon_{1}-1)\wedge(\epsilon_{2}-\frac12)\wedge\epsilon_{3}}$.
\end{lemma}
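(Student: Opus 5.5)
The plan is to follow Lê's dyadic construction \cite[Thm.~2.1]{Le20}, in the form \cite[Thm.~2.9, Lem.~2.11]{FHL}, and track only what the statement adds: uniform constants, uniqueness in the wider class, and Riemann-sum convergence along arbitrary partitions.

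\emph{Construction.} Fix $s<t$ and let $\pi_{k}$ be the dyadic partition of $[s,t]$ into $2^{k}$ pieces, with $A^{k}_{s,t}:=\sum_{[u,v]\in\pi_{k}}A_{u,v}$. The difference $A^{k}_{s,t}-A^{k+1}_{s,t}$ is $\sum_{i}\delta A_{u_{i},m_{i},v_{i}}$, where $m_{i}$ is the midpoint of $[u_{i},v_{i}]$. I split each summand as $\E_{u_{i}}\delta A_{u_{i},m_{i},v_{i}}+(\delta A-\E_{u_{i}}\delta A)$.
\begin{itemize}
\item The conditional-mean parts are bounded by the triangle inequality, giving $2^{k}\Gamma_{1}(2^{-k}|t-s|)^{\epsilon_{1}}$.
\item The centred parts are not martingale differences as they stand. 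I therefore split the index set into even and odd $i$. Within each class, $\delta A_{u_{i},m_{i},v_{i}}$ is $\mathcal F_{v_{i}}$-measurable and $v_{i}\le u_{i+2}$, so the centred terms form a discrete martingale difference sequence.
\item The BDG inequality (for $m\ge2$), followed by Minkowski in $L_{m/2}$, gives the bound $C\,2^{k/2}\Gamma_{2}(2^{-k}|t-s|)^{\epsilon_{2}}$.
\end{itemize}
Since $\epsilon_{1}>1$ and $\epsilon_{2}>\tfrac12$, both bounds are geometric in $k$. Hence $A^{k}_{s,t}$ is Cauchy in $L_{m}$ with some limit $\mathcal I_{s,t}$, and summing the series gives $\|\mathcal I_{s,t}-A_{s,t}\|_{L_{m}}\le C(\Gamma_{1}|t-s|^{\epsilon_{1}}+\Gamma_{2}|t-s|^{\epsilon_{2}})$. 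The conditional bound in \eqref{eq:SSLconc} comes from the same telescoping after applying $\E_{s}$, because $\E_{s}\E_{u_{i}}=\E_{s}$ kills the martingale part.

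\emph{Additivity and uniqueness.} I show $\mathcal I$ is additive, $\mathcal I_{s,u}+\mathcal I_{u,t}=\mathcal I_{s,t}$, and then set $\mathcal A_{t}:=\mathcal I_{0,t}$. Additivity at dyadic points is immediate. For general $u$ I would instead use the arbitrary-partition convergence proved next. Uniqueness runs as follows. If $\mathcal A,\tilde{\mathcal A}$ both satisfy \eqref{eq:SSLconc} with some finite constants, then $R:=\mathcal A-\tilde{\mathcal A}$ is adapted with $\|\E_{s}\delta R_{s,t}\|_{L_{m}}\lesssim|t-s|^{\epsilon_{1}}$ and $\|\delta R_{s,t}\|_{L_{m}}\lesssim|t-s|^{\epsilon_{2}}$. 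Over a partition of $[0,t]$ of mesh $h$, write $R_{t}=\sum\delta R$ and apply the same even/odd martingale split. This gives $\|R_{t}\|_{L_{m}}\lesssim h^{\epsilon_{1}-1}+h^{\epsilon_{2}-1/2}\to0$, so $R_{t}=0$ a.s. for each $t$.

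\emph{Arbitrary partitions.} For a general partition $\pi$, compare $A^{\pi}_{t}$ with $\mathcal A_{t}$ by summing $\mathcal A_{u,v}-A_{u,v}$ over $\pi$, and control the sum by the same even/odd BDG argument using the already-proved local bounds. The resulting rate is $|\pi|^{(\epsilon_{1}-1)\wedge(\epsilon_{2}-1/2)}$.

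\emph{Path regularity (the main obstacle).} Pointwise-in-$t$ convergence does not give a càdlàg or continuous version or uniform convergence. This is the role of \eqref{eq:SSLthird}. My first attempt would be a chaining argument in the spirit of Kolmogorov. Write $\mathcal A_{r}-A^{\pi_{k}}_{r}$, for $r$ in a dyadic cell $[s,t]$ of generation $k$, as the error at the dyadic endpoint $s$ plus $\mathcal A_{s,r}-A_{s,r}$. Control $\sup_{r\in[s,t]}|\mathcal A_{s,r}-A_{s,r}|$ by iterating the half-interval splitting: at each generation, the supremum of the defect is bounded by \eqref{eq:SSLthird}, and the endpoint errors are bounded by \eqref{eq:SSLconc}. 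A union bound over the $2^{k}$ cells costs a factor $2^{k/m}$ in $L_{m}$, and this is exactly compensated by the exponent $\tfrac1m$ in \eqref{eq:SSLthird}, leaving $2^{-k\epsilon_{3}}$. Summability in $k$ then gives almost sure uniform convergence of $A^{\pi_{k}}$. This yields a càdlàg (respectively continuous) version, since $A^{\pi_{k}}$ inherits that regularity from $t\mapsto A_{s,t}$. It also gives the stated uniform rate, first along dyadics and then, by the comparison step above, along arbitrary sequences of partitions. The delicate point is bookkeeping the partial last interval $[u,v\wedge t]$ in $A^{\pi}_{t}$; this is precisely the supremum over $r\in[(s+t)/2,t]$ appearing in \eqref{eq:SSLthird}.
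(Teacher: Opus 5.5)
\textbf{The gap: existence rests on a circular additivity argument.} Your additivity step and your arbitrary-partition step each assume the other. To compare $A^{\pi}_t$ with $\mathcal A_t=\mathcal I_{0,t}$ you write $\mathcal A_t-A^{\pi}_t=\sum_{[u,v]\in\pi}(\mathcal A_{u,v}-A_{u,v})$ and insert the local bounds. Those bounds were proved for $\mathcal I_{u,v}-A_{u,v}$. The identity $\mathcal I_{0,t}=\sum_{[u,v]\in\pi}\mathcal I_{u,v}$ is exactly additivity of $\mathcal I$ at the points of $\pi$, which you intend to deduce from this comparison.

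Nor is additivity at dyadic points immediate beyond the midpoint identity. The relation $A^{\pi_{n+1}(s,t)}_{s,t}=A^{\pi_n(s,c)}_{s,c}+A^{\pi_n(c,t)}_{c,t}$ gives $\mathcal I_{s,t}=\mathcal I_{s,c}+\mathcal I_{c,t}$ only for $c=\tfrac12(s+t)$. Already $\mathcal I_{0,3T/4}$ and $\mathcal I_{0,T/2}+\mathcal I_{T/2,3T/4}$ are limits along different sequences of partitions; $T/2$ is not even a point of the dyadic grids of $[0,3T/4]$. So with $\mathcal A_t:=\mathcal I_{0,t}$ you cannot yet say that $\mathcal A_t-\mathcal A_s$ obeys \eqref{eq:SSLconc}.

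\textbf{What is missing.} You need a bound comparing Riemann sums over two partitions without passing through $\mathcal A$. Concretely, for \emph{arbitrary} partitions $\pi$ of $[s,t]$ and $C$ independent of $\pi$, you need $\|A^{\pi}_{s,t}-A_{s,t}\|_{L_m}\le C(\Gamma_1|t-s|^{\epsilon_1}+\Gamma_2|t-s|^{\epsilon_2})$ and $\|\E_s(A^{\pi}_{s,t}-A_{s,t})\|_{L_m}\le C\Gamma_1|t-s|^{\epsilon_1}$. One way to get it is by repeated coarsening:
\begin{itemize}
\item If $\pi$ has $N$ intervals, fewer than $N/2$ interior points $t_k$ have $t_{k+1}-t_{k-1}>4|t-s|/N$.
\item Delete every other one of the remaining points. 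The deleted points are pairwise non-adjacent, so the change is a sum of defects over triples with disjoint interiors, whose centred parts are martingale differences.
\item Each stage costs $\lesssim N\Gamma_1(|t-s|/N)^{\epsilon_1}+\sqrt N\,\Gamma_2(|t-s|/N)^{\epsilon_2}$ while $N$ shrinks by a fixed factor, so the costs sum geometrically.
\end{itemize}
Apply this on each interval of $\pi$ against a refinement $\pi'$ and use your martingale split; this gives $\|A^{\pi}-A^{\pi'}\|_{L_m}\lesssim|\pi|^{(\epsilon_1-1)\wedge(\epsilon_2-\frac12)}$. Passing through common refinements, all sequences with mesh tending to $0$ have one limit. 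That yields additivity at every point, and only then do your comparison and its rate follow. The paper does not reprove this part: it takes existence, the Riemann sums and the path regularity from \cite[Thm.~2.1]{Le20} and \cite[Thm.~2.9, Lem.~2.11]{FHL}, and writes out only the widened uniqueness.

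\textbf{Uniqueness and smaller points.} Your uniqueness argument is the paper's: a uniform mesh, conditional means summed by the triangle inequality, centred parts by Burkholder--Davis--Gundy and Minkowski in $L_{m/2}$. Two corrections:
\begin{itemize}
\item The unconditional bound on $\delta R_{s,t}$ from the two processes is $\lesssim|t-s|^{\epsilon_1}+|t-s|^{\epsilon_2}$, i.e.\ of order $\epsilon_1\wedge\epsilon_2$ rather than $\epsilon_2$, since $\epsilon_2>\epsilon_1$ is allowed. This is harmless because $\epsilon_1\wedge\epsilon_2>\tfrac12$.
\item The even/odd split is unnecessary. Adjacent intervals have $v_i=u_{i+1}$, so the centred defects are already martingale differences for $(\mathcal F_{v_i})_i$.
\end{itemize}
In the regularity step, the endpoint error enters through $\max_{u\in\pi_k}|\mathcal A_u-A^{\pi_k}_u|$. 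A union bound there costs $2^{k/m}$ against only $2^{-k[(\epsilon_1-1)\wedge(\epsilon_2-\frac12)]}$, which need not be summable; use Doob's inequality on the martingale part instead. It is also cleaner to chain $A^{\pi_k}-A^{\pi_{k+1}}$ directly. On a cell $[u,v]\in\pi_k$ with midpoint $c$, this difference equals the grid term plus $\one_{\{r>c\}}\,\delta A_{u,c,r}$, which is exactly what \eqref{eq:SSLthird} controls. This needs no a priori regularity of $\mathcal A$, and the uniform limit is then identified with $\mathcal A_t$ at each $t$ by the fixed-$t$ Riemann-sum convergence.
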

We ask for integrability of $A$ itself, rather than of $\delta A$
as in \cite[Thm.~2.9]{FHL}. This is a deliberate simplification and holds at every application
below.
The first half of the lemma is \cite[Thm.~2.1]{Le20}. At $m=\mfn $ the mixed norm reduces to the $L_{m}$
norm by \cite[Prop.~2.3(i)]{FHL}, and the lemma is then \cite[Thm.~2.9(i)]{FHL} together with
\cite[Lem.~2.11]{FHL} for the Riemann-sum part.

One point is stated more strongly than in \cite{FHL}. They assert uniqueness among
processes obeying \eqref{eq:SSLconc} with the constants produced by the theorem, while we assert
it among those obeying it at some finite constants. That widening is what
\ref{proof:Eproof:step2:step1:1c} and Step~\ref{stp:rIto4} use, so we give a short proof. Let $\mathcal A$ and
$\widetilde{\mathcal A}$ both meet the conclusion, and put
$D_{s,t}:=(\mathcal A_{t}-\mathcal A_{s})-(\widetilde{\mathcal A}_{t}-\widetilde{\mathcal A}_{s})$,
which is additive and adapted and obeys $\|\E_{s}D_{s,t}\|_{L_{m}}\le c_{1}|t-s|^{\epsilon_{1}}$ and
$\|D_{s,t}\|_{L_{m}}\le c_{2}|t-s|^{\epsilon_{1}\wedge\epsilon_{2}}$, the second exponent being
$\epsilon_{1}\wedge\epsilon_{2}>\tfrac12$ and not $\epsilon_{2}$, since each of the two processes
obeys both bounds of \eqref{eq:SSLconc}. Split $D_{0,T}$ over the uniform mesh
$t_{i}=iT/N$ and each summand as $\E_{t_{i}}D_{t_{i},t_{i+1}}+(1-\E_{t_{i}})D_{t_{i},t_{i+1}}$. The
first parts sum to at most $c_{1}T^{\epsilon_{1}}N^{1-\epsilon_{1}}$ in $L_{m}$. The second parts are
martingale differences, so Burkholder--Davis--Gundy and Minkowski in $L_{m/2}$, at $m\ge2$,
bound their sum by
$C_{m}c_{2}T^{\epsilon_{1}\wedge\epsilon_{2}}N^{\frac12-(\epsilon_{1}\wedge\epsilon_{2})}$. Both
tend to $0$, so
$D_{0,T}=0$ almost surely, and likewise at every pair.

\begin{definition}[Truncation]\label{def:approxfamily}
Let us fix $\Theta_{R}\in C^{\infty}_{c}(\R^{n})$ such that $\Theta_{R}=1$ on $\{|x|\le R\}$,
supported in $\{|x|\le2R\}$,$0\le\Theta_{R}\le1$
and $\|D^{j}\Theta_{R}\|_{\infty}=O(R^{-j})$, and denote by
\[
b_{R}:=\Theta_{R}b,\qquad \sigma_{R}:=\Theta_{R}\sigma
\]
the truncated It\^o data. Step~\ref{step:Ereg} applies the same weight to the
mollified data and denotes it by $b^{k}_{R},\sigma^{k}_{R}$. We write $(f_{R},f'_{R})$ for the truncation of the rough
field, which is the global $\Theta_{R}\,\cdot\,$ under \ref{Ecurvaffine} and a coordinatewise cut-off
under \ref{Ecurvdiag} of Assumption~\ref{ass:Erough}, both with values in $[0,1]$.
Step~\ref{step:Ereg} fixes it and says why the two cases differ. Since the weights lie in $[0,1]$, the truncated coefficients
inherit the growth bound \eqref{eq:Egrowth} and the third bound of \eqref{eq:Eroughgrowth} with the
\emph{same} constant $C_{\mathrm{gr}}$, i.e.~$|b_{R}|+\|\sigma_{R}\|\le|b|+\|\sigma\|\le
C_{\mathrm{gr}}(1+|x|)$ and $|f_{R}|+|f'_{R}|\le|f|+|f'|\le C_{\mathrm{gr}}(1+|x|)$ at every
$(t,x)\in[0,T]\times\R^{n}$. When the It\^o data are only H\"older, we can mollify $b$ and $\sigma$
first.
\end{definition}
\color{blue}
\begin{remark}\label{rem:twotruncations}
Two truncation operators occur in this paper, in the same role and for different reasons. The global truncation $\,\cdot\,\mapsto\Theta_{R}\,\cdot\,$ of
Definition~\ref{def:approxfamily}, subscripted $R$, is the one Step~\ref{step:Ereg} applies to $b$ and $\sigma$ and, under
branch \ref{Ecurvaffine}, to $(f,f')$; it is chosen for compatibility with the flow reduction, $f_{R}$
being compactly supported, so that the transformed problem is posed on a bounded region and
\cite{DFJS26}'s hypotheses are available. It also preserves, uniformly in $R$, the norm curvature
condition \eqref{eq:Ecurvnorm}. The coordinatewise truncation
$q_{[R]}(\xi):=\theta_{R}(\xi)q(\xi)$ of Step~\ref{step:Ereg}, with a bracketed subscript, is the one
used under branch \ref{Ecurvdiag} of Assumption~\ref{ass:Erough}, and it is required there: the
global truncation of a diagonal field is not diagonal, $\Theta_{R}(x)q_{i}(x_{i})$ not being a function
of $x_{i}$ alone, while Lemma~\ref{lem:Escalefree} is applied to the truncated process, so that the
hypothesis would be unavailable exactly where we need it.
\end{remark}
\color{black}

\section{Weak solutions and the rough martingale problem}\label{sec:weaksol}

Definition~\ref{def:weaksol} is a statement about a stochastic basis, whereas uniqueness is a statement
about a law. Lemmas~\ref{lem:filtdescend} and \ref{lem:lawdet} close this gap and 
\S\ref{sec:YWtransfer} uses those two, Lemma~\ref{lem:immersion} and the clause \ref{MP0} of the
rough martingale problem. \ref{MP1}--\ref{MP2}, enters that
section only at Theorem~\ref{thm:YWtransfer}\ref{ywc}, which states the conclusion for it as
well, at the cost of the factor $3$ of Proposition~\ref{prop:MPequiv}.
\color{blue}Two features separate the rough setting from the
classical one:
\begin{enumerate}
    \item \emph{The driver is not part of the basis.} A weak solution is a tuple in which only $B$ is random
and $X$ solves the equation for that same $\RZ$.
\item \emph{The solution notion is conditional, hence filtration-dependent.}
\eqref{eq:daviemoduli} constrains conditional moments; Lemma~\ref{lem:filtdescend} removes the
ambiguity downward and Lemma~\ref{lem:immersion} removes it upward.
\end{enumerate}

\color{black}

\begin{lemma}[The notion descends to a smaller filtration]\label{lem:filtdescend}
Let $X$ denote an $L_{m}$-conditional solution of \eqref{eq:rsde} on the stochastic basis
$(\Omega,\mathcal A,(\mathcal F_{t}),\Prob)$ with Brownian motion $B$, and let $(\mathcal G_{t})$
denote a filtration to which $X$ and $B$ are both adapted and such that
$\mathcal G_{t}\subseteq\mathcal F_{t}$ for every $t$. Then $X$ is an
$L_{m}$-conditional solution on $(\mathcal G_{t})$, with the same or smaller moduli. The same statement
holds identically for the $L_{m,\infty}$ form \eqref{eq:daviemoduliinfty}.
\end{lemma}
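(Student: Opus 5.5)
The plan is to check, clause by clause, that nothing in Definition~\ref{def:davie} gets worse when $(\mathcal F_{t})$ is replaced by the smaller $(\mathcal G_{t})$. There are two preliminary points and then the moduli themselves.

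\emph{Step 1: $B$ is a $(\mathcal G_{t})$-Brownian motion.} It is $(\mathcal G_{t})$-adapted by hypothesis. For $s\le t$, the increment $\delta B_{s,t}$ is independent of $\mathcal F_{s}\supseteq\mathcal G_{s}$, hence independent of $\mathcal G_{s}$. The basis $(\Omega,\mathcal A,(\mathcal G_{t}),\Prob)$ with $B$ is therefore admissible, and $X$ is continuous, $(\mathcal G_{t})$-adapted and $D$-valued.

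\emph{Step 2: the Davie remainder is unchanged.} Both integrands $\sigma_{r}(X_{r})$ and $b_{r}(X_{r})$ are continuous and $(\mathcal G_{t})$-adapted, by \eqref{eq:standingcts} and continuity of $X$. Hence $\int\sigma(X)\,\d B$ is defined in both filtrations, and I need it to be the same process in each. I would argue this via the left-point Riemann sums $\sum\sigma_{t_{i}}(X_{t_{i}})\delta B_{t_{i},t_{i+1}}$. These sums do not depend on the filtration, and by continuity of the integrand they converge u.c.p.\ to the stochastic integral in either filtration (after localising so that the integrand is bounded). Limits in probability are a.s.\ unique, so the two integrals agree up to indistinguishability. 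The remaining terms of \eqref{eq:davie} are pathwise and do not involve the filtration. So $J_{s,t}$ is literally the same random variable, and the third clause and the unconditional half $\|J_{s,t}\|_{L_{m}}=o(|t-s|^{1/2})$ transfer verbatim with identical modulus.

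\emph{Step 3: the conditional moduli.} By the tower property, $\E[J_{s,t}\mid\mathcal G_{s}]=\E[\E[J_{s,t}\mid\mathcal F_{s}]\mid\mathcal G_{s}]$. Conditional Jensen (contractivity of conditional expectation on $L_{m}$) then gives
\[
\|\E[J_{s,t}\mid\mathcal G_{s}]\|_{L_{m}}\le\|\E[J_{s,t}\mid\mathcal F_{s}]\|_{L_{m}},
\]
so the rate $o(|t-s|)$ holds with the same or smaller modulus, uniformly in $s\le t$. The same contractivity holds on $L_{\infty}$, which handles the first bound of \eqref{eq:daviemoduliinfty}. For the second bound of \eqref{eq:daviemoduliinfty} I would write
\[
\E[|J_{s,t}|^{m}\mid\mathcal G_{s}]=\E\big[\E[|J_{s,t}|^{m}\mid\mathcal F_{s}]\mid\mathcal G_{s}\big]\le\big\|\E[|J_{s,t}|^{m}\mid\mathcal F_{s}]\big\|_{L_{\infty}}.
\]
Taking $m$-th roots gives the bound with the same modulus. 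In the $L_{m}$ form, the middle expression of \eqref{eq:daviemoduli} equals $\|J_{s,t}\|_{L_{m}}$ for any filtration, so nothing needs to be done there.

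The only step with any content is Step~2, the identification of the two stochastic integrals. That is exactly where \eqref{eq:standingcts} is used, as the setting section announces. Everything else is conditional Jensen.
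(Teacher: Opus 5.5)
Your proposal is correct and follows the paper's proof essentially step for step. You get the Brownian property from independence of the increments from the larger field, identify the It\^o integrals through the filtration-free left-point Riemann sums (where \eqref{eq:standingcts} enters), and use the tower property with conditional Jensen for the conditional modulus; the second modulus in $L_{m}$ form reduces to $\|J_{s,t}\|_{L_{m}}$, and in the $L_{m,\infty}$ form you apply $L_{\infty}$-contractivity to $\E[|J_{s,t}|^{m}\mid\mathcal F_{s}]$ before taking $m$-th roots.
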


\color{blue}
 The lemma is applied below with $(\mathcal G_{t})$ being the
raw filtration $\sigma(X_{r},B_{r}:r\le t)$ generated by the pair, which is what
Definition~\ref{def:roughMP} conditions on: at Proposition~\ref{prop:MPequiv}\ref{mpe1}, at
Step~\ref{step:Etight}(b) of the proof of Theorem~\ref{thm:weakex}, and at the opening of
the proof of Theorem~\ref{thm:YWtransfer}, where it puts the solution of \ref{yw1} into
$\Gamma_{\nu}$. Its remaining mentions are comparisons.
\color{black}
\begin{definition}
    A modulus (of continuity) in \eqref{eq:daviemoduli} is a function
$\omega$ on $[0,T]$ with $\omega(h)/h\to0$ as $h\downarrow0$, respectively
$\omega(h)/h^{1/2}\to0$, bounding the left-hand side there at every pair $s\le t$. We may and do take $\omega$
non-decreasing on $[0,\delta]$ for some $\delta>0$, by replacing it there with
$\sup_{h'\le h}\omega(h')$, which is finite near $0$ and has the same rate. We do not take it
non-decreasing on all of $[0,T]$: that would force $\sup_{s\le t}\|J_{s,t}\|_{L_{m}}<\infty$, which
\eqref{eq:daviemoduli} does not give.
\end{definition}
\begin{proof}
 We say the moduli descend (along the filtrations) when a pair that works on
$(\mathcal F_{t})$ also works on $(\mathcal G_{t})$.
The objects themselves do not change. $B$ remains a $(\mathcal G_{t})$-Brownian motion: it is $(\mathcal G_{t})$-adapted by
hypothesis, and for $s\le t$ the increment $B_{t}-B_{s}$ is independent of $\mathcal F_{s}$, hence of
the smaller field $\mathcal G_{s}$. Both integrals of \eqref{eq:davie} are unchanged. The
$\dr$-integral is defined pathwise and names no filtration. The $\d B$-integral has integrand
$\sigma_{r}(X_{r})$, which is $(\mathcal G_{r})$-measurable at each $r$ and continuous in $r$ by the
standing regularity \eqref{eq:standingcts}, hence $(\mathcal G_{t})$-progressively measurable. Both
constructions therefore realise it as the u.c.p.\ limit of the same left-point Riemann sums, so the two
integrals are indistinguishable. Consequently $J_{s,t}$ is almost surely the same random variable on
either filtration, which is enough for \eqref{eq:daviemoduli}, the conditional
expectations there being defined only up to $\Prob$-null sets; only the conditioning field changes.
Neither completeness of the filtrations nor right-continuity is used in this proof.

 Since
$\mathcal G_{s}\subseteq\mathcal F_{s}$, the tower property applies and reads
\begin{equation*}
\E\big[J_{s,t}\mid\mathcal G_{s}\big]
=\E\big[\,\E[J_{s,t}\mid\mathcal F_{s}]\mid\mathcal G_{s}\big] .
\end{equation*}
Conditional expectation is an $L_{m}(\Prob)$-contraction, by conditional Jensen applied
to $u\mapsto|u|^{m}$ followed by $\E[\E[\,\cdot\mid\mathcal G_{s}]]=\E[\,\cdot\,]$, so applying it to
the outer conditional expectation gives
\begin{equation*}
\big\|\E[J_{s,t}\mid\mathcal G_{s}]\big\|_{L_{m}(\Prob)}
\ \le\ \big\|\E[J_{s,t}\mid\mathcal F_{s}]\big\|_{L_{m}(\Prob)} .
\end{equation*}
The first modulus therefore holds on $(\mathcal G_{t})$ with the same $\omega$. At a
fixed pair the inequality is strict whenever $\E[J_{s,t}\mid\mathcal F_{s}]$ is not almost surely
$\mathcal G_{s}$-measurable, $u\mapsto|u|^{m}$ being strictly convex at the exponents $m\ge2$ of
Definition~\ref{def:davie}.

The second modulus descends as well. Writing the outer norm as an expectation and
using the tower property once,
\begin{equation*}
\begin{aligned}
\big\|\E\big[|J_{s,t}|^{m}\mid\mathcal G_{s}\big]^{1/m}\big\|^{m}_{L_{m}(\Prob)}
&=\E\Big[\Big(\E\big[|J_{s,t}|^{m}\mid\mathcal G_{s}\big]^{1/m}\Big)^{m}\Big]
=\E\Big[\E\big[|J_{s,t}|^{m}\mid\mathcal G_{s}\big]\Big]\\
&=\E\big[|J_{s,t}|^{m}\big]
=\big\|J_{s,t}\big\|^{m}_{L_{m}(\Prob)}.
\end{aligned}
\end{equation*}
The same
computation with $\mathcal F_{s}$ in place of $\mathcal G_{s}$ gives the same number, so
\begin{equation*}
\big\|\E\big[|J_{s,t}|^{m}\mid\mathcal G_{s}\big]^{1/m}\big\|_{L_{m}(\Prob)}
\ =\ \big\|J_{s,t}\big\|_{L_{m}(\Prob)}
\ =\ \big\|\E\big[|J_{s,t}|^{m}\mid\mathcal F_{s}\big]^{1/m}\big\|_{L_{m}(\Prob)} .
\end{equation*}

For the first modulus of
\eqref{eq:daviemoduliinfty} the argument given applies identically, conditional expectation being an
$L_{\infty}(\Omega)$-contraction as well. For the second the cancellation used there for the second modulus is no longer
available, the outer norm now being $L_{\infty}$ and the inner exponent still $m$. Instead, we run the
two steps used for the first modulus on $\E[|J_{s,t}|^{m}\mid\mathcal F_{s}]$, namely the tower property and then
$L_{\infty}$-contractivity of $\E[\,\cdot\mid\mathcal G_{s}]$, using at both ends that
$\|W^{1/m}\|_{L_{\infty}}=\|W\|_{L_{\infty}}^{1/m}$ for $W\ge0$, since $u\mapsto u^{1/m}$ is an
increasing bijection of $[0,\infty)$ and so commutes with the essential supremum:
\begin{equation*}
\begin{aligned}
\big\|\E\big[|J_{s,t}|^{m}\mid\mathcal G_{s}\big]^{1/m}\big\|_{L_{\infty}(\Omega)}
&=\big\|\E\big[\,\E[|J_{s,t}|^{m}\mid\mathcal F_{s}]\mid\mathcal G_{s}\big]\big\|_{L_{\infty}(\Omega)}^{1/m}\\
&\le\ \big\|\E\big[|J_{s,t}|^{m}\mid\mathcal F_{s}\big]\big\|_{L_{\infty}(\Omega)}^{1/m}
=\big\|\E\big[|J_{s,t}|^{m}\mid\mathcal F_{s}\big]^{1/m}\big\|_{L_{\infty}(\Omega)} .
\end{aligned}
\end{equation*}
\end{proof}

\begin{lemma}[It\^o integrals on the raw filtration]\label{lem:itoraw}
Let $X$, with values in $\R^{n}$, and $B$, with values in $\R^{d}$, be continuous on a basis equipped with the (raw) filtration 
$\mathcal G_{t}=\sigma(X_{r},B_{r}:r\le t)$. 
Let $B$ be a $(\mathcal G_{t})$-Brownian motion, and let
$h:[0,T]\times\R^{n}\to\R^{k\times d}$ be continuous, $k\ge1$. Then $\int_{0}^{\cdot}h(r,X_{r})\d B_{r}$ has a version with continuous paths,
unique up to indistinguishability, it is the limit in probability, uniformly on $[0,T]$, of the
left-point Riemann sums $\sum_{i}h(t_{i},X_{t_{i}})\,\delta B_{t_{i},\,t_{i+1}\wedge\,\cdot}$ along
any sequence of partitions of mesh tending to $0$, and
$\Law\big(X,B,\int_{0}^{\cdot}h(r,X_{r})\d B_{r}\big)$ is determined by $\Law(X,B)$.
\end{lemma}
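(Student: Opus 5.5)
The plan is to reduce everything to the convergence of left-point Riemann sums for a continuous adapted integrand. Put $H_{r}:=h(r,X_{r})$. This process is $(\mathcal G_{t})$-adapted because $X_{r}$ is $\mathcal G_{r}$-measurable, and it is continuous by the continuity of $h$ and $X$, so it is progressively measurable and locally bounded. Since $B$ is a $(\mathcal G_{t})$-Brownian motion, hence a continuous $(\mathcal G_{t})$-local martingale, the It\^o integral $I:=\int_{0}^{\cdot}H_{r}\,\d B_{r}$ is defined on the raw filtration. It has a continuous version, and any two continuous versions are indistinguishable. Neither completeness nor right-continuity of $(\mathcal G_{t})$ is needed: continuity of the integrand lets us work with the elementary approximations directly, and the limit object can be taken continuous after modification on a null set. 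If desired, one may pass to the usual augmentation, where $B$ is still Brownian and the integral coincides.

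The main step is the u.c.p.\ convergence of the left-point sums $I^{\pi}_{t}:=\sum_{i}H_{t_{i}}\,\delta B_{t_{i},t_{i+1}\wedge t}$. I would localise with $\tau_{N}:=\inf\{t:|X_{t}|\ge N\}\wedge T$, which is a $(\mathcal G_{t})$-stopping time since $X$ is continuous and adapted. On $[\![0,\tau_{N}]\!]$ the integrand is bounded, since $h$ is bounded on the compact $[0,T]\times\{|x|\le N\}$, and it is uniformly continuous there. Let $H^{\pi}$ be the piecewise-constant left-point interpolant. Then $H^{\pi}\to H$ pointwise on $[\![0,\tau_{N}]\!]$ with a uniform bound. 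By Doob's $L^{2}$ inequality and dominated convergence,
\[
\E\Big[\sup_{t\le T}\big|(I^{\pi}-I)_{t\wedge\tau_{N}}\big|^{2}\Big]\le4\,\E\int_{0}^{\tau_{N}}\Frob{H^{\pi}_{r}-H_{r}}^{2}\dr\to0.
\]
Because $\Prob(\tau_{N}<T)\to0$, this gives convergence in probability uniformly on $[0,T]$, along any sequence of partitions with mesh tending to $0$. I expect this localisation to be the only point that needs care, because $h$ is merely continuous and unbounded. The stopping argument handles that.

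For the law statement, I fix a sequence of partitions $\pi_{k}$ with mesh tending to $0$ and pass to a subsequence along which $I^{\pi_{k}}\to I$ almost surely, uniformly on $[0,T]$. Each $I^{\pi_{k}}$ is $\Phi_{k}(X,B)$ for a continuous, deterministic map $\Phi_{k}:C([0,T];\R^{n})\times C([0,T];\R^{d})\to C([0,T];\R^{k})$. The subsequence itself, however, depends on the basis. To avoid this, I argue through laws. The sequence $(X,B,\Phi_{k}(X,B))$ converges in probability in the product path space to $(X,B,I)$, so its law converges weakly to $\Law(X,B,I)$. On the other hand, $\Law(X,B,\Phi_{k}(X,B))$ is the pushforward of $\Law(X,B)$ under $(\mathrm{id},\Phi_{k})$, which depends only on $\Law(X,B)$. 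Weak limits are unique, so $\Law(X,B,I)$ is determined by $\Law(X,B)$. The hypothesis that $B$ is a Brownian motion for the generated filtration is itself a property of $\Law(X,B)$, so every basis with the same joint law falls under the same argument.
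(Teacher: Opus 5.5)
Your proposal is correct and follows the paper's proof. The paper also takes the continuous version and the u.c.p.\ convergence of the left-point Riemann sums from the standard construction for a continuous adapted integrand, run on the usual augmentation; your localisation via $\tau_{N}$ and Doob's inequality simply makes that step explicit. It then determines $\Law\big(X,B,\int_{0}^{\cdot}h(r,X_{r})\d B_{r}\big)$ exactly as you do: each Riemann sum is a fixed continuous functional of $(X,B)$, so the laws of the approximants are pushforwards of $\Law(X,B)$, and a limit in probability is therefore determined in law.
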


\begin{proof}
The integrand $r\mapsto h(r,X_{r})$ is continuous and $(\mathcal G_{t})$-adapted, so the first two
claims are the construction of the It\^o integral of such an integrand, run on the usual augmentation
of $(\mathcal G_{t})$, which changes neither the integral nor the Brownian property of $B$. For the
third, each Riemann sum is a fixed continuous functional of $(X,B)$, so the joint law of the whole
sequence with $(X,B)$ is determined by $\Law(X,B)$, and a limit in probability is then determined in
law jointly with $(X,B)$.
\end{proof}

\begin{lemma}[The notion is a property of the law]\label{lem:lawdet}
Let $X$ be an $L_{m}$-conditional solution of \eqref{eq:rsde} with Brownian motion $B$ on a basis
whose filtration is $\mathcal G_{t}=\sigma(X_{r},B_{r}:r\le t)$ (raw), with $B$ a
$(\mathcal G_{t})$-Brownian motion. Then $\Law(X,B,J)$ is determined by $\Law(X,B)$ on
$\mathcal C$, and the two quantities of \eqref{eq:daviemoduli} are functions of $\Law(X,B)$ alone.
Hence the moduli hold, with the same rate functions, and $J_{s,t}\in L_{m}(\Prob)$ at every
pair, for every $(\widetilde X,\widetilde B)$ of the same law on a second such basis. If $\Phi_{s,t}$ is, for each $s\le t$, a continuous
functional of $(X,B)$ lying in $L_{m}(\Prob)$, then $\|\Phi_{s,t}\|_{L_{m}}$ and
$\|\E[\Phi_{s,t}\mid\mathcal G_{s}]\|_{L_{m}}$ are likewise functions of $\Law(X,B)$
alone.
\end{lemma}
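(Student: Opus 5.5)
The plan is to express $J$ as a measurable functional of $(X,B)$ and then reduce every quantity in the statement to an integral against $\Law(X,B)$.

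\textbf{Step 1: $J$ as a functional.} By \eqref{eq:davie}, $J_{s,t}$ is a sum of five pieces. Four of them are continuous functionals of the path $X$ on $\mathcal C$: the increment $\delta X_{s,t}$, the pathwise integral $\int_s^t b_r(X_r)\dr$, and the two rough germ terms $f_s(X_s)\delta Z_{s,t}$ and $(Df_sf_s+f'_s)(X_s)\ZZ_{s,t}$. The rough terms are continuous because $\RZ$ is deterministic and by the standing continuity \eqref{eq:standingcts}; the drift term is continuous by \eqref{eq:standingcts} as well. The fifth piece is the It\^o integral $\int_s^t\sigma_r(X_r)\,\d B_r$. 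On the raw filtration, Lemma~\ref{lem:itoraw} with $h=\sigma$ applies. Along a fixed sequence of partitions, the left-point Riemann sums then converge in probability, uniformly on $[0,T]$. Passing to an a.s.\ convergent subsequence defines a Borel map $\Psi$ on $\mathcal C$, namely the $\limsup$ of continuous functionals, such that $\int_0^\cdot\sigma(X)\,\d B=\Psi(X,B)$ a.s. The same limit in probability holds on any basis carrying a pair of the same law, since convergence in probability of $F_k(X,B)$ depends only on $\Law(X,B)$. Hence the same $\Psi$ works on every such basis, and $J=\mathcal J(X,B)$ a.s.\ for one fixed Borel map $\mathcal J$. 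This gives the first claim: $\Law(X,B,J)$ is the image of $\Law(X,B)$ under $(\mathrm{id},\mathcal J)$.

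\textbf{Step 2: conditional quantities.} The unconditional norms $\|J_{s,t}\|_{L_m}$, and $\|\Phi_{s,t}\|_{L_m}$ for a continuous functional $\Phi$, are integrals against the law, so they are immediate. The conditional ones need more work, and this is the main obstacle. Here $\mathcal G_s=\sigma(X_r,B_r:r\le s)$ is the pullback of the canonical $\sigma$-field $\mathcal C_s$ under $(X,B)$. I would therefore use the Doob--Dynkin factorisation: $\E[\mathcal J_{s,t}(X,B)\mid\mathcal G_s]=g(X,B)$, where $g$ is any $\mathcal C_s$-measurable version of the conditional expectation of $\mathcal J_{s,t}$ under $\mu:=\Law(X,B)$ on canonical space. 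To check this, test against $\mathbf 1_A(X,B)$ with $A\in\mathcal C_s$; both sides reduce to integrals against $\mu$. It follows that $\|\E[J_{s,t}\mid\mathcal G_s]\|_{L_m(\Prob)}=\|\E^\mu[\mathcal J_{s,t}\mid\mathcal C_s]\|_{L_m(\mu)}$, which depends only on $\mu$. The second modulus in \eqref{eq:daviemoduli} equals $\|J_{s,t}\|_{L_m}$ by the tower computation already done in Lemma~\ref{lem:filtdescend}. The same argument applies verbatim to $\Phi_{s,t}$.

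\textbf{Step 3: transfer to a second basis.} Let $(\widetilde X,\widetilde B)$ have law $\mu$ on a second raw basis on which $\widetilde B$ is a Brownian motion. Its Davie remainder is $\widetilde J=\mathcal J(\widetilde X,\widetilde B)$ a.s., by the basis-independence of $\Psi$ established in Step~1. By Step~2, every quantity in \eqref{eq:daviemoduli} coincides with the corresponding one for $(X,B)$ at every pair $(s,t)$. So the same moduli hold, with the same rate functions, and $\widetilde J_{s,t}\in L_m$. The point to watch in Step~1 is that the subsequence, and hence $\Psi$, must be chosen from $\mu$ alone. This is ensured by fixing the subsequence by a summability criterion on $\mu$-probabilities, which makes it identical on both bases.
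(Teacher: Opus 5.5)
Your proposal is correct and follows the paper's proof. Every term of \eqref{eq:davie} other than the It\^o integral is a continuous path functional, Lemma~\ref{lem:itoraw} handles that integral, and a Doob--Dynkin factorisation of $\E[J_{s,t}\mid\mathcal G_{s}]$ through the canonical space shows the conditional norms depend only on $\Law(X,B)$. The one difference is harmless: you realise the It\^o integral as a single Borel functional fixed by $\mu$ through a $\mu$-determined subsequence, whereas the paper only uses that $\Law(X,B,M)$ is determined by $\Law(X,B)$, and either suffices.
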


\begin{proof}
In \eqref{eq:davie} every term other than $M=\int\sigma(X)\d B$ is a continuous functional of the
path, by \eqref{eq:standingcts} and by the continuity of $\delta Z$ and $\ZZ$ in $(s,t)$ that
Assumption~\ref{ass:driver} provides. For $M$, Lemma~\ref{lem:itoraw} at $h=\sigma$ gives that $\Law(X,B,M)$ is determined by
$\Law(X,B)$. Hence, also $\Law(X,B,J)$. Finally $\mathcal G_{s}=\sigma((X,B)|_{[0,s]})$, so for each
pair $s\le t$ there is a measurable $g_{s,t}$ with
$\E[J_{s,t}\mid\mathcal G_{s}]=g_{s,t}\big((X,B)|_{[0,s]}\big)$ almost surely, determined up to
$\Law((X,B)|_{[0,s]})$-null sets by $\Law\big((X,B)|_{[0,s]},J_{s,t}\big)$. Its $L_{m}$-norm is
a function of $\Law(X,B,J_{s,t})$, and likewise for
$\E[|J_{s,t}|^{m}\mid\mathcal G_{s}]$. Both quantities in \eqref{eq:daviemoduli} are thus functions
of $\Law(X,B,J)$, hence of $\Law(X,B)$. The last sentence is the same argument with $\Phi$ in place of $J$: $\Law(X,B,\Phi)$
is determined by $\Law(X,B)$ because $\Phi$ is a path functional, and the representation through
$g_{s,t}$ then applies identically. No moment bound on $X$ is needed: of the solution hypothesis the
proof uses only
continuity and adaptedness, through Lemma~\ref{lem:itoraw}, and integrability of $J_{s,t}$ and of
$|J_{s,t}|^{m}$, which Definition~\ref{def:davie} requires at every pair.
\end{proof}

Together with Lemma~\ref{lem:filtdescend} this closes the gap between basis and law.

\subsection{The rough martingale problem}\label{subsec:MPdef}

\begin{definition}[Rough martingale problem]\label{def:roughMP}
Let $\Prob$ denote a Borel probability measure on $\mathcal C:=C([0,T];D\times\R^{d})$, with canonical
process $(X,B)$ and canonical filtration
$\mathcal G_{t}=\sigma(X_{r},B_{r}:r\le t)$, (uncompleted), and fix $m\ge2$. For
$\varphi\in C^{3}_{b}(\R^{n})$ we
denote by
\begin{equation}\label{eq:seminorm3}
\tnorm\varphi:=\textstyle\sum_{k=1}^{3}\|D^{k}\varphi\|_{\infty}
\end{equation}
the seminorm in the derivatives of order $1$ to $3$, and we put
\begin{equation}\label{eq:roughMP}
\begin{aligned}
D^{\varphi}_{s,t}:={}&\delta\varphi(X)_{s,t}-\!\int_{s}^{t}\!\mathcal L_{r}\varphi(X_{r})\dr
-\delta N^{\varphi}_{s,t}
-\sum_{\kappa}\big(\mathcal G^{\kappa}_{s}\varphi\big)(X_{s})\,\delta Z^{\kappa}_{s,t}\\
&-\sum_{\kappa,\lambda}\big(\mathcal G^{\kappa}_{s}\mathcal G^{\lambda}_{s}+\mathcal G'^{\kappa\lambda}_{s}\big)
\varphi(X_{s})\,\ZZ^{\kappa\lambda}_{s,t}
\ -\ \tfrac12\sum_{\kappa,\lambda}\big(\mathcal Q^{\kappa\lambda}_{s}\varphi\big)(X_{s})\,\CZ^{\kappa\lambda}_{s,t},
\end{aligned}
\end{equation}
with $\mathcal L_{t}$ denoting the It\^o generator of $(b,\sigma)$, $\mathcal G,\mathcal G'$ as in
\S\ref{subsec:gen},
\begin{equation}\label{eq:carre}
\mathcal Q^{\kappa\lambda}_{t}\varphi:=\textstyle\sum_{i,j}f^{\kappa}_{i}(t,x)f^{\lambda}_{j}(t,x)\,\partial^{2}_{ij}\varphi
\end{equation}
the rough carr\'e du champ, $\CZ$ denoting the level-two bracket of Assumption~\ref{ass:driver}, and
$N^{\varphi}_{t}=\int_{0}^{t}\nabla\varphi(X_{r})\cdot\sigma(r,X_{r})\,\d B_{r}$. We say that $\Prob$
solves the rough martingale problem for $(b,\sigma,f,f',\RZ)$ with initial law $\nu$ if the
following hold:
\begin{enumerate}[label=\normalfont(MP\arabic*),start=0,leftmargin=3.2em]
\item\label{MP0} $\Prob\circ X_{0}^{-1}=\nu$, $B$ is a $(\mathcal G_{t})$-Brownian motion such that
$B_{0}=0$, and
\begin{equation*}
\big\|\sup_{t\le T}|X_{t}|\big\|_{L_{m}(\Prob)}<\infty ;
\end{equation*}
\item\label{MP1} \emph{(conditional-mean modulus)} for every $\Lambda>0$ there is a
non-decreasing $\omega_{\Lambda}$ such that $\omega_{\Lambda}(h)/h\to0$ as $h\downarrow0$ and
\begin{equation*}
\big\|\,\E[D^{\varphi}_{s,t}\mid\mathcal G_{s}]\,\big\|_{L_{m}(\Prob)}\ \le\ \omega_{\Lambda}(|t-s|)
\end{equation*}
for \emph{all} $\varphi\in C^{3}_{b}$ with $\tnorm\varphi\le\Lambda$ and all $s\le t$;
\item\label{MP2} \emph{(moment modulus)} likewise with a non-decreasing $\omega'_{\Lambda}$ satisfying
$\omega'_{\Lambda}(h)/h^{1/2}\to0$, and
\begin{equation}\label{eq:MP2form}
\big\|\,\E\big[|D^{\varphi}_{s,t}|^{m}\mid\mathcal G_{s}\big]^{1/m}\,\big\|_{L_{m}(\Prob)}
\ =\ \big\|D^{\varphi}_{s,t}\big\|_{L_{m}(\Prob)}\ \le\ \omega'_{\Lambda}(|t-s|).
\end{equation}
\end{enumerate}
\emph{Weak uniqueness} means: at most one such $\Prob$ for each $\nu$, at the
exponent $m$ of \ref{MP0}--\ref{MP2}.
\end{definition}

\begin{remark}
\begin{enumerate}
\item Every $N^{\varphi}$ here is the continuous version of Lemma~\ref{lem:MPborel}\ref{mpb:version}. \color{blue}The lemma constructs it under
the standing continuity \eqref{eq:standingcts} from \ref{MP0} alone, so that nothing here is circular.\color{black}
\item The equality in \eqref{eq:MP2form} is the tower property, since $\frac{1}{m}$ cancels with the $m$ from the outer norm, so that
\ref{MP2} is an unconditional bound and all the conditional content sits in \ref{MP1}, where the
conditional expectation is taken before the norm. 
\item  The moduli are
calibrated at different exponents, $\omega'_{\Lambda}(h)/h^{1/2}\to0$ against
$\omega_{\Lambda}(h)/h\to0$, so \ref{MP2} does not give \ref{MP1}.
 \ref{MP2} carries a moment above $m$. Take $n=e=d=1$, $b=\sigma=0$, $f(x)=x$ and
$f'=0$. Beyond $\delta\varphi(X)_{s,t}$ the only summands of $D^{\varphi}_{s,t}$ are then
$-X_{s}\varphi'(X_{s})(\delta Z_{s,t}+\ZZ_{s,t})$ and
$-\tfrac12X^{2}_{s}\varphi''(X_{s})\,\delta Z^{2}_{s,t}$, by $\delta Z^{2}=2\ZZ+\CZ$ of
Assumption~\ref{ass:driver}. The first lies in $L_{m}$ by \ref{MP0}. At $\varphi=\sin$ and at
$\varphi=\cos$, and with $|\sin|+|\cos|\ge1$, \ref{MP2} therefore forces
$\|X_{s}\|_{L_{2m}}<\infty$ at every pair with $\delta Z_{s,t}\neq0$. So the definition at exponent
$m$ requires more of $X$ than the moment at $m$ that \ref{MP0} states. Every application below has a
moment at $3m$ or above, so nothing downstream changes.
\end{enumerate}
 
\end{remark}
\begin{remark}[Why the canonical filtration is taken uncompleted]\label{rem:MPraw}
Completion changes none of \ref{MP0}--\ref{MP2}, every set of the augmented field differing from a set
of $\mathcal G_{s}$ by a $\Prob$-null set. The raw field is kept because
Lemma~\ref{lem:MPborel}\ref{mpb:borel} then reduces \ref{MP1} to countably many integrals and path
continuity gives $\mathcal G_{s-}=\mathcal G_{s}$.
Denote by
$\overline{\mathcal G}_{s}:=\mathcal G_{s}\vee\mathcal N$ the augmentation of $\mathcal G_{s}$ by the
$\Prob$-null sets $\mathcal N$ of the Borel field of $\mathcal C$. Every set of
$\overline{\mathcal G}_{s}$ differs from a set of $\mathcal G_{s}$ by a $\Prob$-null set. Hence, for
$\xi$ integrable or non-negative with values in $[0,\infty]$, the variable $\E[\xi\mid\mathcal G_{s}]$
is already a version of $\E[\xi\mid\overline{\mathcal G}_{s}]$: it is measurable for the larger field,
and the two defining integrals agree on every set of that field. With
$\xi=D^{\varphi}_{s,t}$ and $\xi=|D^{\varphi}_{s,t}|^{m}$, this leaves the left-hand sides of
\ref{MP1} and of \ref{MP2} unchanged wherever they are defined. The Brownian requirement of \ref{MP0} is unchanged as well, an increment independent of
$\mathcal G_{s}$ being independent of $\overline{\mathcal G}_{s}$ by the same approximation of sets;
and the other two parts of \ref{MP0}, the initial law and the moment bound, name no filtration at all.

The proof Lemma~\ref{lem:MPborel}\ref{mpb:borel}  cuts the solution set out of the space of Borel probability
measures on $\mathcal C$ by countably many maps $\Prob\mapsto\R$, and each of these has to be one
functional fixed in advance and evaluated at $\Prob$, rather than an object assembled from the $\Prob$ at
which it is stated. The reduction of \ref{MP1} replaces the conditional norm by countably many integrals
$\Prob\mapsto\E_{\Prob}[D^{\varphi}_{s,t}\psi]$ against bounded $\mathcal G_{s}$-measurable $\psi$, and
those $\psi$ are drawn from a countable $\Q$-algebra generated by the maps
$g(\cdot_{r_{1}},\dots,\cdot_{r_{l}})$ at rational $r_{i}\le s$ with $g$ continuous. They are available
because $\mathcal G_{s}$ is generated by evaluations at times $r\le s$, and that generation does not
"move" with $\Prob$. On $\overline{\mathcal G}_{s}$ the same functionals still work, by the
null-set argument above, so the completed version of the argument is not wrong. It is the argument which would have
to be changed slightly, and the raw field lets the proof be written without the detour.
Step~\ref{stp:MPbi} of that proof reaches an irrational
pair $s\le t$ from rational pairs below, and for that it needs $\mathcal G_{s-}=\mathcal G_{s}$ at
$s>0$. This holds because the canonical paths are continuous, so that $X_{s}$ and $B_{s}$ are
measurable for $\bigvee_{r<s}\mathcal G_{r}$. Right-continuity of $(\mathcal G_{t})$ is not needed
here, and \S\ref{sec:YWtransfer} records that it is not needed there either.
\end{remark}

\begin{remark}\label{rem:MPweight}
An $L_{\infty}(\Omega)$-conditional form of \ref{MP1}--\ref{MP2} is blocked twice. 

\emph{Growth:} the
germ defect has degree $2$ in the state and  $V(X_{s})^{-1}$ (coming from the Lyapunov function $V(x)=(1+|x|^2)^{\frac{1}{2}}$) removes one power only,
$V(x)^{-1}(1+|x|)^{2}\asymp1+|x|$. 

\emph{Structure:} it contains $\delta\tilde N^{\varphi}$ of
\eqref{eq:Ntilde}, with no pathwise majorant $\tnorm\varphi\cdot(\varphi\text{-free})$ but only an
$L_{m}$-bound on $\mathcal K^{\varphi}$, \eqref{eq:Ntildebound} and \eqref{eq:PQtildebound}, against
which no $\mathcal G_{s}$-measurable weight is uniform.
\end{remark}

\begin{remark}\label{rem:MPbracket} By the definition of the bracket in Assumption~\ref{ass:driver},
\begin{equation}\label{eq:brackettaylor}
\delta Z^{\kappa}_{s,t}\,\delta Z^{\lambda}_{s,t}
=\ZZ^{\kappa\lambda}_{s,t}+\ZZ^{\lambda\kappa}_{s,t}+\CZ^{\kappa\lambda}_{s,t} .
\end{equation}
We can Taylor-expand $\varphi(X_{t})-\varphi(X_{s})$ to second order and insert the Davie expansion
\eqref{eq:davie} of $\delta X_{s,t}$. The second-order term then contributes
$\tfrac12\sum_{\kappa,\lambda}A_{\kappa\lambda}\,\delta Z^{\kappa}_{s,t}\delta Z^{\lambda}_{s,t}$ with
$A_{\kappa\lambda}:=\sum_{i,j}\partial^{2}_{ij}\varphi\,f^{\kappa}_{i}f^{\lambda}_{j}
=\mathcal Q^{\kappa\lambda}\varphi$, a coefficient symmetric in $(\kappa,\lambda)$. Inserting
\eqref{eq:brackettaylor} and relabelling $\kappa\leftrightarrow\lambda$ in the second summand, we obtain
\begin{equation*}
\tfrac12\sum_{\kappa,\lambda}A_{\kappa\lambda}
\big(\ZZ^{\kappa\lambda}+\ZZ^{\lambda\kappa}+\CZ^{\kappa\lambda}\big)
\ =\ \sum_{\kappa,\lambda}A_{\kappa\lambda}\ZZ^{\kappa\lambda}
\ +\ \tfrac12\sum_{\kappa,\lambda}\big(\mathcal Q^{\kappa\lambda}\varphi\big)\CZ^{\kappa\lambda} ,
\end{equation*}
whose first summand is exactly the
$\textstyle\sum_{i,j}f^{\kappa}_{i}f^{\lambda}_{j}\partial^{2}_{ij}\varphi$ part of
$\mathcal G^{\kappa}\mathcal G^{\lambda}\varphi$, while the second is not absorbed by anything.
 The same term appears on the right-hand side of the rough It\^o formula of
\cite[Thm.~4.13]{FHL},
\[
\varphi(Y_{t})-\varphi(Y_{0})-\int_{0}^{t}\!D\varphi(Y)\sigma\,\d B-\int_{0}^{t}\!(\mathcal L_{r}\varphi)(Y_{r})\dr
=\int_{0}^{t}\!D\varphi(Y)Y'\,\d\RZ+\tfrac12\int_{0}^{t}\!D^{2}\varphi(Y)(Y',Y')\,\d\CZ .
\]
Here $Y$ denotes a controlled path with Gubinelli derivative $Y'$ and second derivative $Y''$, and the
rough integral on the right is taken against the composed controlled pair, which we denote by
\begin{equation}\label{eq:Tphidef}
(\mathcal T\varphi)(Y):=D\varphi(Y)\,Y',
\qquad
(\mathcal T'\varphi)(Y):=D^{2}\varphi(Y)(Y',Y')+D\varphi(Y)\,Y'' ,
\end{equation}
so that the first integral on the right is given by
$\int_{0}^{t}(\mathcal T\varphi,\mathcal T'\varphi)(Y)\,\d\RZ$.
This part is the conclusion quoted in Lemma~\ref{lem:roughItoLn}.

\begin{enumerate}[label=(\roman*)]
    \item With the term present, Definition~\ref{def:roughMP} is genuinely lift-agnostic. Without it, Proposition~\ref{prop:MPequiv}\ref{mpe1} would be false for the It\^o lift $\CZ_{s,t}=(t-s)I$.
The
omitted $\tfrac12\sum_{\kappa}(\mathcal Q^{\kappa\kappa}_{s}\varphi)(X_{s})(t-s)$ has conditional
mean of exact order $(t-s)$. For $n=e=d=1$, $b=\sigma=0$, $f\equiv1$, $f'\equiv0$, $X_{0}=0$,
$\varphi\in C^{3}_{b}$ equal to $x^{2}$ on $[-1,1]$; then $X_{t}=\delta Z_{0,t}$ and on
$\{\sup_{t\le T}|X_{t}|<1\}$ (probability $1$ once $\|\RZ\|_{\gamma}T^{\gamma}<1$)
$\mathcal Q^{11}_{s}\varphi(X_{s})=2$, the omitted term being $(t-s)$ exactly.
\item It changes nothing for the coordinate maps. By \eqref{eq:carre},
$\mathcal Q^{\kappa\lambda}\varphi$ vanishes wherever $D^{2}\varphi$ does, in
particular for the $\varphi_{R}$ of Proposition~\ref{prop:MPequiv}\ref{mpe2} on $\{\tau_{R}>t\}$,
where $D^{2}\varphi_{R}(X_{r})=0$. 
\end{enumerate}
\end{remark}
\begin{remark}\label{rem:MPstrong}
\begin{enumerate}[label=(\roman*)]
    \item The moduli are required
uniformly over $\tnorm\cdot$-bounded families, making $\varphi_{R}$ usable,
$\|\varphi_{R}\|_{\infty}\sim R\to\infty$ while $\sup_{R}\tnorm{\varphi_{R}}<\infty$.
With a $\varphi$-dependent
$o(\cdot)$ the moduli could degenerate as $R\to\infty$ and the limit could not be taken. This is why
\eqref{eq:seminorm3} denotes a seminorm in the derivatives alone.  Every summand of \eqref{eq:roughMP} is linear in $\varphi$, the version of
$N^{\varphi}$ included, it being a limit of Riemann sums that are themselves linear in $\varphi$.
Since $\tnorm\cdot$ is absolutely homogeneous, we obtain
$D^{c\varphi}=c\,D^{\varphi}$ up to indistinguishability, and $\varphi\mapsto\varphi/\Lambda$ maps
$\{\tnorm\varphi\le\Lambda\}$ onto $\{\tnorm\varphi\le1\}$. Hence \ref{MP1}--\ref{MP2} at $\Lambda=1$
already imply them at every $\Lambda>0$, with $\omega_{\Lambda}=\Lambda\,\omega_{1}$ and
$\omega'_{\Lambda}=\Lambda\,\omega'_{1}$, so that we can work with a single modulus pair on the unit
ball, in Definition~\ref{def:roughMP} and in Definition~\ref{def:graded} alike. We keep the quantifier
because the statements that use the definition name the ball they test in, $\Lambda_{0}$ of
\eqref{eq:phiRunif} for one.
\item \ref{MP0} is part of the
definition: it allows the localization and cannot be derived from \ref{MP1}--\ref{MP2}, available
only for bounded test functions until the localization has run.
\end{enumerate}
\end{remark}

\subsection{The rough It\^o formula under moment bounds}\label{subsec:MPito}

\begin{lemma}[Rough It\^o formula under uniform moment bounds: a variant of {\cite[Thm.~4.13]{FHL}}]\label{lem:roughItoLn}
Let Assumption~\ref{ass:driver} hold, so that $\gamma\in(\tfrac13,\tfrac12)$ and
$\CZ$ is Lipschitz. Let $\mfn >4$, let $\varkappa_{\varphi}\in(1/\gamma,3]$ and $0<\beta'\le\beta\le\gamma$, and let
\begin{equation}\label{eq:paramsIto}
\gamma+\beta>\tfrac12 ,
\qquad
\gamma+\beta+\beta''>1 ,
\qquad
\beta'':=\min\Big\{\gamma(\varkappa_{\varphi}-2),\ \gamma\big(\tfrac \mfn 4-1\big),\ \beta'\Big\} .
\end{equation}
Let $b,\sigma,Y',Y''$ be progressively measurable, and let $Y$ be continuous, adapted and a
solution of
$\d Y_{t}=b_{t}\dt+\sigma_{t}\,\d B_{t}+(Y',Y'')_{t}\,\d\RZ_{t}$, the rough integral being that of
\cite[Thm.~3.4]{FHL}. Let $D^{j}\varphi$ be bounded for
$1\le j\le N:=\lceil\varkappa_{\varphi}\rceil-1$, with $D^{N}\varphi$ globally H\"older of order
$\varkappa_{\varphi}-N\in(0,1]$. Assume the uniform moment bound
\begin{equation}\label{eq:rItoLn}
\sup_{t\le T}\Big(\big\|b_{t}\big\|_{L_{\mfn }}+\big\|\sigma_{t}\big\|_{L_{\mfn }}
+\big\|Y'_{t}\big\|_{L_{\mfn }}\Big)\ <\ \infty ,
\end{equation}
and the four quantities that \cite[Def.~4.12]{FHL} requires to be finite of a pair in
$\Dext^{\beta,\beta'}_{Z}L_{4,\mfn }$, here at $(Y',Y'')$ and with $\mathcal F_{s}$ the conditioning
field of the mixed norm $\|\cdot\|_{4,\mfn }$,
\begin{equation}\label{eq:rItoLnsemi}
\begin{gathered}
\sup_{s<t}\frac{\big\|\delta Y'_{s,t}\big\|_{4,\mfn }}{|t-s|^{\beta}}
\ +\ \sup_{t\le T}\big\|Y''_{t}\big\|_{L_{\mfn }}
\ +\ \sup_{s<t}\frac{\big\|\E_{s}R^{Y'}_{s,t}\big\|_{L_{\mfn }}}{|t-s|^{\beta+\beta'}}
\ +\ \sup_{s<t}\frac{\big\|\E_{s}\delta Y''_{s,t}\big\|_{L_{\mfn }}}{|t-s|^{\beta'}}
\ <\ \infty ,
\\
R^{Y'}_{s,t}:=\delta Y'_{s,t}-Y''_{s}\,\delta Z_{s,t} .
\end{gathered}
\end{equation}
Then
\begin{enumerate}[label=\normalfont(\roman*),leftmargin=2.4em]
\item\label{rIto:class} the composed pair $(\mathcal T\varphi,\mathcal T'\varphi)(Y)$ of
\eqref{eq:Tphidef} has those four quantities finite at the indices $(\beta,\beta'')$ and at
$L_{2,2}$, and is progressively measurable. The rough integral against that pair is
therefore defined.
\item The rough It\^o formula stated in Remark~\ref{rem:MPbracket} holds.
\end{enumerate}
\end{lemma}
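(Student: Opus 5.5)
The plan follows the proof of \cite[Thm.~4.13]{FHL} and replaces each use of boundedness of $b,\sigma$ or of $\varphi$ by H\"older's inequality against \eqref{eq:rItoLn} and the bounded derivatives $D^{j}\varphi$, $1\le j\le N$. The argument has two stages. First I establish \ref{rIto:class} by expanding the composed pair along $Y$. Then I prove the formula by sewing both sides over a partition and matching the germs.

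\textbf{Stage 1: the composed pair.} For \ref{rIto:class} I first record the expansion of $Y$:
\[
\delta Y_{s,t}=\int_{s}^{t}b\dr+\int_{s}^{t}\sigma\d B+Y'_{s}\delta Z_{s,t}+Y''_{s}\ZZ_{s,t}+R_{s,t}.
\]
Together with \eqref{eq:rItoLn} and BDG, this gives $\|\delta Y_{s,t}\|_{L_{\mfn}}\lesssim|t-s|^{\gamma}$; here $\gamma<\tfrac12$ absorbs the It\^o part. Next I expand $D\varphi(Y_{t})Y'_{t}-D\varphi(Y_{s})Y'_{s}$ as
\[
\big(D\varphi(Y_{t})-D\varphi(Y_{s})\big)Y'_{t}+D\varphi(Y_{s})\,\delta Y'_{s,t}.
\]
The first piece is controlled by $\|D^{2}\varphi\|_{\infty}\|\delta Y\|_{L_{\mfn/2\cdot 2}}\|Y'\|_{L_{\mfn}}$ via H\"older. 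Products of two $L_{\mfn}$ factors land in $L_{\mfn/2}$ and triple products in $L_{\mfn/3}$, and this is why the target class is $L_{2,2}$ and why $\mfn>4$ is needed. For the $\E_{s}$-remainder of $\mathcal T\varphi$ against $\mathcal T'\varphi\,\delta Z$, I Taylor expand $D\varphi$ to order $N$ with H\"older top derivative. The leading terms recombine into $D^{2}\varphi(Y_{s})(Y'_{s}\delta Z,Y'_{s})+D\varphi(Y_{s})Y''_{s}\delta Z$, which is exactly $\mathcal T'\varphi$. Each leftover term is one of three kinds:
\begin{itemize}
\item an It\^o increment, which is killed or gains order under $\E_{s}$;
\item a term $\E_{s}R^{Y'}$, which is given;
\item a Taylor remainder of order $|\delta Y|^{\varkappa_{\varphi}-1}$, with size $|t-s|^{\gamma(\varkappa_{\varphi}-1)}$.
\end{itemize}
This produces the index $\beta+\beta''$ with $\beta''$ as in \eqref{eq:paramsIto}. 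The middle entry $\gamma(\tfrac\mfn4-1)$ is what the interpolation between moments costs once the higher powers of $|\delta Y|$ are paid in $L_{\mfn}$. Progressive measurability is immediate from continuity of $D^{j}\varphi$.

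\textbf{Stage 2: the formula.} For the formula I write $\varphi(Y_{t})-\varphi(Y_{0})$ as a telescoping sum over a partition $\pi$, Taylor expand each increment to second order, and insert the expansion of $\delta Y$. Using \eqref{eq:brackettaylor}, I sort the terms into four groups:
\begin{enumerate}
\item the germ $D\varphi(Y_{s})Y'_{s}\delta Z_{s,t}+(\mathcal T'\varphi)(Y_{s})\ZZ_{s,t}$ of the rough integral;
\item the It\^o germ $D\varphi(Y_{s})\int_{s}^{t}\sigma\d B$ together with $\int_{s}^{t}D\varphi(Y_{s})b\dr+\tfrac12D^{2}\varphi(Y_{s}):(\int\sigma\d B)^{\otimes2}$;
\item the bracket term $\tfrac12D^{2}\varphi(Y_{s})(Y'_{s},Y'_{s})\CZ_{s,t}$;
\item a remainder.
\end{enumerate}
Each of the first three groups is a germ whose compensated Riemann sums converge by Lemma~\ref{lem:SSL}, to the rough integral of part~\ref{rIto:class}, to $\int D\varphi\sigma\d B+\int\mathcal L\varphi\dr$, and to the Lebesgue--Stieltjes integral against $\CZ$, which is Lipschitz by \eqref{eq:bracketLip}. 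The remainder terms have $\|\E_{s}\cdot\|_{L_{1}}=o(|t-s|)$ and $\|\cdot\|_{L_{1}}=o(|t-s|^{1/2})$, so their sums vanish in the limit by the same BDG splitting argument used for uniqueness after Lemma~\ref{lem:SSL}. All identifications are made as limits in probability of a single sequence, so they agree almost surely.

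\textbf{Main obstacle.} I expect the hardest part to be the bookkeeping of integrability exponents in the remainder. These are cross terms such as $D^{2}\varphi(Y_{s})(Y'_{s}\delta Z,\int\sigma\d B)$, which has zero conditional mean but is only of order $|t-s|^{\gamma+1/2}$, and cubic terms $D^{3}\varphi\,|\delta Y|^{3}$, which need $L_{\mfn/3}$ or better. I will handle these by sewing in $L_{1}$ for the conditional bound and in $L_{2}$ for the unconditional one, checking at each term that $\mfn>4$ suffices. Where an exponent falls short, I will fall back to the Riemann-sum route of \cite{FHL}.
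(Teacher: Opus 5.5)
There is a genuine gap, and it sits at the step the paper singles out. Your Stage~1 asserts $\|\delta Y_{s,t}\|_{L_{\mfn}}\lesssim|t-s|^{\gamma}$, and all of your later integrability bookkeeping rests on it (products of $L_{\mfn}$ factors landing in $L_{\mfn/2}$, cubic terms in $L_{\mfn/3}$). That bound is not available under the hypotheses.

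The reason is as follows. The assumption on $\delta Y'$ in \eqref{eq:rItoLnsemi} is made only in the mixed norm $\|\cdot\|_{4,\mfn}$, and $\delta Y''$ carries no unconditional rate at all. The sewing defect of the germ $Y'_s\delta Z_{s,t}+Y''_s\ZZ_{s,t}$ is therefore of order $\gamma+\beta>\tfrac12$ in $\|\cdot\|_{4,\mfn}$, but only of order $\gamma<\tfrac12$ in $L_{\mfn}$. The rough-integral remainder (your $R_{s,t}$, the paper's $E_{s,t}$) that \cite[Thm.~3.4]{FHL} supplies, and with it $\delta Y$, is controlled in $\|\cdot\|_{4,\mfn}$. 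Since $\|\xi\|_{L_4}\le\|\xi\|_{4,\mfn}\le\|\xi\|_{L_{\mfn}}$, that gives only $L_4$ unconditionally, however large $\mfn$ is.

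Consequently your treatment of the Taylor remainder fails on both counts.
\begin{itemize}
\item An $L_2$ bound on $|\delta Y_{s,t}|^{\varkappa_\varphi}$, or on a cubic term, needs $\delta Y\in L_{2\varkappa_\varphi}$ with $2\varkappa_\varphi>2/\gamma>4$.
\item The route you state first, $\|\E_s\cdot\|_{L_1}=o(|t-s|)$ and $\|\cdot\|_{L_1}=o(|t-s|^{1/2})$ followed by the Burkholder--Davis--Gundy splitting after Lemma~\ref{lem:SSL}, does not work in $L_1$. That splitting needs the unconditional bound in $L_m$ with $m\ge2$. In $L_1$ you only have $\sum_i\|\cdot\|_{L_1}$, which is $N\cdot o(N^{-1/2})$ and need not vanish.
\end{itemize}

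The missing idea is that the Taylor remainder needs no cancellation at all. Since $|R_{u,v}|\le C_\varphi|\delta Y_{u,v}|^{\varkappa_\varphi}$ and $\varkappa_\varphi\le 3<4$, you get $\E\big|\sum_{\pi}R_{u,v}\big|\le C_\varphi\sum_{\pi}\|\delta Y_{u,v}\|^{\varkappa_\varphi}_{L_4}\lesssim t\,|\pi|^{\varkappa_\varphi\gamma-1}\to0$, using $L_4$ only. This is Step~3 of the paper's proof. It is why the paper then identifies the $L_2$ sewing limit of Lemma~\ref{lem:SSL} at $m=2$ with an $L_1$ limit of Riemann sums, both being limits in probability (Step~4). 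The ``Riemann-sum route'' you hold in reserve is therefore forced, not a fallback. If you insist on an unconditional $L_2$ bound for $R$, it must come from the quadratic branch $|R_{u,v}|\le\|D^{2}\varphi\|_\infty|\delta Y_{u,v}|^{2}$, not from the cubic one.

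With that repair your direct decomposition can be completed. Sum every remaining term of order $>1$ directly in $L_1$. Treat only the terms of order in $(\tfrac12,1]$ by the $L_1$/$L_2$ splitting: these are $D\varphi(Y_s)E_{s,t}$ and $D^{2}\varphi(Y_s)\big(Y'_s\delta Z_{s,t},\int_s^t\sigma\,\d B\big)$, and both lie in $L_2$ under your hypotheses.

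In Stage~1, redo the H\"older splits with $\delta Y$ only in $L_{4}$ or $L_{4,\mfn}$. For example, $\delta Y\in L_4$ against $Y'\in L_{\mfn}$ lands in $L_2$ exactly because $\tfrac14+\tfrac{1}{\mfn}<\tfrac12$, and that is where $\mfn>4$ enters. The middle entry $\gamma(\tfrac{\mfn}{4}-1)$ of $\beta''$ is asserted rather than derived in your sketch, and your explanation of it presupposes the false $L_{\mfn}$ bound. It comes from pairing the H\"older modulus of $D^{2}\varphi$ at $\delta Y\in L_4$ with factors of $Y'$ in $L_{\mfn}$. The paper obtains it by checking the seminorms of $(Y,Y')$ in $\mathscr D^{\gamma,\beta}_{Z}L_{4,\mfn}$ and applying \cite[Lem.~3.11]{FHL} at regularity index $1+\mfn/4$.

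Finally, if you identify your It\^o group through Lemma~\ref{lem:SSL}, its conditional defect contains $\tfrac12\int_s^t\langle a_r,D^{2}\varphi(Y_r)-D^{2}\varphi(Y_s)\rangle\dr$. There the unbounded $a=\sigma\sigma^{\top}$ meets $|\delta Y_{s,r}|^{\varkappa_\varphi-2}$ with $\delta Y$ only in $L_4$, and this is what forces the paper's interpolation in its Step~2. Your decomposition can avoid rates there altogether: $\tfrac12\sum_i\int_{t_i}^{t_{i+1}}\langle a_r,D^{2}\varphi(Y_r)-D^{2}\varphi(Y_{t_i})\rangle\dr\to0$ in $L_1$ by dominated convergence, since $D^{2}\varphi$ is bounded and continuous and $a\in L_1(\dr\otimes\d\Prob)$. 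That is a genuine simplification your route offers over the paper's, once the Taylor remainder is handled as above.
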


\begin{remark}[Comparison to the source]\label{rem:rItoLnextra}
\cite[Thm.~4.13]{FHL} asks $b,\sigma$  to be bounded, which is replaced by \eqref{eq:rItoLn}. It also requires the
pair $(Y',Y'')$ to lie in $\Dext^{\beta,\beta'}_{Z}L_{4,\mfn }$, which is replaced by 
\eqref{eq:rItoLnsemi}: the class asks those four quantities together with the measurability
and continuity clauses of \cite[Def.~2.5]{FHL}. The third is the boundedness of $\varphi$,
below. On the lift we assume more: \cite[Thm.~4.13]{FHL} doesn't have any assumptions on the bracket. It proves its Step~1 for a geometric lift and reduces the general case to it in its
Step~2, by passing to the geometric lift $\RZ+(0,\tfrac12\CZ)$. Step~\ref{stp:rIto2} below instead
adds the one defect $J^{5}$ that a general lift creates, and \eqref{eq:bracketLip} is what bounds
it. With $\CZ$ only $2\gamma$-H\"older the order of $J^{5}$ would drop to
$2\gamma+\min\{\gamma\lambda,\beta\}$, at most $3\gamma$, which approaches $1$ as
$\gamma\downarrow\tfrac13$ and then misses the threshold of Step~\ref{stp:rIto4}. We take our route
because Step~\ref{stp:rIto4} identifies the limit through Lemma~\ref{lem:SSL} and so needs the
defect itself. Everything else is taken from \cite{FHL}. The class of
\cite[Def.~4.12]{FHL} replaces condition~(d) of \cite[Def.~3.1]{FHL} by the weaker
condition~(d$'$), so it contains $\mathscr D^{\beta,\beta'}_{Z}L_{4,\mfn }$.

The condition on $\varphi$ is \cite[\S2.1]{FHL}'s $\mathcal C^{\varkappa_{\varphi}}_{b}$, which
gives $N=2$ at the index $\varkappa_{\varphi}=3$ used below. That space asks $\varphi$ itself to be
bounded, and we do not. A footnote inside the proof of \cite[Thm.~4.13]{FHL} records that the test
function enters the composed pair's controlled quantities only through $\|D\varphi\|_{\infty}$,
$\|D^{2}\varphi\|_{\infty}$ and a H\"older seminorm of $D^{2}\varphi$. The other defect bounds there
are stated with the full norm, so we rely on inspection of that proof.
Proposition~\ref{prop:Athreshuses} needs the weakening, applying the lemma at the unbounded $V$ of
\eqref{eq:Vjet}.

The pair $(\beta,\beta')$ is that of the controlled path fed to the formula, and not that of
$(f,f')$: at every place of use below the integrand is a composed pair, whose second index is
the $\bar\beta$ of \eqref{eq:Jbarbeta}, so the third entry of $\beta''$ is $\bar\beta$, or any $\beta_{2}\le\bar\beta$ at which the pair lies in the class, and $\beta'\le\beta\le\gamma$ constrains
that pair. The $\beta'$ of Assumption~\ref{ass:Erough} carries no
upper bound and may exceed $\gamma$.

The input class is used only through progressive
measurability and its seminorms. \cite[Thm.~3.4]{FHL} never explicitly requires the full class: its hypothesis is
progressive measurability and the finiteness of two quantities $\Gamma_{1},\Gamma_{2}$,
which the class dominates, and a footnote there calls this the weakest condition under which
stochastic sewing applies. \cite[Lem.~3.11]{FHL} does ask for the class, but its proof reads the pair
only through those seminorms, and the step that moves a factor through $\E_{s}$ is an identity,
$Df_{s}(Y_{s})$ being $\mathcal F_{s}$-measurable. The continuity clause of \cite[Def.~2.5]{FHL}
that the class also carries enters no step of either. It is supplied anyway at the two places of
Proposition~\ref{prop:Athreshuses}, where the pair is built by \cite[Rem.~4.4]{FHL} and
\cite[Lem.~3.11]{FHL} at fixed $(k,R)$, both of which conclude in the source's own
class. Part~\ref{rIto:class} is cited wherever the integral has to exist before the identity
is used. Lemma~\ref{lem:roughItoLn} keeps the index condition $\mfn >4$ of
\cite[Thm.~4.13]{FHL}. {\color{blue}Proposition~\ref{prop:Taylorsew} asks a second
one, $\mfn >2/\gamma$ in place of $\mfn >4$, which the lemma itself does not use.\color{black}}

The moment bound on $Y'$ is not provided by the controlled class. For
$(Y',Y'')\in\Dext^{\beta,\beta'}_{Z}L_{4,\mfn }$ the class constrains $Y'$ only through $\delta Y'$, and
\cite{FHL} states after \cite[Def.~3.1]{FHL} that no integrability is required of the ground value
$Y'_{0}\in L_{0}(\mathcal F_{0})$. So even $\sup_{t\le T}\|Y'_{t}\|_{L_{4}}<\infty$ has to be
assumed, while the bound
$\|Y'_{s}\delta Z_{s,t}\|_{4,\mfn }\le\|Y'_{s}\|_{L_{\mfn }}|\delta Z_{s,t}|$ of \cite[Rem.~2.2(iii)]{FHL},
$Y'_{s}$ being $\mathcal F_{s}$-measurable, needs it. \cite[Thm.~4.13]{FHL} states it as a
hypothesis of the theorem itself, and the same bound appears in
\cite[Rem.~4.4]{FHL}, which places a solution satisfying $f(Y)\in C^{\beta}L_{4,\mfn }$ in
$\mathscr D^{\gamma,\beta}_{Z}L_{4,\mfn }$ under $\sup_{t}\|g_{t}(Y_{t})\|_{L_{\mfn }}<\infty$ for
$g\in\{b,\sigma\sigma^{\top},f,Df\,f,f'\}$, and with $Y'=f(Y)$ and $Y''=(Df\,f+f')(Y)$ $b$ and $f$ are two summands of
\eqref{eq:rItoLn}, while $Df\,f$ and $f'$ sit in the second entry of \eqref{eq:rItoLnsemi}. 
\eqref{eq:rItoLn} asks  less of the diffusion: $\sigma\in L_{\mfn}$, where \cite[Rem.~4.4]{FHL} requires
$\sigma\sigma^{\top}\in L_{\mfn}$, that is $\sigma\in L_{2\mfn}$.
{\color{blue}The index condition of Proposition~\ref{prop:Taylorsew} holds automatically wherever it
is required: $2/\gamma<4/\gamma-4$ for
$\gamma<\tfrac12$, so $\mfn >2/\gamma$ is implied by \eqref{eq:paramsdischarge}, forced by
\eqref{eq:paramsIto} at every place of use, and $4(\varkappa_{\varphi}-1)>4(1/\gamma-1)>2/\gamma$
there.\color{black}}
\end{remark}

\begin{proof}[Sketch of proof of Lemma~\ref{lem:roughItoLn}]
We follow the arguments of \cite[proof of Thm.~4.13]{FHL}.  Boundedness of
$b,\sigma$ is replaced by \eqref{eq:rItoLn}, which is possible because the proof uses those
coefficients in only two steps, and where a step uses them
pointwise an interpolation replaces that use (Step~\ref{stp:rIto2}), and the class of the pair
is used only through its seminorms, which is all the proof uses (Step~\ref{stp:rIto1}). Separately, $\|\varphi\|_{\infty}$ never appears, neither in the germ estimates nor
in the composition \cite[Lem.~3.11]{FHL} of Step~\ref{stp:rIto1}, whose conclusion uses $\varphi$
only through $D\varphi$ and $D^{2}\varphi$. This is what lets us use the lemma at an unbounded
$\varphi$, which both places of Proposition~\ref{prop:Athreshuses}
do, the germ estimates using
only $[\varphi]_{2}$, $[\varphi]_{\varkappa_{\varphi}}$ and the regularity of
$D\varphi$, never $|\varphi|_{\infty}$, each $\varphi$ standing under a derivative and the formula
being an identity between increments. Two of the steps require a deeper focus, and so do
two summands of the germ estimates. Those four we state and the rest of the germ estimates we
import.

Throughout, $(Y',Y'')$ denotes the controlled pair fed to the rough integral, at the indices
$(\beta,\beta')$ of the statement, and $Y$ the controlled path it drives, with Gubinelli derivative
$Y'$. We use the mixed-norm embedding
\begin{equation}\label{eq:rItoLnembed}
\|\xi\|_{L_{4}}\ \le\ \|\xi\|_{4,\mfn }\ \le\ \|\xi\|_{L_{\mfn }}\qquad(4\le \mfn ),
\end{equation}
which is \cite[Prop.~2.3(ii)]{FHL}, and the collapse $\|\xi\|_{4,\mfn }=\|\xi\|_{L_{\mfn }}$ for
$\mathcal F_{s}$-measurable $\xi$, which is \cite[Rem.~2.2(iii)]{FHL}. The increment splits as
\begin{equation}\label{eq:rItoLnsplit}
\delta Y_{s,t}
=\int_{s}^{t}\!b_{r}\dr+\int_{s}^{t}\!\sigma_{r}\d B_{r}
+Y'_{s}\,\delta Z_{s,t}+Y''_{s}\,\ZZ_{s,t}+E_{s,t} ,
\end{equation}
with $E$ the remainder of the rough integral, estimated by \cite[Thm.~3.4]{FHL} and carrying no
hypothesis on $b$ or $\sigma$.

We assume $\mfn \le4(\varkappa_{\varphi}-1)$, as \cite[proof of Thm.~4.13]{FHL}
does: the hypotheses at $\mfn $ imply them at every smaller index, and $\beta''$ does not change
once $\mfn \ge4(\varkappa_{\varphi}-1)$. The composition of Step~\ref{stp:rIto1} is then applied at
the regularity index $1+\mfn /4\in(2,\varkappa_{\varphi}]$.

\emph{Step 1: the seminorms of the class}. \stepnum{1}{stp:rIto1} \cite[Lem.~3.11]{FHL} asks of $(Y,Y')$ to lie in the class
$\mathscr D^{\gamma,\beta}_{Z}L_{4,\mfn }$. We verify its
seminorms from \eqref{eq:rItoLn} and \eqref{eq:rItoLnsemi}. They are what that lemma's proof uses
(Remark~\ref{rem:rItoLnextra}).\ Minkowski's integral
inequality, and Burkholder--Davis--Gundy followed by Minkowski in $L_{\mfn /2}$, give
\begin{equation}\label{eq:rItoLnbdrift}
\begin{gathered}
\Big\|\int_{s}^{t}\!b_{r}\dr\Big\|_{L_{\mfn }}\ \le\ \sup_{r\le T}\|b_{r}\|_{L_{\mfn }}\,|t-s| ,
\\
\Big\|\int_{s}^{t}\!\sigma_{r}\d B_{r}\Big\|_{L_{\mfn }}
\ \lesssim_{\mfn }\ \Big(\int_{s}^{t}\!\|\sigma_{r}\|^{2}_{L_{\mfn }}\dr\Big)^{1/2}
\ \le\ \sup_{r\le T}\|\sigma_{r}\|_{L_{\mfn }}\,|t-s|^{1/2} ,
\end{gathered}
\end{equation}
the index being $\mfn >4$.
By \eqref{eq:rItoLnembed} both bounds pass to $\|\cdot\|_{4,\mfn }$. The two frozen terms of
\eqref{eq:rItoLnsplit} are $\mathcal F_{s}$-measurable, so the mixed norm collapses on them,
\begin{equation}\label{eq:rItoLnfrozen}
\big\|Y'_{s}\,\delta Z_{s,t}\big\|_{4,\mfn }=\big\|Y'_{s}\big\|_{L_{\mfn }}\big|\delta Z_{s,t}\big| ,
\qquad
\big\|Y''_{s}\,\ZZ_{s,t}\big\|_{4,\mfn }=\big\|Y''_{s}\big\|_{L_{\mfn }}\big|\ZZ_{s,t}\big| ,
\end{equation}
and \cite[Thm.~3.4]{FHL} supplies the two bounds on the remainder,
\begin{equation}\label{eq:rItoLnE}
\big\|E_{s,t}\big\|_{4,\mfn }\ \lesssim\ |t-s|^{\gamma+(\gamma\wedge\beta)} ,
\qquad
\big\|\E_{s}E_{s,t}\big\|_{L_{\mfn }}\ \lesssim\ |t-s|^{\gamma+(\gamma\wedge\beta)+\beta'} .
\end{equation}
The second entry of \eqref{eq:rItoLnsemi} bounds $\sup_{t}\|Y''_{t}\|_{L_{\mfn }}$ and
the third summand of \eqref{eq:rItoLn} bounds $\sup_{t}\|Y'_{t}\|_{L_{\mfn }}$, which are the two
factors \eqref{eq:rItoLnfrozen} needs.

With that bound in hand, \eqref{eq:rItoLnbdrift}, \eqref{eq:rItoLnfrozen} and the first term of
\eqref{eq:rItoLnE} give the five orders $1$, $\tfrac12$, $\gamma$, $2\gamma$ and
$\gamma+(\gamma\wedge\beta)$, each at least $\gamma$ on $|t-s|\le T$ by $\gamma<\tfrac12$, whence
$\delta Y\in C^{\gamma}_{2}L_{4,\mfn }$. Taking the same summands under $\E_{s}$, the It\^o term
vanishes, being a martingale increment, the drift sits at order $1$, the term
$Y''_{s}\ZZ_{s,t}$ at $2\gamma$, and $\E_{s}E_{s,t}$ at $\gamma+\beta+\beta'$ by the second term of
\eqref{eq:rItoLnE} and $\beta\le\gamma$. All three are at least $\gamma+\beta$, again by
$\beta\le\gamma<\tfrac12$, so that
\begin{equation}\label{eq:rItoLnrem}
\big\|\E_{s}R^{Y}_{s,t}\big\|_{L_{\mfn }}\ \lesssim\ |t-s|^{\gamma+\beta},
\qquad R^{Y}_{s,t}:=\delta Y_{s,t}-Y'_{s}\,\delta Z_{s,t} .
\end{equation}
Of the three remaining requirements, the bound
$\sup_{t}\|Y'_{t}\|_{L_{\mfn }}$ is the one just provided, and progressive measurability
and $\delta Y'\in C^{\beta}_{2}L_{4,\mfn }$ are hypotheses on $(Y',Y'')$, not at issue here.

\emph{Step 2: the germ estimates.}\stepnum{2}{stp:rIto2} The source bounds four defects
$J^{1},\dots,J^{4}$ whose sum is, for a geometric lift, the difference between $\mathcal I$ of
Step~\ref{stp:rIto4} and the germ $A$ of that proof, and it bounds their conditional means as well, both in $\|\cdot\|_{L_{2}}$
and never in the mixed norm. In $J^{1}$, in $J^{2}$ and in every summand of $J^{4}$ but one the
coefficients are paired, through the It\^o isometry, Burkholder--Davis--Gundy or Minkowski, with
factors the source controls in $L_{4}$, whether they occur linearly, as in $J^{1}$ and $J^{2}$, or as
a square, as in $\langle D^{2}\varphi(Y_{s}),(\int_{s}^{t}\sigma\d B)^{\otimes2}\rangle$, whose
$L_{2}$-norm is $\|\int_{s}^{t}\sigma\d B\|^{2}_{L_{4}}$; $J^{3}$ carries no coefficient at all,
and $\mfn >4$ supplies the fourth moments. We import those bounds from the source, whose Step~1 proves them for a geometric lift.

Assumption~\ref{ass:driver} allows a general lift, and one further defect is then
added, from two sources. The identity
$2\operatorname{Sym}\ZZ_{s,t}=\delta Z^{\otimes2}_{s,t}-\CZ_{s,t}$ of \eqref{eq:brackettaylor}
replaces the source's $2\operatorname{Sym}\ZZ_{s,t}=\delta Z^{\otimes2}_{s,t}$, and the germ $A$ of
Step~\ref{stp:rIto3} is unchanged while $\mathcal I$ of Step~\ref{stp:rIto4} carries the
$\CZ$-integral. The first source comes from $\mathcal I$,
$\tfrac12\int_{s}^{t}D^{2}\varphi(Y_{r})(Y'_{r},Y'_{r})\d\CZ_{r}$. The identity is the second: it
turns the source's fourth defect into
$J^{4}=J^{4,\mathrm{geo}}-\tfrac12\langle D^{2}\varphi(Y_{s}),Y'^{\otimes2}_{s}\CZ_{s,t}\rangle$,
where $J^{4,\mathrm{geo}}$ denotes the quantity the source bounds. Each of the two is of order $1$,
and at the It\^o lift $\CZ_{s,t}=(t-s)I$ of Remark~\ref{rem:MPbracket} no better, so neither
reaches the threshold $\mu>1$ of Step~\ref{stp:rIto4} alone. We estimate their sum. Since
$\CZ_{s,t}=\int_{s}^{t}\d\CZ_{r}$, that sum is
\[
J^{5}_{s,t}\ :=\ \tfrac12\int_{s}^{t}\Big[D^{2}\varphi(Y_{r})\big(Y'_{r},Y'_{r}\big)
-D^{2}\varphi(Y_{s})\big(Y'_{s},Y'_{s}\big)\Big]\d\CZ_{r} .
\]
By \eqref{eq:bracketLip} the integrator is the increment of a Lipschitz path, so
$|\d\CZ_{r}|\le\|\CZ\|_{1}\dr$ and the integral is a Lebesgue--Stieltjes one. Its integrand splits
into $[D^{2}\varphi(Y_{r})-D^{2}\varphi(Y_{s})](Y'_{r},Y'_{r})$ and two terms carrying
$\delta Y'_{s,r}$. The interpolation below bounds the first at order $\gamma\lambda$, with $Y'_{r}$
in place of $\sigma_{r}$ and at the same exponents; $\|\delta Y'_{s,r}\|_{L_{4}}\|Y'_{r}\|_{L_{4}}$
bounds the other two at order $\beta$, by \eqref{eq:rItoLnsemi} and \eqref{eq:rItoLn}. Both bounds
survive under $\E_{s}$, which is an $L_{2}$-contraction, so
\[
\big\|J^{5}_{s,t}\big\|_{L_{2}}+\big\|\E_{s}J^{5}_{s,t}\big\|_{L_{2}}
\ \lesssim\ \|\CZ\|_{1}\,|t-s|^{1+\min\{\gamma\lambda,\,\beta\}} ,
\]
above both thresholds of Step~\ref{stp:rIto4}.
The exception is the summand
\begin{equation}\label{eq:rItoLnJ4}
\int_{s}^{t}\big\langle a_{r},\ D^{2}\varphi(Y_{r})-D^{2}\varphi(Y_{s})\big\rangle\dr ,
\qquad a:=\tfrac12\sigma\sigma^{\top}{\text{ in this step only}} ,
\end{equation}
where $a$ is paired with the H\"older modulus $|\delta Y_{s,r}|^{\varkappa_{\varphi}-2}$.
With $\sigma$ bounded, that pairing is immediate. \cite{FHL} calls this part of the bound
standard and does not write it out. What follows is our reconstruction under
\eqref{eq:rItoLn}. The paired factor lies in $L_{4}$ and no
better (Step~\ref{stp:rIto3}), so a plain H\"older split would demand 
$a\in L_{4/(4-\varkappa_{\varphi})}$, that is
$\mfn \ge8/(4-\varkappa_{\varphi})$, which at $\varkappa_{\varphi}=3$ is $\mfn \ge8$, above the
$\mfn >4/\gamma-4$ of \eqref{eq:paramsdischarge} at every $\gamma>\tfrac13$. We interpolate the
two bounds on the increment instead, which at the $\mfn =8$ of
Proposition~\ref{prop:Athreshuses} returns the plain split: for $\theta\in(0,1]$ and $\lambda:=\theta(\varkappa_{\varphi}-2)$,
\[
\big|D^{2}\varphi(y)-D^{2}\varphi(y')\big|
\ \le\ \big(2[\varphi]_{2}\big)^{1-\theta}\,[\varphi]^{\theta}_{\varkappa_{\varphi}}\,\big|y-y'\big|^{\lambda} ,
\]
both seminorms being ones this step already keeps. With $\tfrac12=\tfrac\lambda4+\tfrac1q$, H\"older gives
\[
\Big\|\,\big|\delta Y_{s,r}\big|^{\lambda}\,\big|a_{r}\big|\,\Big\|_{L_{2}}
\ \le\ \tfrac12\big\|\delta Y_{s,r}\big\|^{\lambda}_{L_{4}}\,\big\|\sigma_{r}\big\|^{2}_{L_{2q}} ,
\qquad 2q=\frac{8}{2-\lambda} ,
\]
so any $\lambda\le2-8/\mfn$ is paid for by \eqref{eq:rItoLn}, and $\mfn >4$ makes that bound
positive. Fix $\lambda:=\min\{\varkappa_{\varphi}-2,\,2-8/\mfn \}>0$, legitimate since
$\varkappa_{\varphi}>1/\gamma>2$. By Step~\ref{stp:rIto1}, $\|\delta Y_{s,r}\|_{L_{4}}\lesssim|r-s|^{\gamma}$, so
\eqref{eq:rItoLnJ4} is $O(|t-s|^{1+\gamma\lambda})$ in $L_{2}$, and so is its conditional mean. The
exponent exceeds $1$, which is all that
Step~\ref{stp:rIto4} asks of the conditional mean. The unconditional threshold $\tfrac12$ of
Lemma~\ref{lem:SSL}, met at $\gamma+\beta$ by \eqref{eq:paramsIto}, is untouched, since
$1+\gamma\lambda>1$.

\emph{Step 3: the Taylor step.}\stepnum{3}{stp:rIto3}
The germ of \cite[proof of Thm.~4.13]{FHL} is
\[
A_{s,t}=\big\langle D\varphi(Y_{s}),\delta Y_{s,t}\big\rangle
+\big\langle D^{2}\varphi(Y_{s}),Y'^{\otimes2}_{s}\ZZ_{s,t}\big\rangle
+\tfrac12\big\langle D^{2}\varphi(Y_{s}),\delta Y^{\otimes2}_{s,t}
-2Y'^{\otimes2}_{s}\operatorname{Sym}\ZZ_{s,t}\big\rangle .
\]
Since $D^{2}\varphi(Y'^{\kappa}_{s},Y'^{\lambda}_{s})$, the $(\kappa,\lambda)$ entry of that
pairing, is symmetric in $(\kappa,\lambda)$ while $\ZZ_{s,t}-\operatorname{Sym}\ZZ_{s,t}$ is
antisymmetric, the two $\ZZ$-terms cancel, so $A_{s,t}=P_{s}(Y_{t})-\varphi(Y_{s})$,
with $P_{s}$ the second-order Taylor polynomial of $\varphi$ at $Y_{s}$. Denote by
$R_{u,v}:=\varphi(Y_{v})-\varphi(Y_{u})-A_{u,v}$ that Taylor remainder, so that
$|R_{u,v}|\le C_{\varphi}|\delta Y_{u,v}|^{\varkappa_{\varphi}}$. The source sums $R$ over a
partition and takes the limit in $L_{1}$. In $L_{2}$ the estimate would read
\begin{equation}\label{eq:rItoLnL2gap}
\big\|\,|\delta Y_{s,t}|^{\varkappa_{\varphi}}\big\|_{L_{2}}
=\big\|\delta Y_{s,t}\big\|^{\varkappa_{\varphi}}_{L_{2\varkappa_{\varphi}}},
\qquad 2\varkappa_{\varphi}>2/\gamma>4 ,
\end{equation}
while Step~\ref{stp:rIto1} delivers $\delta Y$ only in $L_{4}$: in
$\|\xi\|_{L_{4}}\le\|\xi\|_{4,\mfn }\le\|\xi\|_{L_{\mfn }}$ the right half runs the wrong way, and raising $\mfn $
does not help. In $L_{1}$ everything closes, since
$\varkappa_{\varphi}\le3<4$ gives $\|\cdot\|_{L_{\varkappa_{\varphi}}}\le\|\cdot\|_{L_{4}}$ on a
probability space and hence, for every partition $\pi$ of $[0,t]$,
\begin{equation}\label{eq:rItoLnL1}
\E\Big|\sum_{[u,v]\in\pi}R_{u,v}\Big|
\ \le\ C_{\varphi}\sum_{[u,v]\in\pi}\big\|\delta Y_{u,v}\big\|^{\varkappa_{\varphi}}_{L_{\varkappa_{\varphi}}}
\ \le\ C_{\varphi}\Big(\sup_{s<t}\frac{\|\delta Y_{s,t}\|_{L_{4}}}{|t-s|^{\gamma}}\Big)^{\varkappa_{\varphi}}
t\,|\pi|^{\varkappa_{\varphi}\gamma-1}\ \longrightarrow\ 0 ,
\end{equation}
the rate being positive because $\varkappa_{\varphi}\gamma>1$. Only the inner index $4$ appears in \eqref{eq:rItoLnL1}, so Step~\ref{stp:rIto1} is
needed here at outer index $4$ alone, where the first display of \eqref{eq:rItoLnfrozen} collapses to
$\|Y'_{s}\delta Z_{s,t}\|_{4,4}=\|Y'_{s}\|_{L_{4}}|\delta Z_{s,t}|$ and requires
$\sup_{t}\|Y'_{t}\|_{L_{4}}<\infty$. The germ sums therefore
converge to $\varphi(Y_{\cdot})-\varphi(Y_{0})$ in $L_{1}$, and Step~\ref{stp:rIto4} identifies the limit.

\emph{Step 4: identification.}\stepnum{4}{stp:rIto4} Lemma~\ref{lem:SSL} applies to
the germ $A$ at $m=2$. Write $\mathcal I$ for the right-hand side of the formula $\varphi(Y_{t})-\varphi(Y_{0})=\mathcal I_{t}$, that is, for the sum of the drift and It\^o
integrals and the two rough terms, and
$D_{s,t}:=\mathcal I_{t}-\mathcal I_{s}-A_{s,t}$. Then
$\delta A_{s,u,t}=-D_{s,t}+D_{s,u}+D_{u,t}$, and $\|\E_{s}D_{u,t}\|_{L_{2}}\le\|\E_{u}D_{u,t}\|_{L_{2}}$
by the tower property, so the two estimates on $D$ that Step~\ref{stp:rIto2} assembles, of orders
that \eqref{eq:paramsIto} puts above $\tfrac12$ and above $1$, give the two defect bounds of
Lemma~\ref{lem:SSL}. Those same estimates make $\mathcal I$ the sewing of $A$, by the uniqueness in
that lemma. So the
Riemann sums converge to $\mathcal I$ in $L_{2}$, by the Riemann-sum clause of
Lemma~\ref{lem:SSL}, and to $\varphi(Y_{\cdot})-\varphi(Y_{0})$ in $L_{1}$ by
Step~\ref{stp:rIto3}. Both are limits in probability, so the two agree almost surely at each $t$
and, both being continuous, up to indistinguishability. That is the rough It\^o formula displayed in
Remark~\ref{rem:MPbracket}, and no index condition beyond $\mfn >4$ was used.

{\color{blue}Proposition~\ref{prop:Taylorsew} reaches the same identity without forming a Riemann
sum, and in $L_{2}$ rather than $L_{1}$. It asks $\mfn >2/\gamma$ in return. The present lemma does not
need it.\color{black}}
\end{proof}

\subsection{The two notions agree}\label{subsec:MPequiv}

 Let us directly state the linear growth conditions on the controlled pair:
\begin{equation}\label{eq:MProughgrowth}
\big|f_{t}(x)\big|+\big|(Df_{t}f_{t}+f'_{t})(x)\big|\ \le\ C_{\mathrm{gr}}\big(1+|x|\big),
\qquad (t,x)\in[0,T]\times\R^{n} .
\end{equation}

\begin{proposition}[The two notions agree]\label{prop:MPequiv}
Let $m\ge2$ and let \eqref{eq:Egrowth} and \eqref{eq:MProughgrowth} hold.
\begin{enumerate}[label=\normalfont(\roman*),leftmargin=2.4em]
\item\label{mpe1} \emph{(Forward.)} Let $m_{0}\ge3m$. If $X$ is an $L_{m_{0}}$-conditional solution of
\eqref{eq:rsde} on some stochastic basis such that $\|\sup_{t\le T}|X_{t}|\|_{L_{m'}}<\infty$ for some
$m'\ge3m$, then the law of $(X,B)$ solves the rough martingale problem of
Definition~\ref{def:roughMP} at exponent $m$.
\item\label{mpe2} \emph{(Converse.)} If the law of $(X,B)$ solves Definition~\ref{def:roughMP}, then $X$
is a solution of \eqref{eq:rsde} in the \emph{$L_{m}$-conditional} sense of
Definition~\ref{def:davie}, on the filtration generated by $(X,B)$. We can take the localization along
the exit times of the coordinate process,
\begin{equation}\label{eq:MPlocseq}
\tau_{R}:=\inf\{t\ge0:\ |X_{t}|\ge R\},\qquad R\ge1,
\end{equation}
for which \ref{MP0} gives more than $\tau_{R}\wedge T\uparrow T$ almost surely, namely $\tau_{R}>T$
for all $R$ large enough on almost every path.
\end{enumerate}
\end{proposition}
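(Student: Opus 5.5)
For \ref{mpe1} the plan is to compute $D^{\varphi}_{s,t}$ directly by a Taylor expansion of $\varphi(X_t)-\varphi(X_s)$ to third order around $X_s$, with $\delta X_{s,t}$ replaced by its Davie decomposition \eqref{eq:davie}. Write $\delta X_{s,t}=I_{s,t}+M_{s,t}+f_s(X_s)\delta Z_{s,t}+\mathsf G_s(X_s)\ZZ_{s,t}+J_{s,t}$, where $I$ is the drift integral and $M$ the It\^o integral. Before expanding, pass to the filtration generated by $(X,B)$ via Lemma~\ref{lem:filtdescend}, so that the conditioning field is the one of Definition~\ref{def:roughMP}. Then:
\begin{enumerate}[label=(\alph*)]
\item The first-order term $D\varphi(X_s)\cdot\delta X_{s,t}$ produces the $\mathcal G^\kappa$ and $\mathcal G'$ parts of the germ, together with $D\varphi(X_s)J_{s,t}$. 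Since $D\varphi(X_s)$ is $\mathcal G_s$-measurable and bounded by $\Lambda$, this last term inherits both moduli of $J$ uniformly over $\tnorm\varphi\le\Lambda$.
\item The second-order term, with the rough part squared and \eqref{eq:brackettaylor} applied, produces the $\mathcal Q$-parts at level $\ZZ$ and $\CZ$, exactly as in Remark~\ref{rem:MPbracket}. Its It\^o part $\tfrac12 D^2\varphi(X_s)(M_{s,t},M_{s,t})$ is compared with $\int_s^t\tfrac12 a:D^2\varphi\,\d r$ through It\^o's formula for $M$.
\item The difference between $D\varphi(X_s)M_{s,t}$ and $\delta N^\varphi_{s,t}$ is $\int_s^t(D\varphi(X_r)-D\varphi(X_s))\sigma\,\d B$. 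It has zero conditional mean. Its $L_m$ norm is $O(|t-s|^{1/2+\gamma})$, by BDG and $\|\delta X_{s,r}\|\lesssim|r-s|^\gamma$ in $L_{2m}$.
\item All remaining cross and cubic terms are products of at most three increments, each weighted by at most two powers of $(1+|X_s|)$ through \eqref{eq:Egrowth} and \eqref{eq:MProughgrowth}. Every such term has order strictly larger than $1$, since $3\gamma>1$ and the mixed terms have order at least $\gamma+\tfrac12$ or $2\gamma+\tfrac12$. H\"older's inequality with three factors in $L_{3m}$ then bounds them in $L_m$, and this is where $m_0,m'\ge3m$ is used.
\end{enumerate}
Terms of order above $1$ satisfy both \ref{MP1} and \ref{MP2}. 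The borderline term $D^2\varphi(X_s)(f_s\delta Z,M)$ has order $\gamma+\tfrac12$, which is less than $1$. It has zero conditional mean, however, because $M$ is a martingale increment, and $\gamma+\tfrac12>\tfrac12$ is enough for \ref{MP2}. Finally, \ref{MP0} follows from the moment hypothesis on $\sup|X|$. The Brownian property of $B$ transfers to the smaller filtration, as in Lemma~\ref{lem:filtdescend}.

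For \ref{mpe2} the plan is to apply the martingale problem to coordinate functions made bounded. Take $\varphi_R$ equal to $x_i$ on $\{|x|\le R\}$, with $\sup_R\tnorm{\varphi_R}\le\Lambda_0$, as in Remark~\ref{rem:MPstrong}. On $\{\tau_R>t\}$ we have $D^2\varphi_R(X_r)=0$ along $[s,t]$. There, $\mathcal Q\varphi_R$ vanishes, $\mathcal L\varphi_R=b_i$ and $\mathcal G^\kappa\mathcal G^\lambda\varphi_R+\mathcal G'^{\kappa\lambda}\varphi_R=(\mathsf G_s)_i$, so $D^{\varphi_R}_{s,t}=J^i_{s,t}$. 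To fix the identification of $N^{\varphi_R}$ with $\int\sigma\,\d B$ up to $\tau_R$, use the Riemann-sum construction of Lemma~\ref{lem:itoraw}. Hence
\[
J_{s,t}=D^{\varphi_R}_{s,t}+\big(J_{s,t}-D^{\varphi_R}_{s,t}\big)\mathbf 1_{\{\tau_R\le t\}},
\]
and \ref{MP1}--\ref{MP2} control the first summand with moduli independent of $R$.

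The main obstacle is the correction term on $\{\tau_R\le t\}$. Since $\tau_R>T$ eventually almost surely, this term tends to $0$ almost surely as $R\to\infty$, for each fixed pair $(s,t)$. To pass to the limit in $L_m$, I would dominate it using the linear growth of all coefficients together with \ref{MP0}, and use uniform integrability of $|D^{\varphi_R}_{s,t}|^m$, which \ref{MP2} supplies uniformly in $R$. This gives $\|J_{s,t}\|_{L_m}\le\omega'_{\Lambda_0}(|t-s|)$ and $\|\E[J_{s,t}\mid\mathcal G_s]\|_{L_m}\le\omega_{\Lambda_0}(|t-s|)$, uniformly in $s\le t$, by lower semicontinuity of the norms under almost sure convergence with domination.

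A subtle point is that the dominating bound needs $J_{s,t}\in L_m$. This requires the drift and rough terms evaluated at $X_s$ to be in $L_m$, which holds under linear growth and \ref{MP0}. The remaining ingredient there is $\|\int\sigma\,\d B\|_{L_m}$, which follows from BDG.
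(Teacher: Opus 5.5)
Your plan follows the paper's route in both directions: Taylor expansion to third order with the Davie expansion substituted, germ matching through \eqref{eq:brackettaylor}, the centred mismatch $\int_s^t(\nabla\varphi(X_r)-\nabla\varphi(X_s))\cdot\sigma\,\d B$ set aside, three-factor H\"older at $3m$, and localisation by $x_i\chi(x/R)$ for the converse. However, step (d) of the forward direction has a gap.

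Squaring the Davie decomposition in the second-order Taylor term produces the cross term $D^{2}\varphi(X_s)\bigl(f_s(X_s)\,\delta Z_{s,t},\,J_{s,t}\bigr)$. The hypothesis controls $J$ unconditionally only at $o(|t-s|^{1/2})$. This term is therefore only $o(|t-s|^{\gamma+1/2})$ in $L_m$, which is not $o(|t-s|)$ because $\gamma<\tfrac12$. It is also not conditionally centred, since $\E[J_{s,t}\mid\mathcal G_s]$ need not vanish. So it falls under neither of your two mechanisms. Your claim that everything left after (a)--(c) has order above $1$, apart from the centred $D^{2}\varphi(X_s)(f_s\delta Z_{s,t},M_{s,t})$, is false, and as written \ref{MP1} is not proved for this term.

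The repair uses the idea of your step (a), but not in its trivial form. The factor $D^{2}\varphi(X_s)\bigl(f_s(X_s)\delta Z_{s,t},\cdot\,\bigr)$ is $\mathcal G_s$-measurable. It therefore belongs with $\nabla\varphi(X_s)$ in a single weight $W^{\varphi}_{s,t}$ multiplying $J_{s,t}$. Conditioning pulls this weight out and leaves $|\delta Z_{s,t}|\cdot\E[J_{s,t}\mid\mathcal G_s]=O(|t-s|^{\gamma})\,o(|t-s|)$. Unlike $\nabla\varphi(X_s)$, though, this weight grows linearly in $X_s$. Without the $L_{m,\infty}$ form \eqref{eq:daviemoduliinfty} you cannot simply factor out $\|W^{\varphi}_{s,t}\|_{L_m}$. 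You need an uneven H\"older split $\tfrac1m=\tfrac1r+\tfrac1{r'}$ with $r'\le m_0$ and $r\le m'$, e.g.\ $r'=3m$ and $r=\tfrac32m$, for both \ref{MP1} and \ref{MP2}. This is a second use of $m_0>m$, separate from the degree count you identified; it is the paper's $W^{\varphi}J$ split.

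In the converse, two claims need correcting. First, the uniform bound of \ref{MP2} gives uniform integrability of $D^{\varphi_R}_{s,t}$ itself (in $L_1$, or in $L_q$ for $q<m$), not of $|D^{\varphi_R}_{s,t}|^{m}$. Second, a domination off $\{\tau_R>t\}$ by linear growth and \ref{MP0} alone fails. Without more, $a\!:\!D^{2}\varphi_R$ and $\mathcal Q^{\kappa\lambda}\varphi_R$ have state degree $2$. You must also use $\|D^{k}\varphi_R\|_{\infty}\lesssim R^{-(k-1)}$ on $\{R\le|x|\le2R\}$ for $k\ge2$, as the paper does, and treat $\delta N^{\varphi_R}-\delta N^{i}$ separately by Burkholder--Davis--Gundy. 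Alternatively, you can skip domination altogether. Almost surely $D^{\varphi_R}_{s,t}=J^{i}_{s,t}$ for all large $R$, so Fatou gives the \ref{MP2} bound and $J\in L_m$. Vitali then gives $L_1$-convergence of $\E[D^{\varphi_R}_{s,t}\mid\mathcal G_s]$ to $\E[J^{i}_{s,t}\mid\mathcal G_s]$, and Fatou along an almost surely convergent subsequence transfers the \ref{MP1} bound.
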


\begin{remark}[The factor $3$]\label{rem:MPfactor3}
The exponent $3m$ enters \ref{mpe1} at three places. At each of them the $3$ is the order
$\varkappa_{\varphi}$ of the test class.

The first place is the Taylor remainder in \eqref{eq:MPR3}, bounded through
$\||\xi|^{\varkappa_{\varphi}}\|_{L_{m}}=\|\xi\|^{\varkappa_{\varphi}}_{L_{\varkappa_{\varphi} m}}$.
Its rate is $o(|t-s|)$ as soon as $\varkappa_{\varphi}\gamma>1$. The second is the mismatch term
$\tilde Q^{\varphi}$ in \eqref{eq:PQtildebound}, bounded through
$\Op{D^{2}\varphi(X_{r})-D^{2}\varphi(X_{s})}\le\|D^{3}\varphi\|_{\infty}|\delta X_{s,r}|$ and the
H\"older triple $(3,3,3)$. The third is the top degree of the envelope \eqref{eq:Dphibound} in
Step~\ref{stp:MP13}, where generalised H\"older is applied at $N=3$. That degree comes from the
Taylor row and is then inherited: Step~\ref{stp:MP7} records that substituting \eqref{eq:davie} for
$\delta X$ leaves the total degree unchanged, and Step~\ref{stp:MP8} that its own substitutions do
not raise it, the row quadratic in $J$ leaving a power to spare. Any $\varkappa_{\varphi}>1/\gamma$
would be enough at all three.

If we take instead the test class $C^{2,\alpha}_{b}$ with $\alpha>\tfrac1\gamma-2$, valid
 because $\gamma>\tfrac13$, and replace the seminorm \eqref{eq:seminorm3} by
$\|D\varphi\|_{\infty}+\|D^{2}\varphi\|_{\infty}+[D^{2}\varphi]_{\alpha}$. At the first place $3m$
becomes $(2+\alpha)m$. At the second it becomes $(2+\alpha)m$ as well, once the H\"older pair is
rebalanced to $\big(\tfrac{2+\alpha}\alpha,\tfrac{2+\alpha}2\big)$. The rate then drops to
$O(|t-s|^{1+\alpha\gamma})$, which is weaker but still enough. At the third, the envelope has to be
rebuilt at top degree $2+\alpha$, where the clearing exponent of Step~\ref{stp:MP13} becomes
$2+\alpha-j-\#$, non-negative at every $j$ and zero only on the Taylor row.

This would weaken two conditions elsewhere. The $p>6$ of \eqref{eq:Ep} becomes $p>2(2+\alpha)$. The
$p>12(\tfrac1\gamma-1)^{2}$ that the exactly one half of
Theorem~\ref{thm:YWtransfer}\ref{ywc} requires in the proof of the strong well-posedness corollary
of \cite{HuberII} becomes $p>4(2+\alpha)(\tfrac1\gamma-1)^{2}$. Over the admissible $\alpha$ these
two bounds have infima $2/\gamma$ and $\tfrac4\gamma(\tfrac1\gamma-1)^{2}$, neither attained.

The localizers of \ref{mpe2} survive the larger class. Applying
$[u]_{\alpha}\le(2\|u\|_{\infty})^{1-\alpha}\|Du\|^{\alpha}_{\infty}$ to $u=D^{2}\varphi_{R}$ gives
$[D^{2}\varphi_{R}]_{\alpha}\le CR^{-1-\alpha}$, uniformly in $R\ge1$. On this route the factor
cannot reach $2$: $|\delta X_{s,t}|^{\varkappa_{\varphi}}$ carries no conditional centring, so only
the unconditional rate $\varkappa_{\varphi}\gamma>1$ is available. We prove no lower bound.

We have not carried the change through, since the potential gain did not outweigh the further notational load. Appendix~\ref{app:MPenvelope} is written at
$\varkappa_{\varphi}=3$; the separability step of Lemma~\ref{lem:MPborel}\ref{mpb:borel} uses
$C^{3}_{b}$; and the graded seminorm of Proposition~\ref{prop:gradedequiv} would have to be taken at
$\max\{k,2+\alpha\}$. Everything below is stated at $\varkappa_{\varphi}=3$.
\end{remark}

\begin{proof}[Proof of Proposition~\ref{prop:MPequiv}]
Since the proof is rather long, we split it into several steps.
\emph{Forward,} \ref{mpe1}: \ref{MP0} is immediate. Steps~\ref{stp:MP1}--\ref{stp:MP8} produce the pointwise envelope
\eqref{eq:Dphibound}, Step~\ref{stp:MP9} proves the two $L_{m}$-bounds \eqref{eq:Ntildebound} and
\eqref{eq:PQtildebound} that Step~\ref{stp:MP7} states for the three terms \eqref{eq:Dphibound} sets aside,
Step~\ref{stp:MP10} records the exact decomposition \eqref{eq:Dphisplit} that a pointwise bound cannot see,
Step~\ref{stp:MP11} the four facts the estimates rest on, and Steps~\ref{stp:MP12}--\ref{stp:MP13} assemble the norms and read off
\ref{MP1}--\ref{MP2}. \emph{Converse,} \ref{mpe2}: after a localization the two
conditions of \eqref{eq:daviemoduli} are established in reverse order, the unconditional modulus
first and the conditional one second. 

\medskip\noindent\textbf{\ref{mpe1} Forward.}
By Lemma~\ref{lem:filtdescend} we can take the filtration to be the raw generated one of
Definition~\ref{def:roughMP}. \ref{MP0} is the
integrability in Definition~\ref{def:weaksol}. Every term of \eqref{eq:roughMP} other
than $N^{\varphi}$ is a continuous functional of $(X,B)$ by \eqref{eq:standingcts}, and
$N^{\varphi}$ is taken to be the version of
Lemma~\ref{lem:MPborel}\ref{mpb:version}.

For \ref{MP1}--\ref{MP2} we derive \eqref{eq:Dphibound} directly. The ingredients are
\eqref{eq:davie}, the bracket identity
\eqref{eq:brackettaylor}, the operator identity behind Lemma~\ref{lem:G2}, the It\^o isometry with Burkholder--Davis--Gundy, and
the growth bounds \eqref{eq:Egrowth} and \eqref{eq:MProughgrowth}.
So the strategy is: expand the first terms appearing in $D^{\varphi}_{s,t}$, and then see which of the pieces it breaks into are already sitting there with a minus sign in front of them.

\emph{Step~1: the data along the path.}\stepnum{1}{stp:MP1} By \eqref{eq:Egrowth} the sum $|b_{t}(x)|+\|\sigma_{t}(x)\|$ and by
\eqref{eq:MProughgrowth} the sum $|f_{t}(x)|+|(Df\,f+f')_{t}(x)|$ are each at most
$C_{\mathrm{gr}}(1+|x|)$, so that for every $\mfn \le m'$
\begin{equation}\label{eq:MPdata}
\sup_{t\le T}\Big(\big\|b_{t}(X_{t})\big\|_{L_{\mfn }}+\big\|\sigma_{t}(X_{t})\big\|_{L_{\mfn }}
+\big\|f_{t}(X_{t})\big\|_{L_{\mfn }}+\big\|(Df\,f+f')_{t}(X_{t})\big\|_{L_{\mfn }}\Big)
\ \le\ 4C_{\mathrm{gr}}\Big(1+\big\|\sup_{t\le T}|X_{t}|\big\|_{L_{\mfn }}\Big),
\end{equation}
finite by the hypothesis $\|\sup_{t\le T}|X_{t}|\|_{L_{m'}}<\infty$ of \ref{mpe1}. 

\emph{Step~2: the Taylor expansion of $\delta\varphi(X)$.}\stepnum{2}{stp:MP2} The starting point is the Taylor expansion
of $\delta\varphi(X)_{s,t}$ about $X_{s}$ to order $3$,
\begin{equation}\label{eq:MPtaylor}
\delta\varphi(X)_{s,t}
=\nabla\varphi(X_{s})\cdot\delta X_{s,t}
+\tfrac12D^{2}\varphi(X_{s})\!:\!\delta X_{s,t}^{\otimes2}
+\mathcal R^{\varphi,(3)}_{s,t},
\qquad
\big|\mathcal R^{\varphi,(3)}_{s,t}\big|\ \le\ \tfrac16\|D^{3}\varphi\|_{\infty}\big|\delta X_{s,t}\big|^{3},
\end{equation}
into which we substitute the Davie expansion
\eqref{eq:davie} of $\delta X_{s,t}$.

 The germ is subtracted by the definition of $D^{\varphi}$: besides $\int_{s}^{t}\mathcal L_{r}\varphi(X_{r})\dr$
and $\delta N^{\varphi}_{s,t}$, \eqref{eq:roughMP} subtracts the three germ terms, collected as
$\mathfrak G^{\varphi}_{s,t}$ at \eqref{eq:MPgerm} below, and what has to be shown is that the
rewritten $\delta\varphi(X)_{s,t}$ reproduces $\mathfrak G^{\varphi}_{s,t}$ in full. Level one is
\eqref{eq:gendef}; level two needs the bracket identity \eqref{eq:brackettaylor} as well, the
second-order Taylor term producing $\delta Z^{\kappa}_{s,t}\delta Z^{\lambda}_{s,t}$ and not a
level-two increment. Steps~4--6 do this.

In $L_{m}$ the third-order remainder obeys
\begin{equation}\label{eq:MPR3}
\big\|\mathcal R^{\varphi,(3)}_{s,t}\big\|_{L_{m}}
\ \le\ \tfrac16\|D^{3}\varphi\|_{\infty}\,\big\|\delta X_{s,t}\big\|^{3}_{L_{3m}}
\ =\ O\big(|t-s|^{3\gamma}\big)\ =\ o\big(|t-s|\big),
\qquad 3\gamma>1 ,
\end{equation}
by the identity $\big\||\xi|^{3}\big\|_{L_{m}}=\|\xi\|^{3}_{L_{3m}}$, the rate
$\|\delta X_{s,t}\|_{L_{3m}}=O(|t-s|^{\gamma})$ being the one computed in Step~\ref{stp:MP9}\ref{stp:MP9i} at
$2m$ and re-read there at $3m$.

\emph{Step~3: notation for the five Davie summands and for the subtracted germ.}\stepnum{3}{stp:MP3} Throughout this proof we abbreviate
the five summands of the Davie expansion \eqref{eq:davie} at the pair $(s,t)$ by
\begin{equation}\label{eq:MPpieces}
\begin{gathered}
A_{s,t}:=\int_{s}^{t}\!b_{r}(X_{r})\dr,
\qquad
M_{s,t}:=\int_{s}^{t}\!\sigma_{r}(X_{r})\,\d B_{r},
\qquad
G^{1}_{s,t}:=\sum_{\kappa}f^{\kappa}_{s}(X_{s})\,\delta Z^{\kappa}_{s,t},
\\
G^{2}_{s,t}:=\sum_{\kappa,\lambda}\big(Df^{\lambda}_{s}f^{\kappa}_{s}+f'^{\kappa\lambda}_{s}\big)(X_{s})\,
\ZZ^{\kappa\lambda}_{s,t},
\qquad\text{so that}\qquad
\delta X_{s,t}=A_{s,t}+M_{s,t}+G^{1}_{s,t}+G^{2}_{s,t}+J_{s,t} ,
\end{gathered}
\end{equation}
and we denote by
\begin{equation}\label{eq:MPgerm}
\mathfrak G^{\varphi}_{s,t}:=\sum_{\kappa}\big(\mathcal G^{\kappa}_{s}\varphi\big)(X_{s})\,\delta Z^{\kappa}_{s,t}
+\sum_{\kappa,\lambda}\big(\mathcal G^{\kappa}_{s}\mathcal G^{\lambda}_{s}
+\mathcal G'^{\kappa\lambda}_{s}\big)\varphi(X_{s})\,\ZZ^{\kappa\lambda}_{s,t}
+\tfrac12\sum_{\kappa,\lambda}\big(\mathcal Q^{\kappa\lambda}_{s}\varphi\big)(X_{s})\,\CZ^{\kappa\lambda}_{s,t}
\end{equation}
the germ that \eqref{eq:roughMP} subtracts, so that
$D^{\varphi}_{s,t}=\delta\varphi(X)_{s,t}-\int_{s}^{t}\mathcal L_{r}\varphi(X_{r})\dr
-\delta N^{\varphi}_{s,t}-\mathfrak G^{\varphi}_{s,t}$.

\emph{Step~4: the first-order Taylor term.}\stepnum{4}{stp:MP4} Substituting \eqref{eq:MPpieces} into
$\nabla\varphi(X_{s})\cdot\delta X_{s,t}$ and reading
$\sum_{i}f^{\kappa}_{i}\partial_{i}\varphi=\mathcal G^{\kappa}\varphi$ off \eqref{eq:gendef} yields
\begin{equation}\label{eq:MPfirstorder}
\begin{aligned}
\nabla\varphi(X_{s})\cdot\delta X_{s,t}
={}&\nabla\varphi(X_{s})\cdot A_{s,t}+\nabla\varphi(X_{s})\cdot M_{s,t}
+\sum_{\kappa}\big(\mathcal G^{\kappa}_{s}\varphi\big)(X_{s})\,\delta Z^{\kappa}_{s,t}\\
&\phantom{xx}{}+\sum_{\kappa,\lambda}\Big(\sum_{i}\big(Df^{\lambda}_{s}f^{\kappa}_{s}
+f'^{\kappa\lambda}_{s}\big)_{i}\,\partial_{i}\varphi\Big)(X_{s})\,\ZZ^{\kappa\lambda}_{s,t}
+\nabla\varphi(X_{s})\cdot J_{s,t} .
\end{aligned}
\end{equation}

\emph{Step~5: the second-order Taylor term.}\stepnum{5}{stp:MP5} The Hessian is symmetric, so
$D^{2}\varphi(X_{s})\!:\!(u\otimes v)=D^{2}\varphi(X_{s})(u,v)=D^{2}\varphi(X_{s})(v,u)$, and squaring
\eqref{eq:MPpieces} yields
\begin{equation}\label{eq:MPsecondorder}
\begin{aligned}
\tfrac12D^{2}\varphi(X_{s})\!:\!\delta X_{s,t}^{\otimes2}
={}&\tfrac12D^{2}\varphi(X_{s})\!:\!\big(G^{1}_{s,t}\big)^{\otimes2}
+\tfrac12D^{2}\varphi(X_{s})\!:\!M_{s,t}^{\otimes2}\\
&\phantom{xx}{}+D^{2}\varphi(X_{s})\big(G^{1}_{s,t}+G^{2}_{s,t},\,M_{s,t}\big)
+D^{2}\varphi(X_{s})\big(G^{1}_{s,t}+G^{2}_{s,t},\,J_{s,t}\big)\\
&\phantom{xx}{}+D^{2}\varphi(X_{s})\big(G^{1}_{s,t},G^{2}_{s,t}\big)
+\tfrac12D^{2}\varphi(X_{s})\!:\!\big(G^{2}_{s,t}\big)^{\otimes2}\\
&\phantom{xx}{}+D^{2}\varphi(X_{s})\big(M_{s,t},J_{s,t}\big)
+\tfrac12D^{2}\varphi(X_{s})\!:\!J_{s,t}^{\otimes2}\\
&\phantom{xx}{}+D^{2}\varphi(X_{s})\big(A_{s,t},\,\delta X_{s,t}-\tfrac12A_{s,t}\big) .
\end{aligned}
\end{equation}
The last line collects every term carrying $A_{s,t}$, using
$\tfrac12D^{2}\varphi\!:\!\delta X^{\otimes2}
=\tfrac12D^{2}\varphi\!:\!(\delta X-A)^{\otimes2}+D^{2}\varphi(A,\delta X-\tfrac12A)$.

\emph{Step~6: the expansion reproduces the germ, leaving
\eqref{eq:MPleftover}.}\stepnum{6}{stp:MP6}
We match \eqref{eq:MPgerm} against \eqref{eq:MPfirstorder} and \eqref{eq:MPsecondorder}, one level
at a time, the two level-two summands jointly. Components are written
$\big(f^{\kappa}_{s}\big)_{i}$, just as
$\big(Df^{\lambda}_{s}f^{\kappa}_{s}+f'^{\kappa\lambda}_{s}\big)_{i}$ is written in
\eqref{eq:MPfirstorder}, and the argument $(X_{s})$ of a coefficient is suppressed
where no confusion can arise.

\emph{Level one.} By the definition of $\mathcal G^{\kappa}$ in \eqref{eq:gendef},
\begin{equation}\label{eq:MPcancel1}
\nabla\varphi(X_{s})\cdot G^{1}_{s,t}
=\sum_{i}\partial_{i}\varphi(X_{s})\sum_{\kappa}\big(f^{\kappa}_{s}\big)_{i}(X_{s})\,\delta Z^{\kappa}_{s,t}
=\sum_{\kappa}\big(\mathcal G^{\kappa}_{s}\varphi\big)(X_{s})\,\delta Z^{\kappa}_{s,t},
\end{equation}
which is the $\delta Z$-term of \eqref{eq:MPfirstorder} and the first summand of
\eqref{eq:MPgerm}.

\emph{Level two: the part supplied by the first-order Taylor term.} The $\ZZ$-term of
\eqref{eq:MPfirstorder} is
\begin{equation}\label{eq:MPcancel2a}
\nabla\varphi(X_{s})\cdot G^{2}_{s,t}
=\sum_{\kappa,\lambda}\Big(\sum_{i}\big(Df^{\lambda}_{s}f^{\kappa}_{s}+f'^{\kappa\lambda}_{s}\big)_{i}
\,\partial_{i}\varphi\Big)(X_{s})\,\ZZ^{\kappa\lambda}_{s,t} .
\end{equation}
On its own this is not the second summand of \eqref{eq:MPgerm}, whose coefficient is
$(\mathcal G^{\kappa}_{s}\mathcal G^{\lambda}_{s}+\mathcal G'^{\kappa\lambda}_{s})\varphi$. Expanding
that by the product rule out of \eqref{eq:gendef} and reading the carr\'e du champ off
\eqref{eq:carre}, we obtain
\begin{equation}\label{eq:MPcancelG2}
\big(\mathcal G^{\kappa}_{s}\mathcal G^{\lambda}_{s}+\mathcal G'^{\kappa\lambda}_{s}\big)\varphi
=\sum_{i}\big(Df^{\lambda}_{s}f^{\kappa}_{s}+f'^{\kappa\lambda}_{s}\big)_{i}\,\partial_{i}\varphi
\ +\ \mathcal Q^{\kappa\lambda}_{s}\varphi ,
\end{equation}
This is the identity in the proof of Lemma~\ref{lem:G2}, by the product rule and
\eqref{eq:carre} instead of the Davie expansion. The second half of \eqref{eq:MPcancelG2} is missing from
\eqref{eq:MPcancel2a}. That half, and the whole third summand of \eqref{eq:MPgerm}, come from the
second-order Taylor term.

\emph{Level two: the part supplied by the second-order Taylor term.} Writing out the leading summand
of \eqref{eq:MPsecondorder} and reading the carr\'e du champ off \eqref{eq:carre}, we obtain
\begin{equation}\label{eq:MPcancel2b}
\begin{aligned}
\tfrac12D^{2}\varphi(X_{s})\!:\!\big(G^{1}_{s,t}\big)^{\otimes2}
&=\tfrac12\sum_{i,j}\partial^{2}_{ij}\varphi(X_{s})\sum_{\kappa,\lambda}
\big[\big(f^{\kappa}_{s}\big)_{i}\big(f^{\lambda}_{s}\big)_{j}\big](X_{s})\,
\delta Z^{\kappa}_{s,t}\delta Z^{\lambda}_{s,t}\\
&=\tfrac12\sum_{\kappa,\lambda}\big(\mathcal Q^{\kappa\lambda}_{s}\varphi\big)(X_{s})\,
\delta Z^{\kappa}_{s,t}\delta Z^{\lambda}_{s,t} .
\end{aligned}
\end{equation}
What \eqref{eq:MPcancel2b} carries is the product
$\delta Z^{\kappa}_{s,t}\delta Z^{\lambda}_{s,t}$, which is not a level-two increment. The bracket
identity \eqref{eq:brackettaylor} is what supplies one, and it supplies a $\CZ$ alongside:
\begin{equation}\label{eq:MPcancelbr}
\tfrac12\sum_{\kappa,\lambda}\mathcal Q^{\kappa\lambda}_{s}\varphi\,
\delta Z^{\kappa}_{s,t}\delta Z^{\lambda}_{s,t}
=\tfrac12\sum_{\kappa,\lambda}\mathcal Q^{\kappa\lambda}_{s}\varphi\,\ZZ^{\kappa\lambda}_{s,t}
+\tfrac12\sum_{\kappa,\lambda}\mathcal Q^{\kappa\lambda}_{s}\varphi\,\ZZ^{\lambda\kappa}_{s,t}
+\tfrac12\sum_{\kappa,\lambda}\mathcal Q^{\kappa\lambda}_{s}\varphi\,\CZ^{\kappa\lambda}_{s,t} .
\end{equation}
In the middle sum we exchange the names of the two summation indices and then use
\eqref{eq:MPQsym},
\begin{equation}\label{eq:MPQsym}
\mathcal Q^{\lambda\kappa}_{s}\varphi
=\sum_{i,j}\big(f^{\lambda}_{s}\big)_{i}\big(f^{\kappa}_{s}\big)_{j}\,\partial^{2}_{ij}\varphi
=\sum_{i,j}\big(f^{\kappa}_{s}\big)_{i}\big(f^{\lambda}_{s}\big)_{j}\,\partial^{2}_{ji}\varphi
=\mathcal Q^{\kappa\lambda}_{s}\varphi ,
\end{equation}
the middle equality by exchanging the names of $i$ and $j$ and the last by the symmetry of the
Hessian. The middle sum of \eqref{eq:MPcancelbr} therefore equals the first, and the two together
carry $\mathcal Q^{\kappa\lambda}_{s}\varphi$ with coefficient $1$ rather than $\tfrac12$:
\begin{equation}\label{eq:MPcancel2c}
\tfrac12D^{2}\varphi(X_{s})\!:\!\big(G^{1}_{s,t}\big)^{\otimes2}
=\sum_{\kappa,\lambda}\big(\mathcal Q^{\kappa\lambda}_{s}\varphi\big)(X_{s})\,\ZZ^{\kappa\lambda}_{s,t}
+\tfrac12\sum_{\kappa,\lambda}\big(\mathcal Q^{\kappa\lambda}_{s}\varphi\big)(X_{s})\,
\CZ^{\kappa\lambda}_{s,t} .
\end{equation}

\emph{The two parts add to the remaining two summands of \eqref{eq:MPgerm}.} Adding
\eqref{eq:MPcancel2a} and \eqref{eq:MPcancel2c} and applying \eqref{eq:MPcancelG2}, we obtain
\begin{equation}\label{eq:MPcancel2}
\begin{aligned}
\nabla\varphi&(X_{s})\cdot G^{2}_{s,t}
+\tfrac12D^{2}\varphi(X_{s})\!:\!\big(G^{1}_{s,t}\big)^{\otimes2}\\
&=\ \sum_{\kappa,\lambda}\Big(\sum_{i}\big(Df^{\lambda}_{s}f^{\kappa}_{s}
+f'^{\kappa\lambda}_{s}\big)_{i}\,\partial_{i}\varphi
+\mathcal Q^{\kappa\lambda}_{s}\varphi\Big)(X_{s})\,\ZZ^{\kappa\lambda}_{s,t}
+\tfrac12\sum_{\kappa,\lambda}\big(\mathcal Q^{\kappa\lambda}_{s}\varphi\big)(X_{s})\,
\CZ^{\kappa\lambda}_{s,t}\\
&=\ \sum_{\kappa,\lambda}\big[\big(\mathcal G^{\kappa}_{s}\mathcal G^{\lambda}_{s}
+\mathcal G'^{\kappa\lambda}_{s}\big)\varphi\big](X_{s})\,\ZZ^{\kappa\lambda}_{s,t}
+\tfrac12\sum_{\kappa,\lambda}\big(\mathcal Q^{\kappa\lambda}_{s}\varphi\big)(X_{s})\,
\CZ^{\kappa\lambda}_{s,t},
\end{aligned}
\end{equation}
the second and third summands of \eqref{eq:MPgerm}. The $\CZ$-term is what \eqref{eq:brackettaylor} leaves behind,
and nothing else cancels it. That is the reason why \eqref{eq:roughMP} carries a $\CZ$-term at all, and
Remark~\ref{rem:MPbracket} records the same computation in advance.

Adding \eqref{eq:MPcancel1} and \eqref{eq:MPcancel2} gives exactly 
$\mathfrak G^{\varphi}_{s,t}$, so that subtracting the germ removes precisely three summands
of the expansion: the $\delta Z$- and $\ZZ$-terms of \eqref{eq:MPfirstorder} and the leading term of
\eqref{eq:MPsecondorder}. Every other summand of \eqref{eq:MPfirstorder} and
\eqref{eq:MPsecondorder} survives, together with $\mathcal R^{\varphi,(3)}_{s,t}$. The only regrouping
is that $\nabla\varphi(X_{s})\cdot J_{s,t}$ and
$D^{2}\varphi(X_{s})\big(G^{1}_{s,t}+G^{2}_{s,t},J_{s,t}\big)$ are collected into a single factor
against $J_{s,t}$, by the symmetry of the Hessian. This is \eqref{eq:MPleftover}:
\begin{equation}\label{eq:MPleftover}
\begin{aligned}
\delta\varphi(X)_{s,t}-\mathfrak G^{\varphi}_{s,t}
={}&\nabla\varphi(X_{s})\cdot A_{s,t}
\ +\ \nabla\varphi(X_{s})\cdot M_{s,t}
\ +\ \tfrac12D^{2}\varphi(X_{s})\!:\!M^{\otimes2}_{s,t}\\
&\phantom{xx}{}+\Big(\nabla\varphi(X_{s})+D^{2}\varphi(X_{s})
\big(\,\cdot\,,G^{1}_{s,t}+G^{2}_{s,t}\big)\Big)J_{s,t}\\
&\phantom{xx}{}+D^{2}\varphi(X_{s})\big(G^{1}_{s,t}+G^{2}_{s,t},M_{s,t}\big)\\
&\phantom{xx}{}+D^{2}\varphi(X_{s})\big(G^{1}_{s,t},G^{2}_{s,t}\big)
+\tfrac12D^{2}\varphi(X_{s})\!:\!\big(G^{2}_{s,t}\big)^{\otimes2}\\
&\phantom{xx}{}+D^{2}\varphi(X_{s})\big(M_{s,t},J_{s,t}\big)
+\tfrac12D^{2}\varphi(X_{s})\!:\!J^{\otimes2}_{s,t}\\
&\phantom{xx}{}+D^{2}\varphi(X_{s})\big(A_{s,t},\delta X_{s,t}-\tfrac12A_{s,t}\big)
\ +\ \mathcal R^{\varphi,(3)}_{s,t} .
\end{aligned}
\end{equation}
(The coefficient of $J_{s,t}$ on the second line is the factor $W^{\varphi}_{s,t}$ of
\eqref{eq:Dphisplit}.)

Steps~7--13, the construction of the envelope \eqref{eq:Dphibound} and the norms taken
against it, are Appendix~\ref{app:MPenvelope}. They yield \ref{MP1} and \ref{MP2} at exponent $m$,
which is \ref{mpe1}.

\medskip\noindent\textbf{\ref{mpe2} Converse.} What has to be produced is
\eqref{eq:daviemoduli} at exponent $m$ on the generated filtration: first the second condition, the
unconditional modulus at rate $|t-s|^{1/2}$, then the first, the conditional modulus at rate
$|t-s|$.

The coordinate maps are unbounded, so we localize inside a single $\tnorm\cdot$-ball. Fix
$\chi\in C^{\infty}$ such that $\chi=1$ on $B_{1}$, $\chi=0$ off $B_{2}$ and $0\le\chi\le1$, and set
$\varphi_{R}(x):=x_{i}\chi(x/R)$. By Leibniz we obtain, for $1\le k\le3$,
\[
D^{k}\varphi_{R}(x)=x_{i}R^{-k}(D^{k}\chi)(x/R)+k\,e_{i}\!\odot\!R^{-(k-1)}(D^{k-1}\chi)(x/R),
\]
where, for a vector $v\in\R^{n}$ and a symmetric $(k-1)$-tensor $S$, the symbol $v\odot S$ denotes the
symmetrised tensor product,
\[
(v\odot S)_{j_{1}\dots j_{k}}\ :=\ \frac1k\sum_{r=1}^{k}v_{j_{r}}\,S_{j_{1}\dots\widehat{j_{r}}\dots j_{k}} ,
\]
a hat denoting an omitted index. Since $(v\odot S)(u^{\otimes k})=(v\cdot u)\,S(u^{\otimes(k-1)})$, we
obtain $\|v\odot S\|\le|v|\,\|S\|$ in the operator norm, the only property of $\odot$ used below, at
$v=e_{i}$ with $|e_{i}|=1$. Both terms are supported where $|x|\le2R$; there $|x_{i}|\le2R$, so for
$R\ge1$ we obtain
\[
\big\|D^{k}\varphi_{R}\big\|_{\infty}
\ \le\ 2R\cdot R^{-k}\big\|D^{k}\chi\big\|_{\infty}+k\,R^{-(k-1)}\big\|D^{k-1}\chi\big\|_{\infty}
\ \le\ (2+k)\max_{0\le j\le3}\big\|D^{j}\chi\big\|_{\infty}\ \le\ C_{\chi}
\]
uniformly in $R\ge1$, and
\begin{equation}\label{eq:phiRunif}
\Lambda_{0}:=\sup_{R\ge1}\max_{i}\tnorm{\varphi_{R}}<\infty .
\end{equation}
On $\{\tau_{R}>t\}$, with $\tau_{R}$ denoting the exit time \eqref{eq:MPlocseq}, one has $\varphi_{R}(X_{r})=X^{i}_{r}$,
$\nabla\varphi_{R}(X_{r})=e_{i}$ and $D^{2}\varphi_{R}(X_{r})=0$ for $r\le t$. The bracket term of
\eqref{eq:roughMP} therefore vanishes there, $\mathcal Q^{\kappa\lambda}$ carrying two derivatives,
whence we obtain
\[
D^{\varphi_{R}}_{s,t}\one_{\{\tau_{R}>t\}}=J^{i}_{s,t}\one_{\{\tau_{R}>t\}}
\]
with $J^{i}$ denoting the $i$-th component of \eqref{eq:davie}. For the stochastic
integrals, this is locality, since on that event the two integrands agree on $[s,t]$. So the localization is insensitive to
the lift, and part \ref{mpe2} needs no geometricity hypothesis. The event $\{\tau_{R}>t\}$ is
$\mathcal G_{t}$- but not $\mathcal G_{s}$-measurable, so inserting it needs an argument, which is
dominated convergence and uses no exponent above $m$. Let $A_{R}:=\{\tau_{R}\le t\}$ denote the
exit event, so that $\Prob[A_{R}]\to0$ by \ref{MP0}. Indeed, \ref{MP0} gives $\|\sup_{u\le T}|X_{u}|\|_{L_{m}}<\infty$, hence
$S:=\sup_{u\le T}|X_{u}|<\infty$ almost surely, and then $|X_{u}|<R$ for every $u\le T$ whenever
$R>S$, that is $\tau_{R}>T$ for every $R>S$. This is the statement recorded after
\eqref{eq:MPlocseq}.

\emph{An $R$-free dominating function.} The Leibniz formula above gives more than \eqref{eq:phiRunif}:
for $k\ge2$ the derivative $D^{k}\varphi_{R}$ is supported in $\{R\le|x|\le2R\}$ and satisfies
$\|D^{k}\varphi_{R}\|_{\infty}\le C_{\chi}R^{-(k-1)}$. Hence, on that support, using $|x|\le2R$ and
$R\ge1$, we obtain
\[
\big(1+|x|\big)^{k}\big|D^{k}\varphi_{R}(x)\big|
\ \le\ C_{\chi}R^{-(k-1)}\big(1+2R\big)^{k-1}\big(1+|x|\big)
\ \le\ 3^{k-1}C_{\chi}\big(1+|x|\big)
\ \le\ C\big(1+|x|\big),
\]
while $|\nabla\varphi_{R}|\le C_{\chi}$ everywhere.
A coefficient of \eqref{eq:roughMP} paired with $D^{k}\varphi$ grows like $(1+|x|)^{k}$ by
\eqref{eq:Egrowth} \emph{and} \eqref{eq:MProughgrowth}: the $\dt$- and $\d B$-terms by the first, the
$\delta Z$-, $\ZZ$- and $\CZ$-terms by the second, whose coefficients are built from $f$ and from
$Df\,f+f'$ and on which \eqref{eq:Egrowth} says nothing. Every term of $D^{\varphi_{R}}_{s,t}$ other
than $\delta N^{\varphi_{R}}_{s,t}$ is therefore bounded, uniformly in $R\ge1$,
by the quantity
\[
H_{s,t}:=C\Big(\big|\delta X_{s,t}\big|+\big(1+\sup_{r\in[s,t]}|X_{r}|\big)
\big(|t-s|+|\delta Z_{s,t}|+|\ZZ_{s,t}|+|\CZ_{s,t}|\big)\Big),
\]
of degree one in the state and lying in $L_{m}$ by \ref{MP0} at exponent $m$.

\emph{The second condition.} By \eqref{eq:phiRunif} and \ref{MP2} in the form \eqref{eq:MP2form},
$\|D^{\varphi_{R}}_{s,t}\|_{L_{m}}\le\omega'_{\Lambda_{0}}(|t-s|)$ uniformly in $R$, whence we obtain
\[
\big\|J^{i}_{s,t}\one_{A^{c}_{R}}\big\|_{L_{m}}
=\big\|D^{\varphi_{R}}_{s,t}\one_{A^{c}_{R}}\big\|_{L_{m}}
\ \le\ \big\|D^{\varphi_{R}}_{s,t}\big\|_{L_{m}}
\ \le\ \omega'_{\Lambda_{0}}(|t-s|) .
\]
Letting $R\to\infty$ and using Fatou, we obtain
$\|J^{i}_{s,t}\|_{L_{m}}\le\omega'_{\Lambda_{0}}(|t-s|)=o(|t-s|^{1/2})$, and by the tower property this
is the second condition of \eqref{eq:daviemoduli} in $L_{m}$-conditional form.

\emph{The first condition.} Now $J^{i}_{s,t}\in L_{m}$, and
$D^{\varphi_{R}}_{s,t}-J^{i}_{s,t}=(D^{\varphi_{R}}_{s,t}-J^{i}_{s,t})\one_{A_{R}}$, so, denoting
$N^{i}:=\int_{0}^{\cdot}\sigma^{i\cdot}_{r}(X_{r})\d B_{r}$,
\[
\big\|D^{\varphi_{R}}_{s,t}-J^{i}_{s,t}\big\|_{L_{m}}
\ \le\ \big\|\big(H_{s,t}+|\delta N^{i}_{s,t}|+|J^{i}_{s,t}|\big)\one_{A_{R}}\big\|_{L_{m}}
+\big\|\delta N^{\varphi_{R}}_{s,t}-\delta N^{i}_{s,t}\big\|_{L_{m}}\ \longrightarrow\ 0 ,
\]
the first term because $H,\delta N^{i},J^{i}$ do not depend on $R$, lie in $L_{m}$ and are multiplied by
$\one_{A_{R}}\to0$ a.s., the second by Burkholder--Davis--Gundy and dominated convergence,
$\nabla\varphi_{R}(X_{r})\to e_{i}$ boundedly: more precisely,
\[
\big\|\delta N^{\varphi_{R}}_{s,t}-\delta N^{i}_{s,t}\big\|_{L_{m}}
\ \le\ C_{m}\Big(\int_{s}^{t}\big\|\,\big|\nabla\varphi_{R}(X_{r})-e_{i}\big|^{2}
\,\Frob{\sigma_{r}(X_{r})}^{2}\,\big\|_{L_{m/2}}\dr\Big)^{1/2}\ \longrightarrow\ 0 .
\]
Conditional expectation being an $L_{m}$-contraction,
$\E[D^{\varphi_{R}}_{s,t}\mid\mathcal G_{s}]\to\E[J^{i}_{s,t}\mid\mathcal G_{s}]$ in $L_{m}$, and
\ref{MP1} gives $\|\E[J^{i}_{s,t}\mid\mathcal G_{s}]\|_{L_{m}}\le\omega_{\Lambda_{0}}(|t-s|)=o(|t-s|)$.
Doing this for each $i$ gives \eqref{eq:daviemoduli} coordinatewise.
\end{proof}

The remarks rely on objects built in Appendix~\ref{app:MPenvelope}. We introduce them here in a condensed way. A Taylor expansion yields
$D^{\varphi}_{s,t}=-\mathcal K^{\varphi}_{s,t}+R^{\varphi}_{s,t}$ at \eqref{eq:Dphibound}, where
$\mathcal K^{\varphi}$ collects the three terms set aside, $\delta\tilde N^{\varphi}$ and the two
integrals $\tilde P^{\varphi},\tilde Q^{\varphi}$ of \eqref{eq:PQtilde}, and where $R^{\varphi}$
satisfies $|R^{\varphi}_{s,t}|\le\tnorm\varphi\,\Xi_{s,t}$ pointwise for the $\varphi$-free envelope
\[
\Xi_{s,t}=C\Big(|J_{s,t}|+\sum_{j=0}^{3}\big(1+\sup_{r\in[s,t]}|X_{r}|\big)^{3-j}
\big|\delta X_{s,t}\big|^{j}\,\Theta^{(j)}_{s,t}\Big) ,
\]
the factors $\Theta^{(j)}$ of \eqref{eq:Thetadef} being $1$ at $j\ge2$ and built from $|t-s|$,
$|\delta Z_{s,t}|$, $|\ZZ_{s,t}|$ and a normalized It\^o increment at $j\le1$. Sorting the summands
by whether they are conditionally centred then gives
$D^{\varphi}_{s,t}=W^{\varphi}_{s,t}J_{s,t}+\mathcal M^{\varphi}_{s,t}+\Upsilon^{\varphi}_{s,t}$
at \eqref{eq:Dphisplit}, with $W^{\varphi}_{s,t}$ $\mathcal G_{s}$-measurable, $\mathcal M^{\varphi}$
conditionally centred and $\Upsilon^{\varphi}$ the rest. \eqref{eq:Wsplit} is the H\"older split of
the first summand, at the uneven indices $r=\tfrac{mm_{0}}{m_{0}-m}$ and $r'=m_{0}$,
that is $r=\tfrac32m$ and $r'=3m$ at the stated Davie order.

\begin{remark}[The cross term in $\delta Z$ and $J$]\label{rem:MPgrouping}
The cross term $D^{2}\varphi(X_{s})(f_{s}(X_{s})\delta Z_{s,t},J_{s,t})$ is not conditionally centred,
and H\"older at $(2,2)$ bounds it in $L_{m}$ only by
$C\tnorm\varphi\big(1+\|\sup_{t\le T}|X_{t}|\|_{L_{2m}}\big)|\delta Z_{s,t}|\,\|J_{s,t}\|_{L_{2m}}
=o(|t-s|^{1/2+\gamma})$; conditioning first pulls the $\mathcal G_{s}$-measurable
factor $f_{s}(X_{s})\delta Z_{s,t}$ out and leaves $\|\E_{s}J_{s,t}\|_{L_{2m}}$, so the
bound becomes $O(|t-s|^{\gamma})\,o(|t-s|)$, which is $o(|t-s|)$,\color{blue}
\begin{align*}
\big\|\E\big[D^{2}\varphi(X_{s})\big(f_{s}(X_{s})\delta Z_{s,t},J_{s,t}\big)\mid\mathcal G_{s}\big]\big\|_{L_{m}}
\ &=\ \big\|D^{2}\varphi(X_{s})\big(f_{s}(X_{s})\delta Z_{s,t},\E[J_{s,t}\mid\mathcal G_{s}]\big)\big\|_{L_{m}}\\
\ &\le\ C\tnorm\varphi\Big(1+\big\|\sup_{t\le T}|X_{t}|\big\|_{L_{2m}}\Big)
\big|\delta Z_{s,t}\big|\,\big\|\E[J_{s,t}\mid\mathcal G_{s}]\big\|_{L_{2m}}\\
\ &=\ O\big(|t-s|^{\gamma}\big)\cdot o\big(|t-s|\big)
\end{align*}
\color{black}
so it is grouped within $W^{\varphi}$ in \eqref{eq:Dphisplit}.
\end{remark}

\begin{remark}[The factor $|t-s|$]\label{rem:MPtsexception}
The exception is $|t-s|$, and it is not a germ factor: once $\tilde Q^{\varphi}$ of
\eqref{eq:PQtilde} is set aside, the compensated square
$\big(\int_{s}^{t}\sigma\d B\big)^{\otimes2}-\int_{s}^{t}\sigma\sigma^{\top}(X_{r})\dr$ is majorized
pointwise only by $\big|\int_{s}^{t}\sigma\d B\big|^{2}+\int_{s}^{t}\operatorname{tr}a_{r}(X_{r})\dr$,
whose second summand is at most $C_{\mathrm{gr}}^{2}\big(1+\sup_{r\in[s,t]}|X_{r}|\big)^{2}|t-s|$ by
\eqref{eq:Egrowth}; its rate $o(|t-s|^{1/2})$ leaves \ref{MP2} unaffected, and the square is
conditionally centred, carried by fact~(3).
\end{remark}

\begin{remark}[The $W^{\varphi}J$ split]\label{rem:MPWsplit}
Moving a $\mathcal G_{s}$-measurable weight across $J$ with no H\"older loss 
would need
\[
\big\|W^{\varphi}J\big\|_{L_{m}}\le\big\|\E[|J|^{m}\mid\mathcal G_{s}]^{1/m}\big\|_{L_{\infty}}
\big\|W^{\varphi}\big\|_{L_{m}}
\]
and would appeal to
\eqref{eq:daviemoduliinfty}, which is not available under \eqref{eq:Egrowth}. The unevenness $r=\tfrac32m$, $r'=3m$ is not forced: $r=r'=2m$ needs $2m\le m'$ and $2m\le m_{0}$, a
stronger demand on $m'$ than $\tfrac32m\le m'$ and a weaker one on $m_{0}$ than $3m\le m_{0}$. \color{blue} We keep $r'=m_{0}$, because that is where the Davie modulus 
sits and because it leaves the largest slack. Read as a degree count the step is not a new estimate at
all: $W^{\varphi}J$ has total degree $2$, one state factor and one $J$, and \eqref{eq:Wsplit} at
$m_{0}=3m$ sits strictly below the generalised H\"older bound ``$3$ factors in $L_{3m}$'' already run
for the top-degree terms.\color{black}
\end{remark}

\color{blue}
\begin{remark}[What facts~(1) and~(2) rule out]\label{rem:MPfactsfail}
Each of the first two facts of Step~11 excludes a term that would break \ref{MP1} or \ref{MP2}.

\emph{Fact~(1).} A germ factor standing alone would be fatal, since
\[
\big\|1+\sup_{r\in[s,t]}|X_{r}|\big\|^{3}_{L_{3m}}\,\big|\delta Z_{s,t}\big|
\ =\ O\big(|t-s|^{\gamma}\big),
\qquad \gamma<\tfrac12<1 ,
\]
such a term would be $O(|t-s|^{\gamma})$ and neither \ref{MP1} nor \ref{MP2} could hold. No such term
exists. The factor that did stand alone in fact~(1),
$\nabla\varphi(X_{s})\cdot\int_{s}^{t}\sigma_{r}(X_{r})\d B_{r}$, would have stood in $\Theta^{(0)}$ at
rate $|t-s|^{1/2}$, which is not $o(|t-s|^{1/2})$, so \ref{MP2} would fail; setting it aside as
$-\delta\tilde N^{\varphi}$ is what removes it.

\emph{Fact~(2).} The qualifier surviving cannot be dropped, and $\delta Z^{\otimes2}$ is the
reason: it is of rate $|t-s|^{2\gamma}$ with $2\gamma<1$ throughout $\gamma\in(\tfrac13,\tfrac12)$, and
it is not conditionally centred, $\delta Z$ being deterministic, so that \ref{MP1} would fail outright
had it survived.
\end{remark}
\color{black}

\begin{remark}\label{rem:MPdaviesharp}
At the endpoint $m'=3m$ the hypothesis $m_{0}\ge3m$ cannot be lowered by the argument below.
Substituting \eqref{eq:davie} into $|\delta X_{s,t}|^{3}$ leaves three kinds of monomials carrying
$J$, namely $|J|^{3}$, $|J|^{2}U$ and $|J|U_{1}U_{2}$. The monomials independent of $J$ are products of
three factors $\mathsf S(X,s,t)$ times rate factors, and need only $\mathsf S\in L_{3m}$, which
$m'\ge3m$ gives. Splitting $|J|^{3}$ as $|J|^{3}=|J|^{2}\cdot|J|$ with
$\tfrac1m=\tfrac2{m_{0}}+\tfrac1\rho$, the first factor is $o(|t-s|)$ by \eqref{eq:daviemoduli} at
order $m_{0}$ and the second is $O(1)$ once $\rho\le m'$, so that $m_{0}\ge\tfrac{2mm'}{m'-m}$
suffices. The other two kinds need $m_{0}\ge\tfrac{2mm'}{m'-m}$ and
$m_{0}\ge\tfrac{mm'}{m'-2m}$, the second is only meaningful at $m'>2m$, which
\ref{mpe1} assumes, and the first of these two numbers is at least the second exactly when
$m'\ge3m$, with equality at $m'=3m$. \ref{mpe1} assumes $m'\ge3m$. That bound is $3m$ at $m'=3m$ and decreases to $2m$ as $m'\to\infty$. We do not prove a lower bound. The three-factor
split of Steps~10 and~13 is the instance $m_{0}=\rho=3m$.
\end{remark}

\begin{remark}\label{rem:MPconverse}
Proposition~\ref{prop:MPequiv}\ref{mpe2} recovers \eqref{eq:daviemoduli} at Davie order $m$, where \ref{mpe1}
used order $3m$, and with \ref{MP0} at $m$ where \ref{mpe1} used a moment bound at $m'\ge3m$. The
two halves therefore do not produce an equivalence. They result in the following sandwich,
\[
\begin{gathered}
\big\{\text{laws of }L_{3m}\text{-conditional solutions with }
\|\textstyle\sup_{t}|X_{t}|\|_{L_{m'}}<\infty \text{ for some }m'\ge3m\ \big\}\\
\subseteq\ \big\{\text{solutions of Definition~\ref{def:roughMP} at }m\big\}
\ \subseteq\ \big\{\text{laws of }L_{m}\text{-conditional solutions}\big\}.
\end{gathered}
\]
However, the following holds: Let $X$ denote an $L_{m}$-conditional solution for every $m\in[2,p]$ such that
$\|\sup_{t}|X_{t}|\|_{L_{p}}<\infty$. Then its law solves Definition~\ref{def:roughMP} at every
$m\in[2,p/3]$ by \ref{mpe1} with $m_{0}=3m$ and $m'=p$, and conversely every such law is the law of an
$L_{m}$-conditional solution at every $m\in[2,p/3]$ by \ref{mpe2}. These two choices are admissible
exactly on that range, $m_{0}=3m\le p$ and $m'=p\ge3m$ holding together if and only if $m\le p/3$. The two conditions have different sources: $m'\ge3m$ is the degree count of
\eqref{eq:Dphibound}, and $m_{0}\ge3m$ is the H\"older split that places the Davie remainder at that
degree. Both are non-strict, and the range closes at $m=p/3$. Both halves of the sandwich are used at
Theorem~\ref{thm:YWtransfer}\ref{ywc}: \ref{mpe2} for the inclusion of the solution set in
$\Gamma_{\nu}$ and \ref{mpe1} for the converse. Non-emptiness of $\Gamma_{\nu}$ comes instead from
Lemma~\ref{lem:filtdescend}, and the applicability of Definition~\ref{def:pathuniq} from the
definition of $\Gamma_{\nu}$.
\end{remark}

\subsection{Convexity, and measurability of the solution set}\label{subsec:MPborel}

\begin{lemma}[Convexity and mixtures]\label{lem:convexsel}
Let $\nu$ be given. The following hold.
\begin{enumerate}[label=\normalfont(\roman*),leftmargin=2.4em]
\item\label{cs:convex} The set of solutions of Definition~\ref{def:roughMP} for $\nu$ is convex.

\item\label{cs:mixture} Let $\pi$ be a probability measure on a measurable space
$\mathrm S$ and $\nu=\int\nu_{\theta}\,\pi(\d\theta)$, let
$\theta\mapsto\Prob_{\theta}$ be measurable from $\mathrm S$ to $\mathcal P(\mathcal C)$, let $\Prob_{\theta}$ solve the problem for $\nu_{\theta}$ with moduli uniform in $\theta$,
and assume that
\begin{equation}\label{eq:mixUI}
\int\E_{\Prob_{\theta}}\Big[\sup_{t\le T}|X_{t}|^{m}\Big]\pi(\d\theta)\ <\ \infty .
\end{equation}
Then $\int\Prob_{\theta}\,\pi(\d\theta)$ solves the problem for $\nu$. 
\end{enumerate}
\end{lemma}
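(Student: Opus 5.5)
Part~\ref{cs:convex} is the special case of part~\ref{cs:mixture} where $\mathrm S=\{1,2\}$ and $\pi=(\lambda,1-\lambda)$. There, \eqref{eq:mixUI} is the finite sum of the two \ref{MP0} moments. The moduli are uniform because we may take the pointwise maximum of the two modulus pairs, and a maximum is again non-decreasing with the same rates. So I would prove part~\ref{cs:mixture} and read off part~\ref{cs:convex}. Throughout, write $\bar\Prob:=\int\Prob_{\theta}\,\pi(\d\theta)$. It is a Borel probability measure on $\mathcal C$ because $\theta\mapsto\Prob_{\theta}(A)$ is measurable for every Borel $A$. Expectations under $\bar\Prob$ of non-negative Borel functionals $F$ of the canonical path disintegrate as $\E_{\bar\Prob}[F]=\int\E_{\Prob_{\theta}}[F]\,\pi(\d\theta)$.

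The first step is \ref{MP0}. The initial law is $\int\Prob_{\theta}\circ X_{0}^{-1}\,\pi(\d\theta)=\int\nu_{\theta}\,\pi(\d\theta)=\nu$. The moment bound is exactly \eqref{eq:mixUI}. For the Brownian property, the statement ``$B_{0}=0$ and $B_{t}-B_{s}$ is independent of $\mathcal G_{s}$ with law $N(0,(t-s)I)$'' is equivalent to $\E[g(B_{t}-B_{s})\psi]=\E[g(B_{t}-B_{s})]\,\E[\psi]$ for bounded $\mathcal G_{s}$-measurable $\psi$ and bounded continuous $g$, together with the Gaussian law of the increment. Under each $\Prob_{\theta}$ this reads $\E_{\Prob_{\theta}}[g(\delta B_{s,t})\psi]=\langle N(0,(t-s)I),g\rangle\,\E_{\Prob_{\theta}}[\psi]$. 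The right-hand constant does not depend on $\theta$, so integrating in $\theta$ preserves the identity. Here it matters that the filtration is the raw canonical one, fixed independently of $\Prob$ (Remark~\ref{rem:MPraw}).

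The second and main step is \ref{MP1}, since a conditional expectation under a mixture is not the mixture of conditional expectations. I would avoid computing $\E_{\bar\Prob}[D^{\varphi}_{s,t}\mid\mathcal G_{s}]$ and use duality instead:
\[
\big\|\E_{\bar\Prob}[D^{\varphi}_{s,t}\mid\mathcal G_{s}]\big\|_{L_{m}(\bar\Prob)}=\sup\big\{\E_{\bar\Prob}[D^{\varphi}_{s,t}\psi]:\ \psi\ \mathcal G_{s}\text{-measurable},\ \|\psi\|_{L_{m'}(\bar\Prob)}\le1\big\},
\]
with $m'$ the conjugate exponent. For each such $\psi$, disintegrate and apply H\"older under each $\Prob_{\theta}$, then H\"older in $\theta$:
\[
\E_{\bar\Prob}[D^{\varphi}_{s,t}\psi]=\int\E_{\Prob_{\theta}}\big[\E_{\Prob_{\theta}}[D^{\varphi}_{s,t}\mid\mathcal G_{s}]\psi\big]\pi(\d\theta)\le\omega_{\Lambda}(|t-s|)\int\|\psi\|_{L_{m'}(\Prob_{\theta})}\pi(\d\theta)\le\omega_{\Lambda}(|t-s|).
\]
The last inequality uses Jensen, $\int\|\psi\|_{L_{m'}(\Prob_{\theta})}\,\d\pi\le(\int\E_{\Prob_{\theta}}|\psi|^{m'}\,\d\pi)^{1/m'}=\|\psi\|_{L_{m'}(\bar\Prob)}$. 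So the uniform modulus $\omega_{\Lambda}$ transfers.

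Two technical points need checking here. The first is that $D^{\varphi}_{s,t}$ is one Borel functional on $\mathcal C$, the same under every $\Prob_{\theta}$ and under $\bar\Prob$. This concerns the stochastic-integral term $N^{\varphi}$, and I would use the continuous version of Lemma~\ref{lem:MPborel}\ref{mpb:version}, built as a limit of Riemann sums fixed in advance. I expect this to be the delicate point: I need the version to be defined as, say, a $\limsup$ along a fixed subsequence of Riemann sums, so that it agrees $\Prob_{\theta}$-a.s.\ for every $\theta$, and hence $\bar\Prob$-a.s. If the lemma's construction depends on $\Prob$, I would pick a fast subsequence using the uniform moment bound \eqref{eq:mixUI} to control the error uniformly in $\theta$. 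The second point is integrability of $D^{\varphi}_{s,t}$ under $\bar\Prob$; it follows from \ref{MP2} below.

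The last step is \ref{MP2}, which is immediate because \eqref{eq:MP2form} is unconditional: $\E_{\bar\Prob}|D^{\varphi}_{s,t}|^{m}=\int\E_{\Prob_{\theta}}|D^{\varphi}_{s,t}|^{m}\,\d\pi\le\omega'_{\Lambda}(|t-s|)^{m}$.
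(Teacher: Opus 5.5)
Your proposal is correct, and for the one non-trivial step, \ref{MP1}, it takes a different and somewhat lighter route than the paper.

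The paper's route is as follows. It passes to the product space $\mathcal C\times\mathrm S$ with the measure $\pi(\d\theta)\,\Prob_{\theta}(\d\varpi)$ and sets $U:=\E[D^{\varphi}_{s,t}\mid\mathcal G_{s}\vee\sigma(\theta)]$. A monotone class argument over a countable generating algebra then identifies the $\theta$-sections of $U$ with $\E_{\Prob_{\theta}}[D^{\varphi}_{s,t}\mid\mathcal G_{s}]$ for $\pi$-a.e.\ $\theta$. The proof concludes by the tower property and conditional Jensen.

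Your route instead tests $\E_{\bar\Prob}[D^{\varphi}_{s,t}\mid\mathcal G_{s}]$ against $\mathcal G_{s}$-measurable $\psi$ in the unit ball of $L_{m^{*}}(\bar\Prob)$. You then disintegrate, apply H\"older under each $\Prob_{\theta}$, and apply Jensen in $\theta$. Each step is legitimate:
\begin{itemize}
\item $D^{\varphi}_{s,t}\psi\in L_{1}(\bar\Prob)$ by \ref{MP2} for the mixture;
\item $\psi\in L_{m^{*}}(\Prob_{\theta})$ for $\pi$-a.e.\ $\theta$;
\item the exceptional $\theta$-set does not affect the integral.
\end{itemize}
Your route needs neither the product space nor the identification of sections. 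It is the same duality the paper itself uses in Lemma~\ref{lem:MPborel}\ref{mpb:borel}.

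What the paper's route buys is the averaged inequality $\E_{\bar\Prob}\big|\E_{\bar\Prob}[D^{\varphi}_{s,t}\mid\mathcal G_{s}]\big|^{m}\le\int\E_{\Prob_{\theta}}\big|\E_{\Prob_{\theta}}[D^{\varphi}_{s,t}\mid\mathcal G_{s}]\big|^{m}\pi(\d\theta)$, with measurability in $\theta$ of the integrand built in. A H\"older-in-$\theta$ version of your argument, as one would want for moduli that are not uniform, would have to supply that measurability separately. Under the uniform moduli the lemma assumes, this costs you nothing.

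On the point you flag as delicate, drop the aim of a single subsequence of Riemann sums converging $\Prob_{\theta}$-a.s.\ for \emph{every} $\theta$. There is no reason one fixed subsequence should do that for possibly uncountably many $\theta$. You also do not need any uniform-in-$\theta$ error control from \eqref{eq:mixUI}. The fix is short:
\begin{itemize}
\item You establish \ref{MP0} for $\bar\Prob$ first, so Lemma~\ref{lem:MPborel}\ref{mpb:version} gives convergence of the Riemann sums in $\bar\Prob$-probability.
\item Take a $\bar\Prob$-a.s.\ convergent subsequence, with exceptional set $A$.
\item Then $0=\bar\Prob(A)=\int\Prob_{\theta}(A)\,\pi(\d\theta)$, so the same limit is a version of $N^{\varphi}$ under $\Prob_{\theta}$ for $\pi$-a.e.\ $\theta$.
\end{itemize}
That is all the disintegrations in your \ref{MP1} and \ref{MP2} steps use, and it is exactly the paper's identification step.
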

All three extra hypotheses are used: measurability of $\theta\mapsto\Prob_{\theta}$ is what makes the mixture a measure in the
first place, uniformity of the moduli in $\theta$ is needed because the pointwise
supremum $\sup_{\theta}\omega'_{\theta}$ of moduli need not be a modulus,
and \eqref{eq:mixUI} is needed because the third part of \ref{MP0} is a moment bound
and not a modulus, which a mixture does not inherit from its mixands.
\begin{proof}
We write $\Prob:=\int\Prob_{\theta}\,\pi(\d\theta)$. A convex combination is the case of a $\pi$ carried
by finitely many points, and is not treated separately below.

 The first part is affineness of
$\Prob\mapsto\Prob\circ X^{-1}_{0}$. The second is linearity, the increment law being common to all
mixands, so that for bounded measurable $h$ and bounded $\mathcal G_{s}$-measurable $G$ we obtain
\[
\begin{gathered}
\E_{\Prob_{\theta}}\big[h(\delta B_{s,t})G\big]=\E\big[h(\delta B_{s,t})\big]\,\E_{\Prob_{\theta}}[G]
\quad\text{for every }\theta,\\
\text{hence}\qquad
\E_{\Prob}\big[h(\delta B_{s,t})G\big]=\E\big[h(\delta B_{s,t})\big]\,\E_{\Prob}[G],
\end{gathered}
\]
both sides being linear in
$\Prob_{\theta}$ and hence surviving $\int\cdot\,\pi(\d\theta)$; and $\Prob_{\theta}[B_{0}=0]=1$ for every
$\theta$ gives $\Prob[B_{0}=0]=1$. The third part is \eqref{eq:mixUI}, since
\[
\E_{\Prob}\Big[\sup_{t\le T}|X_{t}|^{m}\Big]
=\int\E_{\Prob_{\theta}}\Big[\sup_{t\le T}|X_{t}|^{m}\Big]\pi(\d\theta)\ <\ \infty .
\]

 Lemma~\ref{lem:MPborel}\ref{mpb:version} constructs $N^{\varphi}$ as a limit in
probability, so it does not present $D^{\varphi}$ as a single Borel functional of the
path, and the random variables called
$D^{\varphi}_{s,t}$ under $\Prob$ and under $\Prob_{\theta}$ have to be identified before any statement
about $\Prob\mapsto\E_{\Prob}[D^{\varphi}_{s,t}\,\cdot\,]$ is meaningful. Fix $\varphi$ and a sequence
of partitions of mesh tending to $0$. Since $\Prob$ satisfies \ref{MP0}, Lemma~\ref{lem:MPborel}\ref{mpb:version} applies to it, and the associated Riemann sums
\eqref{eq:MPriemann} converge to $N^{\varphi}$ uniformly on $[0,T]$ in $\Prob$-probability. Pass to a
subsequence converging $\Prob$-almost surely, let $N^{\varphi}$ denote that almost sure limit and let
$A$ denote the set on which the subsequence fails to converge. Then
\[
0=\Prob(A)=\int\Prob_{\theta}(A)\,\pi(\d\theta),
\qquad\text{hence}\qquad
\Prob_{\theta}(A)=0\ \ \text{for }\pi\text{-almost every }\theta ,
\]
and on $A^{c}$ the same limit is a version of $N^{\varphi}$ under $\Prob_{\theta}$ as well.
Indeed $\Prob_{\theta}$ satisfies \ref{MP0} too, so the same lemma gives convergence of
those Riemann sums to its own $N^{\varphi}$ in $\Prob_{\theta}$-probability, and on $A^{c}$ they
converge to the limit we just fixed. Hence one
and the same measurable function on $\mathcal C$ is $D^{\varphi}_{s,t}$ under $\Prob$ and under
$\pi$-almost every $\Prob_{\theta}$, and without that identification the convexity computation would
compare incomparable objects.

\ref{MP2} in the form \eqref{eq:MP2form} is the bound
$\E_{\Prob}[|D^{\varphi}_{s,t}|^{m}]\le\omega'_{\Lambda}(|t-s|)^{m}$ on a functional that is
\emph{affine} in $\Prob$. Applying Fubini, we obtain
\[
\E_{\Prob}\big[|D^{\varphi}_{s,t}|^{m}\big]
=\int\E_{\Prob_{\theta}}\big[|D^{\varphi}_{s,t}|^{m}\big]\,\pi(\d\theta)
\ \le\ \omega'_{\Lambda}(|t-s|)^{m},
\]
so that the bound passes to convex combinations and to mixtures. 
For \ref{MP1} the functional
$\Prob\mapsto\E_{\Prob}\big[\big|\E_{\Prob}[D^{\varphi}_{s,t}\mid\mathcal G_{s}]\big|^{m}\big]$ is not
affine in $\Prob$, which enters both as the measure the outer expectation integrates against and
through the conditioning operator $\E_{\Prob}[\,\cdot\mid\mathcal G_{s}]$. In particular
$\E_{\Prob}[D^{\varphi}_{s,t}\mid\mathcal G_{s}]$ averages the mixands with the conditional law of
$\theta$ given $\mathcal G_{s}$ and not with $\pi$, so Fubini does not carry the bound. We pass to a
product space on which $\theta$ is a random variable. Fix $s\le t$ and $\varphi$, put$\bar\Omega:=\mathcal C\times\mathrm{S}$,
with $\mathrm{S}$ the measurable space $\theta$ runs over, and
\[
\bar\Prob(\d\varpi,\d\theta):=\pi(\d\theta)\,\Prob_{\theta}(\d\varpi) ,
\]
a probability measure because $\theta\mapsto\Prob_{\theta}$ is measurable. We write $\sigma(\theta)$
for the $\sigma$-field of the second coordinate, and
$D:=D^{\varphi}_{s,t}$ for the version fixed above, read together with
$\mathcal G_{s}$ on $\bar\Omega$ through the first coordinate. Then $D\in L_{1}(\bar\Prob)$ by
\ref{MP2} for the mixands, and for bounded $\mathcal G_{s}$-measurable $G$,
\[
\E_{\bar\Prob}\big[DG\big]=\int\E_{\Prob_{\theta}}\big[DG\big]\,\pi(\d\theta)=\E_{\Prob}\big[DG\big],
\qquad\text{hence}\qquad
\E_{\bar\Prob}\big[D\mid\mathcal G_{s}\big]=\E_{\Prob}\big[D\mid\mathcal G_{s}\big] .
\]
Put $U:=\E_{\bar\Prob}[D\mid\mathcal G_{s}\vee\sigma(\theta)]$, which needs no measurable selection,
being a conditional expectation. Its $\theta$-section is $\mathcal G_{s}$-measurable, and testing its
defining identity against $G\,h(\theta)$ with $h$ bounded gives
$\E_{\Prob_{\theta}}[(D-U(\cdot,\theta))G]=0$ for $\pi$-almost every $\theta$. Running $G$ through a
countable algebra generating $\mathcal G_{s}$ leaves one null set, and a monotone class argument
extends the identity to every bounded $\mathcal G_{s}$-measurable $G$, dominated by
$|D-U(\cdot,\theta)|\in L_{1}(\Prob_{\theta})$. So $U(\cdot,\theta)$ is a version of
$\E_{\Prob_{\theta}}[D\mid\mathcal G_{s}]$ for $\pi$-almost every $\theta$. The tower property and
conditional Jensen for $|\cdot|^{m}$ now give the bound,
\[
\E_{\Prob}\Big[\big|\E_{\Prob}[D\mid\mathcal G_{s}]\big|^{m}\Big]
=\E_{\bar\Prob}\Big[\big|\E_{\bar\Prob}[U\mid\mathcal G_{s}]\big|^{m}\Big]
\ \le\ \E_{\bar\Prob}\big[|U|^{m}\big]
\ =\ \int\E_{\Prob_{\theta}}\Big[\big|\E_{\Prob_{\theta}}[D\mid\mathcal G_{s}]\big|^{m}\Big]\pi(\d\theta)
\ \le\ \omega_{\Lambda}(|t-s|)^{m},
\]
the last step by \ref{MP1} for each mixand.
So the bound $\omega_{\Lambda}(|t-s|)^{m}$ passes to convex combinations and to
mixtures over $\pi$; uniformity in $\theta$ is what allows the moduli to be taken common in the last
step.
\end{proof}

\begin{lemma}[The solution set is Borel]\label{lem:MPborel}
Assume the standing continuity \eqref{eq:standingcts} of \S\ref{subsec:equation},
and, for part \ref{mpb:borel} only, the linear
growth \eqref{eq:Egrowth} and \eqref{eq:MProughgrowth}. For each $\nu$ and each $m$, let $\mathcal S_{\nu}\subseteq\mathcal P(\mathcal C)$
denote the set of Borel probability measures on $\mathcal C$ solving the rough martingale problem of
Definition~\ref{def:roughMP} at that $m$ and with $\Law(X_{0})=\nu$.
\begin{enumerate}[label=\normalfont(\roman*),leftmargin=2.4em]
\item\label{mpb:version} \emph{(The version.)} Let
$\varphi\in C^{3}_{b}$ and let $\Prob$ satisfy \ref{MP0}. Then $N^{\varphi}$ has a version with
continuous paths, unique up to $\Prob$-indistinguishability, and it is the limit in
$\Prob$-probability, uniformly on $[0,T]$, of the following Riemann sums,
\begin{equation}\label{eq:MPriemann}
\sum_{i}\nabla\varphi(X_{t_{i}})\cdot\sigma_{t_{i}}(X_{t_{i}})\,\delta B_{t_{i},\,t_{i+1}\wedge\,\cdot}
\end{equation}
along any sequence of partitions of mesh tending to $0$. The joint law of $(X,B,N^{\varphi})$, and with
it the law of the field $D^{\varphi}$ of \eqref{eq:roughMP}, is determined by the law of $(X,B)$ alone.
\item\label{mpb:borel} \emph{(Borel and convex.)} $\mathcal S_{\nu}$, whose
suppressed second parameter is the exponent $m$, is a Borel subset of
$\mathcal P(\mathcal C)$ in the weak topology, and it is convex.
\end{enumerate}
\end{lemma}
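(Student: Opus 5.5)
For part \ref{mpb:version} I would reduce to Lemma~\ref{lem:itoraw}. Under $\Prob$ satisfying \ref{MP0}, $B$ is a $(\mathcal G_{t})$-Brownian motion on the canonical space with raw filtration $\mathcal G_{t}=\sigma(X_{r},B_{r}:r\le t)$. The integrand is $h(r,x):=\nabla\varphi(x)^{\top}\sigma(r,x)$, with values in $\R^{1\times d}$. It is continuous on $[0,T]\times\R^{n}$ by the standing continuity \eqref{eq:standingcts} and because $\varphi\in C^{3}_{b}$ gives $\nabla\varphi$ continuous. Lemma~\ref{lem:itoraw} then gives three things at once: a continuous version of $N^{\varphi}$, unique up to indistinguishability; uniform convergence in probability of exactly the left-point sums \eqref{eq:MPriemann}; and the statement that $\Law(X,B,N^{\varphi})$ is determined by $\Law(X,B)$. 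Every other summand of $D^{\varphi}_{s,t}$ in \eqref{eq:roughMP} is a continuous functional of the path. This uses \eqref{eq:standingcts} for $\mathcal L\varphi$, $\mathcal G\varphi$, $(\mathcal G\mathcal G+\mathcal G')\varphi$ and $\mathcal Q\varphi$ (note that $Df\,f+f'$ is continuous, not $Df$ alone), together with the determinism of $\RZ$ and $\CZ$. So the law of the field $D^{\varphi}$ follows, exactly as in Lemma~\ref{lem:lawdet}.

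For part \ref{mpb:borel}, convexity is Lemma~\ref{lem:convexsel}\ref{cs:convex}. For Borel measurability I would write $\mathcal S_{\nu}$ as a countable intersection of Borel sets, as Remark~\ref{rem:MPraw} announces.

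\ref{MP0} splits into three Borel conditions:
\begin{itemize}
\item $\Prob\circ X_{0}^{-1}=\nu$ is a closed condition.
\item $\Prob\mapsto\E_{\Prob}[\sup_{t}|X_{t}|^{m}\wedge k]$ is continuous for each $k$, so the moment bound is a Borel condition via monotone limits.
\item The Brownian requirement is equivalent to countably many identities $\E_{\Prob}[h(\delta B_{s,t})G]=\E[h(\delta B_{s,t})]\E_{\Prob}[G]$ and $\E_{\Prob}[|B_{0}|\wedge1]=0$. Here $s<t$ are rational, $h$ ranges over a countable convergence-determining class of bounded continuous functions, and $G$ ranges over the countable $\Q$-algebra $\mathscr A_{s}$ of functions $g(\cdot_{r_{1}},\dots,\cdot_{r_{l}})$ with rational $r_{i}\le s$. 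Each of these is continuous in $\Prob$.
\end{itemize}

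For \ref{MP1}--\ref{MP2}, by Remark~\ref{rem:MPstrong} it suffices to work at $\Lambda=1$. I would then reduce the unit ball of $\tnorm\cdot$ to a countable set $\Phi_{0}\subset C^{3}_{b}$. The idea is to normalize $\varphi(0)=0$, since constants do not enter $D^{\varphi}$. I would then take a countable family dense in the sense that for every $\varphi$ with $\tnorm\varphi\le1$ there exist $\varphi_{k}\in\Phi_{0}$ with $\tnorm{\varphi_{k}}\le1$ and $D^{j}\varphi_{k}\to D^{j}\varphi$ locally uniformly for $j\le3$. Along such a sequence, $D^{\varphi_{k}}_{s,t}\to D^{\varphi}_{s,t}$ in probability. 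The It\^o part converges by BDG, since $\nabla\varphi_{k}\to\nabla\varphi$ boundedly and locally uniformly. The $L_{m}$ norms then pass by Fatou for \ref{MP2}. For \ref{MP1}, Fatou is applied after testing against $G$.

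The next reduction is to rational pairs $s\le t$. This uses continuity of $(s,t)\mapsto D^{\varphi}_{s,t}$ in probability, $\mathcal G_{s-}=\mathcal G_{s}$ to approach from rational $s'\uparrow s$, a martingale convergence argument for $\E[\cdot\mid\mathcal G_{s'}]$, and the fact that the moduli can be taken non-decreasing and right-continuous.

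\ref{MP1} itself would be rewritten by duality, $\|\E[D\mid\mathcal G_{s}]\|_{L_{m}}=\sup\{\E_{\Prob}[DG]:G\in\mathscr A_{s},\ \|G\|_{L_{m/(m-1)}(\Prob)}\le1\}$, the supremum being countable because $\mathscr A_{s}$ is dense in $L_{m'}(\mathcal G_{s})$. With these reductions, each of \ref{MP1} and \ref{MP2} becomes the following condition: there exist moduli, which may be chosen from a countable family after rational discretisation, such that countably many maps $\Prob\mapsto\E_{\Prob}[D^{\varphi}_{s,t}G]$ and $\Prob\mapsto\E_{\Prob}[|D^{\varphi}_{s,t}|^{m}]$ are bounded accordingly. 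The existential over moduli is encoded as: for every rational $\varepsilon$ there is a rational $\delta$ such that the bound holds for all rational $|t-s|\le\delta$. This expression is countable in $\cap$ and $\cup$.

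I expect the main obstacle to be showing that each such map is Borel in $\Prob$, since $D^{\varphi}$ contains $N^{\varphi}$, which is only a $\Prob$-dependent limit in probability. My first attempt would be to replace $N^{\varphi}$ by the fixed Riemann sums $N^{\varphi,\pi_{k}}$ along a fixed dyadic sequence. Each $\E_{\Prob}[(D^{\varphi,\pi_{k}}_{s,t}G)\wedge c\vee(-c)]$ is then continuous in $\Prob$, because every summand is continuous on $\mathcal C$ by \eqref{eq:standingcts}. The true quantity is recovered as $\lim_{c}\lim_{k}$ on the set where \ref{MP0} holds, by part \ref{mpb:version} and the uniform integrability supplied by linear growth \eqref{eq:Egrowth}, \eqref{eq:MProughgrowth} and the $L_{m}$ moment. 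A pointwise limit of continuous maps is Borel, so intersecting with the Borel set defined by \ref{MP0} completes the proof.
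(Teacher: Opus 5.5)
Your route is the paper's: Lemma~\ref{lem:itoraw} at $h=\nabla\varphi\cdot\sigma$ for part (i) and Lemma~\ref{lem:convexsel} for convexity. For Borelness you use, as the paper does, countably many continuous tests for \ref{MP0}, a countable dense subset of the $\tnorm\cdot$-ball modulo constants, and reduction to rational pairs via $\mathcal G_{s-}=\mathcal G_{s}$ and martingale convergence. The remaining ingredients also match: duality against a countable algebra of cylinder functionals, an $\varepsilon$--$\delta$ encoding of the moduli, and fixed Riemann sums in place of $N^{\varphi}$ to make each map Borel. The step that fails as written is the existential over moduli.

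\emph{The countable family is wrong.} Drop the claim that the moduli may be chosen from a countable family. Given any sequence $(\omega_{j})$ of non-decreasing functions with $\omega_{j}(h)/h\to0$, choose $t_{j}\downarrow0$ with $\omega_{j}(t_{j})\le t_{j}/(2j)$. Put $\omega(h):=h\varepsilon(h)$, with $\varepsilon$ a non-decreasing interpolant through $(t_{j},1/j)$. Then $\omega$ is a modulus with $\omega(t_{j})>\omega_{j}(t_{j})$ for every $j$, so a union over a countable family cuts out the wrong set.

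\emph{The $\varepsilon$--$\delta$ condition is incomplete.} It is the right replacement, but alone it is not equivalent to the existence of a modulus. Definition~\ref{def:roughMP} asks the moduli to be non-decreasing on all of $[0,T]$, which forces $\sup_{s\le t}\xi_{s,t}\le\omega(T)<\infty$. Your condition says nothing about $\xi_{s,t}$ at $|t-s|>\delta$, and you give no argument that it is finite there. Finiteness does not follow from \ref{MP0}, since $D^{\varphi}$ has state degree two while \ref{MP0} is only at exponent $m$. You need the additional countable union over $N\in\N$ of $\{\xi_{s,t}\le N\ \text{for all rational }s\le t\}$, the first half of \eqref{eq:epsdelta}. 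This bound must also be transferred from rational to real pairs, by the same Fatou argument.

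\emph{$L_{1}$ convergence, not Fatou.} In the two limit passages for \ref{MP1} (test functions and real pairs), Fatou does not give $\E[D^{\varphi_{k}}G]\to\E[D^{\varphi}G]$; you need $L_{1}$ convergence. One source is Vitali with the uniform \ref{MP2} bound. Another is $L_{m}$ convergence of the It\^o part by Burkholder--Davis--Gundy, together with dominated convergence for the rest of $D^{\varphi}$. That rest is dominated by $C(1+\sup_{r}|X_{r}|)^{2}$, hence integrable under \ref{MP0} at $m\ge2$. The same plain integrability, not uniform integrability, is what your order $\lim_{c}\lim_{k}$ needs for $D^{\varphi}G$. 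For $|D^{\varphi}|^{m}$, monotone convergence in $c$ suffices; uniform integrability at degree $2m$ would not be available from \ref{MP0} anyway.
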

\eqref{eq:standingcts} makes each Riemann sum \eqref{eq:MPriemann}, and each
term of \eqref{eq:roughMP} other than $N^{\varphi}$, a continuous functional of the path, the rough
coefficients $f$ and $Df\,f+f'$ entering that count alongside $b$ and $\sigma$, while
\eqref{eq:Egrowth} supplies the envelope that step~\ref{stp:MPbii} of the proof introduces and
step~\ref{stp:MPbiii} re-uses, and \eqref{eq:MProughgrowth} the state-degree count inside
step~\ref{stp:MPbiii}, the two level-two terms being quadratic in $f$. The convergence in \ref{mpb:version} is in
probability and not almost surely, so this route does not present $N^{\varphi}$ as a single Borel
functional of the path, and step~\ref{stp:MPbiii} works with a $\limsup$ of Riemann-sum functionals instead.
\begin{proof}
Part \ref{mpb:version} is Lemma~\ref{lem:itoraw} at $h=\nabla\varphi\cdot\sigma$, which
\eqref{eq:standingcts} makes continuous; step~\ref{stp:MPbi} below is where this
proof uses it. For part \ref{mpb:borel}, convexity is Lemma~\ref{lem:convexsel}. For measurability
there are the following \emph{four} quantifiers to reduce to
countable ones, namely the existential over moduli, the universal over $\varphi$ in a
$\tnorm\cdot$-ball, the universal over pairs $s\le t$, and the norm over $\Omega$.

\emph{(0) The moduli, by an $\varepsilon$--$\delta$ reformulation.}\stepnum{(0)}{stp:MPb0} This is the one that must not be
done by fixing $(\omega,\omega')$ and taking a union over a cofinal family. The set
$\mathfrak W=\{\omega\ \text{non-decreasing}:\omega(h)/h\to0\}$ has no countable cofinal subfamily for
pointwise domination. Let $(\omega_{j})_{j\ge1}\subseteq\mathfrak W$ be given, choose
$t_{j}\downarrow0$ such that $\omega_{j}(t_{j})\le t_{j}/(2j)$, let $\varepsilon$ denote a non-decreasing
interpolant through $(t_{j},1/j)$ and put $\omega(h):=h\varepsilon(h)$. This yields the following,
\[
\frac{\omega(h)}h=\varepsilon(h)\ \longrightarrow\ 0\quad(h\downarrow0),
\qquad
\omega(t_{j})=\frac{t_{j}}j\ >\ \frac{t_{j}}{2j}\ \ge\ \omega_{j}(t_{j})
\qquad(j\ge1),
\]
so $\omega\in\mathfrak W$ and $\omega\not\le\omega_{j}$ for every $j$.

For a family $(\xi_{s,t})$ of non-negative reals, the statement
``there is a non-decreasing $\omega$ such that $\omega(h)/h\to0$ and $\xi_{s,t}\le\omega(|t-s|)$ for all $s\le t$'' is
\emph{equivalent} to
\begin{equation}\label{eq:epsdelta}
\sup_{s\le t}\xi_{s,t}<\infty
\qquad\text{and}\qquad
\forall\,\varepsilon\in\Q_{+}\ \exists\,\delta\in\Q_{+}:\ \ \xi_{s,t}\le\varepsilon|t-s|
\ \text{ whenever } |t-s|\le\delta ,
\end{equation}
and likewise with $|t-s|^{1/2}$ for \ref{MP2}. ($\Rightarrow$ is the definition of
$\omega(h)/h\to0$ for the second half, and for the first half it is the monotonicity that
\ref{MP1} and \ref{MP2} require of their moduli, which gives
$\sup_{s\le t}\xi_{s,t}\le\omega(T)<\infty$. $\Leftarrow$ builds $\omega$ as the non-decreasing envelope of the resulting step
function.) Both quantifiers in \eqref{eq:epsdelta} are countable, and the uniformity over
$\tnorm\varphi\le\Lambda$ is retained by asking \eqref{eq:epsdelta} for
$\xi_{s,t}:=\sup\{\Lambda^{-1}\|\cdots\|:\tnorm\varphi\le\Lambda\}$, reduced to a countable supremum in
(ii) below. Here we can impose \eqref{eq:epsdelta} at each $\Lambda\in\N$ separately, which is harmless and in
fact redundant: by Remark~\ref{rem:MPstrong}(i) the choice $\omega_{\Lambda}=\Lambda\omega_{1}$ is
available on the whole solution set of Definition~\ref{def:roughMP}, so the countable intersection over
$\Lambda\in\N$ cuts out the set already cut out by the single condition at $\Lambda=1$.

\emph{(i) The pairs $(s,t)$.}\stepnum{(i)}{stp:MPbi} Fix $\varphi\in C^{3}_{b}$ and let $\Prob$ satisfy \ref{MP0}. Then
$(s,t)\mapsto D^{\varphi}_{s,t}$ has a $\Prob$-a.s.\ continuous version on $\{s\le t\}$, since every
term of \eqref{eq:roughMP} except $N^{\varphi}$ is a continuous functional of the path, and
$\delta Z,\ZZ,\CZ$ are continuous in $(s,t)$ by Assumption~\ref{ass:driver}. Here $N^{\varphi}$ is
not a continuous functional of the path, the It\^o map being discontinuous on $\mathcal C$, but it has
a version with continuous paths by \ref{mpb:version}. Continuity of the paths is not continuity of $\|D^{\varphi}_{s,t}\|_{L_{m}}$, which
would need uniform integrability, so we reduce \ref{MP2} to rational pairs exactly as \ref{MP1} is
reduced below.

For \ref{MP1} pathwise continuity is not enough.
The left-hand side of \ref{MP1} is $\|\E[D^{\varphi}_{s,t}\mid\mathcal G_{s}]\|_{L_{m}}$, in which the
conditioning $\sigma$-field is $\mathcal G_{s}$, so it changes as $s$ changes, and $(\mathcal G_{t})$ is the raw canonical filtration, not
assumed right-continuous. Approaching $s$ from above would produce $\mathcal G_{s+}$ and not
$\mathcal G_{s}$. We can approach it from below instead. The canonical paths being continuous,
$\mathcal G_{s-}=\bigvee_{r<s}\mathcal G_{r}=\mathcal G_{s}$. Take rationals $s_{j}\uparrow s$ and
$t_{j}\to t$ such that $s_{j}\le t_{j}$ \emph{and} $t_{j}-s_{j}\le t-s$, which is possible once
$s_{j}>s-1/j$, the interval $(t-1/j,\,s_{j}+(t-s)]$ being then non-empty. At $s=0$
there is nothing to do, $0$ being rational: take $s_{j}\equiv0$ and $t_{j}\uparrow t$ rational. The second constraint is what
makes the conclusion below available: we use the modulus in the
$\varepsilon$--$\delta$ form \eqref{eq:epsdelta} of step~\ref{stp:MPb0}, whose hypothesis $|t_{j}-s_{j}|\le\delta$ is inherited from
$|t-s|\le\delta$. On the set already cut out by \ref{MP2},
$D^{\varphi}_{s_{j},t_{j}}\to D^{\varphi}_{s,t}$ almost surely and, by the uniform \ref{MP2} bound at
$m\ge2>1$, in $L_{1}$ by Vitali. We also have
$\E[\xi\mid\mathcal G_{s_{j}}]\to\E[\xi\mid\mathcal G_{s}]$ in $L_{1}$ by martingale convergence along
$\mathcal G_{s_{j}}\uparrow\mathcal G_{s}$. Conditional expectation being an $L_{1}$-contraction, we
obtain
\[
\big\|\E[D^{\varphi}_{s_{j},t_{j}}\mid\mathcal G_{s_{j}}]-\E[D^{\varphi}_{s,t}\mid\mathcal G_{s}]\big\|_{L_{1}}
\ \le\ \big\|D^{\varphi}_{s_{j},t_{j}}-D^{\varphi}_{s,t}\big\|_{L_{1}}
+\big\|\E[D^{\varphi}_{s,t}\mid\mathcal G_{s_{j}}]-\E[D^{\varphi}_{s,t}\mid\mathcal G_{s}]\big\|_{L_{1}}
\ \longrightarrow\ 0 ,
\]
that is, $\E[D^{\varphi}_{s_{j},t_{j}}\mid\mathcal G_{s_{j}}]\to\E[D^{\varphi}_{s,t}\mid\mathcal G_{s}]$ in
$L_{1}$, and Fatou, applied along a subsequence on which the convergence is almost
sure, transfers the $L_{m}$-bound,
\[
\big\|\E[D^{\varphi}_{s,t}\mid\mathcal G_{s}]\big\|_{L_{m}}
\ \le\ \liminf_{j}\big\|\E[D^{\varphi}_{s_{j},t_{j}}\mid\mathcal G_{s_{j}}]\big\|_{L_{m}}
\ \le\ \varepsilon\liminf_{j}|t_{j}-s_{j}|\ =\ \varepsilon|t-s| ,
\]
the middle step by \eqref{eq:epsdelta} at the rational pair, because
$|t_{j}-s_{j}|\le|t-s|\le\delta$. The same Fatou, with the bound $N$ in place of
$\varepsilon|t_{j}-s_{j}|$, transfers the first half of \eqref{eq:epsdelta} from the rational pairs to
all of them. So we can impose \ref{MP1} too at rational pairs only.

\emph{(ii) The test functions.}\stepnum{(ii)}{stp:MPbii} We can give $C^{3}_{b}$ the topology of uniform convergence of
derivatives of order $\le3$ on compacts. The ball $B_{\Lambda}:=\{\varphi\in C^{3}_{b}:\tnorm\varphi\le\Lambda\}$, quotiented by constants (on which
$\tnorm\cdot$ vanishes and on which $D^{\varphi}\equiv0$), is separable in that topology. Not by
Arzel\`a--Ascoli: $\tnorm\varphi\le\Lambda$ bounds $D^{3}\varphi$, but supplies no equicontinuity for
it, so that theorem is unavailable at the top order. The map
$\varphi\mapsto(D^{k}\varphi)_{0\le k\le3}$ embeds $C^{3}(\R^{n})$, carrying the topology of uniform convergence on compacts,
into a finite product of spaces $C(\R^{n};\R^{N})$, each with local uniform convergence; that product
is metrizable, by the countably many seminorms $\sup_{|x|\le l}|\cdot|$, and separable, e.g.~the
polynomials with rational coefficients being dense on each ball, and every subspace of a
separable metric space is separable, the subspace $B_{\Lambda}$ and its quotient by
constants included. Let $\Phi_{\Lambda}\subseteq B_{\Lambda}$ denote a countable dense subset, and let
$\varphi_{j}\in\Phi_{\Lambda}$ converge in that topology to a $\varphi$ such that
$\tnorm\varphi\le\Lambda$. We can pass the limit with no domination by \eqref{eq:Dphibound} anywhere:
$\Xi_{s,t}$ is of total degree $3$ in the state, so $\Xi_{s,t}\in L_{m}$ would require \ref{MP0} at
exponent $3m$. Definition~\ref{def:roughMP} does not provide that exponent, and
the argument below avoids it. Every term of \eqref{eq:roughMP} other than $\delta N^{\varphi}_{s,t}$ is a
continuous functional of the path and of the derivatives of $\varphi$ on compacts, so each converges
pointwise on $\mathcal C$. For the one remaining term we apply Burkholder--Davis--Gundy and obtain,
with $\Delta_{j}(r):=(\nabla\varphi_{j}-\nabla\varphi)(X_{r})$,
\[
\big\|\delta N^{\varphi_{j}}_{s,t}-\delta N^{\varphi}_{s,t}\big\|_{L_{m}}
\ \le\ C_{m}\,\Big\|\Big(\int_{s}^{t}|\Delta_{j}(r)|^{2}\,\Frob{\sigma_{r}(X_{r})}^{2}\dr\Big)^{1/2}\Big\|_{L_{m}} .
\]
Here $\Delta_{j}$ tends to $0$ pointwise, the paths being continuous and hence of compact range,
and that $|\Delta_{j}|\le2\Lambda$. Applying \eqref{eq:Egrowth}, we obtain the pointwise domination
\[
\Big(\int_{s}^{t}|\Delta_{j}(r)|^{2}\,\Frob{\sigma_{r}(X_{r})}^{2}\dr\Big)^{1/2}
\ \le\ 2\Lambda C_{\mathrm{gr}}\big(1+\sup_{r\le T}|X_{r}|\big)\,|t-s|^{1/2} .
\]
That envelope is of degree one in the state, so \ref{MP0} at exponent $m$ integrates it and we do not require any higher exponent. The bound used being $\|\nabla\varphi_{j}\|_{\infty}\le\Lambda$
and not the Lipschitz estimate of \eqref{eq:Ntildebound}, still less the envelope of
\eqref{eq:Dphibound}. Dominated convergence then gives
$\delta N^{\varphi_{j}}_{s,t}\to\delta N^{\varphi}_{s,t}$ in $L_{m}$, hence
$D^{\varphi_{j}}_{s,t}\to D^{\varphi}_{s,t}$ in probability and, along a subsequence, almost surely, and
we deduce \ref{MP2} for $\varphi$ from \ref{MP2} for the $\varphi_{j}$ by Fatou along that subsequence.
For \ref{MP1} uniform integrability takes the place of domination: \ref{MP2} for the $\varphi_{j}$
yields
\[
\sup_{j}\big\|D^{\varphi_{j}}_{s,t}\big\|_{L_{m}}\ \le\ \omega'_{\Lambda}(|t-s|)\ <\ \infty,
\qquad m\ge2>1 ,
\]
so $(D^{\varphi_{j}}_{s,t})_{j}$ is uniformly integrable, and with the almost sure convergence
Vitali's theorem gives $D^{\varphi_{j}}_{s,t}\to D^{\varphi}_{s,t}$ in $L_{1}$. The duality bounds of (iii)
below are tested against bounded $\psi$ and therefore pass to the limit. The whole of step~\ref{stp:MPbii}
thus runs at exponent $m$, the family set aside in \eqref{eq:Dphibound} never being invoked. Hence
\ref{MP1}--\ref{MP2} for all $\varphi\in B_{\Lambda}$ are equivalent to the same for
$\varphi\in\Phi_{\Lambda}$, and we can impose them for $\Lambda\in\N$ only.

\emph{(iii) The conditional bounds.}\stepnum{(iii)}{stp:MPbiii} \ref{MP2} is now the unconditional bound
$\E_{\Prob}[|D^{\varphi}_{s,t}|^{m}]\le\omega'^{m}$ of \eqref{eq:MP2form}, and what has to be produced
is a Borel map on $\mathcal P(\mathcal C)$ computing its left-hand side wherever that side has a
meaning. Since \ref{mpb:version} does not present $D^{\varphi}$ as a single Borel
functional of the path, we use its Riemann-sum approximation. We denote by $D^{\varphi,l}$ the field obtained by replacing $N^{\varphi}$ by its $l$-th Riemann sum
\eqref{eq:MPriemann}. Each
$D^{\varphi,l}_{s,t}$ \emph{is} a fixed continuous functional of the path, so each
$\Prob\mapsto\E_{\Prob}[|D^{\varphi,l}_{s,t}|^{m}]$ is Borel, being the increasing limit
\[
\E_{\Prob}\big[|D^{\varphi,l}_{s,t}|^{m}\big]
=\lim_{N\uparrow\infty}\E_{\Prob}\big[|D^{\varphi,l}_{s,t}|^{m}\wedge N\big]
\]
of the integrals of the bounded continuous functionals $|D^{\varphi,l}_{s,t}|^{m}\wedge N$. Moreover,
$\E_{\Prob}[|D^{\varphi,l}_{s,t}|^{m}]\to\E_{\Prob}[|D^{\varphi}_{s,t}|^{m}]$ in $[0,\infty]$ for every
$\Prob$ satisfying \ref{MP0}, since Burkholder--Davis--Gundy against the degree-one envelope
$2\Lambda C_{\mathrm{gr}}(1+\sup_{r\le T}|X_{r}|)|t-s|^{1/2}$ used in step~\ref{stp:MPbii} yields
\[
\big\|D^{\varphi,l}_{s,t}-D^{\varphi}_{s,t}\big\|_{L_{m}}\ \longrightarrow\ 0
\qquad(l\to\infty),
\]
a quantity \ref{MP0} makes finite at the exponent
$m$. That convergence is available on the \ref{MP0} set only, so we use instead the map
$\Prob\mapsto\limsup_{l}\E_{\Prob}[|D^{\varphi,l}_{s,t}|^{m}]$: a $\limsup$ of Borel functions is Borel
on the whole of $\mathcal P(\mathcal C)$, and this one equals $\E_{\Prob}[|D^{\varphi}_{s,t}|^{m}]$ at
every $\Prob$ satisfying \ref{MP0}. Intersecting with the Borel set cut out by \ref{MP0}, as the
last paragraph of this proof does in any case, we can make the two conditions the same.
For \ref{MP1}, fix $\varphi$ and rational $s\le t$. By duality in
$L_{m}(\mathcal G_{s},\Prob)$ the condition
$\|\E[D^{\varphi}_{s,t}\mid\mathcal G_{s}]\|_{L_{m}(\Prob)}\le c$ is equivalent to the following,
\[
\big|\E_{\Prob}\big[D^{\varphi}_{s,t}\,\psi\big]\big|\ \le\ c\,\E_{\Prob}\big[|\psi|^{m^{*}}\big]^{1/m^{*}}
\qquad\text{for all }\psi\in\Psi_{s},\qquad \tfrac1m+\tfrac1{m^{*}}=1,
\]
where $\Psi_{s}$ denotes any countable family of bounded $\mathcal G_{s}$-measurable functionals of the
canonical path such that it is dense in $L_{m^{*}}(\mathcal G_{s})$ for every $\Prob$. We can take e.g.~the
$\Q$-algebra generated by $\{\varpi\mapsto g(\varpi_{r_{1}},\dots,\varpi_{r_{l}})\}$ with rational
$r_{i}\le s$ and $g$ in a countable subset of $C_{0}$ dense for uniform convergence, $C_{b}$ under
$\|\cdot\|_{\infty}$ not being separable. Such a family is dense because these functionals
generate $\mathcal G_{s}$ and $m^{*}<\infty$.

One implication is immediate. If
$\|\E[D^{\varphi}_{s,t}\mid\mathcal G_{s}]\|_{L_{m}(\Prob)}\le c$, then
$\E_{\Prob}[D^{\varphi}_{s,t}\psi]=\E_{\Prob}\big[\E[D^{\varphi}_{s,t}\mid\mathcal G_{s}]\,\psi\big]$
for every bounded $\mathcal G_{s}$-measurable $\psi$, and H\"older bounds that by
$c\,\E_{\Prob}[|\psi|^{m^{*}}]^{1/m^{*}}$.

The converse is the one that needs an argument. We prove it on the set where \ref{MP2} holds, so that
$D^{\varphi}_{s,t}\in L_{m}(\Prob)$ and $\E_{\Prob}[D^{\varphi}_{s,t}\psi]$ is finite at bounded
$\psi$. The inequality above bounds the linear functional
$\psi\mapsto\E_{\Prob}[D^{\varphi}_{s,t}\psi]$ by $c\,\|\psi\|_{L_{m^{*}}}$ on $\Psi_{s}$, which is
dense in $L_{m^{*}}(\mathcal G_{s})$. That functional therefore extends to $L_{m^{*}}(\mathcal G_{s})$
with norm at most $c$, and since $m^{*}<\infty$ some $\zeta\in L_{m}(\mathcal G_{s})$ with
$\|\zeta\|_{L_{m}}\le c$ represents the extension. It remains to identify $\zeta$. For
$\psi\in\Psi_{s}\cup\{1\}$ both $\E_{\Prob}[\zeta\psi]$ and
$\E_{\Prob}[\E[D^{\varphi}_{s,t}\mid\mathcal G_{s}]\psi]$ equal $\E_{\Prob}[D^{\varphi}_{s,t}\psi]$, so
$\E_{\Prob}[(\E[D^{\varphi}_{s,t}\mid\mathcal G_{s}]-\zeta)\psi]=0$ there. That family is
multiplicative and generates $\mathcal G_{s}$, and the difference lies in $L_{m}\subseteq L_{1}$, so
the monotone class theorem carries the identity to every bounded $\mathcal G_{s}$-measurable $\psi$.
Hence $\zeta=\E[D^{\varphi}_{s,t}\mid\mathcal G_{s}]$, and its norm bound is the
converse.

Each map
$\Prob\mapsto\E_{\Prob}[D^{\varphi}_{s,t}\psi]$ is Borel on the Borel set already cut out by
\ref{MP2}, where it is defined. We work on that set for convenience and not forced by integrability. Indeed, $D^{\varphi}$ is of total state degree $2$, not
$3$, its worst terms being
\[
\int_{s}^{t}\mathcal L_{r}\varphi(X_{r})\dr ,
\qquad
\textstyle\sum_{\kappa,\lambda}(\mathcal Q^{\kappa\lambda}_{s}\varphi)(X_{s})\ZZ^{\kappa\lambda}_{s,t}
\qquad\text{and}\qquad
\tfrac12\textstyle\sum_{\kappa,\lambda}(\mathcal Q^{\kappa\lambda}_{s}\varphi)(X_{s})\CZ^{\kappa\lambda}_{s,t},
\]
the middle one being the $\mathcal Q$-half of the $\ZZ$-coefficient of
\eqref{eq:roughMP}, by \eqref{eq:MPcancelG2}, and, unlike the third, present for a geometric
lift as well,
so \ref{MP0} at any admissible $m\ge2$ already makes it integrable, the degree $3$ belonging to the
envelope $\Xi$ and not to $D^{\varphi}$. The exponent $3m$ that \ref{mpe1} asks
for is therefore not needed for the integrability we use. It is needed for the degree count:
$m'\ge3m$ is the count of \eqref{eq:Dphibound}, which Remark~\ref{rem:MPfactor3} traces to the order
$\varkappa_{\varphi}$ of the test class. That count requires $m'\ge\varkappa_{\varphi}m$ at
$\varkappa_{\varphi}=3$, and no $\varkappa_{\varphi}$ attains the floor $1/\gamma$, so $m/\gamma$ is
an infimum and not a condition. The argument there uses the same approximation:
$\Prob\mapsto\E_{\Prob}[D^{\varphi,l}_{s,t}\psi]$ is Borel, $\psi$ being bounded and
$D^{\varphi,l}_{s,t}\psi$ a continuous functional of the, unbounded but reached
from the bounded continuous $(D^{\varphi,l}_{s,t}\psi)^{\pm}\wedge N$ as above, and
\[
\big|\E_{\Prob}\big[(D^{\varphi,l}_{s,t}-D^{\varphi}_{s,t})\psi\big]\big|
\ \le\ \|\psi\|_{\infty}\,\big\|D^{\varphi,l}_{s,t}-D^{\varphi}_{s,t}\big\|_{L_{1}}
\ \longrightarrow\ 0 ,
\]
the limit in $l$ again taken as a $\limsup_{l}$, Borel everywhere and equal to
$\E_{\Prob}[D^{\varphi}_{s,t}\psi]$ on the \ref{MP0} set. \ref{MP0} is Borel as well. The first part is the continuity of
$\Prob\mapsto\Prob\circ X^{-1}_{0}$. For the second, we do not use the martingale property of $B$
and of $B^{i}B^{j}-\delta_{ij}t$. Those conditions presuppose
$B_{t}\in L_{1}$, which a Borel measure on $\mathcal C$ need not supply, and recovering the Brownian
property from them needs L\'evy's characterisation. We test the independence directly, as
Lemma~\ref{lem:convexsel} does, by
\begin{equation}\label{eq:MPbmtest}
\E_{\Prob}\big[h(\delta B_{s,t})\,\psi\big]\ =\ \mathsf N_{t-s}(h)\ \E_{\Prob}[\psi]
\end{equation}
at rational $s\le t$, at $\psi\in\Psi_{s}\cup\{1\}$ and at $h$ in a countable
$\|\cdot\|_{\infty}$-dense subset of $C_{0}(\R^{d})$, where $\mathsf N_{u}$ denotes the centered
Gaussian law of covariance $u\,\mathrm{Id}$. Both sides integrate a bounded continuous functional of
the path, so this is again a countable family of Borel conditions, and no moment of $B$ enters it.
At fixed $h$, the two sides agree on the multiplicative family $\Psi_{s}\cup\{1\}$, which generates
$\mathcal G_{s}$; hence, at every bounded $\mathcal G_{s}$-measurable $\psi$, by the monotone class
theorem. The $h$ are dense in $C_{0}$, and both sides are $\|\cdot\|_{\infty}$-continuous in $h$.

A real pair $s\le t$ is reached from rational $s_{j}\uparrow s$, $t_{j}\to t$ as in
step~\ref{stp:MPbi}, but the test function has to be held back: \eqref{eq:MPbmtest} at $(s_{j},t_{j})$
is available only at $\mathcal G_{s_{j}}$-measurable $\psi$, and $\mathcal G_{s_{j}}$ is smaller than
$\mathcal G_{s}$. So fix $j_{0}$ and let $\psi$ be bounded and $\mathcal G_{s_{j_{0}}}$-measurable.
Every $j\ge j_{0}$ admits that $\psi$, and letting $j\to\infty$ gives \eqref{eq:MPbmtest} at
$(s,t)$ for it: $h(\delta B_{s_{j},t_{j}})\to h(\delta B_{s,t})$ bounded, by path continuity, and
$\mathsf N_{t_{j}-s_{j}}(h)\to\mathsf N_{t-s}(h)$. Both sides being linear in $\psi$ and are stable
under bounded monotone limits, the monotone class theorem carries the identity from the algebra
$\bigcup_{j}\mathcal G_{s_{j}}$ to $\bigvee_{j}\mathcal G_{s_{j}}=\mathcal G_{s-}=\mathcal G_{s}$,
the last equality by the path continuity recorded in Remark~\ref{rem:MPraw}. At $s=0$ there is
nothing to do, $0$ being rational. So
$\delta B_{s,t}$ is independent of $\mathcal G_{s}$ with law $\mathsf N_{t-s}$ at every pair, which
together with
\[
\Prob[B_{0}=0]=\lim_{k}\E_{\Prob}\big[(1-k|B_{0}|)^{+}\big]=1 ,
\]
a decreasing limit of integrals of bounded continuous functionals, is the Brownian
requirement of \ref{MP0}. The third is
\[
\sup_{R}\E_{\Prob}\Big[\big(\sup_{t}|X_{t}|\wedge R\big)^{m}\Big]\ <\ \infty ,
\]
a countable supremum of Borel maps.

Intersecting and unioning the countably many Borel conditions produced by (0)--(iii) (countable
intersections over $\varphi\in\Phi_{\Lambda}$, over rational $s\le t$, over $\psi\in\Psi_{s}$ and $h$, over
$\Lambda\in\N$ and over $\varepsilon\in\Q_{+}$, and countable unions over $\delta\in\Q_{+}$
and, for the first half of \eqref{eq:epsdelta}, over the bound $N\in\N$ in
$\bigcap\{\xi_{s,t}\le N\}$),
we can present $\mathcal S_{\nu}$ as a Borel set.
\end{proof}

\color{blue}
\begin{remark} We state Lemma~\ref{lem:MPborel} for two reasons. Lemma~\ref{lem:convexsel} manipulates $\mathcal S_{\nu}$ as a set, through
mixtures, and Borelness is the minimal regularity under which that is meaningful; and the
$\varepsilon$--$\delta$ reformulation \eqref{eq:epsdelta} of its proof, together with the diagonal
construction showing that no countable cofinal family of moduli exists, is what removes the
existential quantifier over moduli in a form uniform in $\Lambda$.
\end{remark}
\color{black}

\subsection{Graded test functions}\label{subsec:graded}

Definition~\ref{def:roughMP} requires its two moduli uniformly over $\tnorm\cdot$-bounded families of
test functions, where $\tnorm\varphi=\sum_{j\le3}\|D^{j}\varphi\|_{\infty}$. That class is calibrated
to test functions of degree one: the truncations $\varphi_{R}(x)=x_{i}\chi(x/R)$ used in the
proof of Proposition~\ref{prop:MPequiv} satisfy $\sup_{R}\tnorm{\varphi_{R}}<\infty$, which is what
makes the localization work there. For a monomial of degree $k\ge2$ it fails:
\[
\big\|D^{j}\big(x^{\alpha}\chi(x/R)\big)\big\|_{\infty}\ \asymp\ R^{k-j}\ \to\ \infty
\qquad(R\to\infty),\qquad j<k .
\]
So the localization of Proposition~\ref{prop:MPequiv} does not survive at degree $\ge2$, and an
argument that needs such test functions, e.g.~a moment hierarchy, cannot be run from the problem as stated. The grading below admits them once the moments of order $\kappa m$ are known, with
$\kappa=\max\{k,3\}$. It records a moment hierarchy and does not produce one. What the grading below changes is
 that the test class stays $C^{3}_{b}$ and a monomial of degree $k\ge2$ doesn't belong to it,
but the truncations $x^{\alpha}\chi(\cdot/R)$ now have $\sup_{R\ge1}\tnorm{\cdot}_{k}<\infty$. The natural strengthening is to weight each derivative by the
growth it is allowed.

\begin{definition}[Graded seminorms and the graded martingale problem]\label{def:graded}
For $k\in\N$ and $\varphi\in C^{3}(\R^{n})$ we put
\begin{equation}\label{eq:gradedseminorm}
\tnorm\varphi_{k}:=\sum_{j=1}^{3}\ \sup_{x\in\R^{n}}\ \frac{|D^{j}\varphi(x)|}{(1+|x|)^{k-j}} .
\end{equation}
We say that $\Prob$ solves the \emph{graded rough martingale problem of order $k$ at exponent $m$},
$m\ge2$, if it solves Definition~\ref{def:roughMP} at that exponent with \ref{MP1}--\ref{MP2} required
uniformly over $\big\{\varphi\in C^{3}_{b}(\R^{n}):\tnorm\varphi_{k}\le\Lambda\big\}$ rather than over
$\{\tnorm\varphi\le\Lambda\}$.
\end{definition}

\begin{proposition}[The graded problem asks only for more moments]\label{prop:gradedequiv}
Let $k\in\N$ and $m\ge2$, and put $\kappa:=\max\{k,3\}$. Let $X$ denote an
$L_{m_{0}}$-conditional solution of \eqref{eq:rsde} such that $m_{0}\ge\kappa m$ and
$\|\sup_{t\le T}|X_{t}|\|_{L_{m'}}<\infty$ for some $m'\ge\kappa m$, and assume \eqref{eq:Egrowth},
and the linear growth \eqref{eq:MProughgrowth} of the rough field. Then the law of $(X,B)$ solves the
graded rough martingale problem of order $k$ at exponent $m$.
Conversely, a solution of the graded problem of order $k\ge3$ solves Definition~\ref{def:roughMP}.
\end{proposition}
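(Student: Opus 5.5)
The forward half will rerun the proof of Proposition~\ref{prop:MPequiv}\ref{mpe1} with the graded seminorm in place of $\tnorm\cdot$, tracking one extra weight. Steps~\ref{stp:MP2}--\ref{stp:MP6} are pure algebra, linear in $\varphi$, and give the exact identity \eqref{eq:MPleftover} for every $\varphi\in C^{3}_{b}$ regardless of seminorm. Only the envelope \eqref{eq:Dphibound} changes: wherever a summand carries $D^{j}\varphi$ at a point of the path, we bound it by $\tnorm\varphi_{k}(1+\sup_{r\in[s,t]}|X_{r}|)^{k-j}$ instead of by $\tnorm\varphi$. The same substitution applies to the third-order Taylor remainder, whose $D^{3}\varphi$ is evaluated on the segment between $X_{s}$ and $X_{t}$, inside the ball of radius $\sup_{r}|X_{r}|$. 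It also applies to the set-aside terms $\delta\tilde N^{\varphi}$, $\tilde P^{\varphi}$, $\tilde Q^{\varphi}$, where the Lipschitz bound on $\nabla\varphi$ and $D^{2}\varphi$ picks up $D^{2}\varphi$, respectively $D^{3}\varphi$, on that segment.

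The result is a graded envelope $|R^{\varphi}_{s,t}|\le\tnorm\varphi_{k}\,\Xi^{(k)}_{s,t}$, where $\Xi^{(k)}$ is $\Xi$ with each summand multiplied by $(1+\sup_{r\le T}|X_{r}|)^{(k-j)^{+}}$ for the derivative order $j$ that produced it. For $j=1,2,3$ the original total state degree of the term involving $D^{j}\varphi$ was at most $3$. The new total degree is at most $\max\{3,k\}=\kappa$: a term of degree $d\le3$ in which $D^{j}\varphi$ appears becomes degree $d+k-j$, and the degree count in Appendix~\ref{app:MPenvelope} shows that the $D^{j}\varphi$ terms had degree at most $j$. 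This is the main obstacle. One has to check, row by row in Steps~7--8 of that appendix, that the coefficient multiplying $D^{j}\varphi$ is of state degree at most $j$ (one factor of linear growth per contracted slot, by \eqref{eq:Egrowth} and \eqref{eq:MProughgrowth}) rather than merely the total degree $3$. For the $J$-carrying rows, $J$ counts as one degree and is placed in $L_{m_{0}}$. I would first verify this on the Taylor row $|\delta X|^{3}D^{3}\varphi$, which gives degree $3+(k-3)^{+}\le\kappa$, and on the cross rows $D^{2}\varphi(G^{1},J)$.

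Granted the count, generalised H\"older with $\kappa$ factors in $L_{\kappa m}$, as in Steps~\ref{stp:MP12}--\ref{stp:MP13} with $3$ replaced by $\kappa$, gives the two rates. The hypotheses $m'\ge\kappa m$ and $m_{0}\ge\kappa m$ then play exactly the roles that $3m$ played there, including the split of $W^{\varphi}J$ and the conditional centring of $\mathcal M^{\varphi}$. Both rates are uniform over $\tnorm\varphi_{k}\le\Lambda$ with moduli linear in $\Lambda$, which gives \ref{MP1}--\ref{MP2}. \ref{MP0} is immediate.

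For the converse with $k\ge3$, note that $(1+|x|)^{k-j}\ge1$ for $j\le3\le k$. Hence $\tnorm\varphi_{k}\le\tnorm\varphi$, so the ball $\{\tnorm\varphi\le\Lambda\}$ is contained in $\{\tnorm\varphi_{k}\le\Lambda\}$. The graded moduli therefore serve verbatim for Definition~\ref{def:roughMP}, and \ref{MP0} is shared.
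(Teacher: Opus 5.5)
Your proposal is correct and follows the paper's proof: rerun Proposition~\ref{prop:MPequiv}\ref{mpe1} with $|D^{j}\varphi(x)|\le\tnorm\varphi_{k}(1+|x|)^{k-j}$, and use that every $D^{j}\varphi$-term carries exactly $j$ state factors, so its total degree is $\max\{k,j\}\le\kappa$, with the set-aside family at $\max\{k,2\}$ and $\max\{k,3\}$. Then run generalised H\"older at $\kappa$ in place of $3$, which also covers the $W^{\varphi}J$ split that the paper checks explicitly as $(\kappa-1)r\le m'$ with $\tfrac1r=\tfrac1m-\tfrac1{m_{0}}$; the converse follows from $\tnorm\varphi_{k}\le\tnorm\varphi$ for $k\ge3$. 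One wording should be tightened: $\Xi^{(k)}$ must be built from the unpadded rows (degree $j$ for each $D^{j}\varphi$), not by reweighting the printed $\Xi$, whose summands are already padded to degree $3$; your degree argument in fact does the former.
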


\begin{proof}
The forward direction is the proof of Proposition~\ref{prop:MPequiv}\ref{mpe1} with one change of
bookkeeping. The expansion is Taylor to order $3$ about
$X_{s}$ with \eqref{eq:davie} substituted, the germ cancelling by \eqref{eq:gendef}, \eqref{eq:MPcancelG2} and \eqref{eq:brackettaylor}. So \eqref{eq:paramsIto} is not used and the moment condition is the bare degree count $m'\ge\kappa m$. There, that expansion produced
\eqref{eq:Dphibound}, the $L_{m}$-bounds \eqref{eq:Ntildebound} and \eqref{eq:PQtildebound} for the
three members of $\mathcal K^{\varphi}$, and \eqref{eq:Dphisplit}; we repeat the inspection while
keeping the graded weights. A term carrying $D^{j}\varphi$, $1\le j\le3$, is accompanied by exactly
$j$ factors, each of them a piece of $\delta X$ or a coefficient and hence each of size
$O(1+\sup_{r}|X_{r}|)$, while \eqref{eq:gradedseminorm} yields
$|D^{j}\varphi(x)|\le\tnorm\varphi_{k}(1+|x|)^{k-j}$. This yields the following,
\[
(1+|x|)^{k-j}\,(1+|x|)^{j}=(1+|x|)^{k}\quad\text{for }j\le k,
\qquad
(1+|x|)^{k-j}\le1\quad\text{for }j>k .
\]
For $j\le k$ the two meet exactly and the term is $O((1+\sup_{r}|X_{r}|)^{k})$; for $j>k$, which can
occur only when $k\le2$, the graded weight is $\le1$ and the term is $O((1+\sup_{r}|X_{r}|)^{j})$ with
$j\le3$. We therefore obtain
\[
D^{\varphi}_{s,t}=-\,\mathcal K^{\varphi}_{s,t}+R^{\varphi}_{s,t},
\qquad
|R^{\varphi}_{s,t}|\ \le\ \tnorm\varphi_{k}\cdot\Xi^{(k)}_{s,t},
\qquad
\Xi^{(k)}_{s,t}\ \text{of total degree}\ \kappa=\max\{k,3\}\ \text{in the state},
\]
with the same $\varphi$-free rate factors and the same conditionally centred part as in
\eqref{eq:Dphisplit}. The family set aside is unchanged and does not raise the count. With the
graded weight we obtain
\[
\big|\nabla\varphi(X_{r})-\nabla\varphi(X_{s})\big|\ \le\ \tnorm\varphi_{k}\,\big|\delta X_{s,r}\big|\,
\big(1+\sup_{r}|X_{r}|\big)^{(k-2)\vee0},
\]
since $|D^{2}\varphi(x)|\le\tnorm\varphi_{k}(1+|x|)^{k-2}$; so \eqref{eq:Ntildebound} and the
$\tilde P^{\varphi}$-half of \eqref{eq:PQtildebound} hold with $\tnorm\varphi$ replaced by
$\tnorm\varphi_{k}$ and $(k-2)\vee0$ further state factors, at total degree
\[
2+\big((k-2)\vee0\big)=\max\{k,2\},
\qquad\text{i.e.~}2\ \text{at}\ k=0,1,2,\quad 3\ \text{at}\ k=3,\quad k\ \text{at every}\ k\ge2 .
\]
Similarly, we obtain
\[
\Op{D^{2}\varphi(X_{r})-D^{2}\varphi(X_{s})}\ \le\ \tnorm\varphi_{k}\,\big|\delta X_{s,r}\big|\,
\big(1+\sup_{r}|X_{r}|\big)^{(k-3)\vee0}
\]
from $\Op{D^{3}\varphi(x)}\le\tnorm\varphi_{k}(1+|x|)^{k-3}$, the $\tilde Q^{\varphi}$-half carrying in
addition the two state factors of $a=\sigma\sigma^{\top}$, at total degree
\[
3+\big((k-3)\vee0\big)=\max\{k,3\},
\qquad\text{i.e.~}3\ \text{at}\ k=0,1,2,3,\quad k\ \text{at every}\ k\ge3 .
\]
So the family set aside is of total degree $\max\{k,3\}=\kappa$ for every $k\in\N$, exactly within
the exponents already in use, and we can run the estimate of
Proposition~\ref{prop:MPequiv}\ref{mpe1} with $3$ replaced by $\kappa$, using
$\|\sup_{t\le T}|X_{t}|\|_{L_{\kappa m}}<\infty$ together with the Davie moduli at order $\kappa m$.
The graded weight has state degree $\max\{k-1,1\}\le\kappa-1$,
so instead of \eqref{eq:Wsplit} the split of $W^{\varphi}J$ needs $(\kappa-1)r\le m'$ with
$\tfrac1r=\tfrac1m-\tfrac1{m_{0}}$. At $m_{0}=\kappa m$ this gives $r=\kappa m/(\kappa-1)$, hence
$(\kappa-1)r=\kappa m\le m'$, which is $m'\ge\kappa m$. At $k=1$ the degree is $1$ and the
condition is \eqref{eq:Wsplit}.
The converse is $\tnorm\varphi_{k}\le\tnorm\varphi$ for $k\ge3$, every exponent $j-k$ in
\eqref{eq:gradedseminorm} being then non-positive, so that the graded $\Lambda$-ball contains the
ungraded one and the moduli transfer without loss.
\end{proof}

\section{Weak existence for systems of rough SDEs}\label{sec:weakex}

 $\RZ=(Z,\ZZ)\in\CC^{\gamma}([0,T];\R^{e})$ is a \emph{deterministic} rough path,
$\gamma\in(\tfrac13,\tfrac12)$, and we study
\begin{equation}\label{eq:rsdeE}
\d X_{t}=b(t,X_{t})\dt+\sigma(t,X_{t})\,\d B_{t}+(f,f')(t,X_{t})\,\d\RZ_{t},
\qquad X_{0}\sim\nu,\qquad X\in\R^{n},\ B\in\R^{d}.
\end{equation}

\subsection{Hypotheses}\label{subsec:Ehyp}

\begin{assumption}[It\^o data: continuity, linear growth, coordinatewise diffusion]\label{ass:Eito}
$b:[0,T]\times\R^{n}\to\R^{n}$ and $\sigma:[0,T]\times\R^{n}\to\R^{n\times d}$ are continuous, and there
is $C_{\mathrm{gr}}<\infty$ such that
\begin{equation}\label{eq:Egrowth}
|b(t,x)|+\Frob{\sigma(t,x)}\ \le\ C_{\mathrm{gr}}\big(1+|x|\big),\qquad (t,x)\in[0,T]\times\R^{n};
\end{equation}
and, with $a=\sigma\sigma^{\top}$, the \emph{coordinatewise diffusion bound}
\begin{equation}\label{eq:Eacoord}
a_{ii}(t,x)\ \le\ C_{\mathrm{gr}}\big(1+|x_{i}|\big)\big(1+|x|\big),
\qquad i=1,\dots,n,\quad (t,x)\in[0,T]\times\R^{n}.
\end{equation}
\eqref{eq:Eacoord} is only required in combination with branch \ref{Ecurvdiag} of
Assumption~\ref{ass:Erough}. Under \ref{Ecurvaffine}, assumption \eqref{eq:Egrowth} needs no addition: every
appeal to \eqref{eq:Eacoord} below sits in the \ref{Ecurvdiag} branch, as noted after
\eqref{eq:Taylorminaff}, so under \ref{Ecurvaffine} the coordinatewise bound may be removed from this
assumption.
\end{assumption}

\eqref{eq:Eacoord} is strictly stronger than the $\sigma$-half of \eqref{eq:Egrowth}, which yields
only $a_{ii}\le\Frob{\sigma}^{2}\le C^{2}_{\mathrm{gr}}(1+|x|)^{2}$. For $\sigma(x)=|x|\,e_{2}\otimes e_{1}$ and $x=(M,0,\dots,0)$ we have $\Frob{\sigma(x)}=|x|$,and \eqref{eq:Egrowth} holds with $C_{\mathrm{gr}}=1$, while $a_{22}(x)=M^{2}$ exceeds
$C_{\mathrm{gr}}(1+|x_{2}|)(1+|x|)=C_{\mathrm{gr}}(1+M)$ by a factor $M$. Conversely \eqref{eq:Eacoord}
implies that half up to a factor $\sqrt{C_{\mathrm{gr}}n}$. 

\begin{assumption}[The rough vector field]\label{ass:Erough}
$(f,f')$ is a deterministic controlled vector field: $f_{t}\in C^{3}(\R^{n};\R^{n\times e})$ and
$f'_{t}\in C^{2}(\R^{n};\R^{n\times e\times e})$ for each $t$, such that $(f,f')\in\mathscr D^{\beta,\beta'}_{\RZ}$
for some $\tfrac13<\beta\le\gamma$ and $\tfrac13<\beta'$. We denote by $C_{\RZ}$ the controlled-path
norm of the pair, i.e.\ the smallest constant such that
$|f_{t}(x)-f_{s}(x)-f'_{s}(x)\delta Z_{s,t}|\le C_{\RZ}(1+|x|)|t-s|^{\beta+\beta'}$ and
$|f'_{t}(x)-f'_{s}(x)|\le C_{\RZ}(1+|x|)|t-s|^{\beta'}$. The same bounds hold for one additional derivative, together with a time-modulus for the second derivative of $f$, at a constant
$C^{(1)}_{\RZ}<\infty$,
\begin{align}\label{eq:Eroughderiv}
   &\big|Df_{t}(x)-Df_{s}(x)-Df'_{s}(x)\delta Z_{s,t}\big|\le C^{(1)}_{\RZ}|t-s|^{\beta+\beta'},
\quad
\big|Df'_{t}(x)-Df'_{s}(x)\big|\le C^{(1)}_{\RZ}(1+|x|)|t-s|^{\beta'},\\
&\big|D^{2}f_{t}(x)-D^{2}f_{s}(x)\big|\le C^{(1)}_{\RZ}(1+|x|)|t-s|^{\beta'}, \nonumber
\end{align}
for all $s\le t$ and all $x$. The first of the three is unweighted, being used against
$|f_{w}(x)|\le C_{\mathrm{gr}}(1+|x|)$ at \eqref{eq:Gtemporal}, where a weight would make that product
quadratic and no Gr\"onwall would absorb it. The second and third keep their weights. $Df'$ and $D^{2}f$ are used only in the
truncation of Step~\ref{step:Ereg} and in \S\ref{app:FHL46}, both at fixed $(k,R)$, where the field
is truncated.
Further,
\begin{equation}\label{eq:Eroughgrowth}
\begin{gathered}
\sup_{t\le T}\ \|D^{j}f_{t}\|_{\infty}<\infty\ \ (j=1,2,3),
\qquad
\sup_{t\le T}\ \|D^{j}f'_{t}\|_{\infty}<\infty\ \ (j=1,2),
\\
\sup_{t\le T}\big(|f_{t}(x)|+|f'_{t}(x)|\big)\le C_{\mathrm{gr}}(1+|x|);
\end{gathered}
\end{equation}
and, finally, one of the following two curvature hypotheses is assumed.
\begin{enumerate}[label=\normalfont(\alph*),leftmargin=2.6em]
\item\label{Ecurvaffine} \emph{Norm-curvature case.} The curvature decays in the norm of the
state:
\begin{equation}\label{eq:Ecurvnormhyp}
C_{\mathrm{nrm}}\ :=\ \sup_{t\le T}\ \sup_{x}\ \big(1+|x|\big)\,\Op{D^{2}f_{t}(x)}\ <\ \infty .
\end{equation}
\item\label{Ecurvdiag} \emph{Diagonal case.} $f$ and $f'$ are diagonal: there are
$q^{\kappa}_{t,i}\in C^{3}(\R)$ and $p^{\kappa\lambda}_{t,i}\in C^{2}(\R)$ such that
\[
f^{\kappa}_{t,i}(x)=q^{\kappa}_{t,i}(x_{i}),
\qquad
f'^{\kappa\lambda}_{t,i}(x)=p^{\kappa\lambda}_{t,i}(x_{i}),
\qquad i\le n,\ \kappa,\lambda\le e,
\]
and the coordinatewise curvature decay holds:
\begin{equation}\label{eq:Ecurvdiag}
C_{\mathrm{cur}}\ :=\ \sup_{t\le T}\ \max_{i\le n,\ \kappa\le e}\ \sup_{\xi\in\R}\
\big(1+|\xi|\big)\,\big|\ddot q^{\kappa}_{t,i}(\xi)\big|\ <\ \infty .
\end{equation}
\end{enumerate}
\end{assumption}
\begin{remark}[Why the curvature hypothesis is coordinatewise]\label{rem:Ecurvature}
The germ defect of \eqref{eq:rsdeE} carries the Taylor remainder
$\mathcal T_{u,w}:=\delta[f(X)]_{u,w}-Df_{u}(X_{u})\delta X_{u,w}$, of size
$\tfrac12D^{2}f(\xi)(\delta X_{u,w},\delta X_{u,w})$, and on a greedy interval
$|\delta X_{u,w}|\asymp\Lambda_{\RZ}\Phi_{u}|w-u|^{\gamma}$ with $\Phi=V(X)$. Under
$\|D^{2}f\|_{\infty}<\infty$ alone that remainder is quadratic in $\Phi_{u}$, the greedy mesh
becomes state-dependent and the constant in \eqref{eq:Emoment} would depend on $\nu$, so one power of $\Phi_{u}$ must cancel.
The norm condition \eqref{eq:Ecurvnormhyp} is not suited for the diagonal class: for $f_{i}(x)=q(x_{i})$,
$\Op{D^{2}f(x)}=\max_{i}|\ddot q(x_{i})|$, so at $x^{(M)}=(\xi,M,0,\dots,0)$ with $\ddot q(\xi)\ne0$,
$(1+|x^{(M)}|)\Op{D^{2}f(x^{(M)})}\ge M|\ddot q(\xi)|$ diverges, and for $n\ge2$ it holds only for
diagonal fields with affine $q$. The condition itself is not affineness: $f_{1}=V$, $f_{i}\equiv0$ ($i\ge2$) obeys
$(1+|x|)\Op{D^{2}f(x)}\le\sqrt2$ by \eqref{eq:Vjet} while $D^{2}f\not\equiv0$. For $n\ge2$ neither branch contains the
other. At $n=1$ they agree.

 Substituting the Davie expansion
\eqref{eq:davie} into $D^{2}f(\xi)(\delta X_{u,w},\delta X_{u,w})$, pairs the rough term
$f\,\delta Z$, the martingale increment and the drift increment with one another. Bounding $D^{2}f$
only in the directions those pairings supply, rather than in every direction as
\eqref{eq:Ecurvnormhyp} does, would require
\begin{equation}\label{eq:Ecurv}
\sup_{t\le T}\ \sup_{x}\ \frac{
\big|D^{2}f_{t}(x)\big(f_{t}(x),f_{t}(x)\big)\big|
+\big|\!\operatorname{tr}\big[D^{2}f_{t}(x)\,a(t,x)\big]\big|
+\big|D^{2}f_{t}(x)\big(f_{t}(x),\,b(t,x)\big)\big|}{V(x)}\ <\ \infty ,
\end{equation}
one power of $V$ rather than two being exactly the cancellation the greedy mesh needs. This is a weaker
assumption than either branch of Assumption~\ref{ass:Erough}: \ref{Ecurvaffine} gives it through
\eqref{eq:Egrowth}, by $|S\!:\!a|\le\Op{S}\operatorname{tr}a$ for the middle term, and
\ref{Ecurvdiag} through \eqref{eq:Ecurvdiag} together with the coordinatewise diffusion bound
\eqref{eq:Eacoord}, which is what makes that term $O(V)$. It is nevertheless not what the proof uses,
for two reasons. First, \eqref{eq:Ecurv} accounts for only three of the six pairings: the expansion
also produces $D^{2}f(\delta M,\delta M)$, $D^{2}f(b,b)$, $D^{2}f(f,\delta M)$ and every term in $J$,
and of these the first is covered by $\operatorname{tr}[D^{2}f\,a]$ only in conditional mean and only
with $D^{2}f$ frozen at $X_{u}$, whereas Lemma~\ref{lem:SSL} at $\epsilon_{2}$ bounds the defect
itself, and the others not at all. Second, \eqref{eq:Ecurv} contracts $D^{2}f(x)$ against
coefficients at the same $x$, while the Taylor remainder evaluates $D^{2}f$ at an intermediate
$\xi\in[X_{u},X_{w}]$.
The proof uses instead
the pathwise two-branch bound \eqref{eq:Taylormin} below, one branch linear in $|\delta X|$ and one
quadratic with a weight in the denominator, which a diagonal field with \eqref{eq:Ecurvdiag} supplies
coordinate by coordinate. Its weight is the coordinate $1+|X_{u,i}|$ and cannot be improved to
$\Phi_{u}$: at $X_{u}=(M,0,\dots,0)$, $\delta X=(0,\eta,0,\dots,0)$, $|\mathcal T_{2}|\asymp|\ddot q_{2}(0)|\eta^{2}$
independently of $M$ while $|\delta X|^{2}/\Phi_{u}\asymp\eta^{2}/M\to0$. The germ of $V$ does admit
the global weight, $\partial_{i}V$ vanishing at $x_{i}=0$.
\end{remark}

\begin{example}[An example \ref{Ecurvdiag}.]\label{ex:Adiag}
\emph{An instance.} Fix $n\ge1$, take $e=1$, and let
\[
\d X_{i}=\drifz(X_{i})\,\drifo(\bar X)\dt+\sqrt{\alpha}\,\bsigz(X_{i})\,\bsigo(\bar X)\,\d B^{i}
+\varpi\,\wsigz(X_{i})\,\d\RZ,
\qquad \bar x:=\tfrac1n\textstyle\sum_{j}x_{j},
\]
on $\R^{n}$, with $\alpha\ge0$, where $\varpi\in\R$ denotes a constant, and where
$\drifz,\drifo,\bsigz,\bsigo,\wsigz$ denote continuous, nonnegative functions subject to the growth hypothesis:
there is $C_{\mathrm{gr}}<\infty$ such that, for all $x,m\in\R$,
\begin{equation}\label{eq:Agrowth}
\big|\drifz(x)\drifo(m)\big|\le C_{\mathrm{gr}}\big(1+|x|+|m|\big),\quad
\bsigz(x)^{2}\bsigo(m)^{2}\le C_{\mathrm{gr}}\big(1+|x|\big)\big(1+|m|\big),\quad
\wsigz(x)\le C_{\mathrm{gr}}\big(1+|x|\big).
\end{equation}
Further, $\wsigz\in C^{3}(\R)$, $\wsigz\not\equiv0$, such that the graded decay
$\wsigz''(\xi)=O\big((1+|\xi|)^{-1}\big)$ and
$\|\wsigz'''\|_{\infty}<\infty$ both hold. The non-vanishing is only required for the failure part below,
which divides by a value of $\wsigz$. The growth hypothesis
is stated on the products because each coefficient is a product of a coordinate and 
the empirical mean, and the product of two functions of linear growth is quadratic. 

\emph{Verification of Assumption~\ref{ass:Eito}.} Continuity is immediate, and $|\bar x|\le|x|$,
so that the first two bounds in \eqref{eq:Agrowth} yield
\[
|b(x)|\ \le\ C_{\mathrm{gr}}\sqrt n\,\big(1+2|x|\big),
\qquad \Frob{\sigma(x)}^{2}\ \le\ \alpha C_{\mathrm{gr}}n\big(1+|x|\big)^{2} ,
\]
i.e.~\eqref{eq:Egrowth}. The diffusion
matrix is diagonal with
\[
a_{ii}(x)=\alpha\,\bsigz(x_{i})^{2}\bsigo(\bar x)^{2}
\ \le\ \alpha C_{\mathrm{gr}}\big(1+|x_{i}|\big)\big(1+|\bar x|\big)
\ \le\ \alpha C_{\mathrm{gr}}\big(1+|x_{i}|\big)\big(1+|x|\big),
\]
and this is exactly \eqref{eq:Eacoord}, at a constant independent of $n$. We do not require $\bsigo$ to be constant or bounded, only $\varpi$ must be.

\emph{Verification of \ref{Ecurvdiag}.} With $\varpi$ constant, the rough field is
$f_{i}(x)=\varpi\,\wsigz(x_{i})$, a function of $x_{i}$ alone, and $f'\equiv0$ is diagonal. The structural half of \ref{Ecurvdiag} therefore holds with $q_{i}(\xi)=\varpi\,\wsigz(\xi)$. The
graded decay gives $|\wsigz''(\xi)|\le C(1+|\xi|)^{-1}$, hence
\[
\big(1+|\xi|\big)\big|\ddot q_{i}(\xi)\big|\ \le\ |\varpi|\,C
\qquad\text{for all }\xi\in\R,\ i\le n,
\]
and this is \eqref{eq:Ecurvdiag} with $C_{\mathrm{cur}}=|\varpi|C$; no decay beyond rate $1$ is
required. The three bounds $\|D^{j}f\|_{\infty}<\infty$, $j=1,2,3$, are also in place, but not all for
the same reason. Here $\|D^{2}f\|_{\infty}=|\varpi|\,\|\wsigz''\|_{\infty}\le|\varpi|C$ is the
graded decay with its weight dropped. The bound $\|Df\|_{\infty}<\infty$ follows from that decay
together with $\wsigz\ge0$ and its linear
growth. Indeed, let $|\wsigz'(\xi_{0})|=L$. The decay keeps $\wsigz'$ of one sign and of modulus at
least $L-C\log3$ throughout the interval between $\xi_{0}$ and $2\xi_{0}\pm2$, whose length is
$|\xi_{0}|+2$, so that
\[
\big|\wsigz\big(2\xi_{0}\pm2\big)-\wsigz(\xi_{0})\big|
\ \ge\ \big(L-C\log3\big)\big(|\xi_{0}|+2\big)
\ >\ 3C_{\mathrm{gr}}\big(1+|\xi_{0}|\big)
\qquad\text{once }L>C\log3+3C_{\mathrm{gr}} ,
\]
while the third bound of \eqref{eq:Agrowth} caps both endpoint values by
$3C_{\mathrm{gr}}(1+|\xi_{0}|)$. At the far endpoint $\wsigz$ therefore either exceeds that bound or is
negative. The third derivative
is a separate hypothesis. The next example shows why: it has the graded decay and an
unbounded third derivative, so it is a counterexample to the implication and is itself not
admissible for Assumption~\ref{ass:Erough}. An admissible example is given after.
Let us take
\[
\wsigz''(\xi)=\sin(\xi^{3})(1+\xi^{2})^{-1/2},
\qquad
\big(1+|\xi|\big)|\wsigz''(\xi)|\le\sqrt2,
\qquad
\wsigz'''(\xi)=3\xi^{2}\cos(\xi^{3})(1+\xi^{2})^{-1/2}+O(1) .
\]
The first two entries are the graded decay, and they leave $\wsigz'$ bounded, the integral
$I:=\int_{0}^{\infty}\sin(y^{3})(1+y^{2})^{-1/2}\dy$ being convergent
with $I=0.3343\ldots$, while the third
is unbounded. Here $\wsigz''$ is odd, so $\wsigz'$ is even. For every
choice but one $\wsigz'$ tends to the same non-zero limit $\wsigz'(0)+I$ at $\pm\infty$, making
$\wsigz-\wsigz(0)$ an odd function of linear growth and $\wsigz$ negative on a half-line, that is, outside
the class. We can normalize by $\wsigz'(0)=-I$. This makes $\wsigz'$ vanish at $\pm\infty$ with
$\|\wsigz'\|_{\infty}=I$ and, $\wsigz'$ being $O(\xi^{-3})$ there, makes $\wsigz$ bounded,
$\wsigz-\wsigz(0)$ ranging in $[-0.312,0.312]$. Then $\wsigz(0)=0.4$ makes $\wsigz$ nonnegative and of
(at most) linear growth, as \eqref{eq:Agrowth} requires. Every hypothesis of
\eqref{eq:Agrowth} and the graded decay are thus met, and $\|\wsigz'''\|_{\infty}=\infty$
nonetheless. This
is why we add $\|\wsigz'''\|_{\infty}<\infty$: Assumption~\ref{ass:Erough} requires $j=3$, and
$\varkappa=3$ of Appendix~\ref{app:FHL46} uses it.

\emph{An admissible example.} The function $\wsigz(\xi)=\sqrt{1+\xi^{2}}$
satisfies every hypothesis listed at the start of this example: it is smooth, nonnegative and
of linear growth, its first three derivatives are bounded by $1$, and
$(1+|\xi|)\,|\wsigz''(\xi)|\le2$.

With the three bounds and the third of
\eqref{eq:Agrowth}, Assumption~\ref{ass:Erough}\ref{Ecurvdiag} holds for every $\wsigz$
satisfying the hypotheses listed at the start of this example, the example just displayed
included, and Theorem~\ref{thm:weakex}
applies with $C$ depending on $n$.

Now replace
$\varpi$ by a non-constant $h\in C^{3}_{b}(\R)$, so that the rough field becomes a product of a
coordinate factor and a functional of the empirical mean,
\[
f_{i}(x)=g(x_{i})\,h(\bar x),\qquad g:=\wsigz .
\]
For $n\ge2$ it is diagonal exactly when $h$ is constant, since $\partial_{j}f_{i}=g(x_{i})h'(\bar x)/n$
for $j\ne i$, and only then it satisfies \ref{Ecurvdiag}. The following statement is about $n\ge3$. 
The two dimensions $n=1$ and $n=2$ are
genuine exceptions to it. Let $h$ be
non-constant. Then $D^{2}f_{i}$ carries, besides the same-coordinate block
$g''(x_{i})h(\bar x)\,\delta_{ij}\delta_{ik}$, a mixed block
$g'(x_{i})h'(\bar x)\,\delta_{ij}/n$ (and its transpose in $j\leftrightarrow k$) and a product
block $g(x_{i})h''(\bar x)/n^{2}$. Neither is diagonal, so \ref{Ecurvdiag} fails outright, and neither
is small. Contracted against the diagonal $a$ above, they contribute
\[
\frac{2\,g'(x_{i})h'(\bar x)\,a_{ii}}{n}
\ +\ \frac{g(x_{i})h''(\bar x)}{n^{2}}\sum_{j}a_{jj},
\]
and for functions of the model size (with $g$ of linear growth and $\bsigz=\bsigo=\sqrt{\,\cdot\,}$, so that
$a_{ii}(x)=\alpha x_{i}\bar x$) neither stays bounded relative to the Lyapunov weight $V$. Indeed, on
the diagonal $x_{i}\equiv s>0$ we have $\bar x=s$, $a_{ii}=\alpha s^{2}$ and $V\asymp s\sqrt n$, so
that, $g$ being of linear growth and $g',h',h''$ bounded, the mixed block is of order
$s^{2}/n=V^{2}/n^{2}$ and the product block of order $s^{3}/n=V^{3}/n^{5/2}$, hence of order $V$ and
$V^{2}$ relative to $V$. The configurations $(M,1,\dots,1)$ give the same two orders, and
both are unbounded relative to $V$.

 Replacing $\Phi=V(X)$ by a per-coordinate Lyapunov weight
$V(X_{i})$ leaves both blocks untouched, they being second derivatives in directions $j,k\ne i$ that a
weight which is a function of $x_{i}$ alone does not reach. The difficulty is the coupling, not the growth: it survives $h$ bounded, $h''$ bounded, and every choice of per-coordinate weight. The same
applies to any off-diagonal second-derivative block in directions $j,k\ne i$, however it appears, and in
particular to the blocks a global cut-off introduces into a diagonal field. That is why the cut-off of
Step~\ref{step:Ereg} must act coordinatewise.

Branch \ref{Ecurvaffine}  requires $\sup_{x}(1+|x|)\Op{D^{2}f(x)}<\infty$ and not affineness, as Remark~\ref{rem:Ecurvature} says,
so that non-affineness alone does not rule it out. For $n\ge3$ it does fail. Indeed, fix a pair $(\xi,m)$, put $x_{i}=\xi$, hold the
$n-3$ coordinates other than $i$ and two chosen ones at $0$, and send those two to $M+a$ and to
$-M+b$, where $a+b:=nm-\xi$. Then
\[
\bar x\ =\ \frac{\xi+(M+a)+(-M+b)}{n}\ =\ \frac{\xi+a+b}{n}\ =\ m
\]
for every $M$, while $|x|\ge|M+a|\to\infty$. Sending the two coordinates to $+M$ and $-M$ instead
would force $\bar x=\xi/n$ and leave $m$ unavailable, which at $n=3$ is the whole of the remaining
freedom. The limit does not isolate the product block on its own, the same-coordinate and
the mixed block being functions of $(\xi,m)$ as well, so that we test $D^{2}f(x)$ against
$v=e_{j}$ with $j\ne i$, possible since $n\ge3$: this annihilates those two blocks, which carry
$v_{i}^{2}$ and $v_{i}\sum_{j}v_{j}$, and leaves
\[
\big|D^{2}f_{i}(x)(v,v)\big|\ =\ \big|g(\xi)h''(m)\big|/n^{2} .
\]
Since $|v|=1$, this yields $(1+|x|)\Op{D^{2}f(x)}\gtrsim M\,|g(\xi)h''(m)|/n^{2}$ along that family, and
the product block therefore forces $g(\xi)h''(m)=0$ at every pair $(\xi,m)$. Since $g$ does not vanish
identically, we obtain $h''\equiv0$, hence $h$ affine, hence $h$ constant, $h$ being bounded.

Hence, for $n\ge3$, \ref{Ecurvdiag} holds for the system above when the mean modulation of the
rough field is constant and fails otherwise, \ref{Ecurvaffine} fails at every non-constant modulation
as well, and Theorem~\ref{thm:weakex} does not cover a non-constant one.
\end{example}

\begin{remark}[The two exceptional dimensions of Example~\ref{ex:Adiag}]\label{rem:Adiagsmalln}
The limitation concerns $n\ge3$. At $n=1$, $\bar x=x_{1}$ and $f_{1}(x)=(gh)(x_{1})$ is diagonal for
every $h$, the curvature half holding, once $(1+|\xi|)|(gh)''(\xi)|$ is bounded, as for $h=\arctan$. At
$n=2$, fixing $x_{i}$ and $\bar x$ fixes the configuration: at $e=1$,
$g(\xi)=h(\xi)=(1+\xi^{2})^{-1}$, $f'\equiv0$, the field is not diagonal and \ref{Ecurvdiag} fails,
while $C_{\mathrm{nrm}}$ is finite. Every other requirement of Assumption~\ref{ass:Erough}
holds in both cases, and under \ref{Ecurvaffine} \eqref{eq:Agrowth} supplies \eqref{eq:Egrowth}.
\end{remark}\color{blue}
\begin{remark}\label{rem:Efprime}
$f'$ denotes the Gubinelli derivative of the coefficient in time, so the coefficient in
$\delta f_{s,t}(\cdot)\approx f'_{s}(\cdot)\delta Z_{s,t}$.It only governs the time variation of the rough field. That class $f'\equiv0$ contains every autonomous field, the autonomous case being the one that
occurs at finite $N$, but it does not only consist of them, and the counterexample lies inside
Assumption~\ref{ass:Erough} itself, which permits $\beta+\beta'<1$, as at the triple
$\gamma=\beta=0.45$, $\beta'=0.35$. Fix such a triple, let
$g:\R^{n}\to\R^{n\times e}$ denote an affine map and denote $f_{t}(x):=t^{\,\beta+\beta'}g(x)$ with
$f'\equiv0$. Since $|t^{a}-s^{a}|\le|t-s|^{a}$ for $a\in(0,1]$, the controlled-path defect of $(f,0)$
is at most $|t-s|^{\beta+\beta'}|g(x)|$, so that $(f,0)\in\mathscr D^{\beta,\beta'}_{\RZ}$ with
$C_{\RZ}<\infty$ by the linear growth of $g$, while $|f'_{t}-f'_{s}|=0$ meets the second requirement trivially; the three bounds \eqref{eq:Eroughderiv} follow by the same computation, $g$ affine making
$Dg$ constant and $D^{2}g\equiv0$; $\sup_{t}\|D^{j}f_{t}\|_{\infty}<\infty$ for $j\le3$ and
$\sup_{t}|f_{t}(x)|\le T^{\beta+\beta'}|g(x)|\le C_{\mathrm{gr}}(1+|x|)$; and \ref{Ecurvaffine} holds
with $C_{\mathrm{nrm}}=0$, as does \ref{Ecurvdiag} with $C_{\mathrm{cur}}=0$ if $g$ is taken diagonal.
And $f$ is plainly not autonomous. The whole H\"older increment of $f$ in $t$ can be absorbed into the
remainder as soon as $\beta+\beta'<1$; the direction we use is the other one, autonomy
$\Rightarrow f'\equiv0$ and hence $\mathcal G'\equiv0$.
\end{remark}
\color{black}
\begin{assumption}[Initial moment]\label{ass:Einit}
$\nu$ is a Borel probability measure on $\R^{n}$ such that
$\langle\nu,|x|^{p}\rangle<\infty$ for some
\begin{equation}\label{eq:Ep}
p\ >\ \max\Big\{6,\ \tfrac1\gamma\Big\}\ =\ 6 .
\end{equation}
\end{assumption}

The first entry binds at every admissible $\gamma$, $1/\gamma\in(2,3)$ making the maximum $6$: the
range $m\in[2,p/3]$ of Theorem~\ref{thm:weakex} is non-empty at every $p$ satisfying \eqref{eq:Ep}.

The pathwise-uniqueness input of \cite{HuberII} asks instead
\begin{equation}\label{eq:Epcond}
p\ >\ 4\Big(\frac1\gamma-1\Big)^{2} ,
\end{equation}
and neither bound implies the other: \eqref{eq:Epcond} is the weaker one exactly on
$\gamma>1/(1+\sqrt{3/2})\approx0.44949$, where $4(\tfrac1\gamma-1)^{2}<6$. It is proved there.

We do not assume geometricity of $\RZ$ anywhere in this section, and we use no level beyond the
second. Theorem~\ref{thm:weakex} therefore holds for an arbitrary level-two lift, subject
to Assumption~\ref{ass:driver}. The bracket
changes nothing in Step~\ref{step:Emoment}, its germ coefficient obeying
\[
\tfrac12\mathcal Q^{\kappa\lambda}V=\tfrac12\textstyle\sum_{i,j}f^{\kappa}_{i}f^{\lambda}_{j}\partial^{2}_{ij}V ,
\qquad
\big|\tfrac12\mathcal Q^{\kappa\lambda}V\big|\ \le\ \tfrac12|f|^{2}\Op{D^{2}V}\ \le\ C_{1}V ,
\]
with the scale-free constant of the third bound of \eqref{eq:Vlyap}, entering
\eqref{eq:Egronwallineq} against $\CZ_{s,t}$, $|\CZ_{s,t}|=O(\|\CZ\|_{1}|t-s|)$ by
\eqref{eq:bracketLip}, absorbed by the greedy mesh, $\|\CZ\|_{1}$ entering $\Lambda_{\RZ}:=1+\|\RZ\|_{\gamma}+\|\CZ\|_{1}$.

\color{blue}
\begin{remark}[On \eqref{eq:Ep} and \eqref{eq:Epcond}]\label{rem:Ethreshold}
\begin{enumerate}
    \item $p\ge2$: Burkholder--Davis--Gundy in Step~\ref{step:Emoment}, and every passage through
$L_{m/2}$ in the proof of Lemma~\ref{lem:Escalefree}, which needs $m/2\ge1$.
\item $p>1/\gamma$: Kolmogorov--Chentsov in Step~\ref{step:Etight}, the H\"older exponent
of $\delta X$ being $\gamma$, together with Step~3 of the proof of
Lemma~\ref{lem:Escalefree}, whose series converges precisely when $\gamma m>1$. As a constraint on $p$
it is never active, $6>3>1/\gamma$ at every admissible $\gamma$. Interpreted at $m$ it is the floor of the
range $m\in(1/\gamma,p]$ on which we prove that lemma, the excluded interval $[2,1/\gamma]$ being
non-empty at every admissible $\gamma$; at e.g.~$\gamma=0.4$ it is $[2,2.5]$.
\item $m>4(\tfrac1\gamma-1)^{2}$: this is the floor \cite{HuberII} derives from
Lemma~\ref{lem:roughItoLn}, by capping the index of its controlled pair three times over. It is a
requirement on the exponent at which the uniqueness argument is run, and it plays no part here.
\item $p>6$: the martingale-problem conclusion of Theorem~\ref{thm:weakex} is stated on
$m\in[2,p/3]$, Proposition~\ref{prop:MPequiv}\ref{mpe1} asking $m_{0}=3m\le p$ and $m'=p\ge3m$
together, and the largest $m'$ available is $m'=p$, since with coefficients of linear growth no
moment can exceed what $\nu$ provided. That range is non-empty precisely when $p\ge6$. 
\end{enumerate}
\end{remark}
\color{black}

\subsection{The theorem}\label{subsec:Ethm}

\begin{theorem}[Weak existence for systems of rough SDEs, norm-curvature or diagonal rough field]\label{thm:weakex}
Let Assumptions~\ref{ass:driver}, \ref{ass:Eito}, \ref{ass:Erough} and \ref{ass:Einit} hold,
the coordinatewise bound \eqref{eq:Eacoord} of Assumption~\ref{ass:Eito} only in branch
\ref{Ecurvdiag}, and let
Definition~\ref{def:roughMP} be taken with $D=\R^{n}$. Then the
rough martingale
problem of Definition~\ref{def:roughMP} for $(b,\sigma,f,f',\RZ)$ with initial law $\nu$ admits at least
one solution at every exponent $m\in[2,p/3]$. Further, there is a stochastic basis carrying a Brownian
motion $B$ and an $L_{m}$-conditional solution $X$ of \eqref{eq:rsdeE} such that $\Law(X_{0})=\nu$, for every
$m\in(1/\gamma,p]$ and hence for every $m\in[2,p]$. Moreover
\begin{equation}\label{eq:Emoment}
\E\Big[\sup_{t\le T}\big(1+|X_{t}|\big)^{p}\Big]
\ \le\ \exp\!\Big\{C\Big(1+\big(1+\|\RZ\|_{\gamma}^{1/\gamma}+\|\CZ\|^{1/\gamma}_{1}\big)T\Big)\Big\}\,
\big(1+\langle\nu,|x|^{p}\rangle\big),
\end{equation}
with $C=C\big(C_{\mathrm{gr}},C_{\mathrm{cur}}\text{ or }C_{\mathrm{nrm}},\|Df\|_{\infty},\|Df'\|_{\infty},C_{\RZ},C^{(1)}_{\RZ},\beta,\beta',\gamma,p,n,e\big)$
independent of $\nu$. For a geometric lift $\CZ\equiv0$.
\end{theorem}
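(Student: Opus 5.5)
The proof runs along the compactness route of \cite[Thm.~4.19]{FHL}, with the a priori estimate \cite[Prop.~4.5]{FHL} replaced by a Lyapunov moment bound that is uniform in the approximation.

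\emph{Regularisation.} Mollify $b,\sigma$ to $b^{k},\sigma^{k}$ and truncate everything at level $R$. The It\^o data take the global weight $\Theta_{R}$ of Definition~\ref{def:approxfamily}. The rough field takes $\Theta_{R}$ in branch \ref{Ecurvaffine} and the coordinatewise cut-off $q_{[R]}$ in branch \ref{Ecurvdiag}, so that diagonality survives (Remark~\ref{rem:twotruncations}). At fixed $(k,R)$ all coefficients are bounded and Lipschitz, and the truncated pair lies in the unweighted controlled class. Hence \cite[Thm.~4.6]{FHL} yields a strong $L_{m,\infty}$-solution $X^{k,R}$ on a single Brownian basis, once its hypotheses are checked in \S\ref{app:FHL46}. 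Two features of the truncation are essential. It keeps $C_{\mathrm{gr}}$, and it keeps $C_{\mathrm{nrm}}$, respectively $C_{\mathrm{cur}}$, uniformly in $R$ (Remark~\ref{rem:twotruncations}). These uniform constants are what the next step runs on.

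\emph{Uniform moments (main obstacle).} The goal is \eqref{eq:Emoment} for $X^{k,R}$ with $C$ independent of $(k,R)$ and $\nu$. Apply the rough It\^o formula, Lemma~\ref{lem:roughItoLn}, to the unbounded weight $V(x)=(1+|x|^{2})^{1/2}$, raised to the power $p$. Its germ coefficients $\mathcal L V^{p}$, $\mathcal G^{\kappa}V^{p}$, $(\mathcal G^{\kappa}\mathcal G^{\lambda}+\mathcal G'^{\kappa\lambda})V^{p}$ and $\tfrac12\mathcal Q^{\kappa\lambda}V^{p}$ are each $O(V^{p})$ by linear growth and the jet bounds on $V$. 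The delicate term is the Taylor remainder $\mathcal T_{u,w}=\delta[f(X)]_{u,w}-Df_{u}(X_{u})\delta X_{u,w}$ in the defect of the composed germ. If only $\|D^{2}f\|_{\infty}$ is used, it is quadratic in $\Phi_{u}=V(X_{u})$. Here the curvature hypothesis removes one power: in branch \ref{Ecurvaffine} directly through \eqref{eq:Ecurvnormhyp}, and in branch \ref{Ecurvdiag} coordinatewise through \eqref{eq:Ecurvdiag} together with \eqref{eq:Eacoord}. This is the two-branch bound anticipated in Remark~\ref{rem:Ecurvature}, and it is scale-free (Lemma~\ref{lem:Escalefree}). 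On a deterministic greedy partition with mesh $h$ satisfying $c\Lambda_{\RZ}h^{\gamma}\le\tfrac12$ we then get a discrete Gr\"onwall inequality for $\E\sup V(X)^{p}$ on each interval. The supremum is controlled by Burkholder--Davis--Gundy for the martingale part, the $\CZ$-term is absorbed by \eqref{eq:bracketLip}, and chaining over the $O(1+\Lambda^{1/\gamma}_{\RZ}T)$ intervals produces the exponential constant. I expect the bookkeeping that keeps every defect at first order in the weight to be the hardest part, in particular the terms $D^{2}f(\delta M,\delta M)$ and those involving $J$. In the same step the Kolmogorov-type increment bound $\|\delta X^{k,R}_{s,t}\|_{L_{p}}\lesssim|t-s|^{\gamma}$ holds uniformly.

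\emph{Tightness and identification.} For $p>1/\gamma$, Kolmogorov--Chentsov makes $\Law(X^{k,R},B)$ tight in $\mathcal C$ as $k,R\to\infty$. By Skorokhod we obtain almost sure convergence on a new space, with one filtration per member. In the limit $\tau_{R}$ moves to infinity thanks to the uniform moment bound. The rough term of the limit is identified by Lemma~\ref{lem:roughstab}, the variant of \cite[Lem.~4.20]{FHL}, taken at finite integrability index and with H\"older indices summing to at most $\beta+\beta'$. The It\^o term is identified separately, by continuity of $b,\sigma$ and convergence of left-point Riemann sums, using the fact that $B$ remains Brownian for the generated filtration. Both Davie moduli then pass to the limit in $L_{m}$ for $m\in(1/\gamma,p]$, by uniform integrability from the $p$-th moment combined with Fatou. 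Because the moduli are monotone in $m$ through $L_{m}$-norms, this extends to every $m\in[2,p]$. Fatou also carries \eqref{eq:Emoment} to the limit. Finally, Proposition~\ref{prop:MPequiv}\ref{mpe1} with $m_{0}=3m$ and $m'=p$ shows that $\Law(X,B)$ solves the rough martingale problem for every $m\in[2,p/3]$, as in Remark~\ref{rem:MPconverse}.
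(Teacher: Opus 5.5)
\textbf{The uniform moment step, as you set it up, does not close.} Your overall architecture matches the paper's. You mollify and truncate, solve at fixed $(k,R)$ by \cite[Thm.~4.6]{FHL}, prove a Lyapunov bound uniform in $(k,R)$, get tightness by Kolmogorov--Chentsov, pass to Skorokhod, use Lemma~\ref{lem:roughstab} for the rough term and finish with Proposition~\ref{prop:MPequiv}\ref{mpe1}. The failure is in the central step. Lemma~\ref{lem:roughItoLn} does not apply to $V^{p}$, whose first and second derivatives grow like $V^{p-1}$ and $V^{p-2}$. More fundamentally, the sewing remainder of the It\^o formula is controlled only through Lemma~\ref{lem:SSL}. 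That is an $L_{m}$ statement with $m\ge2$, because its martingale cancellation runs through Burkholder--Davis--Gundy and Minkowski in $L_{m/2}$. For a test function of degree $p$ the defects contain products such as $V(X_{u})^{p-1}|\delta X_{u,w}|$. Their $L_{2}$ norm costs $\|V(X)\|^{p-1}_{L_{2p}}\|\delta X_{u,w}\|_{L_{2p}}$, a moment of order $2p$ that $\nu$ does not supply. So no closed Gr\"onwall inequality for $\E\sup V(X)^{p}$ comes out.

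The paper instead applies the formula to $V$ itself, whose jets are bounded by \eqref{eq:Vjet}. Every defect then stays linear in $\mathcal N_{s,t}=\|\sup_{r\in[s,t]}V(X_{r})\|_{L_{m}}$ at the same exponent $m$. That is what Lemma~\ref{lem:Escalefree} proves, and its $\Phi^{\natural}$ is the remainder for $V$, not for $V^{p}$. Only then does the paper take $L_{m}$ norms of the pathwise inequality \eqref{eq:Egronwallineq} at $m=p$, absorb on the greedy mesh, close the martingale term by Burkholder--Davis--Gundy on a short horizon, and iterate. These absorptions also need the absorbed quantities to be finite at fixed $(k,R)$. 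That is Step~\ref{step:Emoment}\ref{proof:Eproof:step2:b} together with \eqref{eq:Jfinite}, and your proposal does not address it.

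\textbf{The limit passage has a second gap.} Uniform integrability and Fatou transfer only unconditional norms, while the first modulus of \eqref{eq:daviemoduli} is conditional. You need two further ingredients.
\begin{itemize}
\item A conditional rate uniform in $(k,R)$. The moduli that \cite[Thm.~4.6]{FHL} gives the approximants depend on $(k,R)$, and $\|\E_{s}J_{s,t}\|_{L_{m}}\le\|J_{s,t}\|_{L_{m}}$ yields only $o(|t-s|^{1/2})$. The paper obtains \eqref{eq:EscaleCond} from the second conclusion of Lemma~\ref{lem:SSL} applied to the germ of the equation, with $\Gamma_{1}\le C\Lambda^{3}_{\RZ}\mathcal N_{s,t}$. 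It obtains the unconditional rate from $\mathcal J\le C_{3}\Lambda^{2}_{\RZ}\mathcal N_{s,t}$ in \eqref{eq:HJfinal}.
\item A way to carry conditional expectations across a limit in which each Skorokhod copy has its own filtration $\mathcal G^{j}$. $L_{m}$-convergence of $J^{j}_{s,t}$ says nothing about $\E[\,\cdot\mid\mathcal G^{j}_{s}]$ versus $\E[\,\cdot\mid\mathcal G_{s}]$. The paper tests against bounded cylinder functionals $\psi^{j}\to\psi$, dense in $L_{m^{*}}(\mathcal G_{s})$, and recovers the bound by duality. The same device passes the two conditional entries of the quantity that Lemma~\ref{lem:roughstab} requires to be finite in the limit.
\end{itemize}
Your Riemann-sum identification of the It\^o term, by contrast, is a workable alternative to the paper's bracket argument, provided you control the Riemann-sum error uniformly in $j$.
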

Six of the twelve entries of $C$ are analytic constants, and each
is a supremum over $[0,T]$: $C_{\mathrm{gr}}$ by \eqref{eq:Egrowth}, \eqref{eq:Eacoord} and \eqref{eq:Eroughgrowth}; $C_{\mathrm{cur}}$ by \eqref{eq:Ecurvdiag} under \ref{Ecurvdiag} and $C_{\mathrm{nrm}}$
by \eqref{eq:Ecurvnormhyp} under \ref{Ecurvaffine}, whichever branch is used, the
other constant not being assumed finite; $\|Df\|_{\infty}$ and $\|Df'\|_{\infty}$ by
\eqref{eq:Eroughgrowth}; $C_{\RZ}$ by \eqref{eq:cvfdefect} and $C^{(1)}_{\RZ}$ by
\eqref{eq:Eroughderiv}. The last two are moreover taken against $\RZ$, as are the indices
$\beta,\beta',\gamma$; the remaining entries $p,n,e$ are the moment exponent of the initial law and
the two dimensions. What the statement asserts is that beyond these, the explicit factor multiplying
$T$ in the exponent is the whole of the dependence on the driver and on the horizon, and that $\nu$
enters $C$ nowhere.

The proof applies \cite[Thm.~4.6]{FHL} at fixed $(k,R)$, and
\S\ref{app:FHL46} checks its hypotheses.

Proposition~\ref{prop:MPequiv}\ref{mpe1} converts $X$ into a solution of the martingale problem at
every $m\in[2,p/3]$, which is non-empty by \eqref{eq:Ep}.

The additive $1$ in the exponent is the ``$+1$'' of \eqref{eq:Egreedycount}, from the first greedy
interval, and cannot be dropped: at $n=e=d=1$, $b=\sigma=0$, $f(x)=x$, $f'=0$, a geometric lift,
$Z_{t}=at^{\gamma}$, $\nu=\delta_{x_{0}}$, $X_{t}=x_{0}e^{Z_{t}}$ is the only solution (affine field,
the affine proposition of \cite{HuberII}) and the left-hand side is $(1+x_{0}e^{aT^{\gamma}})^{p}$, so as
$x_{0}\to\infty$ the exponent must dominate $paT^{\gamma}$, of order $\Lambda_{\RZ}T^{\gamma}\gg T$
as $T\downarrow0$: a bound proportional to $T$ would be false. The $1$ covers that, since
\[
\Lambda_{\RZ}T^{\gamma}\ =\ \big(\Lambda^{1/\gamma}_{\RZ}T\big)^{\gamma}
\ \le\ 1+\Lambda^{1/\gamma}_{\RZ}T ,
\]
by $u^{\gamma}\le1+u$ for $u\ge0$ and $\gamma\in(0,1)$.
\color{blue}
\begin{remark}[What Theorem~\ref{thm:weakex} does not claim]\label{rem:Escope}
\begin{enumerate}
    \item No uniqueness. Weak existence is proved by compactness along a subsequence and different
subsequences not being shown to agree; uniqueness is the subject of \cite{HuberII}, under
hypotheses strictly stronger than this theorem's. 
\item No uniformity in $n$. The Lyapunov
constant $C_{1}$ depending on $n$ through $|x|^{2}=\sum_{i}x^{2}_{i}$.
\item No invariance. The
solution is $\R^{n}$-valued, and invariance of a closed $D\subsetneq\R^{n}$ is not addressed in this
paper and left to one of the companion papers, which are currently work in progress.
\end{enumerate}
\end{remark}
\color{black}
\subsection{Proof}\label{subsec:Eproof}

Both left-hand sides of \eqref{eq:daviemoduli} are non-decreasing in $m$, and its third clause descends with $m$, so that an
$L_{m}$-conditional solution for every $m\in(1/\gamma,p]$ is one for every $m\in[2,p]$, the two ranges
meeting because $1/\gamma<3$. The interval $[2,p/3]$ is non-empty, since \eqref{eq:Ep} requires $p>\max\{6,\tfrac1\gamma\}=6$ at every admissible $\gamma$, so that $p/3>2$.

\subsubsection*{Step 1: regularization and truncation}
\makeatletter\def\@currentlabel{1}\makeatother\label{step:Ereg}

Let $b^{k},\sigma^{k}$ denote mollifications of $b,\sigma$ in the space variable. Then
$b^{k},\sigma^{k}$ are locally Lipschitz in $x$ uniformly in $t$, they converge to $b,\sigma$
locally uniformly, and they satisfy
\eqref{eq:Egrowth} with the same constant $C_{\mathrm{gr}}$ up to a factor $2$ (mollify at scale
$\le1$ and use that the mollifier has unit mass). They are not globally Lipschitz. Indeed, for continuous $b$ of linear growth we obtain
\[
|Db^{k}(x)|\ \le\ \|D\rho_{k}\|_{L^{1}}\sup_{|y-x|\le1/k}|b(y)|\ \lesssim\ k(1+|x|) ,
\]
a Lipschitz constant growing linearly in the state. The cut-off below produces global Lipschitz
continuity.
The mollification also passes on condition \eqref{eq:Eacoord}. Jensen's inequality yields
\[
a^{k}_{ii}(x)=|\sigma^{k}_{i\cdot}(x)|^{2}\le\int a_{ii}(x-y)\rho_{k}(y)\dy
\le C_{\mathrm{gr}}(2+|x_{i}|)(2+|x|)\le4C_{\mathrm{gr}}(1+|x_{i}|)(1+|x|) .
\]
We use the $\Theta_{R}$ of Definition~\ref{def:approxfamily}, and we also fix a
scalar $\theta_{R}\in C^{\infty}_{c}(\R)$, such that the same properties hold in one variable. We denote
\[
b_{R}^{k}:=\Theta_{R}b^{k},\qquad \sigma^{k}_{R}:=\Theta_{R}\sigma^{k},\qquad
(f_{R},f'_{R}):=
\begin{cases}
\big(\Theta_{R}f,\ \Theta_{R}f'\big) & \text{under \ref{Ecurvaffine}},\\[2pt]
\big(q^{\kappa}_{[R],t,i}(x_{i})\big)_{i,\kappa},\ \big(p^{\kappa\lambda}_{[R],t,i}(x_{i})\big)_{i,\kappa\lambda}
& \text{under \ref{Ecurvdiag}},
\end{cases}
\]
where in the second case $q_{[R]}:=\theta_{R}q$ and $p_{[R]}:=\theta_{R}p$, coordinate by coordinate.
Here $q_{[R]}$ denotes the coordinatewise truncation, that is, a different operator
from the global $\Theta_{R}\,\cdot\,$ of Definition~\ref{def:approxfamily}.

Under \ref{Ecurvdiag} the cut-off must act coordinatewise. A
cut-off $\Theta_{R}$ in the full vector destroys diagonality, since $\Theta_{R}(x)q_{i}(x_{i})$ is not
a function of $x_{i}$ alone, and its second derivative acquires exactly the off-diagonal blocks
$q_{i}\partial^{2}_{jk}\Theta_{R}$ and $\partial_{j}\Theta_{R}\dot q_{i}\delta_{ik}$ with $j,k\ne i$ that
no coordinatewise weight controls, and that the closing paragraph of Example~\ref{ex:Adiag} covers.
The coordinatewise cut-off keeps the
field diagonal, and it preserves \eqref{eq:Ecurvdiag} uniformly in $R\ge1$. With
$q_{[R],i}=\theta_{R}q_{i}$, Leibniz' rule yields
\[
\big(1+|\xi|\big)\big|\ddot q_{[R],i}(\xi)\big|
\le\big(1+|\xi|\big)\Big(\big|\ddot\theta_{R}q_{i}\big|+2\big|\dot\theta_{R}\dot q_{i}\big|+\big|\theta_{R}\ddot q_{i}\big|\Big)(\xi)
\ \le\ C\Big(\tfrac{(1+2R)^{2}}{R^{2}}C_{\mathrm{gr}}+\tfrac{1+2R}{R}\|Df\|_{\infty}\Big)+C_{\mathrm{cur}},
\]
using that $\ddot\theta_{R}$ and $\dot\theta_{R}$ are supported in $\{|\xi|\le2R\}$. That is the constant the truncated
field satisfies. It is at most
$\bar C_{\mathrm{cur}}:=C_{\mathrm{cur}}+C(9C_{\mathrm{gr}}+3\|Df\|_{\infty})$ at every $R\ge1$,
since $(1+2R)^{2}/R^{2}\le9$ and $(1+2R)/R\le3$ there. Every later use of \eqref{eq:Ecurvdiag} at
$f_{R}$ carries $\bar C_{\mathrm{cur}}$ in place of $C_{\mathrm{cur}}$, and
$\bar C_{\mathrm{cur}}$ is independent of $k$ and $R$.

Under \ref{Ecurvaffine} we use the full cut-off, and what it preserves is the norm
condition \eqref{eq:Ecurvnormhyp}. Leibniz' rule yields
\[
D^{2}(\Theta_{R}f)=\Theta_{R}D^{2}f+2D\Theta_{R}\otimes Df+fD^{2}\Theta_{R} .
\]
The first summand satisfies
$(1+|x|)\Op{\Theta_{R}D^{2}f(x)}\le C_{\mathrm{nrm}}$ by \eqref{eq:Ecurvnormhyp}, since
$0\le\Theta_{R}\le1$. The other two are supported in $\{|x|\le2R\}$, where
\[
\Op{2D\Theta_{R}\otimes Df+fD^{2}\Theta_{R}}
\ \le\ CR^{-1}\|Df\|_{\infty}+CR^{-2}C_{\mathrm{gr}}(1+2R)\ \le\ CR^{-1} ,
\qquad
(1+2R)\cdot CR^{-1}\ \le\ 3C .
\]
Hence,
\begin{equation}\label{eq:Ecurvnorm}
\sup_{x}\big(1+|x|\big)\Op{D^{2}f_{R}(x)}\ \le\ C_{\mathrm{nrm}}+3C
\qquad\text{uniformly in }R\ge1 .
\end{equation}
This, and not $D^{2}f\equiv0$, is what branch \ref{Ecurvaffine} uses. For an
affine field $C_{\mathrm{nrm}}=0$ and the bound is $3C$, $D^{2}f_{R}$ then living only where the cut-off
varies, where it is $O(1/R)$. \color{blue}Among the untruncated diagonal fields only the affine ones satisfy
the norm condition. This is what makes branch \ref{Ecurvaffine} run with
the global weight $V(X_{u})$ in Step~1 of the proof of Lemma~\ref{lem:Escalefree}.\color{black}

In both cases we truncate both members of the pair by the same cut-off, and
$(f_{R},f'_{R})$ is again a controlled vector field. The cut-off is deterministic and
time-independent, so that for $\vartheta=\Theta_{R}$ or $\vartheta=\theta_{R}(x_{i})$,
\[
\delta(f_{R})_{s,t}(x)=\vartheta(x)\delta f_{s,t}(x)=\vartheta(x)f'_{s}(x)\delta Z_{s,t}+O(|t-s|^{\beta+\beta'})
=f'_{R,s}(x)\delta Z_{s,t}+O(|t-s|^{\beta+\beta'}) ,
\]
uniformly in $x$, the remainder inheriting the bound of $(f,f')$ since $0\le\vartheta\le1$, so that
$(f_{R},f'_{R})\in\mathscr D^{\beta,\beta'}_{\RZ}C^{3}_{b}$ with the same indices and the same
controlled-path norm. The same computation for higher derivatives gives \eqref{eq:Eroughderiv} for
the truncated pair, at the same indices apart from the third term and at a constant uniform in
$R$ \color{blue}(Lemma~\ref{lem:Escalefree} requiring
constants free of $(k,R)$)\color{black}. For the first, unweighted term of \eqref{eq:Eroughderiv} this holds by
Leibniz,
\[
R^{Df_{R}}_{s,t}=\vartheta R^{Df}_{s,t}+R^{f}_{s,t}\otimes D\vartheta .
\]
While
$\|D\vartheta\|_{\infty}=O(R^{-1})$, the companion factor $R^{f}$ carries a weight bounded by
$1+2R$ where $D\vartheta$ lives, so
\[
\sup_{x}\big|R^{f}_{s,t}(x)\big|\,\|D\vartheta\|_{\infty}
\ \le\ C_{\RZ}(1+2R)|t-s|^{\beta+\beta'}\cdot CR^{-1}
\ \le\ 3CC_{\RZ}|t-s|^{\beta+\beta'}\qquad(R\ge1),
\]
Hence the unweighted constant of the truncated pair is at most $C^{(1)}_{\RZ}+3CC_{\RZ}$, uniformly in
$R$. Under
\ref{Ecurvaffine}, $\vartheta=\Theta_{R}$ is supported in $\{|x|\le2R\}$, where $1+|x|\le1+2R$. Under
\ref{Ecurvdiag} the support of $D\theta_{R}(x_{i})$ is the annulus $\{R\le|x_{i}|\le2R\}$, on which
$1+|x|$ is not bounded. There, however, $f^{\kappa}_{i}(x)=q^{\kappa}_{i}(x_{i})$ is a function
of $x_{i}$ alone, so that $R^{f}_{i}$ is too, and evaluating \eqref{eq:cvfdefect} at $x=\xi e_{i}$
yields
\[
|R^{f}_{i}|\le C_{\RZ}(1+|x_{i}|)|t-s|^{\beta+\beta'}\le C_{\RZ}(1+2R)|t-s|^{\beta+\beta'} ,
\]
after the $\ell^{2}$-sum over the surviving components. It applies identically to
$R^{Df}$, $\delta Df'$ and $\delta D^{2}f$, each diagonal under \ref{Ecurvdiag}.
The sum produces the square root of the number of surviving components, $ne$ for the unprimed
$R^{f},R^{Df},\delta D^{2}f$ and $ne^{2}$ for the primed $\delta f',\delta Df'$, both absorbed into
constants, which carry $n$ and $e$ throughout. The weighted second and third terms are inherited the
same way and with the same uniformity, the third at order $\gamma\wedge\beta'$ rather than $\beta'$
because $\delta D^{2}f_{R}$ carries $\delta Df$ against $D\vartheta$,
Appendix~\ref{app:FHL46} checking it at $\alpha=\beta\wedge\beta'\le\gamma\wedge\beta'$. Finally, the
truncation does not increase $C_{\RZ}$ itself. Leibniz again yields
\[
\delta Df'_{R}=\vartheta\,\delta Df'+\delta f'\otimes D\vartheta ,
\qquad
\delta D^{2}f_{R}=\vartheta\,\delta D^{2}f+2D\vartheta\otimes\delta Df+\delta f\,D^{2}\vartheta ,
\]
each summand carrying one of the moduli of \eqref{eq:cvfdefect} or \eqref{eq:Eroughderiv} at an order
$\ge\beta\wedge\beta'$. Each of these bounds is in $\mathcal C_{b}$ at fixed $R$, for the two reasons just given: under \ref{Ecurvaffine} because
$\{\Theta_{R}\ne0\}\subseteq\{|x|\le2R\}$, where $1+|x|\le1+2R$; under \ref{Ecurvdiag}, because every component of $\delta f$, $\delta f'$,
$\delta Df$ and $\delta D^{2}f$ is a function of $x_{i}$ alone, so that its weighted bound
evaluated at $x=\xi e_{i}$ reads $1+|x_{i}|\le1+2R$, the $\ell^{2}$-sum over $i$ producing a
factor $\sqrt{ne}$. At $\alpha=\beta\wedge\beta'$ (as \eqref{eq:FHL46alpha} demands) this is
\[
(f_{R},f'_{R})\in\mathscr D^{2\alpha}_{\RZ}\mathcal C^{3}_{b},
\qquad
(Df_{R},Df'_{R})\in\mathscr D^{\alpha,\alpha}_{\RZ}\mathcal C^{2}_{b},
\]
and with $\varkappa=3$ and $\alpha''=\alpha$ that is \eqref{eq:FHL46hyp} in full. These are bounded and globally Lipschitz and
satisfy \eqref{eq:Egrowth} with the same $C_{\mathrm{gr}}$, since $0\le\vartheta\le1$, and their first
three derivatives inherit
\[
\|D^{j}f_{R}\|_{\infty}\le C\textstyle\sum_{1\le i\le j}\|D^{i}f\|_{\infty}+C
\qquad\text{for }1\le j\le3,\ \text{uniformly in }R\ge1 ,
\]
by Leibniz and $\|D^{j}\Theta_{R}\|_{\infty}+\|\theta_{R}^{(j)}\|_{\infty}=O(R^{-j})$ together with the
linear growth of $f$. The It\^o data is bounded and globally Lipschitz at fixed $(k,R)$ \color{blue}(which is what
\cite[Thm.~4.6]{FHL} requires)\color{black}. Leibniz yields
\[
D\big(\Theta_{R}b^{k}\big)=\Theta_{R}Db^{k}+b^{k}\otimes D\Theta_{R} ,
\]
and for fixed $(k,R)$ each summand is bounded: on $\{|x|\le2R\}$, where only the product is non-zero,
the mollification bound and \eqref{eq:Egrowth} yield
\[
|Db^{k}|\lesssim k(1+2R),
\qquad
|b^{k}|\le2C_{\mathrm{gr}}(1+2R),
\qquad
\|D\Theta_{R}\|_{\infty}=O(R^{-1}) .
\]
The function $b^{k}_{R}$ is therefore bounded with bounded derivative, hence globally Lipschitz on
$\R^{n}$, and the same computation yields it for $\sigma^{k}_{R}$.

By \cite[Thm.~4.6]{FHL}, applied at the index $\alpha=\beta\wedge\beta'$ of \eqref{eq:FHL46alpha} and
not at $\gamma$, the truncated equation has a unique $L_{m,\infty}$-solution $X^{k,R}$ on $[0,T]$,
simultaneously for every $m\ge2$, on any basis carrying $B$ and an $\mathcal F_{0}$-measurable
$X_{0}\sim\nu$.  Truncation having made the coefficients bounded, that solution satisfies the strong form
\eqref{eq:daviemoduliinfty}. At $\alpha=\beta\wedge\beta'$ the driver is still a rough path on $[0,T]$ and the theorem's whole
requirement reduces to $\beta\wedge\beta'>\tfrac13$, which
Assumption~\ref{ass:Erough} states. Conditions (a) and (c) of \cite[Def.~4.2]{FHL} do not
mention $\alpha$ and condition (b)'s inequalities are increasing in it, so that $X^{k,R}$ is an
$L_{m,\infty}$-integrable solution at every index in $[\alpha,\gamma]$, and in particular at $\gamma$.

For each $m$ in the range Lemma~\ref{lem:Escalefree} fixes, the Davie remainder satisfies, at fixed $(k,R)$,
\[
\big\|J_{u,v}\big\|_{L_{m}}\ \lesssim\ |v-u|^{2\gamma} ,
\qquad
\big\|\E_{u}J_{u,v}\big\|_{L_{m}}\ \lesssim\ |v-u|^{\epsilon_{1}} ,
\]
with $\epsilon_{1}$ as in \eqref{eq:eps1value}.

\begin{enumerate}[label=(\roman*), leftmargin=*, labelindent=0pt, align=left]
    \item What the source states: \cite[Prop.~4.3]{FHL} states, for any $L_{m,\mfn }$-integrable
solution and every $u\le v$,
\begin{equation}\label{eq:FHLdavierates}
\Big\|\big\|J_{u,v}\mid\mathcal F_{u}\big\|_{L_{m}}\Big\|_{L_{\mfn }}
\ \lesssim\ |v-u|^{\gamma+(\gamma\wedge\bar\alpha)} ,
\qquad
\big\|\E_{u}J_{u,v}\big\|_{L_{\mfn }}
\ \lesssim\ |v-u|^{\gamma+(\gamma\wedge\bar\alpha)+\bar\alpha'} ,
\end{equation}
where $(\bar\alpha,\bar\alpha')$ denote indices such that the composed pair
$\big(f_{R}(X^{k,R}),(Df_{R}f_{R}+f'_{R})(X^{k,R})\big)$ is a stochastic controlled rough path. The norms in \eqref{eq:FHLdavierates} are stronger than the ones
\eqref{eq:Jfinite} requires, \cite[Prop.~2.3(ii)]{FHL} yielding
\[
\|\xi\|_{L_{m}}\le\big\|\|\xi\mid\mathcal F_{u}\|_{L_{m}}\big\|_{L_{\mfn }} ,
\qquad
\|\xi\|_{L_{m}}\le\|\xi\|_{L_{\mfn }}\quad\text{for }m\le \mfn  .
\]
Here $\mfn =\infty$, \cite[Thm.~4.6]{FHL} having produced
an $L_{m,\infty}$-solution. \cite[Prop.~4.3]{FHL} has the membership
\cite[Def.~4.2(b)]{FHL} as a hypothesis. At fixed $(k,R)$ that membership holds by construction,
since $X^{k,R}$ comes from \cite[Thm.~4.6]{FHL}. Its continuity clause carries no index, so the
membership still holds when \cite[Prop.~4.3]{FHL} is applied again at the pair of indices
$(\gamma,\bar\beta)$ of the next item.
\item The truncated field lies not only in
$\mathscr D^{\beta,\beta'}_{\RZ}$ but in $\mathscr D^{\gamma,\bar\beta}_{\RZ}$, where
\begin{equation}\label{eq:Jbarbeta}
\bar\beta\ :=\ \min\big\{\beta+\beta'-\gamma,\ \gamma\big\}\ \in\ (0,\gamma] ,
\end{equation}
positive because $2\beta+\beta'>1$ (a consequence of the bounds $\beta,\beta'>\tfrac13$
that Assumption~\ref{ass:Erough} states) and
$\beta\le\gamma<\tfrac12$ require
\[
\beta+\beta'\ >\ 1-\beta\ \ge\ 1-\gamma\ >\ \gamma ,
\]
i.e.~\eqref{eq:rhoexp}. Indeed, the controlled-path remainder of $(f_{R},f'_{R})$ is of order
$\beta+\beta'\ge\gamma+\bar\beta$ and $\delta f'_{R}$ of order $\beta'\ge\bar\beta$, the latter by the
two cases of \eqref{eq:Jbarbeta} separately: if $\bar\beta=\beta+\beta'-\gamma$, then
$\gamma\ge\beta$ yields $\beta'\ge\bar\beta$. If $\bar\beta=\gamma$, then $\beta+\beta'\ge2\gamma$, and
$\beta\le\gamma$ requires $\beta'\ge\gamma=\bar\beta$. \cite[Rem.~4.4]{FHL} requires two things of the solution. Its moment hypothesis
$\sup_{t}\|g_{t}(X^{k,R}_{t})\|_{L_{\infty}}<\infty$, $g\in\{b,\sigma\sigma^{\top},f,Df\,f,f'\}$,
holds because the truncated coefficients are bounded at fixed $(k,R)$. Its H\"older hypothesis
$\delta[f_{R}(X^{k,R})]\in C^{\bar\beta}_{2}L_{m,\infty}$ holds as well. First,
$\delta X^{k,R}\in C^{\gamma}_{2}L_{m,\infty}$: in \eqref{eq:davie} the drift is $O(|t-s|)$ and the two
driver terms are $O(|t-s|^{\gamma})$ and $O(|t-s|^{2\gamma})$ in that norm, all three by the same
boundedness, the martingale term is $O(|t-s|^{1/2})$ by conditional Burkholder--Davis--Gundy, and
$J$ is $o(|t-s|^{1/2})$ by \eqref{eq:daviemoduliinfty}. Since $\gamma<\tfrac12$ every summand is
$O(|t-s|^{\gamma})$ once $|t-s|$ is below a threshold $h_{\sharp}\le1$ depending on $(k,R,m)$, below
which the second modulus of \eqref{eq:daviemoduliinfty} holds at constant $1$. A longer pair is cut
into at most $\lceil T/h_{\sharp}\rceil$ short ones and the pieces are added by conditional Minkowski. Each piece
may be taken at its own left endpoint, since
$\|\E[|\xi|^{m}\mid\mathcal F_{s}]^{1/m}\|_{L_{\infty}}
\le\|\E[|\xi|^{m}\mid\mathcal F_{u}]^{1/m}\|_{L_{\infty}}$ for $s\le u$ by the tower property, and
$|t-s|\ge h_{\sharp}$ turns the count into a constant. This puts $\delta X^{k,R}$ in
$C^{\gamma}_{2}L_{m,\infty}$.
For $f_{R}$ the state part is its Lipschitz bound and the time part is the increment of the
truncation above, $f'_{R,s}(x)\delta Z_{s,t}+O(|t-s|^{\beta+\beta'})$ uniformly in $x$ at a constant
depending on $R$, of order $\gamma\wedge(\beta+\beta')\ge\bar\beta$. Hence
\cite[Rem.~4.4]{FHL} places $\big(X^{k,R},f_{R}(X^{k,R})\big)$ in
$\mathscr D^{\gamma,\bar\beta}_{\RZ}L_{m,\infty}$; and \cite[Lem.~3.11]{FHL}, the composition lemma,
applied at $\varkappa=3$ (which $f_{R}\in C^{3}_{b}$ allows, Appendix~\ref{app:FHL46}) and at $\mfn =\infty$, carries this to
$\big(f_{R}(X^{k,R}),(Df_{R}f_{R}+f'_{R})(X^{k,R})\big)\in\mathscr D^{\gamma,\bar\beta'}_{\RZ}L_{m,\infty}$
with $\bar\beta'=\min\{(\varkappa-2)\gamma,\bar\beta\}=\bar\beta$. One hypothesis of that lemma is not
in \eqref{eq:cvfdefect}: it asks $(f_{R},f'_{R})$ in
$\mathscr D^{\gamma,\bar\beta}L_{m,\infty}\mathcal C^{\varkappa-1}_{b}$, and by
\cite[Def.~3.7(b)]{FHL} that class asks $[\![\delta Df_{R}]\!]_{\bar\beta}<\infty$ as well as the two
brackets \eqref{eq:cvfdefect} delivers. The first two bounds of \eqref{eq:Eroughderiv} deliver it:
\[
\delta Df_{R}=Df'_{R,s}\delta Z+O(|t-s|^{\beta+\beta'})
\]
with $Df'_{R}$ bounded, hence of order $\gamma\wedge(\beta+\beta')\ge\bar\beta$. Hence
$\bar\alpha=\gamma$ and $\bar\alpha'=\bar\beta$ in \eqref{eq:FHLdavierates}, and the two rates it
delivers are $2\gamma$ and $2\gamma+\bar\beta$.
\item The first rate is exactly the first bound of \eqref{eq:Jfinite}. For the
second, we must check
\[
2\gamma+\bar\beta\ \ge\ \epsilon_{1} ,
\qquad
\epsilon_{1}=\min\{\tfrac{1+3\gamma}2,\ \beta+\beta'+\gamma\} ,
\]
with $\epsilon_{1}$ of \eqref{eq:eps1value}, and we can check the two cases of \eqref{eq:Jbarbeta}
separately. \emph{If $\bar\beta=\beta+\beta'-\gamma$}, then
\[
2\gamma+\bar\beta=\gamma+\beta+\beta' ,
\]
one of the two entries of that minimum, and therefore at least $\epsilon_{1}$. \emph{If
$\bar\beta=\gamma$}, then
\[
2\gamma+\bar\beta=3\gamma\ >\ \tfrac{1+3\gamma}2\ \ge\ \epsilon_{1} ,
\]
because $6\gamma>1+3\gamma$ is $\gamma>\tfrac13$.

\end{enumerate}
\color{blue}
\begin{remark}\label{rem:Emollifyfactor}
We do not claim invariance, but an argument deducing
invariance of a closed set $D$ from invariance for the approximating equations would require the
approximating coefficients to preserve $D$. On the orthant
the diffusion coefficient $\mathcal B_{0}(x)=\sqrt{x^{+}\mathfrak b(x^{+})}$ and $\mathcal B_{0}(0)=0$ is what keeps a
solution nonnegative, while a mollifier of scale $1/k$ averages $\mathcal B_{0}$ over a neighborhood of
the origin on which it is strictly positive. Mollifying the factor $\mathfrak b$ keeps the zero,
but $x\mapsto\sqrt{x^{+}\mathfrak b^{k}(x^{+})}$ is not Lipschitz at the origin, and it is global
Lipschitz continuity at fixed $k$ that Step~\ref{step:Ereg} hands to \cite[Thm.~4.6]{FHL}. A
regularization such that both hold is
\[
\mathcal B^{k}_{0}(x)\ :=\ \sqrt{x^{+}\mathfrak b^{k}(x^{+})+\tfrac1k}\ -\ \sqrt{\tfrac1k}\ ,
\]
with $\mathfrak b^{k}$ the mollification of $\mathfrak b$. It vanishes at $x=0$ and on $\{x\le0\}$. It
is Lipschitz on bounded sets with constant $O(\sqrt k)$, hence globally Lipschitz and bounded after the
cut-off $\Theta_{R}$; it converges to $\mathcal B_{0}$ locally uniformly, since
$|\sqrt{u_{k}+\tfrac1k}-\sqrt u|\le\sqrt{|u_{k}-u|+\tfrac1k}$; and it inherits \eqref{eq:Egrowth} and
\eqref{eq:Eacoord} from $x\mathfrak b$, because
$0\le\mathcal B^{k}_{0}(x)^{2}\le x^{+}\mathfrak b^{k}(x^{+})$.
\end{remark}
\color{black}
\subsubsection*{Step 2: the moment bound, uniform in $(k,R)$}
\makeatletter\def\@currentlabel{2}\makeatother\label{step:Emoment}

Let $V(x):=(1+|x|^{2})^{1/2}$ denote the Lyapunov function, so that $V\in C^{\infty}$,
$\tfrac1{\sqrt2}(1+|x|)\le V(x)\le1+|x|$, and
\begin{equation}\label{eq:Vjet}
\|DV\|_{\infty}\le1,\qquad \Op{D^{2}V(x)}\le\frac1{V(x)},\qquad \Op{D^{3}V(x)}\le\frac{C}{V(x)^{2}} .
\end{equation}
Recall the norm convention of \S\ref{subsec:rp}:  \eqref{eq:Vjet}'s second bound reads
\[
\sup_{|v|=1}|D^{2}V(v,v)|\le V^{-1} .
\]
That
bound is sharp, and the Frobenius norm does not satisfy it for $n\ge2$. For $\sigma$, it is the other way around, and $\Frob{\sigma}^{2}=\operatorname{tr}(\sigma \sigma^{\top})$ is the
stronger form of \eqref{eq:Egrowth}, which is the one used below.
Let $\mathcal L_{t}$, $\mathcal G$ and $\mathcal G^{(2)}$, the latter standing for
$\mathcal G^{\kappa}\mathcal G^{\lambda}+\mathcal G'^{\kappa\lambda}$, denote the rough generator of the truncated system. From \eqref{eq:Egrowth}, \eqref{eq:Vjet} and $\|Df\|_{\infty}<\infty$ we obtain the following bounds,
\begin{equation}\label{eq:Vlyap}
|\mathcal L_{t}V|\le\tfrac12\Frob{\sigma_{R}^{k}}^{2}\Op{D^{2}V}+|b^{k}_{R}||DV|\le C_{1}V,
\qquad
|\mathcal GV|\le C_{1}V,
\qquad
|\mathcal G^{(2)}V|\le C_{1}V,
\end{equation}
with $C_{1}=C_{1}(C_{\mathrm{gr}},\|Df\|_{\infty},\|Df'\|_{\infty},C_{\mathrm{nrm}}\text{ or }\bar C_{\mathrm{cur}})$ independent of $k$ and $R$. For the third bound we
use
\[
\mathcal G^{\kappa}\mathcal G^{\lambda}V+\mathcal G'^{\kappa\lambda}V
=\sum_{i,j}f^{\kappa}_{i}(\partial_{i}f^{\lambda}_{j})\partial_{j}V
+\sum_{i,j}f^{\kappa}_{i}f^{\lambda}_{j}\partial^{2}_{ij}V+\sum_{i}f'^{\kappa\lambda}_{i}\partial_{i}V
\]
together with
\[
|f|^{2}\Op{D^{2}V}\le C(1+|x|)^{2}/V\le CV ,
\qquad
|f'|\,|DV|\le C(1+|x|)\le CV .
\]
Similarly, the martingale denoted
$N^{V}:=\int DV(X^{k,R})\cdot\sigma^{k}_{R}(X^{k,R})\,\d B$ has
\begin{equation}\label{eq:Vbracket}
\d\langle N^{V}\rangle_{t}=\big|DV(X^{k,R}_{t})\,\sigma^{k}_{R}(t,X^{k,R}_{t})\big|^{2}\dt
\ \le\ C_{1}\,V(X^{k,R}_{t})^{2}\dt .
\end{equation}
\begin{enumerate}[label=(a\arabic*), start=0, wide=0pt]
    \item\label{proof:Eproof:step2:a0} \emph{The germ coefficients are Lipschitz, with scale-free constants.}  We denote by
\begin{equation}\label{eq:Vgerm}
\Gamma^{1,\kappa}(x):=\big(\mathcal G^{\kappa}_{t}V\big)(x),
\qquad
\Gamma^{2,\kappa\lambda}(x):=\big(\mathcal G^{\kappa}_{t}\mathcal G^{\lambda}_{t}
+\mathcal G'^{\kappa\lambda}_{t}\big)V(x),
\qquad
\Gamma^{3,\kappa\lambda}(x):=\tfrac12\big(\mathcal Q^{\kappa\lambda}_{t}V\big)(x)
\end{equation}
the three germ coefficients of \eqref{eq:roughMP} at $\varphi=V$. Then, uniformly in $t$ and in
$(k,R)$,
\begin{equation}\label{eq:VgermLip}
|\Gamma^{1}(x)|+|\Gamma^{2}(x)|+|\Gamma^{3}(x)|\ \le\ C_{1}V(x),
\qquad
|D\Gamma^{1}(x)|\le C_{1},
\qquad
|D\Gamma^{2}(x)|+|D\Gamma^{3}(x)|\ \le\ C_{1}V(x) .
\end{equation}
The first bound is \eqref{eq:Vlyap} together with
$|\mathcal Q^{\kappa\lambda}V|\le|f|^{2}\Op{D^{2}V}\le CV$. For the second, Leibniz' rule and
\eqref{eq:Vjet} yield
\[
D(\mathcal G^{\kappa}V)=(Df^{\kappa})^{\top}DV+(D^{2}V)f^{\kappa} ,
\qquad
|D\Gamma^{1}|\le\|Df\|_{\infty}+C_{\mathrm{gr}}(1+|x|)/V(x)\le C_{1} .
\]
For the third, we
differentiate the three summands displayed after \eqref{eq:Vlyap}, which produces eight terms. Seven are
bounded outright: each carries either $\|Df\|_{\infty}$, or
$\|Df'\|_{\infty}$, or a factor \[ |f|^{j}\Op{D^{j+1}V}\le C_{\mathrm{gr}}^{j}(1+|x|)^{j}\cdot CV^{-j}\le C
\qquad (j=1,2) .
\]
The
eighth is $\textstyle\sum_{i,j}f^{\kappa}_{i}(\partial_{k}\partial_{i}f^{\lambda}_{j})\partial_{j}V$. Under \ref{Ecurvaffine},
\eqref{eq:Ecurvnorm} and \eqref{eq:Vjet} yield the following bound,
\[
\big|\textstyle\sum_{i,j}f^{\kappa}_{i}(\partial_{k}\partial_{i}f^{\lambda}_{j})\partial_{j}V\big|
\ \le\ C_{\mathrm{gr}}(1+|x|)\Op{D^{2}f}\,\|DV\|_{\infty}
\ \le\ C_{\mathrm{gr}}(C_{\mathrm{nrm}}+3C) .
\]
Under \ref{Ecurvdiag} the only non-zero second derivatives are
$\partial^{2}_{jj}f^{\lambda}_{j}=\ddot q^{\lambda}_{j}(x_{j})$, so that the eighth term has $k$-th
component $q^{\kappa}_{k}(x_{k})\,\ddot q^{\lambda}_{k}(x_{k})\,\partial_{k}V(x)$, and we obtain the following bound,
\[
\big|\sum_{i,j}f^{\kappa}_{i}\,\partial_{k}\partial_{i}f^{\lambda}_{j}\,\partial_{j}V\big|
\ \le\ C_{\mathrm{gr}}\big(1+|x_{k}|\big)\,\big|\ddot q^{\lambda}_{k}(x_{k})\big|\,\big|\partial_{k}V\big|
\ \le\ C_{\mathrm{gr}}\bar C_{\mathrm{cur}} ,
\]
a bounded quantity. Hence,
$|D\Gamma^{2}|+|D\Gamma^{3}|\le C_{1}$ is bounded in either case. This is stronger than
\eqref{eq:VgermLip} claims and is what the proof of Lemma~\ref{lem:Escalefree} uses.

 For all
$\xi,v\in\R^{n}$ and every $\kappa$,
\begin{equation}\label{eq:D2Gamma1}
\big|D^{2}\Gamma^{1,\kappa}(\xi)(v,v)\big|\ \le\ \frac{C_{1}\,|v|^{2}}{V(\xi)} .
\end{equation}
Leibniz' rule yields
\[
\partial^{2}_{jk}\Gamma^{1,\kappa}
=\textstyle\sum_{i}(\partial^{2}_{jk}f^{\kappa}_{i})\partial_{i}V
+\textstyle\sum_{i}(\partial_{j}f^{\kappa}_{i})\partial^{2}_{ik}V
+\textstyle\sum_{i}(\partial_{k}f^{\kappa}_{i})\partial^{2}_{ij}V
+\textstyle\sum_{i}f^{\kappa}_{i}\partial^{3}_{ijk}V .
\]
For the last three, \eqref{eq:Vjet} yields the following bound,
\[ 2\|Df\|_{\infty}\Op{D^{2}V}+|f|\,\Op{D^{3}V} \ \le\ \big(2\|Df\|_{\infty}+\sqrt2\,CC_{\mathrm{gr}}\big)/V(\xi) .
\]
For the first, \eqref{eq:Ecurvnorm} and $1+|\xi|\ge V(\xi)$ yield under \ref{Ecurvaffine}
\[
\Opb{D^{2}f(\xi)}\,|v|^{2}\|DV\|_{\infty}\ \le\ (C_{\mathrm{nrm}}+3C)|v|^{2}/V(\xi) ,
\]
Under \ref{Ecurvdiag} it contracts to $\sum_{i}\ddot q^{\kappa}_{i}(\xi_{i})\,v_{i}^{2}\,\partial_{i}V(\xi)$.
Here $|\partial_{i}V(\xi)|=|\xi_{i}|/V(\xi)\le(1+|\xi_{i}|)/V(\xi)$, so that \eqref{eq:Ecurvdiag}
yields
\[
\big|\textstyle\sum_{i}\ddot q^{\kappa}_{i}(\xi_{i})v^{2}_{i}\partial_{i}V(\xi)\big|
\ \le\ \bar C_{\mathrm{cur}}|v|^{2}/V(\xi) .
\]

Every constant here is free of $k$ and $R$.
\item \label{proof:Eproof:step2:a1} \emph{A scale-free bound on the sewing remainder.} 
\begin{lemma}[Scale-free a priori estimates for the truncated system]\label{lem:Escalefree}
Let Assumptions~\ref{ass:driver}, \ref{ass:Eito}, \ref{ass:Erough} and
\ref{ass:Einit} hold, the coordinatewise bound \eqref{eq:Eacoord} only in branch
\ref{Ecurvdiag}, let $X=X^{k,R}$ denote the solution of the
truncated, mollified system of Step~\ref{step:Ereg} and denote $\Phi_{t}:=V(X_{t})$. Fix $m\in(1/\gamma,p]$
and denote, for $0\le s\le t\le T$,
\[
\mathcal N_{s,t}:=\Big\|\sup_{r\in[s,t]}\Phi_{r}\Big\|_{L_{m}},
\qquad
\Lambda_{\RZ}:=1+\|\RZ\|_{\gamma}+\|\CZ\|_{1} .
\]
For that $m$,
\begin{equation}\label{eq:Jfinite}
\mathcal J^{k,R}\ :=\ \sup_{0\le u<v\le T}\frac{\big\|J_{u,v}\big\|_{L_{m}}}{|v-u|^{2\gamma}}\ <\ \infty ,
\qquad
\mathcal J^{k,R}_{\mathrm{c}}\ :=\ \sup_{0\le u<v\le T}\frac{\big\|\E_{u}J_{u,v}\big\|_{L_{m}}}{|v-u|^{\epsilon_{1}}}\ <\ \infty ,
\end{equation}
where $J$ denotes the remainder of \eqref{eq:davie} for $X^{k,R}$ and $\epsilon_{1}>1$ denotes the exponent
\eqref{eq:eps1value}. Both suprema are finite by \cite[Prop.~4.3, Rem.~4.4]{FHL}, at
values allowed to depend on $(k,R)$ and on $m$ (Remark~\ref{rem:Jfinite}).

There are $c_{0}\ge1$ and $C_{3}$, depending only on
$\big(C_{\mathrm{gr}},C_{\mathrm{cur}}\text{ or }C_{\mathrm{nrm}},\|Df\|_{\infty},\|Df'\|_{\infty},C_{\RZ},C^{(1)}_{\RZ},\beta,\beta',\gamma,m,n,e\big)$
and in particular \emph{free of $k$, of $R$ and of $T$}, such that with
$h_{\star}:=\big(c_{0}\Lambda_{\RZ}\big)^{-1/\gamma}\wedge T$ we have, for all $s\le t$ such that
$t-s\le h_{\star}$,
\begin{align}
\big\|\delta X_{s,t}\big\|_{L_{m}}&\ \le\ C_{3}\,\Lambda_{\RZ}\,\mathcal N_{s,t}\,|t-s|^{\gamma},
\label{eq:EscaleX}\\[2pt]
\Big\|\sup_{r\in[s,t]}\big|\Phi^{\natural}_{s,r}\big|\Big\|_{L_{m}}
&\ \le\ C_{3}\,\Lambda^{2}_{\RZ}\,\mathcal N_{s,t}\,|t-s|^{2\gamma},
\label{eq:EscaleNat}\\[2pt]
\Big\|\E\big[J_{s,t}\mid\mathcal F_{s}\big]\Big\|_{L_{m}}
&\ \le\ C_{3}\,\Lambda^{3}_{\RZ}\,\mathcal N_{s,t}\,|t-s|^{\epsilon_{1}},
\qquad
\epsilon_{1}\ :=\ \min\Big\{\tfrac{1+3\gamma}{2},\ \beta+\beta'+\gamma\Big\}\ >\ 1,
\label{eq:EscaleCond}
\end{align}
where
\[
\Phi^{\natural}_{s,t}:=\int_{s}^{t}\big(\Gamma^{1},\Gamma^{2}\big)(X)\,\d\RZ
-\Gamma^{1}(X_{s})\,\delta Z_{s,t}-\Gamma^{2}(X_{s})\,\ZZ_{s,t}
+\int_{s}^{t}\Gamma^{3}(X_{r})\,\d\CZ_{r}-\Gamma^{3}(X_{s})\,\CZ_{s,t}
\]
denotes the remainder of the rough It\^o formula applied to $V(X)$.
\end{lemma}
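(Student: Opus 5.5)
The plan is a bootstrap on the greedy mesh $h_{\star}$. The qualitative finiteness \eqref{eq:Jfinite} supplies a priori finite constants, possibly depending on $(k,R)$. The stochastic sewing lemma, Lemma~\ref{lem:SSL}, then turns those constants into scale-free ones. The mechanism is that every germ defect has one factor that is small by a power of $|t-s|$, times a factor measured against $\mathcal N_{s,t}$. On intervals of length at most $h_{\star}$, the constant $c_{0}\Lambda_{\RZ}h^{\gamma}\le\tfrac12$ lets the $(k,R)$-dependent quantity be absorbed into the left-hand side. The three estimates are proved in the order \eqref{eq:EscaleX}, \eqref{eq:EscaleNat}, \eqref{eq:EscaleCond}, but they are coupled through $J$, so the absorption is run on the pair $(\mathcal J^{k,R},\mathcal J^{k,R}_{\mathrm c})$ restricted to $[s,t]$.

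\emph{Step 1: the increment.} Write $\delta X_{s,t}=A+M+G^{1}+G^{2}+J$ as in \eqref{eq:MPpieces}, with $A,M,G^{1},G^{2}$ read from \eqref{eq:davie}. By linear growth, which is uniform in $(k,R)$ by Definition~\ref{def:approxfamily}, and by Burkholder--Davis--Gundy,
\[
\|A\|_{L_{m}}+\|M\|_{L_{m}}+\|G^{1}\|_{L_{m}}+\|G^{2}\|_{L_{m}}\lesssim\Lambda_{\RZ}\mathcal N_{s,t}|t-s|^{\gamma}.
\]
The term $J$ is not yet controlled scale-free. So I set $\mathcal J_{s,t}:=\sup_{[u,v]\subseteq[s,t]}\|J_{u,v}\|_{L_{m}}/\big(\mathcal N_{s,t}|v-u|^{2\gamma}\big)$ and obtain $\|\delta X\|\lesssim(\Lambda_{\RZ}+\mathcal J_{s,t}|t-s|^{\gamma})\mathcal N|t-s|^{\gamma}$.

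\emph{Step 2: sewing $J$.} The process $\delta X$ minus the It\^o and drift integrals is the sewing of the germ $f_{u}(X_{u})\delta Z_{u,v}+\mathsf G_{u}(X_{u})\ZZ_{u,v}$. Its defect is
\[
\big(f_{u}(X_{u})-f_{w}(X_{w})+\mathsf G_{u}(X_{u})\delta Z_{u,w}\big)\delta Z_{w,v}+\delta\mathsf G_{u,w}\ZZ_{w,v}.
\]
I decompose it through the controlled-path defect \eqref{eq:cvfdefect}, the unweighted first bound of \eqref{eq:Eroughderiv}, and the Taylor remainder $\mathcal T_{u,w}$. The remainder $\mathcal T_{u,w}$ is the crux. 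Under \ref{Ecurvaffine} I use the two-branch bound
\[
|\mathcal T|\le\min\Big\{2\|Df\|_{\infty}|\delta X|,\ C_{\mathrm{nrm}}\frac{|\delta X|^{2}}{\inf_{[u,w]}(1+|X|)}\Big\}.
\]
Under \ref{Ecurvdiag} the same bound holds coordinate by coordinate with weight $1+|X_{u,i}|$, and \eqref{eq:Eacoord} controls the $M_{i}^{2}$ contribution by $(1+|X_{i}|)(1+|X|)$, which cancels one power. This yields
\[
\|\delta A_{u,w,v}\|_{L_{m}}\lesssim\Lambda_{\RZ}^{2}\mathcal N\,(1+\mathcal J|v-u|^{\gamma})|v-u|^{3\gamma}.
\]
The conditional version gains through the frozen factor $\delta Z_{w,v}$ and $\E_{u}$ of $M$, giving orders $\beta+\beta'+\gamma$ and $\tfrac{1+3\gamma}2$. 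Here $\tfrac{1+3\gamma}2$ comes from pairing $M\in L$ at $\tfrac12$ with a $\gamma$ factor under cross-term centering. This is the source of $\epsilon_{1}$. Lemma~\ref{lem:SSL}, with uniqueness in the wide class, identifies $J$ with the sewing remainder. It gives
\[
\mathcal J_{s,t}\le C\Lambda_{\RZ}^{2}\big(1+\mathcal J_{s,t}h^{\gamma}\big),
\]
which absorbs for $c_{0}$ large, because $\mathcal J_{s,t}<\infty$ by \eqref{eq:Jfinite}. Substituting back into Step 1 gives \eqref{eq:EscaleX}, and the conditional bound gives \eqref{eq:EscaleCond}. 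Uniformity over subintervals is needed for the supremum; the stated $m>1/\gamma$ enters when passing to $\sup_{r}$ via the third clause \eqref{eq:SSLthird} (Kolmogorov-type series $\sum2^{-j(\gamma m-1)}$).

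\emph{Step 3: $\Phi^{\natural}$.} Apply Lemma~\ref{lem:roughItoLn} to $V(X)$ with the coefficients $\Gamma^{1},\Gamma^{2},\Gamma^{3}$. Sew the germ $\Gamma^{1}(X_{u})\delta Z+\Gamma^{2}(X_{u})\ZZ$. Its defect uses \eqref{eq:VgermLip} (the bounded $D\Gamma^{2}$), \eqref{eq:D2Gamma1} (again one power of $V$ cancelled), and Step 2. The $\CZ$ part is Lipschitz-in-time by \eqref{eq:bracketLip}. The maximal form follows from the continuity clause of Lemma~\ref{lem:SSL}, giving \eqref{eq:EscaleNat}.

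\textbf{Main obstacle.} The main obstacle is the Taylor term in Step 2. It must land at degree one in $\Phi$ with the curvature weight evaluated at an intermediate point rather than at $X_{u}$, and I would handle this by taking the infimum over $[u,w]$ against $\mathcal N$ in the denominator.
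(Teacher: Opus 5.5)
Your architecture matches the paper's: a priori finiteness from \eqref{eq:Jfinite}, sewing with uniqueness in the wide class to identify $J$, absorption on the greedy mesh, the $V$-germ via \eqref{eq:D2Gamma1}, and dyadic chaining needing $\gamma m>1$. The gap is in the step you flag as the crux: the conditional ($\epsilon_{1}$) estimate of the Taylor remainder $\mathcal T_{u,w}$, at degree one and at exponent $m$. The weight $\inf_{r\in[u,w]}(1+|X_{r}|)$ fails on three counts.

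First, $\mathcal T_{u,w}$ evaluates $D^{2}f$ on the chord $[X_{u},X_{w}]$, not on the path. Under branch \ref{Ecurvaffine} with $n\ge2$, the chord can pass much closer to the origin than the path ever does, so the path infimum is not a lower bound for $1+|\xi|$.

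Second, the weight is not $\mathcal F_{u}$-measurable, so it cannot be pulled through $\E_{u}$.

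Third, and decisively, a weight in the denominator helps only through a lower bound, while $\mathcal N_{s,t}$ bounds $\sup\Phi$ from above. When the infimum is of order $1$ you are left with $|\delta X_{u,w}|^{2}$, or $|G^{1}_{u,w}|^{2}\sim\Phi_{u}^{2}\Lambda_{\RZ}^{2}\tau^{2\gamma}$, which is of degree two. Its $L_{m}$ norm costs $\|\delta X_{u,w}\|^{2}_{L_{2m}}$, or after interpolating with the linear branch a moment strictly between $m$ and $2m$. The lemma bounds everything by $\mathcal N_{s,t}$ at exponent $m$, and at $m=p$, where the theorem applies it, no higher moment exists.

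The missing idea is to freeze the weight at $u$ and then use concavity instead of a case split on $\Omega$.
- On $\{|\delta X_{u,w}|\le\tfrac12\Phi_{u}\}$ every chord point has $1+|\xi|\ge\tfrac12\Phi_{u}$. Off that event the linear branch is at most $4\|Df\|_{\infty}|\delta X_{u,w}|^{2}/\Phi_{u}$.
- Together these give the pathwise bound \eqref{eq:Taylorminaff}, equivalently $|\mathcal T_{u,w}|\le2C_{\mathcal T}|\delta X_{u,w}|^{2}/(\Phi_{u}+|\delta X_{u,w}|)$. The weight is now $\mathcal F_{u}$-measurable; under \ref{Ecurvdiag} the same holds coordinatewise with $c_{i}=1+|X_{u,i}|$.
- Concavity of $g_{c}(\theta)=\theta/(c+\sqrt\theta)$ then gives \eqref{eq:concaveJensen}.
- Split $\E_{u}|\delta X_{u,w}|^{2}$ into drift, martingale, rough and remainder pieces. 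For each piece choose, by subadditivity, the branch $\theta/c$ or $\sqrt\theta$ of $g_{c}\le\min\{\theta/c,\sqrt\theta\}$. Every bound stays at exponent $m$ and linear in $\mathcal N_{s,t},\mathcal H,\mathcal J$.

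This also corrects your account of $\tfrac{1+3\gamma}{2}$. Pairing $M$ with a $\gamma$-factor gives $\tfrac12+\gamma$, not $\tfrac{1+3\gamma}{2}$. Centring the cross term needs $D^{2}f$ frozen at $X_{u}$, which the Taylor remainder does not provide. The exponent actually comes from the mismatch in \eqref{eq:Eacoord}: the weight $c_{i}$ is taken at $u$ while $a_{ii}$ is evaluated at $r$. That mismatch costs $(\mathcal H\mathcal N_{s,t})^{1/2}\tau^{(1+\gamma)/2}$, which is not linear in $\mathcal H$ and has to be absorbed by AM--GM.

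There are two bookkeeping errors as well.
- The unconditional defect is not $O(|v-u|^{3\gamma})$. The term $Df_{u}(X_{u})\delta M_{u,w}\,\delta Z_{w,v}$ has order $\tfrac12+\gamma$ and the linear Taylor branch has order $2\gamma$, so $\epsilon_{2}=2\gamma$.
- The inequality $\mathcal J_{s,t}\le C\Lambda^{2}_{\RZ}(1+\mathcal J_{s,t}h^{\gamma})$ does not absorb. Indeed $\Lambda^{2}_{\RZ}h_{\star}^{\gamma}\le\Lambda_{\RZ}/c_{0}$, and $c_{0}$ may not depend on $\|\RZ\|_{\gamma}$. The $J$-term of the defect carries a single driver factor, so $\mathcal J$ must enter only through $\Lambda_{\RZ}h^{\gamma}\mathcal J$.
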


\begin{proof}
The proof of the lemma can be found in Appendix~\ref{app:Escalefree}.
\end{proof}
\item\label{proof:Eproof:step2:a2} \emph{The rough Gr\"onwall, in $L_{m}$.} We apply the rough It\^o formula (
Lemma~\ref{lem:roughItoLn}) to $V(X^{k,R})$ and denote $\Phi_{t}:=V(X^{k,R}_{t})$. That lemma requires
that $V$ have bounded derivatives of order $1\le j\le2$ with $D^{2}V$ globally Lipschitz, all three
supplied by \eqref{eq:Vjet}. This is its hypothesis at $N=2$, forced by 
$\varkappa_{\varphi}=3$. The coefficients are bounded
at fixed $(k,R)$, so that \eqref{eq:rItoLn} holds, and the parameter conditions
\eqref{eq:paramsIto} are satisfied at $\mfn =8$, the controlled pair being in $L_{4,\mfn }$ at every
$\mfn <\infty$. The arithmetic is given by Proposition~\ref{prop:Athreshuses}\ref{app:Athresh1}. On $[s,t]\subseteq[0,T]$ we
obtain the following estimate,
\begin{equation}\label{eq:Egronwallineq}
\begin{aligned}
\sup_{r\in[s,t]}\Phi_{r}\ \le\ &\Phi_{s}+C_{1}\!\int_{s}^{t}\!\Phi_{r}\dr
+C_{1}\big(\|\RZ\|_{\gamma}|t-s|^{\gamma}+\|\RZ\|_{\gamma}|t-s|^{2\gamma}
+\|\CZ\|_{1}|t-s|\big)\sup_{r\in[s,t]}\Phi_{r}\\
&+\sup_{r\in[s,t]}\big|\delta N^{V}_{s,r}\big|+\sup_{r\in[s,t]}\big|\Phi^{\natural}_{s,r}\big|,
\end{aligned}
\end{equation}
where the third term collects the three germ coefficients \eqref{eq:Vgerm} against
$\delta Z_{s,t}$, $\ZZ_{s,t}$ and $\CZ_{s,t}$, bounded by \eqref{eq:VgermLip}, and where
$\Phi^{\natural}$ denotes the sewing remainder. We take $\|\cdot\|_{L_{m}}$ of
\eqref{eq:Egronwallineq} for $m\in(1/\gamma,p]$, the range of Lemma~\ref{lem:Escalefree}, and use
that lemma's \eqref{eq:EscaleNat}. Denoting
$\mathcal N(t):=\|\sup_{r\le t}\Phi_{r}\|_{L_{m}}$, and $\mathcal N_{s,t}$ as in
Lemma~\ref{lem:Escalefree}, this yields
\begin{equation}\label{eq:EgronwallLm}
\mathcal N_{s,t}\ \le\ \mathcal N(s)+C_{1}\!\int_{s}^{t}\!\mathcal N(r)\dr
+\underbrace{\Big[C_{1}\Lambda_{\RZ}|t-s|^{\gamma}+C_{3}\Lambda^{2}_{\RZ}|t-s|^{2\gamma}\Big]}_{=:\ \theta(|t-s|)}
\mathcal N_{s,t}
+\Big\|\sup_{r\in[s,t]}\big|\delta N^{V}_{s,r}\big|\Big\|_{L_{m}} .
\end{equation}
Since $2\gamma>\gamma$, we can choose $c_{1}=c_{1}(C_{1},C_{3},\gamma)$
large enough such that the greedy mesh denoted
$h:=\big(c_{1}\Lambda_{\RZ}\big)^{-1/\gamma}\wedge h_{\star}$
yields the following bound,
\[
\theta(h)\ =\ C_{1}\Lambda_{\RZ}h^{\gamma}+C_{3}\Lambda^{2}_{\RZ}h^{2\gamma}
\ \le\ C_{1}c_{1}^{-1}+C_{3}c_{1}^{-2}\ \le\ \tfrac12 ,
\]
and we can absorb both contributions, picking up a factor $2$.

The scalar Gr\"onwall on each subinterval contributes $e^{2C_{1}h}$ and the absorption a further factor
$2$, and there are
\begin{equation}\label{eq:Egreedycount}
\lceil T/h\rceil\ \le\ 1+T\big((c_{0}\vee c_{1})\Lambda_{\RZ}\big)^{1/\gamma}
\end{equation}
of them. Indeed, $h=\big((c_{0}\vee c_{1})\Lambda_{\RZ}\big)^{-1/\gamma}\wedge T$ by the definition of
$h_{\star}$, so that either $h=T$ and the count is $1$, or $h$ is the first term and
$\lceil T/h\rceil\le1+T/h$. Multiplying and using $h\le h_{\star}\le c_{0}^{-1/\gamma}\le1$, we obtain the factor
\[
\big(2e^{2C_{1}h}\big)^{\lceil T/h\rceil}
\ =\ \exp\!\Big\{\big(\log2+2C_{1}h\big)\lceil T/h\rceil\Big\}
\ \le\ \exp\!\Big\{C_{2}\big(1+\Lambda^{1/\gamma}_{\RZ}T\big)\Big\} .
\]
Hence
\begin{equation}\label{eq:Epathwise}
\begin{gathered}
\Big\|\sup_{t\le T}\Phi_{t}\Big\|_{L_{m}}\ \le\ \mathcal E_{T}\Big(\big\|\Phi_{0}\big\|_{L_{m}}
+\max_{j}\Big\|\sup_{t\le T}\big|\delta N^{V}_{t_{j},t}\big|\Big\|_{L_{m}}\Big),\\[2pt]
\mathcal E_{T}:=\exp\!\Big\{C_{2}\big(1+\Lambda^{1/\gamma}_{\RZ}T\big)\Big\}
=\exp\!\Big\{C_{2}'\Big(1+\big(1+\|\RZ\|^{1/\gamma}_{\gamma}+\|\CZ\|^{1/\gamma}_{1}\big)T\Big)\Big\},
\end{gathered}
\end{equation}
where $C_{2}=C_{2}(C_{1},C_{3},c_{0},\gamma)$ is free of $k$, $R$, $\nu$ and $T$, and where the $t_{j}$ denote the
greedy mesh points. Iterating the one-step bound puts the count
\eqref{eq:Egreedycount} on the second summand as well. It is absorbed into $\mathcal E_{T}$ by
enlarging $C_{2}$: the count is at most $1+cu$ with $u=\Lambda^{1/\gamma}_{\RZ}T$ and
$c=c(c_{0},c_{1},\gamma)$, and $\log(1+cu)\le(1+\log(1+c))(1+u)$. For a geometric lift $\CZ\equiv0$ and the factor multiplying $T$ reads
$1+\|\RZ\|^{1/\gamma}_{\gamma}$. 
\item[(b)]\stepnum{(b)}{proof:Eproof:step2:b} \emph{A priori finiteness.} We denote
$\mathcal M:=\E\big[\sup_{t\le T}\Phi^{p}_{t}\big]$ and show $\mathcal M<\infty$ at fixed $(k,R)$.
We write $X$ for $X^{k,R}$ here. Every constant may depend on $(k,R)$. The Davie expansion
gives no pathwise bound on $X$: \eqref{eq:daviemoduliinfty}, available at fixed $(k,R)$, bounds a
conditional moment of $J$ and not $|J|$, and summing the germ over a partition of mesh $T/N$ leaves
$C(R)\Lambda_{\RZ}T^{\gamma}N^{1-\gamma}$, which diverges.

\emph{Short intervals.} The truncated coefficients are bounded by a deterministic $C(R)$, and
Definition~\ref{def:davie} at $m=p$ gives an $h_{1}>0$ with $\|J_{u,v}\|_{L_{p}}\le\tau^{1/2}$ for
$\tau:=|v-u|\le h_{1}$. So \eqref{eq:davie}, the triangle inequality and the
Burkholder--Davis--Gundy inequality yield, for those pairs,
\[
\big\|\delta X_{u,v}\big\|_{L_{p}}
\ \le\ C(R)\big(\tau+\tau^{1/2}+\Lambda_{\RZ}\tau^{\gamma}+\Lambda_{\RZ}\tau^{2\gamma}\big)+\tau^{1/2}
\ \le\ K\tau^{\gamma},
\qquad \tau\le h_{1}\wedge1 ,
\]
since $\gamma<\tfrac12$.

\emph{All intervals.} Chaining over $\lceil T/(h_{1}\wedge1)\rceil$ such intervals and using
Assumption~\ref{ass:Einit} gives $\sup_{r\le T}\|X_{r}\|_{L_{p}}<\infty$, and hence
$\|\delta X_{u,v}\|_{L_{p}}\le K'|v-u|^{\gamma}$ at every pair, the longer ones by that supremum and
$|v-u|^{\gamma}\ge(h_{1}\wedge1)^{\gamma}$.

\emph{The supremum.} Raising that bound to the power $p$ gives
$\E|\delta X_{u,v}|^{p}\lesssim|v-u|^{1+\varepsilon}$ with $\varepsilon=p\gamma-1>0$ by
\eqref{eq:Ep}. Writing $[X]_{\alpha}:=\sup_{u\ne v}|\delta X_{u,v}|/|v-u|^{\alpha}$, Kolmogorov--Chentsov in its
quantitative form then bounds $\E\big[[X]^{p}_{\alpha}\big]$ for $\alpha<\varepsilon/p$, and with
$\sup_{t\le T}|X_{t}|\le|X_{0}|+T^{\alpha}[X]_{\alpha}$ and $\|X_{0}\|_{L_{p}}<\infty$ we obtain
$\E\big[\sup_{t\le T}|X_{t}|^{p}\big]<\infty$. Hence $\mathcal M<\infty$, by $\Phi\le1+|X|$. This is
what makes $\mathcal H$ finite in the proof of Lemma~\ref{lem:Escalefree}. $\mathcal J<\infty$ is
instead \eqref{eq:Jfinite}.

\item[(c)]\stepnum{(c)}{proof:Eproof:step2:c} \emph{Closing the martingale term.} We raise \eqref{eq:Epathwise} to the power $m$ and take
expectations. Applying the Burkholder--Davis--Gundy inequality together with \eqref{eq:Vbracket} and
Doob's inequality, we obtain
\[
\E\Big[\sup_{t\le T}|N^{V}_{t}|^{m}\Big]\le c_{m}\,\E\big[\langle N^{V}\rangle_{T}^{m/2}\big]
\le c_{m}\big(C_{1}T\big)^{m/2}\,\E\Big[\sup_{t\le T}\Phi^{m}_{t}\Big] ,
\]
so that, denoting $\Phi_{0}=V(X_{0})$ and $\mathcal M_{m}:=\E[\sup_{t\le T}\Phi^{m}_{t}]$, this yields
\begin{equation}\label{eq:Eabsorb}
\mathcal M_{m}\ \le\ 2^{m-1}\mathcal E^{m}_{T}\,\E\big[V(X_{0})^{m}\big]
\ +\ 2^{m-1}\mathcal E^{m}_{T}\,c_{m}(C_{1}T)^{m/2}\,\mathcal M_{m} .
\end{equation}
We choose $T_{0}\in(0,T]$ such that
\[
2^{m-1}\mathcal E^{m}_{T_{0}}c_{m}\big(C_{1}T_{0}\big)^{m/2}\ \le\ \tfrac12 ,
\]
which is possible because $T_{0}\mapsto\mathcal E^{m}_{T_{0}}T_{0}^{m/2}$ is continuous and vanishes at
$T_{0}=0$. More explicitly, we can take $T_{0}:=\tau_{0}\wedge\Lambda^{-1/\gamma}_{\RZ}\wedge T$ with
$\tau_{0}=\tau_{0}(m,C_{1},C_{2})$, since $\Lambda^{1/\gamma}_{\RZ}T_{0}\le1$ bounds
$\mathcal E_{T_{0}}$ by $e^{2C_{2}}$. We then run \ref{proof:Eproof:step2:a0}--\ref{proof:Eproof:step2:c} on $[0,T_{0}]$ and iterate over the
\[
\lceil T/T_{0}\rceil\ \le\ 1+\big(\tau^{-1}_{0}+\Lambda^{1/\gamma}_{\RZ}\big)T
\]
successive intervals, each time with the previous endpoint as initial datum. Each contributes the same
constant factor $2^{m}\mathcal E^{m}_{T_{0}}$, so that the accumulated constant is again of the form
$\exp\{C(1+(1+\|\RZ\|^{1/\gamma}_{\gamma})T)\}$ after enlarging $C$. Using
$\tfrac1{\sqrt2}(1+|x|)\le V(x)\le1+|x|$, this is \eqref{eq:Emoment} at exponent $m$, for $X^{k,R}$, with
a constant free of $k$ and $R$.

Taking $m=p$, we obtain \eqref{eq:Emoment} at the exponent $p$, and hence for every $m\in[2,p]$.
What this section's argument requires is $\gamma m>1$, hence $m>1/\gamma>2$ by $\gamma<\tfrac12$.
 
\end{enumerate}

\color{blue}
\begin{remark}\label{rem:Etemporal}
The germ coefficients of \eqref{eq:rsdeE} depend on time as well as on the state, so that their
two-index increments split as in \eqref{eq:Gsplit}, and \eqref{eq:DGGLip}, a bound on a spatial
derivative, controls only the spatial half. For an autonomous field the temporal half vanishes.

 The temporal half of $\delta[Df\,f+f']$ contains
$(Df_{w}-Df_{u})(x)f_{w}(x)$, whose second factor is of linear growth and cannot be improved,
Assumption~\ref{ass:Erough} requires only $|f_{t}(x)|\le C_{\mathrm{gr}}(1+|x|)$. Had the first term
of \eqref{eq:Eroughderiv} carried the weight $1+|x|$ that the other two carry, the product would be of
order $(1+|x|)^{2}$, whose $L_{m}$ norm is $\|\Phi_{u}\|^{2}_{L_{2m}}$, which the $L_{m}$ quantity
$\mathcal N_{s,t}$ does not dominate. Interpolating against $|Df_{w}-Df_{u}|\le2\|Df\|_{\infty}$
leaves $(1+|x|)^{1+\theta}\tau^{\theta(\beta+\beta')}$ with $\theta\in(0,1)$, where $\epsilon_{1}$
needs $\theta$ bounded away from $0$, forcing a moment exponent strictly between $m$ and $2m$. Since
\eqref{eq:Emoment} is proved by running the whole scheme at the top exponent $m=p$ supplied by $\nu$,
there is no higher exponent available and the Gr\"onwall does not close.

The unweighted form is a mild strengthening: it implies the weighted form, since
$1+|x|\ge1$, and is not implied by it. Take e.g.~$n=e=1$, $f'\equiv0$ and
$f_{t}(x)=\int_{0}^{x}\sin(\omega(t)y)\dy$ with $\omega$ H\"older of index $\beta+\beta'$ and
bounded away from $0$, since
$f_{t}(x)=(1-\cos(\omega(t)x))/\omega(t)$ has $\partial_{\omega}f\to x^{2}/2$ as $\omega\downarrow0$,
so that without it the temporal defect of $f$ is quadratic in the state and \eqref{eq:cvfdefect} fails,
whereas $\inf_{t}\omega(t)=:\omega_{-}>0$ yields $|\partial_{\omega}f|\le|x|/\omega_{-}+2/\omega^{2}_{-}$.
Then $Df_{t}(x)=\sin(\omega(t)x)$ is bounded and the weighted statements holds by
$|\sin(ax)-\sin(bx)|\le\min\{|a-b||x|,2\}\le|a-b|(1+|x|)$, while $\sup_{x}|Df_{t}(x)-Df_{s}(x)|$ does
not tend to $0$ as $t-s\to0$, so that the unweighted one fails. The issue arises from $(1+|x|)|D^{2}f|$ being unbounded, so that the field satisfies neither
curvature branch. What the strengthening excludes is a bounded derivative whose oscillation in time
does not decay as the state grows; every autonomous field satisfies it trivially, as does the example
$f_{t}=t^{\beta+\beta'}g$ of Remark~\ref{rem:Efprime} with $g$ affine.
\end{remark}
\color{black}

\subsubsection*{Step 3: tightness, and identification of the limit (jointly in $(k,R)$)}
\makeatletter\def\@currentlabel{3}\makeatother\label{step:Etight}

Both indices are removed here, and in the same limit.  We can treat
$\{X^{k,R}\}$ as one doubly indexed family and send the two indices to infinity together.

\emph{(a) Tightness.}\stepnum{(a)}{stp:Et-a} Applying the Burkholder--Davis--Gundy inequality to the Davie expansion for
$X^{k,R}$, and using \eqref{eq:Egrowth} together with \eqref{eq:Emoment}, we obtain for
$m\in(1/\gamma,p]$ and $s\le t$ with $|t-s|\le h_{\star}$,
\begin{equation}\label{eq:Eholder}
\big\|\delta X^{k,R}_{s,t}\big\|_{L_{m}}
\ \lesssim\ \underbrace{|t-s|}_{\text{drift}}
+\underbrace{|t-s|^{1/2}}_{\text{martingale}}
+\underbrace{\|\RZ\|_{\gamma}|t-s|^{\gamma}}_{\text{first level}}
+\underbrace{\|\RZ\|_{\gamma}|t-s|^{2\gamma}}_{\text{second level}}
+\underbrace{C_{3}\Lambda^{2}_{\RZ}|t-s|^{2\gamma}}_{\text{Davie remainder}}
\ \lesssim\ \Lambda_{\RZ}\,|t-s|^{\gamma},
\end{equation}
uniformly in $k$ and in $R$, the exponent $\gamma$ is rate determining, since $\gamma<\tfrac12<1$ and
$2\gamma>\gamma$. The Davie-remainder entry is the second bound of
\eqref{eq:HJfinal} in the proof of Lemma~\ref{lem:Escalefree}, and the final bound is
\eqref{eq:EscaleX}, with $\mathcal N_{s,t}$ bounded by \eqref{eq:Emoment}.

We can take $m=p$ and raise \eqref{eq:Eholder} to the power $p$, which yields
\[
\E\Big[\big|\delta X^{k,R}_{s,t}\big|^{p}\Big]\ \lesssim\ \Lambda^{p}_{\RZ}\,|t-s|^{1+\varepsilon},
\qquad
\varepsilon\ :=\ p\gamma-1\ >\ 0 ,
\]
uniformly in $k$ and in $R$, where $\varepsilon>0$ holds by \eqref{eq:Ep}, since
$p>6>3>1/\gamma$. Kolmogorov--Chentsov then yields
the tightness of the doubly indexed family
$\{\Law(X^{k,R},B)\}_{k\ge1,\,R\ge1}$ on
$C([0,T];\R^{n}\times\R^{d})$. The initial laws are all $\nu$. Intervals longer than $h_{\star}$ are
covered by concatenating at most $\lceil T/h_{\star}\rceil$ of them.

\emph{(b) Skorokhod.}\stepnum{(b)}{stp:Et-b} We can fix any sequence $(k_{j},R_{j})_{j\ge1}$ with $k_{j}\to\infty$ and
$R_{j}\to\infty$, extract a subsequence along which $\Law(X^{k_{j},R_{j}},B)$ converges, and choose a
probability space carrying $(\widetilde X^{j},\widetilde B^{j})$ such that
$\Law(\widetilde X^{j},\widetilde B^{j})=\Law(X^{k_{j},R_{j}},B)$ and
$(\widetilde X^{j},\widetilde B^{j})\to(\widetilde X,\widetilde B)$ a.s.\ in
$C([0,T];\R^{n}\times\R^{d})$. Each
$\widetilde B^{j}$ is a Brownian motion for the filtration generated by
$(\widetilde X^{j},\widetilde B^{j})$, and each $\widetilde X^{j}$ is an $L_{m}$-conditional solution of the
$(k_{j},R_{j})$-th truncated, mollified equation driven by $\widetilde B^{j}$, by
Lemma~\ref{lem:lawdet}.

Let $s\le t$,
let $g\in C_{b}(\R^{d})$, let $\psi$ denote a bounded continuous cylinder functional of
$(\widetilde X,\widetilde B)|_{[0,s]}$, and let $\psi^{j}$ denote the same functional of
$(\widetilde X^{j},\widetilde B^{j})$. Then
\[
\E\big[g\big(\widetilde B^{j}_{t}-\widetilde B^{j}_{s}\big)\,\psi^{j}\big]
\ =\ \E\big[g\big(\widetilde B^{j}_{t}-\widetilde B^{j}_{s}\big)\big]\,\E\big[\psi^{j}\big],
\qquad j\ge1 ,
\]
and by bounded convergence we obtain
\[
\E\big[g\big(\widetilde B_{t}-\widetilde B_{s}\big)\,\psi\big]
\ =\ \E\big[g\big(\widetilde B_{t}-\widetilde B_{s}\big)\big]\,\E\big[\psi\big] .
\]
With $\mathcal G_{t}:=\sigma(\widetilde X_{r},\widetilde B_{r}:r\le t)$, the chosen $\psi$ generate $\mathcal G_{s}$. Hence, $\widetilde B_{t}-\widetilde B_{s}$ is independent of
$\mathcal G_{s}$ and is $N(0,(t-s)I)$, its law being the limit of those of
$\widetilde B^{j}_{t}-\widetilde B^{j}_{s}$. With $\widetilde B$ continuous and $\widetilde B_{0}=0$,
this is the definition of a $(\mathcal G_{t})$-Brownian motion.

 Here $\delta Z$
and $\ZZ$ are deterministic, and the It\^o integral is, under each
measure, almost surely equal to a random variable measurable for the generated filtration and
continuous in $(s,t)$ (Lemma~\ref{lem:itoraw}), so that the Davie remainder
\eqref{eq:davie} is given by the same formula on either space and its joint law with the pair is
determined by the $\Law(X,B)$ alone. Both moduli of
\eqref{eq:daviemoduli}, taken with respect to the generated filtration, are therefore determined
by $\Law(X,B)$, and Lemma~\ref{lem:filtdescend} hands the moduli that Step~\ref{step:Ereg}
produces on the ambient filtration to the generated one.

\emph{(c) Identification.}\stepnum{(c)}{stp:Et-c} The drift term passes to the limit,
\[
\int_{s}^{t}b^{k_{j}}_{R_{j}}\big(r,\widetilde X^{j}_{r}\big)\dr
\ \longrightarrow\
\int_{s}^{t}b\big(r,\widetilde X_{r}\big)\dr
\qquad\text{in }L_{m}\ \text{ for every }m<p,
\]
by the locally uniform
convergence $b^{k_{j}}_{R_{j}}\to b$ of Step~\ref{step:Ereg}, the a.s.\ convergence of
$\widetilde X^{j}$, dominated convergence and uniform integrability from \eqref{eq:Emoment} with
$m<p$.

The It\^o term needs slightly more. We want
\begin{equation}\label{eq:Itostability}
\int_{s}^{t}\sigma^{k_{j}}_{R_{j}}\big(r,\widetilde X^{j}_{r}\big)\,\d\widetilde B^{j}_{r}
\ \longrightarrow\
\int_{s}^{t}\sigma\big(r,\widetilde X_{r}\big)\,\d\widetilde B_{r}
\qquad\text{in }L_{m}\ \text{ for every }m<p.
\end{equation}
What supplies \eqref{eq:Itostability} is a martingale argument. The Skorokhod space carries every $j$ at once. What stays
$j$-dependent is the filtration, each identity below being read for the one $\mathcal G^{j}$ its index
names, and only expectations cross the limit. $\mathcal G^{j}_{t}:=\sigma\big(\widetilde X^{j}_{r},\widetilde B^{j}_{r}:r\le t\big)$.

\emph{(i)}\stepnum{(i)}{stp:Et-ci} We denote
$M^{k,R}_{t}:=\int_{0}^{t}\sigma^{k}_{R}(r,X^{k,R}_{r})\,\d B_{r}$. Applying the
Burkholder--Davis--Gundy inequality together with \eqref{eq:Egrowth} and \eqref{eq:Emoment}, we obtain the following bounds,
\[
\big\|\delta M^{k,R}_{s,t}\big\|_{L_{p}}
\ \lesssim\ \Big\|\Big(\int_{s}^{t}\big\|\sigma^{k}_{R}\big(r,X^{k,R}_{r}\big)\big\|^{2}\dr\Big)^{1/2}\Big\|_{L_{p}}
\ \lesssim\ |t-s|^{1/2},
\qquad
\E\Big[\sup_{t\le T}\big|M^{k,R}_{t}\big|^{p}\Big]\ \lesssim\ 1,
\]
both uniformly in $k$ and in $R$. Raising the first bound to the power $p$ yields
\[
\E\Big[\big|\delta M^{k,R}_{s,t}\big|^{p}\Big]\ \lesssim\ |t-s|^{p/2},
\qquad \tfrac{p}{2}\ >\ 1 ,
\]
since $p>2$, so that Kolmogorov--Chentsov applies to $M^{k,R}$ as it did to $X^{k,R}$ in \ref{stp:Et-a}. Hence
$\{\Law(X^{k,R},B,M^{k,R})\}_{k,R}$ is tight on $C([0,T];\R^{n}\times\R^{d}\times\R^{n})$ and we can
run steps \ref{stp:Et-a} and \ref{stp:Et-b} on the triple. This is not a second extraction on top of
step~\ref{stp:Et-b}: that step is to be read from the start with the triple
$(X^{k,R},B,M^{k,R})$ in place of the pair, so that one subsequence and one Skorokhod
representation serve throughout, and the pair statements of \ref{stp:Et-b} are the first two components of the
triple statements. We denote by $\widetilde M^{j}$ the
third component of the $j$-th Skorokhod copy and by $\widetilde M$ its limit. At fixed $(k,R)$ the
integrand $\sigma^{k}_{R}(\cdot,X^{k,R})$ is continuous and adapted, so that $M^{k,R}$ is the u.c.p.\
limit of the left-point Riemann sums $S_{l}(X^{k,R},B)$ along any partitions of vanishing mesh, each
$S_{l}$ a fixed continuous functional of the path pair, by the construction the proof of
Lemma~\ref{lem:MPborel}\ref{mpb:version} opens with, run on $\sigma^{k}_{R}$ in place of
$\nabla\varphi\cdot\sigma$. From equality in law of the triples we obtain
\[
\sup_{t\le T}\big|S_{l}\big(\widetilde X^{j},\widetilde B^{j}\big)_{t}-\widetilde M^{j}_{t}\big|
\ \longrightarrow\ 0
\qquad\text{in probability as }l\to\infty ,
\]
On the Skorokhod space
, the same sums converge u.c.p.\ to
$\int_{0}^{\cdot}\sigma^{k_{j}}_{R_{j}}\big(r,\widetilde X^{j}_{r}\big)\d\widetilde B^{j}_{r}$,
$\widetilde B^{j}$ being a $\mathcal G^{j}$-Brownian motion and $\widetilde X^{j}$
$\mathcal G^{j}$-adapted by (b). Two u.c.p.\ limits of one sequence agree almost
surely, and we obtain
\begin{equation}\label{eq:Mjtransfer}
\widetilde M^{j}_{t}=\int_{0}^{t}\sigma^{k_{j}}_{R_{j}}\big(r,\widetilde X^{j}_{r}\big)\,\d\widetilde B^{j}_{r}
\qquad\text{almost surely, for all }t\le T .
\end{equation}
First, $\widetilde M^{j}_{t}$ is by \eqref{eq:Mjtransfer} a limit in probability of
$\mathcal G^{j}_{t}$-measurable random variables, so the filtration generated by the $j$-th triple
coincides with $\mathcal G^{j}$ up to $\Prob$-null sets, $\widetilde B^{j}$ is a Brownian motion for it,
and the closedness argument of (b) applies identically to the triple: $\widetilde B$ is a Brownian motion
for $\mathcal G^{\sharp}_{t}:=\sigma\big(\widetilde X_{r},\widetilde B_{r},\widetilde M_{r}:r\le t\big)$.
Second, $\widetilde M^{j}\to\widetilde M$ almost surely uniformly on $[0,T]$, hence in $L_{m}$ for every
$m<p$ by the $p$-th moment bound. Those null sets, and the stochastic integration,
Kunita--Watanabe associativity and Doob--Meyer uniqueness of brackets used in (ii) and (iii), are read
on the usual augmentation of the filtration, which changes neither integrals, brackets, nor the
Brownian property.

\emph{(ii) The brackets of the limit.}\stepnum{(ii)}{stp:Et-cii} We fix $s\le t$, let $\psi$ denote a bounded continuous cylinder
functional of the limit triple on $[0,s]$ and $\psi^{j}$ the same functional of the $j$-th triple. Such
$\psi$ generate $\mathcal G^{\sharp}_{s}$, and each $\psi^{j}$ is bounded and
$\mathcal G^{j}_{s}$-measurable. At fixed $(k_{j},R_{j})$ the coefficients are bounded, so that by
\eqref{eq:Mjtransfer} $\widetilde M^{j}$ is a square-integrable continuous $\mathcal G^{j}$-martingale
such that
\[
\big\langle\widetilde M^{j,i},\widetilde B^{j,\kappa}\big\rangle_{t}
=\int_{0}^{t}\big(\sigma^{k_{j}}_{R_{j}}\big)^{i\kappa}\big(r,\widetilde X^{j}_{r}\big)\dr,
\qquad
\big\langle\widetilde M^{j,i},\widetilde M^{j,l}\big\rangle_{t}
=\int_{0}^{t}\big(a^{k_{j}}_{R_{j}}\big)^{il}\big(r,\widetilde X^{j}_{r}\big)\dr,
\]
where $a^{k}_{R}:=\sigma^{k}_{R}(\sigma^{k}_{R})^{\top}$ denotes the diffusion matrix. Testing the three martingales against
$\psi^{j}$, we obtain, for every $j$,
\[
\E\big[\delta\widetilde M^{j,i}_{s,t}\,\psi^{j}\big]=0,\qquad
\E\Big[\Big(\delta\big(\widetilde M^{j,i}\widetilde B^{j,\kappa}\big)_{s,t}
-\int_{s}^{t}\big(\sigma^{k_{j}}_{R_{j}}\big)^{i\kappa}\big(r,\widetilde X^{j}_{r}\big)\dr\Big)\psi^{j}\Big]=0,
\]
and the same with $\big(\widetilde M^{j,i}\widetilde M^{j,l},\,a^{k_{j}}_{R_{j}}\big)$ in place of
$\big(\widetilde M^{j,i}\widetilde B^{j,\kappa},\,\sigma^{k_{j}}_{R_{j}}\big)$. Each passes to the limit
in $L_{1}$. The linear terms do so by (i). For the products, $\widetilde M^{j}_{t}\to\widetilde M_{t}$
in $L_{m}$ and $\widetilde B^{j}_{t}\to\widetilde B_{t}$ in every $L_{q}$, the $\widetilde B^{j}$ being
Brownian motions and the convergence almost sure, and H\"older's inequality yields the following bound,
\[
\big\|\widetilde M^{j,i}_{t}\widetilde B^{j,\kappa}_{t}
-\widetilde M^{i}_{t}\widetilde B^{\kappa}_{t}\big\|_{L_{m/2}}
\ \le\ \big\|\widetilde M^{j,i}_{t}-\widetilde M^{i}_{t}\big\|_{L_{m}}
\big\|\widetilde B^{j,\kappa}_{t}\big\|_{L_{m}}
+\big\|\widetilde M^{i}_{t}\big\|_{L_{m}}
\big\|\widetilde B^{j,\kappa}_{t}-\widetilde B^{\kappa}_{t}\big\|_{L_{m}}
\ \longrightarrow\ 0 ,
\]
so that they converge in $L_{m/2}\subseteq L_{1}$, the exponent $m$ exceeding $1/\gamma>2$. For the
$\dr$ integrals, the agreement on compacts below and \eqref{eq:standingcts} give almost sure
convergence of the integrands, and \eqref{eq:Egrowth} with \eqref{eq:Emoment} at $p>2$ the uniform
integrability, for $a^{k_{j}}_{R_{j}}$ as well since
$\Frob{a^{k}_{R}}\le\Frob{\sigma^{k}_{R}}^{2}\lesssim(1+|x|)^{2}$, and $\psi^{j}\to\psi$ almost surely and
boundedly. Hence $\widetilde M$ is a continuous $\mathcal G^{\sharp}$-martingale such that
\begin{equation}\label{eq:Mbrackets}
\big\langle\widetilde M^{i},\widetilde B^{\kappa}\big\rangle_{t}=\int_{0}^{t}\sigma^{i\kappa}\big(r,\widetilde X_{r}\big)\dr,
\qquad
\big\langle\widetilde M^{i},\widetilde M^{l}\big\rangle_{t}=\int_{0}^{t}a^{il}\big(r,\widetilde X_{r}\big)\dr .
\end{equation}

\emph{(iii) The representation.}\stepnum{(iii)}{stp:Et-ciii} $\widetilde B$ is a $\mathcal G^{\sharp}$-Brownian
motion by (i), and $r\mapsto\sigma(r,\widetilde X_{r})$ is continuous and
$\mathcal G^{\sharp}$-adapted, so that
$\mathcal R:=\widetilde M-\int_{0}^{\cdot}\sigma(r,\widetilde X_{r})\,\d\widetilde B_{r}$ is a
continuous local martingale such that $\mathcal R_{0}=0$. Its bracket expands into the following four terms,
\[
\big\langle\mathcal R^{i},\mathcal R^{l}\big\rangle_{t}
\ =\ \big\langle\widetilde M^{i},\widetilde M^{l}\big\rangle_{t}
-\big\langle\widetilde M^{i},\int_{0}^{\cdot}\sigma^{l\cdot}\d\widetilde B\big\rangle_{t}
-\big\langle\int_{0}^{\cdot}\sigma^{i\cdot}\d\widetilde B,\widetilde M^{l}\big\rangle_{t}
+\big\langle\int_{0}^{\cdot}\sigma^{i\cdot}\d\widetilde B,
\int_{0}^{\cdot}\sigma^{l\cdot}\d\widetilde B\big\rangle_{t} ,
\]
each equal to $\int_{0}^{t}a^{il}(r,\widetilde X_{r})\dr$. The first
is the second identity of \eqref{eq:Mbrackets}. The two mixed ones are the associativity of the
bracket with the stochastic integral, read against the first identity of \eqref{eq:Mbrackets},
\[
\begin{gathered}
\big\langle\widetilde M^{i},\int_{0}^{\cdot}\sigma^{l\cdot}\d\widetilde B\big\rangle_{t}
=\sum_{\kappa}\int_{0}^{t}\sigma^{l\kappa}\d\big\langle\widetilde M^{i},\widetilde B^{\kappa}\big\rangle
=\int_{0}^{t}a^{il}(r,\widetilde X_{r})\dr,
\\
\big\langle\int_{0}^{\cdot}\sigma^{i\cdot}\d\widetilde B,\widetilde M^{l}\big\rangle_{t}
=\sum_{\kappa}\int_{0}^{t}\sigma^{i\kappa}\d\big\langle\widetilde B^{\kappa},\widetilde M^{l}\big\rangle
=\int_{0}^{t}a^{li}(r,\widetilde X_{r})\dr ,
\end{gathered}
\]
and the fourth is the bracket formula for stochastic integrals against a Brownian motion,
\[
\big\langle\int_{0}^{\cdot}\sigma^{i\cdot}\d\widetilde B,\int_{0}^{\cdot}\sigma^{l\cdot}\d\widetilde B\big\rangle_{t}
=\sum_{\kappa}\int_{0}^{t}\sigma^{i\kappa}\big(r,\widetilde X_{r}\big)\sigma^{l\kappa}\big(r,\widetilde X_{r}\big)\dr
=\int_{0}^{t}a^{il}\big(r,\widetilde X_{r}\big)\dr .
\]
Carrying the signs $+,-,-,+$ of the expansion, the four cancel:
\[
\big\langle\mathcal R^{i},\mathcal R^{l}\big\rangle_{t}
\ =\ \int_{0}^{t}a^{il}\big(r,\widetilde X_{r}\big)\dr
-\int_{0}^{t}a^{il}\big(r,\widetilde X_{r}\big)\dr
-\int_{0}^{t}a^{il}\big(r,\widetilde X_{r}\big)\dr
+\int_{0}^{t}a^{il}\big(r,\widetilde X_{r}\big)\dr\ =\ 0 ,
\]
so that $\langle\mathcal R^{i},\mathcal R^{l}\rangle\equiv0$, whence $\mathcal R\equiv0$ and
\[
\widetilde M_{t}=\int_{0}^{t}\sigma\big(r,\widetilde X_{r}\big)\,\d\widetilde B_{r}.
\]
The right-hand side is a u.c.p.\ limit of left-point Riemann sums of a continuous adapted integrand, so
that it is the same object on $\mathcal G^{\sharp}$ or on the smaller $\mathcal G$. $\widetilde B$
is a Brownian motion for both. In particular, $\widetilde M$ is $\mathcal G$-adapted. Combining with the
$L_{m}$ convergence $\widetilde M^{j}\to\widetilde M$ of (i) and taking increments yields
\eqref{eq:Itostability} for every $s\le t$ and every $m<p$.

The identification does not care which of the two indices is moving: on a
fixed compact $\{|x|\le M\}$ we have $\Theta_{R}=1$ and $\theta_{R}(x_{i})=1$ for every $i$ as soon as
$R\ge M$, so there $b^{k}_{R}=b^{k}$, $\sigma^{k}_{R}=\sigma^{k}$ and $(f_{R},f'_{R})=(f,f')$
exactly, and what is left is the locally uniform convergence $b^{k}\to b$,
$\sigma^{k}\to\sigma$ of Step~\ref{step:Ereg}. For the rough term the integrand is the controlled pair
carried by the equation:
\[
\big(Y^{j}_{r},Y^{j\prime}_{r}\big):=\Big(f_{R_{j},r}\big(\widetilde X^{j}_{r}\big),\
\big(Df_{R_{j},r}f_{R_{j},r}+f'_{R_{j},r}\big)\big(\widetilde X^{j}_{r}\big)\Big)
\ \longrightarrow\
\big(Y_{r},Y'_{r}\big):=\Big(f_{r}\big(\widetilde X_{r}\big),\ \big(Df_{r}f_{r}+f'_{r}\big)\big(\widetilde X_{r}\big)\Big)
\]
in $L_{m}$ at each fixed $r$ and each $m<p$,
by the agreement on compacts, continuity of $f_{r}$ and $Df_{r}f_{r}+f'_{r}$, a.s.\ convergence, the linear growth
$|f_{R}|+|f'_{R}|\le C_{\mathrm{gr}}(1+|x|)$ that truncation does not interfere with, and uniform integrability
from \eqref{eq:Emoment}. Hence the pair is the one of \eqref{eq:davie}.

Stability of the rough integral is what yields the integral form of
\eqref{eq:rsdeE} in the limit. By \cite[Prop.~4.3]{FHL} an integrable solution there is
equivalently a process such that
\[
\widetilde X_{t}=\widetilde X_{0}+\int_{0}^{t}b\big(r,\widetilde X_{r}\big)\dr
+\int_{0}^{t}\sigma\big(r,\widetilde X_{r}\big)\,\d\widetilde B_{r}+\int_{0}^{t}(Y,Y')\d\RZ
\qquad\text{for all }t\in[0,T],\ \text{a.s.},
\]
\cite[Prop.~4.3]{FHL} carries as a hypothesis the membership condition \cite[Def.~4.2(b)]{FHL},
membership in $\mathscr D^{\bar\alpha,\bar\alpha'}_{\RZ}L_{m,\mfn }$ of \cite[Def.~3.1]{FHL}. The limit pair $(\widetilde X,Y)$ satisfies that condition: the four bounds it requires
for are established at the end of this step. One clause of \cite[Def.~3.1]{FHL} is left open there,
the right-continuity in $s$ of $(s,t)\mapsto\E_{s}R^{Y}_{s,t}$, and nothing in this paper uses
it.

The stability lemma \cite[Lem.~4.20]{FHL} asks that pair to lie in the extended
controlled class of \cite[Def.~4.12]{FHL}. We apply it in the weaker form of
Lemma~\ref{lem:roughstab}, which requires only what the proof in \cite{FHL} uses: control of the pair through the four quantities
\[
\big\|\delta Y\big\|_{\gamma;m,\mfn },
\qquad
\sup_{r\le T}\big\|Y'_{r}\big\|_{L_{\mfn }},
\qquad
\sup_{s<t}\frac{\big\|\E_{s}R^{Y}_{s,t}\big\|_{L_{\mfn }}}{|t-s|^{\gamma+\beta_{\star}}},
\qquad
\sup_{s<t}\frac{\big\|\E_{s}\delta Y'_{s,t}\big\|_{L_{\mfn }}}{|t-s|^{\beta_{\star}}},
\qquad
R^{Y}_{s,t}:=\delta Y_{s,t}-Y'_{s}\delta Z_{s,t},
\]
Their sum is
$\Gamma^{\gamma,\beta_{\star};m,\mfn }(Y,Y';[0,T])$, and the lemma requires it to be bounded uniformly along the
sequence and finite in the limit.

 At $\mfn =\infty$ the second quantity is an essential
supremum over $\Omega$, while $Y^{j\prime}=(Df_{R_{j}}f_{R_{j}}+f'_{R_{j}})(\widetilde X^{j})$ has
linear growth. More precisely,
\[
\sup_{r\le T}\big\|Y^{j\prime}_{r}\big\|_{L_{\infty}}
\ \le\ \big(\sup_{R}\|Df_{R}\|_{\infty}+1\big)C_{\mathrm{gr}}\big(1+2R_{j}\big)
\ \xrightarrow[\ j\to\infty\ ]{}\ \infty ,
\]
and at the limit $(Df\,f+f')(\widetilde X)$ is unbounded on $\Omega$ as soon as $\nu$ is, so that the
term is $+\infty$ there. Any $\mfn \in[m,p]$ is available, and we take $\mfn =m$, at which
\[
\big\|\delta Y\big\|_{\gamma;m,m}\ =\ \sup_{s<t}\frac{\|\delta Y_{s,t}\|_{L_{m}}}{|t-s|^{\gamma}}
\]
by \cite[Prop.~2.3(i)]{FHL}.

$R^{Y}$ is exactly the remainder
$R$ of \eqref{eq:Rsplitfive}, and Step~\ref{proof:Eproof:step2:step1}\ref{proof:Eproof:step2:step1:1b} of the proof of Lemma~\ref{lem:Escalefree} bounds it
uniformly in $(k,R)$ at order $\min\{\tfrac{1+\gamma}{2},\beta+\beta'\}$.
Collecting the orders, with $\varrho^{(2)}$ at $\beta+\beta'$ by \eqref{eq:rhoorders} and the Taylor
term $\mathcal T$ at $\tfrac{1+\gamma}{2}$ by \eqref{eq:Emismatch}, yields the following bound,
\[
\big\|\E_{s}R^{Y}_{s,t}\big\|_{L_{m}}
\ \lesssim\ \underbrace{|t-s|}_{\text{drift}}
+\underbrace{|t-s|^{2\gamma}}_{\text{second level},\ J,\ \varrho^{(1)}}
+\underbrace{|t-s|^{\beta+\beta'}}_{\varrho^{(2)}}
+\underbrace{|t-s|^{\frac{1+\gamma}{2}}}_{\mathcal T}
\ \lesssim\ |t-s|^{\min\{\frac{1+\gamma}{2},\ \beta+\beta'\}} ,
\]
the last term rate determining, because $\gamma>\tfrac13$. Since $\tfrac{1+\gamma}{2}<\beta+\beta'$ does occur inside
Assumption~\ref{ass:Erough}, at e.g.~$\gamma=\beta=\beta'=0.45$, we lower the pair to
$(\gamma,\beta_{\star})$, where we denote
\[
\beta_{\star}\ :=\ \epsilon_{1}-2\gamma\ =\ \min\Big\{\frac{1-\gamma}{2},\ \beta+\beta'-\gamma\Big\}
\ =\ \min\Big\{\frac{1-\gamma}{2},\ \bar\beta\Big\}\ \in\ (0,\bar\beta] ,
\]
with $\epsilon_{1}$ as in \eqref{eq:eps1value} and $\bar\beta$ as in \eqref{eq:Jbarbeta}. The second
equality holds because $\tfrac{1-\gamma}{2}\le\gamma$ at $\gamma\ge\tfrac13$, which is also why
$\beta_{\star}\le\bar\beta$, and $\beta_{\star}>0$ because $\bar\beta>0$. Here $\gamma+\beta_{\star}=\min\{\tfrac{1+\gamma}{2},\beta+\beta'\}$ is the
order just named, and $\beta_{\star}\le\bar\beta\le\beta'$ is the order of the fourth quantity. The
parameter conditions of \cite[Thm.~3.4]{FHL}, on which \cite[Lem.~4.20]{FHL} rests, at the
driver's index $\alpha=\gamma$
\[
\gamma+\gamma\ =\ 2\gamma\ >\ \tfrac12,
\qquad
\gamma+(\gamma\wedge\gamma)+\beta_{\star}\ =\ 2\gamma+\beta_{\star}\ =\ \epsilon_{1}\ >\ 1 ,
\]
the first by $\gamma>\tfrac13$ and the second by \eqref{eq:eps1value}. The driver's index is available, since \cite[Thm.~3.4]{FHL}
constrains only its own triple $(\alpha,\beta,\beta')$, read here at $(\gamma,\gamma,\beta_{\star})$,
and does not require $\mathscr D^{2\alpha}_{\RZ}$ membership.

\begin{lemma}[Stability of the rough integral, after {\cite[Lem.~4.20]{FHL}}]\label{lem:roughstab}
Let $m\ge2$, let $\mfn \in[m,\infty]$, and let $(\gamma,\beta_{\star})$ meet the parameter conditions of
\cite[Thm.~3.4]{FHL}. Let $(\Omega,\mathcal A,\Prob)$ carry, for each $j\in\N\cup\{\infty\}$, a
filtration $(\mathcal G^{j}_{t})$ and a pair $(Y^{j},Y^{j\prime})$ progressively measurable for it.
Assume
\[
\sup_{j}\ \Gamma^{\gamma,\beta_{\star};m,\mfn }\big(Y^{j},Y^{j\prime};[0,T]\big)\ <\ \infty
\]
and $(Y^{j}_{s},Y^{j\prime}_{s})\to(Y^{\infty}_{s},Y^{\infty\prime}_{s})$ in $L_{m}$ at every
$s\in[0,T]$. Then
\[
\sup_{t\le T}\Big|\int_{0}^{t}(Y^{j},Y^{j\prime})\d\RZ
-\int_{0}^{t}(Y^{\infty},Y^{\infty\prime})\d\RZ\Big|\ \longrightarrow\ 0
\qquad\text{in }L_{m} .
\]
\end{lemma}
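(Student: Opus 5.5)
The plan is to run the stochastic sewing lemma (Lemma~\ref{lem:SSL}) on the \emph{difference} of germs, and to combine it with a uniform bound on the sewing remainders. For each $j\in\N\cup\{\infty\}$ put $A^{j}_{s,t}:=Y^{j}_{s}\delta Z_{s,t}+Y^{j\prime}_{s}\ZZ_{s,t}$. Its sewing $\mathcal I^{j}=\int(Y^{j},Y^{j\prime})\d\RZ$ exists by \cite[Thm.~3.4]{FHL}, and it is adapted to $(\mathcal G^{j}_{t})$.

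The defect is computed by Chen's relation:
\[
\delta A^{j}_{s,u,t}=-R^{Y^{j}}_{s,u}\delta Z_{u,t}-\delta Y^{j\prime}_{s,u}\ZZ_{u,t}.
\]
Bounding its $L_{m}$ norm needs the first two quantities of $\Gamma$, and bounding $\E_{s}$ of it needs the last two. This gives rates $\epsilon_{1}=2\gamma+\beta_{\star}>1$ and $\epsilon_{2}=2\gamma>\tfrac12$, with constants controlled by $\Gamma^{\gamma,\beta_{\star};m,\mfn}(Y^{j},Y^{j\prime})$ uniformly in $j$.

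Since the filtrations differ with $j$, I would not sew $A^{j}-A^{\infty}$ directly on one filtration. Instead I would use the Riemann-sum clause of Lemma~\ref{lem:SSL} with the quantitative rate. For a partition $\pi$ of mesh $|\pi|$, each $j$ gives
\[
\big\|\mathcal I^{j}_{t}-A^{j,\pi}_{t}\big\|_{L_{m}}\ \le\ C\,\sup_{j}\Gamma\,|\pi|^{\eta},
\qquad \eta:=(\epsilon_{1}-1)\wedge(\epsilon_{2}-\tfrac12)>0 .
\]
The key point is that this constant depends on $j$ only through $\Gamma$. Adding the third defect bound \eqref{eq:SSLthird} upgrades the convergence to be uniform in $t$, since $\sup_{r}|\delta A_{s,(s+t)/2,r}|$ is controlled by the same quantities together with a maximal inequality. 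Alternatively, I would first prove pointwise convergence in $t$ and then get uniformity from a tightness/Kolmogorov bound on $\mathcal I^{j}-\mathcal I^{\infty}$. Its increments are bounded in $L_{m}$ at order $\gamma$ uniformly in $j$, because $\mathcal I^{j}_{t}-\mathcal I^{j}_{s}=A^{j}_{s,t}+O(|t-s|^{2\gamma})$. When $\gamma m\le1$ this bound is too weak, and I would interpolate the pointwise convergence against the Hölder bound on a grid.

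For the fixed partition, the Riemann sums satisfy
\[
\big\|A^{j,\pi}_{t}-A^{\infty,\pi}_{t}\big\|_{L_{m}}\ \le\ \sum_{[u,v]\in\pi}\Big(\|Y^{j}_{u}-Y^{\infty}_{u}\|_{L_{m}}|\delta Z_{u,v}|+\|Y^{j\prime}_{u}-Y^{\infty\prime}_{u}\|_{L_{m}}|\ZZ_{u,v}|\Big),
\]
and this tends to $0$ as $j\to\infty$ by the assumed pointwise $L_{m}$ convergence at the finitely many partition points. The proof is then a standard $\varepsilon/3$ argument: first choose $|\pi|$ small using the uniform sewing bound, then choose $j$ large.

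I expect the main obstacle to be checking that the sewing constant $C$ in Lemma~\ref{lem:SSL} depends only on $(\epsilon_{1},\epsilon_{2},m)$ and on the two $\Gamma$-constants, and not on the filtration or on any $\mfn=\infty$ norm. This is what the mixed-index choice $\mfn\in[m,\infty]$ and \cite[Prop.~2.3]{FHL} must deliver. A second point needs care: bounding $\|\E_{s}\delta A_{s,u,t}\|$ requires moving the $\mathcal G_{u}$-measurable parts through $\E_{s}$ via the tower property. I would carry this out exactly as in \cite[Lem.~4.20]{FHL}, whose proof only reads these four quantities.
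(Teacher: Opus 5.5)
Your proposal is correct and is essentially the paper's argument. The paper cites \cite[Cor.~2.10]{FHL}, whose hypotheses are:
\begin{itemize}
\item the $j$-uniform defect bounds $\Gamma_{1},\Gamma_{2}$, read off the four entries of $\Gamma^{\gamma,\beta_{\star};m,\mfn}$;
\item the third defect bound, together with a.s.\ continuity of $t\mapsto A^{j}_{s,t}$;
\item $\|\sup_{t}|A^{j}_{s,t}-A^{\infty}_{s,t}|\|_{L_{m}}\to0$, which follows from the pointwise $L_{m}$ convergence.
\end{itemize}
Your $\varepsilon/3$ argument with a $j$-uniform Riemann-sum rate is that corollary unpacked.

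One simplification: you need neither a maximal inequality nor the tower property, because $\delta Z$ and $\ZZ$ are deterministic. Hence $\sup_{r}|\delta A^{j}_{s,u,r}|\le|R^{Y^{j}}_{s,u}|\sup_{r}|\delta Z_{u,r}|+|\delta Y^{j\prime}_{s,u}|\sup_{r}|\ZZ_{u,r}|$, which gives $\Gamma_{3}\lesssim\Gamma_{2}$ at $\varepsilon_{3}=2\gamma-\tfrac1m>0$ for every $m\ge2$. This is why the third-defect route also covers $m=2$, where your Kolmogorov alternative fails.
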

This is \cite[Lem.~4.20]{FHL}, and its two-line proof, with one hypothesis
relaxed: the controlled class is dropped. The proof only needs the requirements of \cite[Thm.~3.4]{FHL},
progressive measurability and the finiteness of that theorem's $\Gamma_{1}\vee\Gamma_{2}$, which the
statement above dominates: under it the germ
$A^{j}_{s,t}=Y^{j}_{s}\delta Z_{s,t}+Y^{j\prime}_{s}\ZZ_{s,t}$ satisfies Lemma~\ref{lem:SSL} and the
integral is its sewing. Of condition~(c) of \cite[Def.~3.1]{FHL} only the seminorm bound is
used. The $C_{2}L$-continuity it also carries, and with it the clause of
\cite[Def.~2.5]{FHL}, is not. One filtration per index is allowed in
\cite[Lem.~4.20]{FHL}. The measure space here is common to all of them.

That proof runs through \cite[Cor.~2.10]{FHL}, which requires the hypotheses of
\cite[Thm.~2.9(ii)]{FHL} uniformly in $j$ and $\lim_{j}\big\|\sup_{t\in[s,T]}|A^{j}_{s,t}-A^{\infty}_{s,t}|\big\|_{L_{m}}=0$
for each $s$. The
latter is the $L_{m}$-convergence hypothesis of the lemma, $A^{j}_{s,t}$ being a linear function of
$(Y^{j}_{s},Y^{j\prime}_{s})$ with deterministic coefficients: the supremum is bounded by
$\|\RZ\|_{\gamma}T^{\gamma}|Y^{j}_{s}-Y^{\infty}_{s}|+\|\RZ\|_{\gamma}T^{2\gamma}|Y^{j\prime}_{s}-Y^{\infty\prime}_{s}|$. Of the former, two items are needed
beyond $\Gamma_{1}\vee\Gamma_{2}$, and the proof of \cite[Thm.~3.4]{FHL} provides both: the third defect bound
with $\Gamma_{3}\lesssim\Gamma_{2}$ at the rate
$\varepsilon_{3}=\gamma+(\gamma\wedge\gamma)-\tfrac1m=2\gamma-\tfrac1m$, positive at every
$m\ge2$ since $2\gamma>\tfrac12$, and the almost sure continuity of $t\mapsto A^{j}_{s,t}$, which
holds because $Z$ and $\ZZ$ are continuous.

 At $(\gamma,\beta_{\star};m,m)$ and for
$|t-s|\le h_{\star}$ we obtain the following four bounds,
\[
\begin{aligned}
\big\|\delta Y^{j}_{s,t}\big\|_{L_{m}}&\ \lesssim\ |t-s|^{\gamma},
&\qquad
\sup_{r\le T}\big\|Y^{j\prime}_{r}\big\|_{L_{m}}
&\ \le\ \big(\sup_{R}\|Df_{R}\|_{\infty}+1\big)C_{\mathrm{gr}}
\Big(1+\sup_{r\le T}\big\|\widetilde X^{j}_{r}\big\|_{L_{m}}\Big),\\[2pt]
\big\|\E_{s}R^{Y^{j}}_{s,t}\big\|_{L_{m}}&\ \lesssim\ |t-s|^{\gamma+\beta_{\star}},
&\qquad
\big\|\E_{s}\delta Y^{j\prime}_{s,t}\big\|_{L_{m}}&\ \lesssim\ |t-s|^{\beta_{\star}} .
\end{aligned}
\]
The first bound is \eqref{eq:Eholder}, together with
$\|D^{i}f_{R}\|_{\infty}\le C\sum_{1\le l\le i}\|D^{l}f\|_{\infty}+C$ for $1\le i\le3$ uniformly in $R\ge1$ and the temporal
defect of $(f_{R},f'_{R})$, of order $\gamma\wedge(\beta+\beta')=\gamma$ by \eqref{eq:rhoexp}. The
second is finite by \eqref{eq:Emoment} at $m\le p$, the third is Step~\ref{proof:Eproof:step2:step1}\ref{proof:Eproof:step2:step1:1b}, and the fourth is
\eqref{eq:Gincrement}, of order $\min\{\gamma,\beta'\}\ge\bar\beta\ge\beta_{\star}$, its spatial half
carrying $\|D(Df\,f+f')\|_{\infty}<\infty$ from \eqref{eq:DGGLip} and its temporal half the defects of
Assumption~\ref{ass:Erough}. Every constant is independent of $(k,R)$, $\mathcal H$ and $\mathcal J$ being
bounded by $C_{3}\Lambda_{\RZ}\mathcal N_{s,t}$ and $C_{3}\Lambda^{2}_{\RZ}\mathcal N_{s,t}$ at
\eqref{eq:HJfinal} and $\mathcal N_{s,t}$ by \eqref{eq:Emoment}.

Pairs with $|t-s|>h_{\star}$ are covered in a crude manner. The conditional expectation is a contraction on
$L_{m}$, and both suprema are finite uniformly in $j$ by linear growth and \eqref{eq:Emoment}, so that
\[
\frac{\big\|\delta Y^{j}_{s,t}\big\|_{L_{m}}}{|t-s|^{\gamma}}
\ \le\ \frac{2\sup_{r}\|Y^{j}_{r}\|_{L_{m}}}{h_{\star}^{\gamma}},
\qquad
\frac{\big\|\E_{s}\delta Y^{j\prime}_{s,t}\big\|_{L_{m}}}{|t-s|^{\beta_{\star}}}
\ \le\ \frac{2\sup_{r}\|Y^{j\prime}_{r}\|_{L_{m}}}{h_{\star}^{\beta_{\star}}},
\]
and, $R^{Y}$ being $\delta Y-Y'\delta Z$ and $|\delta Z_{s,t}|\le\|\RZ\|_{\gamma}T^{\gamma}$,
\[
\frac{\big\|\E_{s}R^{Y^{j}}_{s,t}\big\|_{L_{m}}}{|t-s|^{\gamma+\beta_{\star}}}
\ \le\ \frac{2\sup_{r}\|Y^{j}_{r}\|_{L_{m}}
+\|\RZ\|_{\gamma}T^{\gamma}\,2\sup_{r}\|Y^{j\prime}_{r}\|_{L_{m}}}
{h_{\star}^{\gamma+\beta_{\star}}} .
\]
Each denominator is at least $h_{\star}$ raised to the entry's exponent, $\gamma$, $0$,
$\gamma+\beta_{\star}$ and $\beta_{\star}$ respectively, so that what we pick up is the single factor
$h_{\star}^{-(\gamma+\beta_{\star})}$ and no $\vee1$ is needed: $h_{\star}\le c_{0}^{-1/\gamma}\le1$ by
\eqref{eq:Ebookkeep}, since $c_{0}\ge1$ and $\Lambda_{\RZ}\ge1$, so that the four factors are ordered
by their exponents.

\cite[Thm.~3.4]{FHL} produces $\int(Y,Y')\d\RZ$
as the limit in probability, uniform in time, of the Riemann sums
$\sum\big(Y_{u}\delta Z_{u,v}+Y'_{u}\ZZ_{u,v}\big)$. That characterisation makes the integral independent of both the indices and the
filtration at which it was built. Step~\ref{step:Ereg} builds it at $(\gamma,\bar\beta;m,\infty)$ on
the ambient filtration of the $j$-th truncated system, whose germ is subtracted in \eqref{eq:davie},
while the construction here runs at $(\gamma,\beta_{\star};m,m)$ on $\mathcal G^{j}$. The sewing
hypotheses hold in both cases, and the Riemann sums are the same random variables either way, so
the two limits in probability agree almost surely at each $t$ and, both being continuous, up to
indistinguishability.

The first two quantities pass by continuity of the $L_{m}$ norm.
At each fixed $(s,t)$ we have $\delta Y^{j}_{s,t}\to\delta Y_{s,t}$
and $Y^{j\prime}_{s}\to Y'_{s}$ in $L_{m}$, so that
\[
\big\|\delta Y_{s,t}\big\|_{L_{m}}=\lim_{j}\big\|\delta Y^{j}_{s,t}\big\|_{L_{m}},
\qquad\text{hence}\qquad
\big\|\delta Y\big\|_{\gamma;m,m}
=\sup_{s<t}\frac{\|\delta Y_{s,t}\|_{L_{m}}}{|t-s|^{\gamma}}
\ \le\ \limsup_{j}\big\|\delta Y^{j}\big\|_{\gamma;m,m} ,
\]
and similarly
$\sup_{r\le T}\|Y'_{r}\|_{L_{m}}\le\limsup_{j}\sup_{r\le T}\|Y^{j\prime}_{r}\|_{L_{m}}$. The last two
quantities are conditional, and each $j$ has its own conditioning $\sigma$-field:
Step~\ref{proof:Eproof:step2:step1}\ref{proof:Eproof:step2:step1:1b} conditions on $\mathcal G^{j}_{s}$ and the limit entry on $\mathcal G_{s}$, and
$L_{m}$-convergence transfers nothing between them, so that they pass instead by duality.
Let $\Psi_{s}$ denote the family of bounded cylinder functionals,
dense in $L_{m^{*}}(\mathcal G_{s})$ with $\tfrac1m+\tfrac1{m^{*}}=1$, and let $\psi^{j}$ denote the
same functional of $(\widetilde X^{j},\widetilde B^{j})$. Then
\[
\big|\E\big[R^{Y}_{s,t}\psi\big]\big|
=\lim_{j}\big|\E\big[\E\big[R^{Y^{j}}_{s,t}\mid\mathcal G^{j}_{s}\big]\,\psi^{j}\big]\big|
\ \le\ \limsup_{j}\big\|\E\big[R^{Y^{j}}_{s,t}\mid\mathcal G^{j}_{s}\big]\big\|_{L_{m}}\,
\big\|\psi\big\|_{L_{m^{*}}},
\]
Here $R^{Y^{j}}_{s,t}\to R^{Y}_{s,t}$ in $L_{m}$ while $\psi^{j}\to\psi$ boundedly and almost
surely, so that $\|\psi^{j}\|_{L_{m^{*}}}\to\|\psi\|_{L_{m^{*}}}$. Taking the supremum over $\Psi_{s}$
returns
\[
\big\|\E\big[R^{Y}_{s,t}\mid\mathcal G_{s}\big]\big\|_{L_{m}}
\ \le\ \limsup_{j}\big\|\E\big[R^{Y^{j}}_{s,t}\mid\mathcal G^{j}_{s}\big]\big\|_{L_{m}} ,
\]
and the same computation with $\delta Y^{j\prime}_{s,t}$ in place of
$R^{Y^{j}}_{s,t}$ does the fourth. Collecting the four estimates yields the following bound,
\[
\Gamma^{\gamma,\beta_{\star};m,m}\big(Y,Y';[0,T]\big)
\ \le\ \limsup_{j}\Gamma^{\gamma,\beta_{\star};m,m}\big(Y^{j},Y^{j\prime};[0,T]\big)\ <\ \infty ,
\]
and with it, the quantitative half of the membership \cite[Def.~4.12]{FHL}, the third and fourth components
just bounded being exactly the seminorms of that definition's remainder condition,
$(s,t)\mapsto\E_{s}R^{Y}_{s,t}$ in $C^{\gamma+\beta_{\star}}_{2}L_{m}$, and of its condition~(d$'$),
$(s,t)\mapsto\E_{s}\delta Y'_{s,t}$ in $C^{\beta_{\star}}_{2}L_{m}$. Of the remaining conditions, progressive measurability for $(\mathcal G_{t})$ and continuity of $r\mapsto Y_{r}$ into $L_{m}$ are
immediate from step~(b) and \eqref{eq:Emoment}. One condition is not verified here in full: membership in $C_{2}L_{m}$ in the sense of \cite[Def.~2.5]{FHL} also requires $(s,t)\mapsto\E_{s}R^{Y}_{s,t}$ to be
continuous into $L_{m}$. Continuity in $t$ at fixed $s$ follows from the bound on the first
entry and the contraction property of $\E_{s}$.

\emph{From the left it holds.} Let $s_{j}\uparrow s$ and $t_{j}\to t$ with $s_{j}\le t_{j}$. Since
$\E_{s_{j}}$ is an $L_{m}$-contraction,
\[
\big\|\E_{s_{j}}R^{Y}_{s_{j},t_{j}}-\E_{s}R^{Y}_{s,t}\big\|_{L_{m}}
\ \le\ \big\|R^{Y}_{s_{j},t_{j}}-R^{Y}_{s,t}\big\|_{L_{m}}
\ +\ \big\|\E_{s_{j}}R^{Y}_{s,t}-\E_{s}R^{Y}_{s,t}\big\|_{L_{m}} .
\]
For the first summand, $R^{Y}_{u,v}=\delta Y_{u,v}-Y'_{u}\delta Z_{u,v}$ and
\[
\big\|R^{Y}_{s_{j},t_{j}}-R^{Y}_{s,t}\big\|_{L_{m}}
\le\big\|Y_{t_{j}}-Y_{t}\big\|_{L_{m}}+\big\|Y_{s_{j}}-Y_{s}\big\|_{L_{m}}
+\big\|Y'_{s_{j}}-Y'_{s}\big\|_{L_{m}}\big|\delta Z_{s_{j},t_{j}}\big|
+\big\|Y'_{s}\big\|_{L_{m}}\big|\delta Z_{s_{j},t_{j}}-\delta Z_{s,t}\big| ,
\]
and all four vanish: the $L_{m}$-continuity of $r\mapsto Y_{r}$ is the one just recorded, that of
$r\mapsto Y'_{r}$ is the unconditional bound $\|\delta Y'_{u,w}\|_{L_{m}}\lesssim
|w-u|^{\min\{\gamma,\beta'\}}$ established under condition~(d) below, which does not use the present
step, $\sup_{r}\|Y'_{r}\|_{L_{m}}<\infty$ is the second entry of $\Gamma$, and $\delta Z$ is
deterministic and continuous. For the second summand, $R^{Y}_{s,t}\in L_{m}$ by those same two
bounds, and $\mathcal G_{s_{j}}\uparrow\bigvee_{r<s}\mathcal G_{r}=\mathcal G_{s-}=\mathcal G_{s}$,
the last equality because $(\mathcal G_{t})$ is generated by the continuous paths
$(\widetilde X,\widetilde B)$, as in the proof of Lemma~\ref{lem:MPborel}; Doob's martingale convergence theorem in
$L_{m}$, available at $m\ge2>1$, then gives $\E_{s_{j}}R^{Y}_{s,t}\to\E_{s}R^{Y}_{s,t}$ in $L_{m}$.
So $(s,t)\mapsto\E_{s}R^{Y}_{s,t}$ is continuous in $t$ and left-continuous in $s$, into $L_{m}$.

\emph{From the right it is open, but nothing below needs it.} Approaching $s$ from above replaces
$\mathcal G_{s}$ by $\mathcal G_{s+}$, which would need $(\mathcal G_{t})$ right-continuous.
The clause is never used. Step~\ref{step:Etight} applies \cite[Lem.~4.20]{FHL}, proved there from
\cite[Thm.~3.4]{FHL} and \cite[Cor.~2.10]{FHL}, and between them those ask only for progressive
measurability, the finiteness of the four quantities bounded above, which are \cite{FHL}'s
$\Gamma^{\gamma,\beta_{\star};m,m}(Y,Y';[0,T])$, almost sure continuity of $t\mapsto A_{s,t}$ at
fixed $s$, and convergence at each fixed $s$ in $L_{m}$. The continuity of \cite[Def.~2.5]{FHL}
is not among them. The clause enters only through the membership
\cite[Def.~4.2(b)]{FHL} recorded at the end of Step~\ref{step:Etight}, which compares our notion of solution with
\cite{FHL}'s and is used nowhere in this paper.

 At
$(\bar\alpha,\bar\alpha')=(\gamma,\beta_{\star})$ and $\mfn =m$, the conditions (a)--(c) of \cite[Def.~3.1]{FHL} are the progressive
measurability and the first and third entries of $\Gamma$, and only its condition~(d),
$Y'\in C^{\beta_{\star}}L_{m,m}$ with $\sup_{r}\|Y'_{r}\|_{L_{m}}<\infty$, is left. The second half of
(d) is the second entry. The first half is the unconditional half of \eqref{eq:Gincrement}. It
yields the following bound, at order $\min\{\gamma,\beta'\}\ge\beta_{\star}$, with
$\mathsf G_{t}:=Df_{t}f_{t}+f'_{t}$,
\[
\big\|\delta Y'_{u,w}\big\|_{L_{m}}=\big\|\delta[\mathsf G(X)]_{u,w}\big\|_{L_{m}}
\ \le\ \big(C_{\mathrm{lip}}\mathcal H+C\,\Lambda_{\RZ}\,\mathcal N_{s,t}\big)\,|w-u|^{\min\{\gamma,\beta'\}}.
\]
The factor $\Lambda_{\RZ}$ being the one \eqref{eq:Gincrement} attaches to its first temporal piece and
which survives the collapse of the three orders to $\min\{\gamma,\beta'\}$, since $\Lambda_{\RZ}\ge1$
and $|w-u|\le h_{\star}\le1$. The bound is independent of $(k,R)$ by \eqref{eq:HJfinal} and
\eqref{eq:Emoment}, and it holds for the untruncated field as well, its two halves being
\eqref{eq:DGGLip} and \eqref{eq:Gtemporal} at constants Step~\ref{step:Ereg} makes uniform in $R$. It
passes to the limit at each fixed pair by the $L_{m}$-continuity used for the first entry, no
conditioning being involved, and pairs with $|t-s|>h_{\star}$ are covered as before, at the exponent
$\beta_{\star}$. Since $\|\cdot\|_{m,m}=\|\cdot\|_{L_{m}}$ by \cite[Prop.~2.3(i)]{FHL}, this is
$\|\delta Y'\|_{\beta_{\star};m,m}<\infty$, whence also the $L_{m}$-continuity of $r\mapsto Y'_{r}$
that $C^{\beta_{\star}}L_{m,m}$ requires. Condition~(d) holds. With it \cite[Def.~4.2(b)]{FHL} holds at the limit, except for the
right-continuity in $s$ of $(s,t)\mapsto\E_{s}R^{Y}_{s,t}$, which is open, as noted above, and which
nothing in this paper uses.

All the $\widetilde X^{j}$ live on the one Skorokhod space, which is the common
measure space Lemma~\ref{lem:roughstab} needs. Only the filtration depends on
$j$. $\widetilde B^{j}$ is a
Brownian motion for $\mathcal G^{j}$, with no reason to be one for the $\sigma$-field generated by the
whole family, while enlarging the filtration can only enlarge the third quantity, by conditional
Jensen.
The descent of
Step~\ref{proof:Eproof:step2:step1}\ref{proof:Eproof:step2:step1:1b} to the raw filtration is simply
conditional Jensen. This is performed on the original basis of $X^{k_{j},R_{j}}$, where an ambient
filtration is present and $\mathcal G^{j}$ is the filtration the pair generates. On
the Skorokhod space $\mathcal G^{j}$ is itself the ambient one. Lemma~\ref{lem:lawdet} then carries
the descended bound to $\widetilde X^{j}$: the third and fourth quantities are conditional $L_{m}$
norms of continuous functionals of the pair, the first two plain $L_{m}$ norms of
such. Denoting by
$\mathcal F_{s}$ the ambient $\sigma$-field, so that
$\mathcal G^{j}_{s}\subseteq\mathcal F_{s}$, the tower property and the contraction of conditional
expectation on $L_{m}$ yield
\[
\big\|\E\big[\xi\mid\mathcal G^{j}_{s}\big]\big\|_{L_{m}}
\ =\ \big\|\E\big[\E[\xi\mid\mathcal F_{s}]\mid\mathcal G^{j}_{s}\big]\big\|_{L_{m}}
\ \le\ \big\|\E\big[\xi\mid\mathcal F_{s}\big]\big\|_{L_{m}},
\qquad \xi\in L_{m} .
\]
Hence
Lemma~\ref{lem:roughstab} applies and yields
\[
\sup_{t\le T}\Big|\int_{0}^{t}(Y^{j},Y^{j\prime})\d\RZ-\int_{0}^{t}(Y,Y')\d\RZ\Big|
\ \longrightarrow\ 0
\qquad\text{in }L_{m},\ \text{ at every }m\in(1/\gamma,p) .
\]
Passing to the limit in the integral form of the $j$-th equation gives the same form
for $(X,B)$. The endpoint $m=p$ is not needed. What remains of the solution property is the two
moduli, we establish next.

The convergences established yield $J^{j}_{s,t}\to J_{s,t}$ in $L_{m}$ at each fixed pair $s\le t$,
$J$ being the same Borel functional \eqref{eq:davie} of $(X,B)$ on either space. That
is at every $m<p$. At $m=p$ we have convergence in probability together with
$\sup_{j}\|J^{j}_{s,t}\|_{L_{p}}<\infty$, which \eqref{eq:davie} gives at every pair, uniformly in
$j$, from \eqref{eq:Egrowth} and \eqref{eq:MProughgrowth} at constants independent of $(k,R)$,
Burkholder--Davis--Gundy and \eqref{eq:Emoment} at exponent $p$.  Fatou along an almost surely convergent subsequence, together with that bound, gives
$J_{s,t}\in L_{m}(\Prob)$ at every pair, the last condition of
Definition~\ref{def:davie}. The
second modulus then passes the same way from $\mathcal J\le C_{3}\Lambda^{2}\mathcal N$ at \eqref{eq:HJfinal}. The first does not follow from it. Indeed, an unconditional $L_{m}$ bound yields
only
\[
\big\|\E_{s}J_{s,t}\big\|_{L_{m}}\ \le\ \big\|J_{s,t}\big\|_{L_{m}}\ =\ o\big(|t-s|^{1/2}\big).
\]
It comes instead from \eqref{eq:EscaleCond}, and it passes to the limit by duality.
We fix $s\le t$ and let $\Psi_{s}$ denote the test family of bounded cylinder
functionals $\psi=g\big(\varpi_{r_{1}},\dots,\varpi_{r_{l}}\big)$ with $r_{i}\le s$ and $g\in C_{b}$,
evaluated at the canonical path of the pair, dense in $L_{m^{*}}(\mathcal G_{s})$ for every measure on
$\mathcal C$. Again $\psi^{j}$
denotes the same functional evaluated at $(\widetilde X^{j},\widetilde B^{j})$, which
\eqref{eq:EscaleCond} for the $j$-th approximant may be tested against, that estimate conditioning on
the $j$-th generated filtration. It is $\mathcal G^{j}_{s}$-measurable and bounded, and
$\psi^{j}\to\psi$ almost surely and boundedly, so that
$\E[J^{j}_{s,t}\psi^{j}]\to\E[J_{s,t}\psi]$ and
$\|\psi^{j}\|_{L_{m^{*}}}\to\|\psi\|_{L_{m^{*}}}$. Applying the same duality to \eqref{eq:EscaleCond}, we obtain the following bound,
\[
\big|\E\big[J_{s,t}\psi\big]\big|
\ =\ \lim_{j}\big|\E\big[J^{j}_{s,t}\psi^{j}\big]\big|
\ =\ \lim_{j}\big|\E\big[\E\big[J^{j}_{s,t}\mid\mathcal G^{j}_{s}\big]\,\psi^{j}\big]\big|
\ \le\ C_{3}\Lambda^{3}\mathcal N\,|t-s|^{\epsilon_{1}}\,\big\|\psi\big\|_{L_{m^{*}}} ,
\]
with $\mathcal N$ bounded uniformly in $j$ by \eqref{eq:Emoment}. Taking the
supremum over $\Psi_{s}$ yields
\[
\big\|\E\big[J_{s,t}\mid\mathcal G_{s}\big]\big\|_{L_{m}}
\ \le\ C\,|t-s|^{\epsilon_{1}}\ =\ o\big(|t-s|\big) ,
\]
the first modulus on the filtration $(\mathcal G_{s})$, generated by the pair
$(\widetilde X,\widetilde B)$. Every ingredient is $\mathcal G_{s}$-measurable, $J$ being a Borel
functional of the pair by step~\ref{stp:Et-b}, and $\Psi_{s}$ is dense in $L_{m^{*}}(\mathcal G_{s})$, so that taking the supremum over it returns nothing about a larger $\sigma$-field.

Finally, \eqref{eq:Emoment} survives by Fatou, and Proposition~\ref{prop:MPequiv}\ref{mpe1}, at the
exponent shift it carries, turns its law into a solution of the rough martingale problem.
\hfill$\square$
\subsection{An abstract Lyapunov form}

\begin{remark}\label{rem:Elyapunov}
\eqref{eq:Vlyap}--\eqref{eq:Vbracket} say that $V$ is a Lyapunov function for the whole rough
generator and not only for the It\^o part, and that is what the rough Gr\"onwall of (a$_{2}$) uses: given \eqref{eq:EscaleNat}, the passage from \eqref{eq:Egronwallineq} to \eqref{eq:Epathwise} uses
nothing about $b,\sigma,f$ beyond
\[
|\mathcal L_{t}V|+|\mathcal GV|+|\mathcal G^{(2)}V|\ \le\ C_{1}V,
\qquad
|DV\cdot\sigma|^{2}\ \le\ C_{1}V^{2} .
\]
It does not follow that Theorem~\ref{thm:weakex} holds with \eqref{eq:Egrowth} replaced by an abstract
Lyapunov condition.

\emph{(i) }The drift is used directly, in $L_{m}$ along the path. Piece $(\beta)$
of the $\epsilon_{1}$-estimate produces
$\tau^{1/2}\big(\int_{u}^{w}\|b(r,X_{r})\|^{2}_{L_{m}}\dr\big)^{1/2}$, the same quantity reappearing in
the Davie expansion that closes $\mathcal H$, and \eqref{eq:Egrowth}, and nothing else, bounds it by
$\sqrt2C_{\mathrm{gr}}\mathcal N_{s,t}\tau$. A Lyapunov bound $|\mathcal L_{t}V|\le C_{V}V$ controls
$\sup_{t}V(X_{t})$ and says nothing about $\|b(r,X_{r})\|_{L_{m}}$, the quantity an increment
estimate needs. For the dissipative $b(x)=-x|x|^{2}$ we obtain
$\|b(r,X_{r})\|_{L_{m}}=\|X_{r}\|^{3}_{L_{3m}}$, neither linear in $\mathcal N_{s,t}$ nor at exponent
$m$, so that the inequality for $\mathcal H$ is cubic and at three
times the exponent and the absorption of \ref{proof:Eproof:step2:step1:1c} does not close. 

\emph{(ii) }Under branch \ref{Ecurvdiag}
there is nothing left for a Lyapunov condition to add on $\sigma$ either, \eqref{eq:Eacoord} already
implying the $\sigma$-half of \eqref{eq:Egrowth}: summing it over $i$ and using
$\sum_{i}(1+|x_{i}|)\le n(1+|x|)$ yields $\operatorname{tr}a\le C_{\mathrm{gr}}n(1+|x|)^{2}$, that is,
$\|\sigma\|\le\sqrt{C_{\mathrm{gr}}n}\,(1+|x|)$, at a factor $\sqrt n$ this section's constants carry
anyway. Under \ref{Ecurvaffine} the $\sigma$-half is used directly and \eqref{eq:Eacoord} is not required
at all. What a genuine generalization would have to replace is therefore not the growth conditions but
the bounds \eqref{eq:Vjet}, special to $V=(1+|x|^{2})^{1/2}$ and false for
e.g.~$V=(1+|x|^{2})^{3/2}$. That is Open Problem~\ref{op:lyapunov}.
\end{remark}

\begin{openproblem}[An abstract Lyapunov form of Theorem~\ref{thm:weakex}]\label{op:lyapunov}
Replace $1+|x|$ by a coercive $V$ in the growth bounds,
\[
|b|+\Frob{\sigma}\ \lesssim\ V,
\qquad
a_{ii}\ \lesssim\ \big(1+|x_{i}|\big)V,
\qquad
|f|+|f'|\ \lesssim\ V ,
\]
and find what replaces \eqref{eq:Vjet}. The difficulty is entirely in \eqref{eq:Vjet}:
$\|DV\|_{\infty}<\infty$ alone forces $V$ to grow at most linearly, so no Lyapunov
function of superlinear growth satisfies \eqref{eq:Vjet}. For instance $V(x)=(1+|x|^{2})^{3/2}$ with $b(x)=-x|x|^{2}$ satisfies
$\mathcal L_{t}V\le C_{V}V$ and $|b|\le V$, and Theorem~\ref{thm:weakex} does not apply.
\end{openproblem}

\section{From pathwise uniqueness to uniqueness in law}\label{sec:YWtransfer}

The Yamada--Watanabe implication asserts that weak existence together with pathwise uniqueness yields
a strong solution and uniqueness in law. For rough stochastic differential equations we are not aware
of it having been established. We establish it in this section, and we begin by naming the obstacle.

For a classical It\^o equation the phrase ``$X$ solves the equation'' denotes a pathwise identity
between $X$, the driving Brownian motion $B$ and a stochastic integral. Let $B$ remain a Brownian
motion for a larger filtration, and let the integrand be adapted to the smaller one. Then the identity
persists, i.e.~the notion is insensitive to the filtration. The Yamada--Watanabe construction
glues two solutions over a common Brownian motion and applies pathwise uniqueness on the glued space,
using exactly that insensitivity, in the direction of enlargement.

The notion of Definition~\ref{def:weaksol} is not a pathwise identity insensitive to
enlargement of the filtration: it is a pair of bounds on conditional moments of the Davie
remainder,
\begin{equation*}
\big\|\E[J_{s,t}\mid\mathcal F_{s}]\big\|_{L_{m}(\Prob)}=o(|t-s|),
\qquad
\big\|\E\big[|J_{s,t}|^{m}\mid\mathcal F_{s}\big]^{1/m}\big\|_{L_{m}(\Prob)}=o(|t-s|^{1/2}),
\end{equation*}
that is, the two moduli of \eqref{eq:daviemoduli}. The rough-specific ingredients are
Lemmas~\ref{lem:filtdescend}, \ref{lem:lawdet} and \ref{lem:immersion}. The argument is otherwise set in the framework of Kurtz's abstract Yamada--Watanabe--Engelbert theorem \cite{Kurtz14}.

\subsection{Immersion}\label{subsec:immersion}

Filtrations are $\Prob$-complete, except where a filtration is said to be raw, as at
Definition~\ref{def:roughMP}, on which $\Gamma_{\nu}$ below is posed and which
Remark~\ref{rem:MPraw} requires to stay raw, and at
Definition~\ref{def:compatible}. Right-continuity is not assumed.

\begin{definition}[Immersion; hypothesis (H)]\label{def:immersion}
Let $(\mathcal F_{t})_{t\in[0,T]}\subseteq(\mathcal G_{t})_{t\in[0,T]}$ denote filtrations. We say
$(\mathcal F_{t})$ is immersed in $(\mathcal G_{t})$, and write
$(\mathcal F_{t})\hookrightarrow(\mathcal G_{t})$, if every $(\mathcal F_{t})$-martingale is a
$(\mathcal G_{t})$-martingale.
\end{definition}

\begin{lemma}[Equivalent formulations]\label{lem:immersionchar}
For $(\mathcal F_{t})\subseteq(\mathcal G_{t})$ the following are equivalent.
\begin{enumerate}[label=\normalfont(\roman*),leftmargin=2.2em]
\item\label{imm1} $(\mathcal F_{t})\hookrightarrow(\mathcal G_{t})$.
\item\label{imm2} $\E[\xi\mid\mathcal G_{t}]=\E[\xi\mid\mathcal F_{t}]$ a.s., for every $t$ and every
$\xi\in L^{1}(\mathcal F_{T})$.
\item\label{imm3} $\mathcal F_{T}$ and $\mathcal G_{t}$ are conditionally independent given
$\mathcal F_{t}$, for every $t$.
\end{enumerate}
\end{lemma}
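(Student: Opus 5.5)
I will prove the cycle \ref{imm1}$\Rightarrow$\ref{imm2}$\Rightarrow$\ref{imm3}$\Rightarrow$\ref{imm1}, working throughout with a fixed $t$ and using only the tower property and the standard characterisation of conditional independence. No right-continuity is needed anywhere, since every statement is made at a fixed time.

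\emph{\ref{imm1}$\Rightarrow$\ref{imm2}.} Take $\xi\in L^{1}(\mathcal F_{T})$ and consider the closed martingale $N_{u}:=\E[\xi\mid\mathcal F_{u}]$, $u\le T$. This is an $(\mathcal F_{t})$-martingale with $N_{T}=\xi$, so by \ref{imm1} it is also a $(\mathcal G_{t})$-martingale. Hence
\[
\E[\xi\mid\mathcal G_{t}]=\E[N_{T}\mid\mathcal G_{t}]=N_{t}=\E[\xi\mid\mathcal F_{t}] .
\]

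\emph{\ref{imm2}$\Rightarrow$\ref{imm3}.} Let $A\in\mathcal F_{T}$ and $G\in\mathcal G_{t}$. Since $\mathcal F_{t}\subseteq\mathcal G_{t}$, the tower property followed by \ref{imm2} at $\xi=\one_{A}$ gives
\[
\E[\one_{A}\one_{G}\mid\mathcal F_{t}]=\E\big[\one_{G}\,\E[\one_{A}\mid\mathcal G_{t}]\mid\mathcal F_{t}\big]
=\E\big[\one_{G}\,\E[\one_{A}\mid\mathcal F_{t}]\mid\mathcal F_{t}\big]
=\E[\one_{A}\mid\mathcal F_{t}]\,\E[\one_{G}\mid\mathcal F_{t}] ,
\]
which is conditional independence given $\mathcal F_{t}$.

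\emph{\ref{imm3}$\Rightarrow$\ref{imm1}.} I will use the standard fact that $\mathcal F_{T}\perp\!\!\!\perp\mathcal G_{t}\mid\mathcal F_{t}$ together with $\mathcal F_{t}\subseteq\mathcal G_{t}$ is equivalent to $\E[\xi\mid\mathcal G_{t}]=\E[\xi\mid\mathcal F_{t}]$ for bounded $\mathcal F_{T}$-measurable $\xi$. To check it, test against $\one_{G}$ with $G\in\mathcal G_{t}$:
\[
\E[\xi\one_{G}]=\E\big[\E[\xi\mid\mathcal F_{t}]\,\E[\one_{G}\mid\mathcal F_{t}]\big]=\E\big[\E[\xi\mid\mathcal F_{t}]\one_{G}\big] ,
\]
where the first equality is the conditional independence, extended from indicators to bounded $\xi$ by linearity and monotone class, and the second uses that $\E[\xi\mid\mathcal F_{t}]$ is $\mathcal F_{t}$-measurable. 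The candidate $\E[\xi\mid\mathcal F_{t}]$ is $\mathcal G_{t}$-measurable, so this identifies $\E[\xi\mid\mathcal G_{t}]$. The identity then extends to $\xi\in L^{1}(\mathcal F_{T})$ by truncation and $L^{1}$-continuity of conditional expectation.

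Now let $N$ be an $(\mathcal F_{t})$-martingale. It is adapted to $(\mathcal G_{t})$ because $\mathcal F_{t}\subseteq\mathcal G_{t}$, and it is integrable. For $s\le t$, apply the identity at time $s$ to $\xi=N_{t}\in L^{1}(\mathcal F_{T})$:
\[
\E[N_{t}\mid\mathcal G_{s}]=\E[N_{t}\mid\mathcal F_{s}]=N_{s} .
\]
So $N$ is a $(\mathcal G_{t})$-martingale.

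The only nonroutine point is the monotone-class extension in \ref{imm3}$\Rightarrow$\ref{imm1}, together with recording that completeness plays no role: all identities hold almost surely at fixed times. I expect no real obstacle.
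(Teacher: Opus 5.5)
Your proof is correct and follows the paper's route. You use the closed martingale $\E[\xi\mid\mathcal F_{u}]$ for (i)$\Rightarrow$(ii), the one-line martingale check to return to (i) (which the paper calls immediate), and the standard characterisation of conditional independence for (ii)$\Leftrightarrow$(iii), which you write out where the paper only cites it; arranging this as a cycle is cosmetic, since your (iii)$\Rightarrow$(i) passes through (ii) anyway.
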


\begin{proof}
\ref{imm1}$\Rightarrow$\ref{imm2}: let $\xi\in L^{1}(\mathcal F_{T})$ and denote by $M$ the martingale
$M_{t}=\E[\xi\mid\mathcal F_{t}]$. It is uniformly integrable with $M_{T}=\xi$, and by \ref{imm1} it is
a $(\mathcal G_{t})$-martingale. We thus obtain the following identity,
\begin{equation*}
\E\big[\xi\mid\mathcal F_{t}\big]\ =\ M_{t}\ =\ \E\big[M_{T}\mid\mathcal G_{t}\big]
\ =\ \E\big[\xi\mid\mathcal G_{t}\big] .
\end{equation*}
\ref{imm2}$\Rightarrow$\ref{imm1} is immediate, and
\ref{imm2}$\Leftrightarrow$\ref{imm3} is the standard characterisation of conditional independence.
Indeed, letting $\xi$ range over bounded $\mathcal F_{T}$-measurable variables and testing against
bounded $\mathcal G_{t}$-measurable ones, \ref{imm2} reads
\begin{equation*}
\E\big[\xi\,\eta\big]\ =\ \E\big[\E[\xi\mid\mathcal F_{t}]\,\eta\big],
\qquad \xi\in L^{\infty}(\mathcal F_{T}),\quad \eta\in L^{\infty}(\mathcal G_{t}) .
\end{equation*}
See e.g.~\cite[Ch.~VI]{Protter} or \cite{BremaudYor}.
\end{proof}

\begin{lemma}[The solution notion is immersion-invariant]\label{lem:immersion}
Let $X$ denote an $L_{m}$-conditional solution of \eqref{eq:rsde} on the stochastic basis
$(\Omega,\mathcal A,(\mathcal F_{t}),\Prob)$ with Brownian motion $B$, and let
$(\mathcal G_{t})\supseteq(\mathcal F_{t})$ denote a filtration such that
$(\mathcal F_{t})\hookrightarrow(\mathcal G_{t})$. Then $B$ is a $(\mathcal G_{t})$-Brownian motion,
$X$ is an $L_{m}$-conditional solution of \eqref{eq:rsde} on
$(\Omega,\mathcal A,(\mathcal G_{t}),\Prob)$
with the same Brownian motion and the same rough path, and the two moduli in \eqref{eq:daviemoduli}
may be taken to be \emph{the same functions}. The same holds for the $L_{m,\infty}$ form
\eqref{eq:daviemoduliinfty}.
\end{lemma}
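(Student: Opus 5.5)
The proof has three parts, in this order. First we show that $B$ stays Brownian for $(\mathcal G_{t})$. Next we check that the Davie remainder $J$ is the same random variable on both bases. Finally we show that the two conditional quantities of \eqref{eq:daviemoduli} coincide, which gives the same moduli.

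\emph{Step (a): $B$ is a $(\mathcal G_{t})$-Brownian motion.} Apply the immersion to the $(\mathcal F_{t})$-martingales $B^{i}_{t}$ and $B^{i}_{t}B^{j}_{t}-\delta_{ij}t$. Both are continuous $(\mathcal G_{t})$-martingales, and $B$ is $(\mathcal G_{t})$-adapted because $\mathcal F_{t}\subseteq\mathcal G_{t}$. L\'evy's characterisation then gives the claim. Alternatively, Lemma~\ref{lem:immersionchar}\ref{imm2} applied to $\xi=g(\delta B_{s,t})$ gives $\E[g(\delta B_{s,t})\mid\mathcal G_{s}]=\E[g(\delta B_{s,t})\mid\mathcal F_{s}]=\E[g(\delta B_{s,t})]$, which is exactly the required independence.

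\emph{Step (b): the remainder does not change.} \stepnum{(b)}{stp:imm-b} The integrand $r\mapsto\sigma_{r}(X_{r})$ is continuous by \eqref{eq:standingcts} and adapted to $(\mathcal F_{t})$, hence also to $(\mathcal G_{t})$. So on either basis the It\^o integral is the u.c.p.\ limit of the same left-point Riemann sums. The two integrals are therefore indistinguishable, as in the proof of Lemma~\ref{lem:filtdescend}. All other terms of \eqref{eq:davie} are pathwise. Hence $J_{s,t}$ is a.s.\ the same random variable on both bases, and it is $\mathcal F_{t}$-measurable, in particular $\mathcal F_{T}$-measurable. The integrability clause of \eqref{eq:daviemoduli} is unchanged.

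\emph{Step (c): the conditional quantities coincide.} This is the crux. Since $J_{s,t}\in L_{m}(\mathcal F_{T})\subseteq L^{1}(\mathcal F_{T})$, Lemma~\ref{lem:immersionchar}\ref{imm2} gives $\E[J_{s,t}\mid\mathcal G_{s}]=\E[J_{s,t}\mid\mathcal F_{s}]$ a.s. The first modulus is therefore literally the same function. Likewise $|J_{s,t}|^{m}\in L^{1}(\mathcal F_{T})$, so $\E[|J_{s,t}|^{m}\mid\mathcal G_{s}]=\E[|J_{s,t}|^{m}\mid\mathcal F_{s}]$ a.s. This gives equality of the second quantity, both in the $L_{m}$ form, where the tower-property identity of Lemma~\ref{lem:filtdescend} already suffices, and in the $L_{\infty}$ form \eqref{eq:daviemoduliinfty}, where a genuine equality of random variables is needed. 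Consequently the moduli on $(\mathcal G_{t})$ may be taken equal to those on $(\mathcal F_{t})$.

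The only delicate point is that \ref{imm2} is stated for $\xi\in L^{1}(\mathcal F_{T})$, so $\mathcal F_{T}$-measurability of $J_{s,t}$ up to null sets is essential. This is supplied by Step~\ref{stp:imm-b}. Completeness of $(\mathcal F_{t})$ disposes of the null sets. Right-continuity is used nowhere.
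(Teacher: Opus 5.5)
Your proposal is correct and follows the paper's proof essentially step for step. You show the Brownian property on $(\mathcal G_{t})$ via Lemma~\ref{lem:immersionchar}\ref{imm2} applied to functions of $\delta B_{s,t}$; the paper takes this second route and explicitly avoids L\'evy's characterisation. You identify the It\^o integrals through the common left-point Riemann sums, and then apply \ref{imm2} at time $s$ to $J_{s,t}$ and to $|J_{s,t}|^{m}$. Both conditional quantities therefore agree as random variables, so the moduli coincide in both the $L_{m}$ form and the $L_{m,\infty}$ form.
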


\begin{proof}
\emph{(a) $B$ is a $(\mathcal G_{t})$-Brownian motion.}\stepnum{(a)}{stp:imm-a} No appeal to
L\'evy's characterisation is needed. Let $s\le t$ and
let $h$ denote a bounded measurable function. Then $h(\delta B_{s,t})\in L^{1}(\mathcal F_{T})$, so
Lemma~\ref{lem:immersionchar}\ref{imm2} yields the following identity,
\begin{equation}\label{eq:immersionBM}
\E\big[h(\delta B_{s,t})\mid\mathcal G_{s}\big]\ =\ \E\big[h(\delta B_{s,t})\mid\mathcal F_{s}\big]
\ =\ \E\big[h(\delta B_{s,t})\big],
\end{equation}
the second equality because $B$ is an $(\mathcal F_{t})$-Brownian motion. Thus $\delta B_{s,t}$ is
independent of $\mathcal G_{s}$ with the right law. Here $B$ is continuous with $B_{0}=0$, so
\eqref{eq:immersionBM} is the definition.

\emph{(b) The It\^o integrals are unchanged.}\stepnum{(b)}{stp:imm-b} The integrand $r\mapsto\sigma_{r}(X_{r})$ is continuous
and $(\mathcal F_{r})$-adapted, hence also $(\mathcal G_{r})$-adapted. Let an integrand be
adapted to both filtrations and let the integrator be a continuous local martingale for both. Then the
two stochastic integrals coincide. Indeed, both are the u.c.p.\ limits, along any sequence of
partitions $\pi$ of vanishing mesh, of the same left-point Riemann sums
\begin{equation}\label{eq:immersionriemann}
\sum_{[u,v]\in\pi}\sigma_{u}(X_{u})\,\delta B_{u,v},
\end{equation}
and limits in probability are a.s.\ unique. Therefore the Davie remainder
\[
J_{s,t}=\delta X_{s,t}-\int_{s}^{t}\!b_{r}(X_{r})\dr-\int_{s}^{t}\!\sigma_{r}(X_{r})\,\mathrm dB_{r}
-f_{s}(X_{s})\,\delta Z_{s,t}-\big(Df_{s}f_{s}+f'_{s}\big)(X_{s})\,\ZZ_{s,t}
\]
is the same random field whether computed on $(\mathcal F_{t})$ or on $(\mathcal G_{t})$.
No property of the rough path is used here: $\delta Z$ and $\ZZ$ are deterministic.

\emph{(c) The conditional bounds transfer with equality.}\stepnum{(c)}{stp:imm-c} The remainder $J_{s,t}$ is
$\mathcal F_{t}$-measurable, and so is $|J_{s,t}|^{m}$. Both are integrable, the first by the
$L_{m}$-bound of the solution notion and the second by definition. Applying
Lemma~\ref{lem:immersionchar}\ref{imm2} at time $s$ with
$\xi=J_{s,t}$ and with $\xi=|J_{s,t}|^{m}$, we obtain the following identities,
\begin{equation}\label{eq:immersionidentity}
\E\big[J_{s,t}\mid\mathcal G_{s}\big]=\E\big[J_{s,t}\mid\mathcal F_{s}\big],
\qquad
\E\big[|J_{s,t}|^{m}\mid\mathcal G_{s}\big]=\E\big[|J_{s,t}|^{m}\mid\mathcal F_{s}\big]
\qquad\text{a.s.}
\end{equation}
Taking $L_{m}(\Prob)$-norms in \eqref{eq:immersionidentity} yields
\begin{align}
\big\|\E[J_{s,t}\mid\mathcal G_{s}]\big\|_{L_{m}(\Prob)}
&=\big\|\E[J_{s,t}\mid\mathcal F_{s}]\big\|_{L_{m}(\Prob)}=o(|t-s|),\nonumber\\
\big\|\E\big[|J_{s,t}|^{m}\mid\mathcal G_{s}\big]^{1/m}\big\|_{L_{m}(\Prob)}
&=\big\|\E\big[|J_{s,t}|^{m}\mid\mathcal F_{s}\big]^{1/m}\big\|_{L_{m}(\Prob)}
=o(|t-s|^{1/2}),\label{eq:immersionnorms}
\end{align}
that is, \eqref{eq:daviemoduli} on $(\mathcal G_{t})$ with the same moduli. Taking
$L_{\infty}(\Omega)$-norms instead reproduces \eqref{eq:daviemoduliinfty} on $(\mathcal G_{t})$, again
with the same moduli. The norm is applied after the identity, which is why the lemma is
indifferent to which of the two forms is used. Together with (a) and (b), $X$ is an
$L_{m}$-conditional solution on $(\mathcal G_{t})$.
\end{proof}

\subsection{Compatibility, and the glued measure}\label{subsec:gluing}

\begin{definition}[Compatibility, after {\cite[Def.~2.1]{Kurtz14}}]\label{def:compatible}
Let $X$ and $B$ denote continuous processes on a common probability space, and let
$\mathcal F^{X,B}_{t}=\sigma(X_{r},B_{r}:r\le t)$ and $\mathcal F^{B}_{t}=\sigma(B_{r}:r\le t)$ denote the generated filtrations. We say
$X$ is compatible with $B$ if
\begin{equation}\label{eq:compat}
\E\big[h(B)\mid\mathcal F^{X,B}_{t}\big]=\E\big[h(B)\mid\mathcal F^{B}_{t}\big]
\qquad\text{a.s., for every }t\in[0,T]\text{ and every bounded measurable }h .
\end{equation}
Equivalently, $\sigma(B)$ and $\mathcal F^{X,B}_{t}$ are conditionally independent given
$\mathcal F^{B}_{t}$.
\end{definition}

\cite[Def.~2.1]{Kurtz14} says temporally compatible and completes the generated filtrations.
We take them raw, \eqref{eq:compat} being an a.s.\ identity.

\begin{lemma}[Every weak solution is compatible]\label{lem:solcompat}
On a stochastic basis $(\Omega,\mathcal A,(\mathcal F_{t}),\Prob)$ let $X$ and $B$ be
continuous and $(\mathcal F_{t})$-adapted, with $B$ an $(\mathcal F_{t})$-Brownian motion in
$\R^{d}$. This holds at every weak solution of \eqref{eq:rsde} in the sense of
Definition~\ref{def:weaksol}. Then $X$ is compatible with $B$. Moreover $X$ is compatible with the
input $Y:=(X_{0},B)$ of \S\ref{subsec:YWthm}, the pair of \cite[Def.~2.1]{Kurtz14}.
Denoting
$\mathcal F^{X,Y}_{t}=\sigma(X_{r},X_{0},B_{r}:r\le t)$ and
$\mathcal F^{Y}_{t}=\sigma(X_{0})\vee\mathcal F^{B}_{t}$, we have
\begin{equation}\label{eq:compatinput}
\E\big[h(Y)\mid\mathcal F^{X,Y}_{t}\big]=\E\big[h(Y)\mid\mathcal F^{Y}_{t}\big]
\qquad\text{a.s., for every }t\in[0,T]\text{ and every bounded measurable }h .
\end{equation}
\end{lemma}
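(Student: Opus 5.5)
The approach is to reduce both compatibility statements to the independence of future Brownian increments from $\mathcal F_{t}$, using nothing about the equation beyond adaptedness. Fix $t$ and decompose $B=(B_{\cdot\wedge t},\,B_{t+\cdot}-B_{t})$. The first component is $\mathcal F^{B}_{t}$-measurable. The second, the post-$t$ increment process $\Delta^{t}B$, is independent of $\mathcal F_{t}$ because $B$ is an $(\mathcal F_{t})$-Brownian motion; independence of all finite families of increments after $t$ from $\mathcal F_{t}$ follows by iterating the one-step independence, and a monotone class argument passes to the whole path $\Delta^{t}B$. Since $X$ and $B$ are $(\mathcal F_{t})$-adapted, $\mathcal F^{X,B}_{t}\subseteq\mathcal F_{t}$, and therefore $\Delta^{t}B$ is independent of $\mathcal F^{X,B}_{t}$, and a fortiori of $\mathcal F^{B}_{t}$.

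Next, let $h$ be bounded measurable on path space and write $h(B)=H(B_{\cdot\wedge t},\Delta^{t}B)$ with $H$ measurable. The freezing lemma gives
\[
\E\big[h(B)\mid\mathcal F^{X,B}_{t}\big]=\int H\big(B_{\cdot\wedge t},w\big)\,\mathbb W(\d w),
\]
where $\mathbb W$ is Wiener measure. The lemma applies because $B_{\cdot\wedge t}$ is $\mathcal F^{X,B}_{t}$-measurable and $\Delta^{t}B$ is independent of that field with law $\mathbb W$. The right-hand side is $\mathcal F^{B}_{t}$-measurable, and the same computation with $\mathcal F^{B}_{t}$ in place of $\mathcal F^{X,B}_{t}$ yields the same expression. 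The two conditional expectations therefore agree almost surely, which is \eqref{eq:compat}. Raw filtrations cause no difficulty here, since only an a.s.\ identity is asserted.

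For \eqref{eq:compatinput} I would repeat this argument with $Y=(X_{0},B)$. Write $h(Y)=H'\big((X_{0},B_{\cdot\wedge t}),\Delta^{t}B\big)$. The first argument is measurable for $\mathcal F^{Y}_{t}=\sigma(X_{0})\vee\mathcal F^{B}_{t}\subseteq\mathcal F^{X,Y}_{t}$. Also $\mathcal F^{X,Y}_{t}=\mathcal F^{X,B}_{t}\subseteq\mathcal F_{t}$, because $X_{0}$ is already $X$ evaluated at time $0$ and $X$ is adapted. Freezing then gives both conditional expectations as the common $\mathcal F^{Y}_{t}$-measurable function $\int H'\big((X_{0},B_{\cdot\wedge t}),w\big)\,\mathbb W(\d w)$.

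The applicability to weak solutions is immediate from Definition~\ref{def:weaksol}, which provides continuity, adaptedness and the Brownian property. The only point requiring care is the independence of the entire future increment path from $\mathcal F_{t}$, rather than of single increments; I expect this to be the main, though routine, obstacle, and I would handle it by the cylinder-set and monotone class argument above, with no completeness or right-continuity needed.
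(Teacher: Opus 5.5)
Your proof is correct and follows the paper's route for \eqref{eq:compat}: independence of the post-$t$ increment path from $\mathcal F_{t}\supseteq\mathcal F^{X,B}_{t}$, then freezing. For \eqref{eq:compatinput} you freeze directly, with $(X_{0},B_{\cdot\wedge t})$ as the known argument, whereas the paper passes through the sandwich identity \eqref{eq:compatinter} and a monotone class extension over products $g(X_{0})h_{1}(B)$; your version is slightly shorter and equally valid, since $\mathcal F^{Y}_{t}\subseteq\mathcal F^{X,Y}_{t}=\mathcal F^{X,B}_{t}\subseteq\mathcal F_{t}$.
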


\begin{proof}
By hypothesis $B$ is an $(\mathcal F_{t})$-Brownian motion. Hence the increment process
$\beta^{t}:=(B_{t+u}-B_{t})_{u\in[0,T-t]}$ is independent of $\mathcal F_{t}$, and therefore of
$\mathcal F^{X,B}_{t}\subseteq\mathcal F_{t}$. We write a bounded measurable $h(B)$ as
$\tilde h(B|_{[0,t]},\beta^{t})$. Here $B|_{[0,t]}$ is $\mathcal F^{B}_{t}$-measurable while
$\beta^{t}\perp\mathcal F^{X,B}_{t}$, so freezing the first argument yields
\begin{equation}\label{eq:compatfreeze}
\E\big[h(B)\mid\mathcal F^{X,B}_{t}\big]=H_{t}\big(B|_{[0,t]}\big),
\qquad H_{t}(w):=\E\big[\tilde h(w,\beta^{t})\big] .
\end{equation}
The right-hand side of \eqref{eq:compatfreeze} is $\mathcal F^{B}_{t}$-measurable. Conditioning it
further on $\mathcal F^{B}_{t}\subseteq\mathcal F^{X,B}_{t}$ leaves it unchanged, and by the tower
property we obtain
\begin{equation*}
\E\big[h(B)\mid\mathcal F^{B}_{t}\big]
=\E\Big[\E\big[h(B)\mid\mathcal F^{X,B}_{t}\big]\ \Big|\ \mathcal F^{B}_{t}\Big]
=H_{t}\big(B|_{[0,t]}\big)
=\E\big[h(B)\mid\mathcal F^{X,B}_{t}\big],
\end{equation*}
i.e.~\eqref{eq:compat}.

\eqref{eq:compatfreeze} yields more than \eqref{eq:compat}, and the surplus is needed in
\eqref{eq:compatinput}. Here, $\E[h_{1}(B)\mid\mathcal F^{X,B}_{t}]$ is
$\mathcal F^{B}_{t}$-measurable for every bounded measurable $h_{1}$. Applying the tower property to
any $\sigma$-field in between, we obtain the following identity,
\begin{equation}\label{eq:compatinter}
\E\big[h_{1}(B)\mid\mathcal H_{t}\big]=\E\big[h_{1}(B)\mid\mathcal F^{B}_{t}\big]
\qquad\text{for every }\mathcal H_{t}\text{ with }
\mathcal F^{B}_{t}\subseteq\mathcal H_{t}\subseteq\mathcal F^{X,B}_{t}.
\end{equation}
Note further that $X_{0}$ is $\mathcal F^{X,B}_{0}$-measurable. This yields the following filtration identities,
\begin{equation}\label{eq:compatfiltid}
\mathcal F^{X,Y}_{t}=\mathcal F^{X,B}_{t},
\qquad
\mathcal F^{B}_{t}\ \subseteq\ \mathcal F^{Y}_{t}=\sigma(X_{0})\vee\mathcal F^{B}_{t}
\ \subseteq\ \mathcal F^{X,B}_{t},
\end{equation}
so that $\mathcal F^{Y}_{t}$ is one such $\mathcal H_{t}$. Folding $X_{0}$
into the conditioning is a step and not a restatement, and \eqref{eq:compatinter}
allows it. Let now $h(x_{0},w)=g(x_{0})h_{1}(w)$, with $g,h_{1}$ denoting bounded measurable functions. The factor
$g(X_{0})$ is measurable for both $\mathcal F^{X,Y}_{t}$ and $\mathcal F^{Y}_{t}$, hence comes out of
both sides of \eqref{eq:compatinput}, and we obtain the following chain,
\begin{equation}\label{eq:compatproduct}
\E\big[g(X_{0})h_{1}(B)\mid\mathcal F^{X,Y}_{t}\big]
=g(X_{0})\,\E\big[h_{1}(B)\mid\mathcal F^{X,B}_{t}\big]
=g(X_{0})\,\E\big[h_{1}(B)\mid\mathcal F^{Y}_{t}\big]
=\E\big[g(X_{0})h_{1}(B)\mid\mathcal F^{Y}_{t}\big],
\end{equation}
the middle equality being \eqref{eq:compatinter} applied at $\mathcal H_{t}=\mathcal F^{X,B}_{t}$ and at
$\mathcal H_{t}=\mathcal F^{Y}_{t}$. Such products form a multiplicative class generating the product
$\sigma$-field on $\R^{n}\times C([0,T];\R^{d})$, so the functional monotone class theorem extends
\eqref{eq:compatproduct} to \eqref{eq:compatinput} for every bounded measurable $h$.
\end{proof}

\begin{remark}\label{rem:compatfree}
Lemma~\ref{lem:solcompat} uses only that $B$ is an $(\mathcal F_{t})$-Brownian motion, supplied by
\ref{MP0} on the filtration generated by $(X,B)$, while \eqref{eq:compat}, \eqref{eq:compatinput}
constrain only the law of $(X,B)$. Every $\Prob$ of Definition~\ref{def:roughMP} is compatible with
the input.
\end{remark}

\begin{proposition}[The glued measure, and its immersion property]\label{prop:gluing}
For $k=1,2$ let
\[
\big(\Omega^{k},\mathcal A^{k},\Prob^{k},X^{k},B^{k}\big)
\]
denote a probability space carrying continuous processes $X^{k},B^{k}$ such that $X^{k}$
is an $L_{m}$-conditional solution on the raw filtration of $(X^{k},B^{k})$ and \ref{MP0} holds at
exponent $m$, for the same deterministic
rough path $\RZ$, the same coefficients and the same initial law $\nu$. By \ref{MP0}
the process $B^{k}$ has increments independent of the raw field at time $0$, which contains
$\sigma(X^{k}_{0})$, so $\Law_{\Prob^{k}}(X^{k}_{0},B^{k})=\nu\otimes\mathcal W$. This is
\eqref{eq:gluejointlaw} below, and it is what makes the kernels $\mu^{k}$ of \eqref{eq:glue} defined
$\nu\otimes\mathcal W$-almost everywhere. Let
\[
\widehat\Omega:=\R^{n}\times C([0,T];\R^{n})^{2}\times C([0,T];\R^{d}),
\]
with coordinate maps $(x_{0},X^{1},X^{2},B)$; here $\widehat\Omega$ is written with $\R^{n}$ in place
of the state space $D$ for readability, the marginals constructed below being carried by
$C([0,T];D)$ because each $\Prob^{k}$ is, and $D$ being closed. We denote by
$\mu^{k}(\mathrm dx\mid x_{0},w)$ a regular
conditional distribution of $X^{k}$ given $(X^{k}_{0},B^{k})=(x_{0},w)$ under $\Prob^{k}$, and define
\begin{equation}\label{eq:glue}
\Q(\mathrm dx_{0},\mathrm dx^{1},\mathrm dx^{2},\mathrm dw)
:=\nu(\mathrm dx_{0})\,\mu^{1}(\mathrm dx^{1}\mid x_{0},w)\,\mu^{2}(\mathrm dx^{2}\mid x_{0},w)\,
\mathcal W(\mathrm dw),
\end{equation}
$\mathcal W$ denoting Wiener measure on $C([0,T];\R^{d})$. We denote
\[
\mathcal G_{t}:=\sigma\big(x_{0},\,X^{1}_{r},\,X^{2}_{r},\,B_{r}:r\le t\big),
\qquad
\mathcal F^{k}_{t}:=\sigma\big(x_{0},\,X^{k}_{r},\,B_{r}:r\le t\big)
\]
(all augmented, and not right-continuous). Then the following hold under $\Q$:
\begin{enumerate}[label=\normalfont(\alph*),leftmargin=2.4em]
\item\label{gl1} $\Law_{\Q}(X^{k},B)=\Law_{\Prob^{k}}(X^{k},B^{k})$ for $k=1,2$;
\item\label{gl2} $(\mathcal F^{k}_{t})\hookrightarrow(\mathcal G_{t})$ for $k=1,2$;
\item\label{gl3} $B$ is a $(\mathcal G_{t})$-Brownian motion, and $X^{1},X^{2}$ are both
\emph{$L_{m}$-conditional} solutions of \eqref{eq:rsde} on the single stochastic basis
$(\widehat\Omega,\mathcal G_{T},(\mathcal G_{t}),\Q)$, driven by the same $B$, the same
$\RZ$, and with the same initial value $X^{1}_{0}=X^{2}_{0}=x_{0}$, and both satisfy \ref{MP0}.
\end{enumerate}
\end{proposition}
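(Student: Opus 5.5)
The plan follows the classical gluing argument of Yamada--Watanabe and Kurtz, with Lemma~\ref{lem:immersion} supplying the one rough-specific step. Part \ref{gl1} comes straight from \eqref{eq:glue}. Integrating out $x^{j}$, $j\ne k$, leaves $\nu(\mathrm dx_{0})\mu^{k}(\mathrm dx^{k}\mid x_{0},w)\mathcal W(\mathrm dw)$. Since $\Law_{\Prob^{k}}(X^{k}_{0},B^{k})=\nu\otimes\mathcal W$ and $\mu^{k}$ is a regular conditional law of $X^{k}$ given $(X^{k}_{0},B^{k})$, this measure equals $\Law_{\Prob^{k}}(X^{k}_{0},X^{k},B^{k})$. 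Because $X^{k}_{0}$ is a function of the path $X^{k}$, we also get $x_{0}=X^{k}_{0}$ $\Q$-a.s., and this is the claim about initial values.

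For \ref{gl2} I would verify criterion \ref{imm3} of Lemma~\ref{lem:immersionchar} for $k=1$; the case $k=2$ is symmetric. By construction, $X^{1}$ and $X^{2}$ are conditionally independent given $(x_{0},B)$. The key input is compatibility. By Lemma~\ref{lem:solcompat}, applied on the raw filtration of $(X^{k},B^{k})$, the law of $X^{k}$ restricted to $[0,t]$ given $(x_{0},B)$ depends only on $(x_{0},B|_{[0,t]})$. Equivalently, the kernel $\mu^{k}$, pushed forward to $C([0,t])$, is $\mathcal F^{Y}_{t}$-measurable. Using this, I would show that for bounded $F$ measurable for $\mathcal F^{1}_{T}$ and $G$ measurable for $\mathcal G_{t}$ (take product form $G=g_{1}(x_{0},X^{1}|_{[0,t]},B|_{[0,t]})\,g_{2}(X^{2}|_{[0,t]})$ and finish by monotone class),
\[
\E_{\Q}[FG]=\E_{\Q}\big[\E_{\Q}[F\mid\mathcal F^{1}_{t}]\,G\big].
\]
To see this, condition on $(x_{0},B)$. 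The factor $g_{2}$ integrates to $\int g_{2}\,\d\mu^{2}|_{[0,t]}$, which by compatibility is a function of $(x_{0},B|_{[0,t]})$ and therefore $\mathcal F^{1}_{t}$-measurable. That factor then moves inside the conditional expectation given $\mathcal F^{1}_{t}$, and undoing the conditioning gives the identity. Augmentation by null sets changes none of this.

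For \ref{gl3}, I first note that by \ref{gl1} and Lemma~\ref{lem:lawdet}, $X^{k}$ is an $L_{m}$-conditional solution on $(\mathcal F^{k}_{t})$ with $B$ an $(\mathcal F^{k}_{t})$-Brownian motion. This uses that $\sigma(x_{0})$ adds nothing, since $x_{0}=X^{k}_{0}$, and that augmentation does not matter, by Remark~\ref{rem:MPraw}. Then Lemma~\ref{lem:immersion}, applied with the immersion of \ref{gl2}, transfers the Brownian property of $B$ and both moduli to $(\mathcal G_{t})$ unchanged. \ref{MP0} transfers because its moment part and initial law are law statements and its Brownian part was just shown.

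I expect the main obstacle to be \ref{gl2}, specifically the measurability bookkeeping. The regular conditional kernels are defined only $\nu\otimes\mathcal W$-a.e. One must show that the $[0,t]$-marginal of $\mu^{2}$ has a version measurable for the time-$t$ input field, which I would derive from \eqref{eq:compatinput} by testing against a countable generating family.
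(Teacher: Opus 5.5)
Your proposal is correct and follows the paper's proof essentially step for step: the marginal computation for \ref{gl1}; for \ref{gl2}, a cylinder-plus-monotone-class check of the immersion identity, using conditional independence of the two copies given $(x_{0},B)$ together with compatibility of the other copy; and for \ref{gl3}, Lemma~\ref{lem:lawdet} followed by Lemma~\ref{lem:immersion}. The differences are minor. You cite \eqref{eq:compatinput} directly, where the paper uses compatibility with $B$ alone plus the folding rule \eqref{eq:condindrule}; the two are equivalent here. The kernel-version issue you flag as the main obstacle does not actually arise: for each fixed bounded $g_{2}$, conditional independence already gives $\E_{\Q}[g_{2}(X^{2}|_{[0,t]})\mid x_{0},B]=\E_{\Q}[g_{2}(X^{2}|_{[0,t]})\mid x_{0},B|_{[0,t]}]$ almost surely, and that is all the argument uses.
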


\begin{proof}
\ref{gl1} Integrating out the other copy in \eqref{eq:glue} yields
\begin{equation}\label{eq:gluemarginal}
\Q\big(\mathrm dx_{0},\mathrm dx^{k},\mathrm dw\big)
=\nu(\mathrm dx_{0})\,\mu^{k}(\mathrm dx^{k}\mid x_{0},w)\,\mathcal W(\mathrm dw),
\qquad k=1,2,
\end{equation}
each $\mu^{j}(\cdot\mid x_{0},w)$, $j=1,2$, being a probability measure. The right-hand side of
\eqref{eq:gluemarginal} is
$\Law_{\Prob^{k}}(X^{k}_{0},X^{k},B^{k})$, since the outer factor
$\nu\otimes\mathcal W$ is the joint law of $(X^{k}_{0},B^{k})$ and not merely the product of its
marginals. That identification uses the independence of the initial datum from the
driver, and $\Law_{\Prob^{k}}(B^{k})=\mathcal W$ does not provide this. It is provided by
\ref{MP0}, which asks that $B$ is a Brownian motion for the canonical filtration
$(\mathcal G_{t})$ of the pair $(X,B)$. $B_{0}=0$, so the whole path $B$ then consists of
increments independent of $\mathcal G_{0}$, and $\sigma(X_{0})\subseteq\mathcal G_{0}$. Hence $B^{k}$ is
independent of $X^{k}_{0}$, and with $\Law_{\Prob^{k}}(X^{k}_{0})=\nu$ we obtain the following joint law,
\begin{equation}\label{eq:gluejointlaw}
\Law_{\Prob^{k}}\big(X^{k}_{0},B^{k}\big)=\nu\otimes\mathcal W .
\end{equation}
Finally $\mu^{k}$ was constructed as a conditional law under $\Prob^{k}$.

\ref{gl2} We verify Lemma~\ref{lem:immersionchar}\ref{imm2} for $k=1$.
The case $k=2$ is symmetric.
Fix $s\le t$ and bounded measurable
\[
\xi=F\big(x_{0},X^{1}|_{[0,t]},B|_{[0,t]}\big)\in\mathcal F^{1}_{t},
\qquad
\eta=G\big(X^{2}|_{[0,s]}\big),
\qquad
\zeta=H\big(x_{0},X^{1}|_{[0,s]},B|_{[0,s]}\big)\in\mathcal F^{1}_{s} .
\]
The products $\eta\zeta$ form a multiplicative class generating
$\mathcal G_{s}=\mathcal F^{1}_{s}\vee\sigma(X^{2}|_{[0,s]})$, and the filtrations here are
augmented, so cylinder variables represent the general bounded measurable ones up to $\Q$-null sets.
By the functional monotone class theorem it therefore suffices to
show that
\begin{equation}\label{eq:towrite}
\E_{\Q}\big[\xi\,\eta\,\zeta\big]=\E_{\Q}\big[\E[\xi\mid\mathcal F^{1}_{s}]\,\eta\,\zeta\big].
\end{equation}
By construction, under $\Q$ the two copies $X^{1},X^{2}$ are conditionally independent given
$(x_{0},B)$, that is, given the whole path $B$. The variables $\xi,\zeta$ are
$\sigma(x_{0},X^{1},B)$- and $\eta$ is $\sigma(X^{2})$-measurable, so the tower property together with
that conditional independence yields
\begin{equation}\label{eq:condfactor}
\E_{\Q}\big[\xi\eta\zeta\big]
=\E_{\Q}\Big[\ \underbrace{\E_{\Q}\big[\xi\zeta\mid x_{0},B\big]}_{=:A(x_{0},B)}\cdot
\underbrace{\E_{\Q}\big[\eta\mid x_{0},B\big]}_{=:\beta(x_{0},B)}\ \Big].
\end{equation}
Next we apply compatibility of the other solution. By \ref{gl1} and
Lemma~\ref{lem:solcompat}, the process
$X^{2}$ is compatible with $B$ under $\Q$. Definition~\ref{def:compatible} therefore yields
\begin{equation}\label{eq:gluecondind}
\sigma\big(X^{2}|_{[0,s]}\big)\ \perp\!\!\!\perp\ \sigma(B)\ \big|\ \sigma\big(B|_{[0,s]}\big) .
\end{equation}
What \eqref{eq:betameas} needs is conditional independence given
$\sigma(B|_{[0,s]})\vee\sigma(x_{0})$. Folding $x_{0}$ into the conditioning is permitted by the following elementary rule:
\begin{equation}\label{eq:condindrule}
\mathcal A\perp\!\!\!\perp\mathcal B\mid\mathcal C
\quad\text{and}\quad
\mathcal D\subseteq\mathcal A
\qquad\Longrightarrow\qquad
\mathcal A\perp\!\!\!\perp\mathcal B\mid\mathcal C\vee\mathcal D .
\end{equation}
We apply \eqref{eq:condindrule} to \eqref{eq:gluecondind} with
\[
\mathcal A=\sigma\big(X^{2}|_{[0,s]}\big),\qquad \mathcal B=\sigma(B),\qquad
\mathcal C=\sigma\big(B|_{[0,s]}\big),\qquad \mathcal D=\sigma(x_{0}),
\]
the inclusion $\mathcal D\subseteq\mathcal A$ holding modulo $\Q$-null sets, by
$x_{0}=X^{2}_{0}$ $\Q$-almost surely, which is stated at the end of this proof. That is enough, since
conditional independence is insensitive to null sets. From the resulting conditional independence
$\mathcal A\perp\!\!\!\perp\mathcal B\mid\mathcal C\vee\mathcal D$ we obtain the following identity,
\begin{equation}\label{eq:gluetower}
\E\big[\eta\mid\mathcal B\vee\mathcal C\vee\mathcal D\big]
=\E\big[\eta\mid\mathcal C\vee\mathcal D\big]
\end{equation}
for every integrable $\mathcal A$-measurable $\eta$, in particular for our $\eta$, a
functional of $X^{2}|_{[0,s]}$. Now $\mathcal B\vee\mathcal C\vee\mathcal D=\sigma(x_{0},B)$ and
$\mathcal C\vee\mathcal D=\sigma(x_{0},B|_{[0,s]})$, so that \eqref{eq:gluetower} reads
\begin{equation}\label{eq:betameas}
\beta(x_{0},B)=\E_{\Q}\big[\eta\mid x_{0},B|_{[0,s]}\big]=:\beta_{s}\big(x_{0},B|_{[0,s]}\big).
\end{equation}
In particular $\beta(x_{0},B)$ is
$\sigma(x_{0},B|_{[0,s]})\subseteq\mathcal F^{1}_{s}$-measurable. Substituting
\eqref{eq:betameas} into \eqref{eq:condfactor} and undoing the conditioning, we obtain
\[
\E_{\Q}\big[\xi\eta\zeta\big]=\E_{\Q}\big[\xi\,\zeta\,\beta_{s}\big]
=\E_{\Q}\big[\E[\xi\mid\mathcal F^{1}_{s}]\,\zeta\,\beta_{s}\big],
\]
the last step by the tower property, $\zeta\beta_{s}$ being $\mathcal F^{1}_{s}$-measurable. Running
the identical computation \eqref{eq:condfactor}--\eqref{eq:betameas} with $\xi$ replaced by the
$\mathcal F^{1}_{s}$-measurable variable $\E[\xi\mid\mathcal F^{1}_{s}]$ yields
\begin{equation}\label{eq:gluesecondrun}
\E_{\Q}\big[\E[\xi\mid\mathcal F^{1}_{s}]\eta\zeta\big]
=\E_{\Q}\big[\E[\xi\mid\mathcal F^{1}_{s}]\zeta\beta_{s}\big].
\end{equation}
Collecting the two identities, the right-hand sides agree, and that is \eqref{eq:towrite}.

\ref{gl3} By \ref{gl1} the law of $(X^{k},B)$ under $\Q$ is the law of $(X^{k},B^{k})$ under
$\Prob^{k}$, and the latter is an $L_{m}$-conditional solution by hypothesis. That the
former is one as well is given by Lemma~\ref{lem:lawdet}, the property, and the hypotheses under which the
lemma reads it, being determined by the law of $(X,B)$: each $X^{k}$ is
an $L_{m}$-conditional solution on its own generated filtration
$\sigma(X^{k}_{r},B_{r}:r\le t)$ under $\Q$. The filtration
$(\mathcal F^{k}_{t})$ above is the augmentation of that field once $\sigma(x_{0})$ is
adjoined, and since $X^{k}_{0}=x_{0}$ $\Q$-a.s., as recorded at the end of this proof, $\sigma(x_{0})$
already lies in the augmentation of the raw field. So only the augmentation has to be justified, and
that is the null-set argument of Remark~\ref{rem:MPraw}, which uses only that every set of the
augmentation differs from a set of the raw field by a null set: applied with $J_{s,t}$ and
$|J_{s,t}|^{m}$ in place of $D^{\varphi}_{s,t}$ and $|D^{\varphi}_{s,t}|^{m}$, it leaves both moduli of
\eqref{eq:daviemoduli} unchanged, and it leaves the Brownian requirement of \ref{MP0} unchanged as
well, an increment independent of the raw field being independent of its augmentation. Hence, the property
holds on $(\mathcal F^{k}_{t})$.
This is the step that fixes the solution class for the rest of the section: \ref{gl1}
controls only the law of $(X^{k},B)$, and Lemma~\ref{lem:lawdet} is what carries that back to the
equation, at Davie order $m$. By
\ref{gl2} and Lemma~\ref{lem:immersion}, each is then an
$L_{m}$-conditional solution on $(\mathcal G_{t})$, with $B$
a $(\mathcal G_{t})$-Brownian motion and the same moduli. Of \ref{MP0}, the initial
law and the moment bound are functions of $\Law(X^{k},B)$ and transfer by \ref{gl1}. The Brownian
condition, on $(\mathcal G_{t})$, is not, but it is provided by  step~\ref{stp:imm-a} of
Lemma~\ref{lem:immersion}. Finally, $X^{k}_{0}=x_{0}$ $\Q$-a.s.\ for
both $k$, since $\mu^{k}(\cdot\mid x_{0},w)$ is by construction supported on paths starting at $x_{0}$.
\end{proof}

We do not use right-continuity here. Passing to $(\mathcal G_{t+})$ loses the solution
property, and what survives is immersion between the right-continuous regularisations,
$(\mathcal F_{t+})$ into $(\mathcal G_{t+})$.

\subsection{The Yamada--Watanabe theorem}\label{subsec:YWthm}

\begin{definition}[Pathwise uniqueness]\label{def:pathuniq}
Fix $m\ge2$.
\emph{Pathwise uniqueness} holds for \eqref{eq:rsde} if the following is satisfied. Let $X^{1}$ and
$X^{2}$ denote $L_{m}$-conditional solutions in the sense of Definition~\ref{def:davie} on one and
the same stochastic basis $(\Omega,\mathcal A,(\mathcal F_{t}),\Prob)$, both satisfying
the moment bound of \ref{MP0} at
exponent $m$, driven by the
same Brownian motion $B$ and the same rough path $\RZ$, such that $\Prob[X^{1}_{0}=X^{2}_{0}]=1$. Then
\begin{equation}\label{eq:pathuniqconc}
\Prob\big[X^{1}_{t}=X^{2}_{t}\ \forall t\in[0,T]\big]=1 .
\end{equation}
\end{definition}

\begin{theorem}[Yamada--Watanabe for rough stochastic differential equations]\label{thm:YWtransfer}
Fix a deterministic $\RZ\in\CC^{\gamma}([0,T];\R^{e})$, $\gamma\in(\tfrac13,\tfrac12)$, and
let $(b,\sigma,f,f')$ denote coefficients as in \S\ref{sec:setting}, with $b$ and $\sigma$ of linear growth
\eqref{eq:Egrowth} and the rough field of linear growth \eqref{eq:MProughgrowth}. Parts \ref{ywa} and \ref{ywb} use neither growth bound, only the standing
continuity \eqref{eq:standingcts}. Both enter at part \ref{ywc}, through
Proposition~\ref{prop:MPequiv}, and at the Borelness of $\Gamma$, which the proof records without
using. Let
$m\ge2$, the floor of Definitions~\ref{def:davie} and \ref{def:pathuniq}. For an initial law $\nu$ denote
\begin{align*}
    \Gamma_{\nu}\ :=\ \big\{&\Prob\ \text{Borel on }\mathcal C=C([0,T];D\times\R^{d}):\
\text{the canonical }(X,B)\text{ is an }L_{m}\text{-conditional solution}\\
&\text{of \eqref{eq:rsde} for }(b,\sigma,f,f',\RZ)\text{ on the raw filtration,}\\
&\text{with \ref{MP0} at exponent }m\text{, which contains }\Law(X_{0})=\nu\big\} .
\end{align*}
It is a set of laws by construction, the condition being posed on the canonical space. What
Lemma~\ref{lem:lawdet} provides is that the property of a realisation can be read off its law, which
is what Proposition~\ref{prop:gluing}\ref{gl3} needs.

This is \cite{Kurtz14}'s constraint set with the initial law carried inside it rather than imposed
afterwards, which is the same object taken one $\nu$ at a time. Let $\mathcal N$ denote a set of
initial laws on $\R^{n}$ and suppose the following.
\begin{enumerate}[label=\normalfont(\roman*),leftmargin=2.4em]
\item\label{yw1} \emph{(weak existence)} for every $\nu\in\mathcal N$ there exists a weak solution of
\eqref{eq:rsde} with initial law $\nu$, and it is an $L_{m}$-conditional solution
satisfying \ref{MP0} at exponent $m$.
\item\label{yw2} \emph{(pathwise uniqueness)} Definition~\ref{def:pathuniq} holds.
\end{enumerate}
Then the following hold for every $\nu\in\mathcal N$:
\begin{enumerate}[label=\normalfont(\alph*),leftmargin=2.4em]
\item\label{ywa} \emph{uniqueness in law} holds \emph{on all of $\Gamma_{\nu}$}: that set has at most
one element, and by \ref{yw1} exactly one. In particular any two weak solutions with initial law $\nu$
whose laws lie in $\Gamma_{\nu}$ induce the same law on
$C([0,T];\R^{n})$; and the uniqueness is \emph{joint}, that is, what is determined is the law of the
pair $(X,B)$ on $C([0,T];\R^{n}\times\R^{d})$;
\item\label{ywb} \emph{strong existence} holds: there is a Borel map
$\Psi:\R^{n}\times C([0,T];\R^{d})\to C([0,T];D)$ such that, for every $t$, the map
$y\mapsto\Psi(y)|_{[0,t]}$ is measurable for the $\nu\otimes\mathcal W$-completion of
$\sigma(x_{0},w_{r}:r\le t)$, and such that
$X=\Psi(X_{0},B)$ a.s.\ under the unique element of $\Gamma_{\nu}$ given by
\ref{ywa};
\item\label{ywc} the rough martingale problem of Definition~\ref{def:roughMP} at exponent
$m$ has, for each $\nu\in\mathcal N$, at most one solution with initial law $\nu$, its
solution set at $m$ being contained in $\Gamma_{\nu}$ by
Proposition~\ref{prop:MPequiv}\ref{mpe2}; and exactly one if the solution of \ref{yw1} is in addition
an $L_{3m}$-conditional solution whose supremum lies in $L_{m'}$ for some $m'\ge3m$, by
\ref{mpe1}.
\end{enumerate}
\end{theorem}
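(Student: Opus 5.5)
The plan is to run the classical Yamada--Watanabe gluing inside Kurtz's framework \cite{Kurtz14}, with the rough-specific input supplied by Lemmas~\ref{lem:filtdescend}, \ref{lem:lawdet} and \ref{lem:immersion} and Proposition~\ref{prop:gluing}.

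\emph{Step 0: $\Gamma_{\nu}$ is non-empty.} Given the weak solution of \ref{yw1} on its own basis, Lemma~\ref{lem:filtdescend} descends it to the raw filtration generated by $(X,B)$. It stays an $L_{m}$-conditional solution with the same moduli there. \ref{MP0} is inherited: the initial law and the moment bound name no filtration, and $B$ remains Brownian for the smaller filtration. Its law therefore lies in $\Gamma_{\nu}$.

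\emph{Part \ref{ywa}.} Take $\Prob^{1},\Prob^{2}\in\Gamma_{\nu}$, realised canonically, and form the glued measure $\Q$ of \eqref{eq:glue}. By Proposition~\ref{prop:gluing}\ref{gl3}, $X^{1}$ and $X^{2}$ are $L_{m}$-conditional solutions on the single basis $(\widehat\Omega,(\mathcal G_{t}),\Q)$. They share the Brownian motion $B$ and the rough path $\RZ$, satisfy the moment bound of \ref{MP0}, and have $X^{1}_{0}=X^{2}_{0}=x_{0}$. This is exactly the setting of Definition~\ref{def:pathuniq}, so \ref{yw2} gives $X^{1}=X^{2}$ $\Q$-a.s. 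Then \ref{gl1} yields $\Prob^{1}=\Law_{\Q}(X^{1},B)=\Law_{\Q}(X^{2},B)=\Prob^{2}$, which is joint uniqueness of the law of $(X,B)$.

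\emph{Part \ref{ywb}.} Apply the same gluing with $\Prob^{1}=\Prob^{2}=\Prob$, the unique element. Under $\Q$, $X^{1}$ and $X^{2}$ are conditionally independent given $(x_{0},B)$, each with conditional law $\mu(\cdot\mid x_{0},w)$, and they are a.s.\ equal. Hence for $\nu\otimes\mathcal W$-a.e.\ $y$ the measure $\mu(\cdot\mid y)\otimes\mu(\cdot\mid y)$ charges the diagonal fully, which forces $\mu(\cdot\mid y)$ to be a Dirac mass $\delta_{\Psi(y)}$. Measurability of $\Psi$ follows from that of $\mu$, for instance by taking $\Psi(y)$ to be the barycentre of $\mu(\cdot\mid y)$ coordinatewise and redefining it on a null set. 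The adaptedness of $\Psi(y)|_{[0,t]}$ to the completed $\sigma(x_{0},w_{r}:r\le t)$ comes from compatibility. By Lemma~\ref{lem:solcompat}, $X|_{[0,t]}$ is conditionally independent of $B$ given $(x_{0},B|_{[0,t]})$. Being a function of $(x_{0},B)$, it is therefore a.s.\ a function of $(x_{0},B|_{[0,t]})$. This is \cite[Thm.~3.14/Lem.~A]{Kurtz14}, and I would cite it rather than reprove it.

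\emph{Part \ref{ywc}.} By Proposition~\ref{prop:MPequiv}\ref{mpe2}, a solution of Definition~\ref{def:roughMP} at exponent $m$ is an $L_{m}$-conditional solution on the raw filtration. It satisfies \ref{MP0} by definition, so it lies in $\Gamma_{\nu}$, and \ref{ywa} gives at most one such solution. For existence under the stronger hypothesis, first descend the solution of \ref{yw1} to the raw filtration with Lemma~\ref{lem:filtdescend}. Then apply \ref{mpe1} with $m_{0}=3m$ and $m'\ge3m$ to obtain a solution of the martingale problem.

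\emph{Main obstacle.} The real issue is that $\Gamma_{\nu}$ must be exactly a constraint on laws and stable under gluing. The point needing care is that the augmented, $x_{0}$-enlarged filtrations in Proposition~\ref{prop:gluing} carry the same moduli as the raw ones. That is the null-set argument of Remark~\ref{rem:MPraw} combined with Lemma~\ref{lem:lawdet}; it is already done in \ref{gl3}, so the proof should reduce to citing it. The only other delicate point is the measurability claim in \ref{ywb}, which I would take from Kurtz.
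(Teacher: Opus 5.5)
Your proposal is correct and follows the paper's proof essentially step for step. It shows non-emptiness of $\Gamma_{\nu}$ via Lemma~\ref{lem:filtdescend}, where Lemma~\ref{lem:lawdet} implicitly carries the property to the canonical pair. It then gets \ref{ywa} from the gluing of Proposition~\ref{prop:gluing} together with \ref{yw2}, \ref{ywb} from the Dirac disintegration of the self-glued measure, and \ref{ywc} from the two halves of Proposition~\ref{prop:MPequiv}. The differences are cosmetic. The paper gets Borelness of $\Psi$ from the Borel full-measure set of Dirac kernels and the Borel map $\delta_{x}\mapsto x$ on it, which is cleaner than a coordinatewise barycentre that is only defined once you restrict to that set anyway. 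For adaptedness the paper cites \cite[Prop.~2.13]{Kurtz14}, whose content is exactly your one-line argument from the compatibility \eqref{eq:compatinput}.
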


\begin{proof}
The solution of \ref{yw1} has its law in $\Gamma_{\nu}$: by
Lemma~\ref{lem:filtdescend} it is an $L_{m}$-conditional solution on the filtration generated by the
pair, and \ref{MP0} descends with it, an increment independent of the larger field being independent
of the smaller one. Lemma~\ref{lem:lawdet} then carries the property to the canonical pair under
that law. So $\Gamma_{\nu}$ is non-empty.

Let $\nu\in\mathcal N$ and let $\mu^{1},\mu^{2}\in\Gamma_{\nu}$, realised by processes
$(X^{1},B^{1})$ and $(X^{2},B^{2})$ on possibly
different bases. Proposition~\ref{prop:gluing} produces a single basis
$(\widehat\Omega,\mathcal G_{T},(\mathcal G_{t}),\Q)$ carrying a $(\mathcal G_{t})$-Brownian motion
$B$ and two $L_{m}$-conditional solutions of \eqref{eq:rsde} driven by that same $B$ and the same $\RZ$,
with $X^{1}_{0}=X^{2}_{0}=x_{0}$ $\Q$-a.s.\ and with the correct marginal
laws. Hypothesis \ref{yw2} applies on that basis and yields $X^{1}=X^{2}$ $\Q$-a.s.

Hence $\Law_{\Q}(X^{1},B)=\Law_{\Q}(X^{2},B)$, and by Proposition~\ref{prop:gluing}\ref{gl1} these
are $\mu^{1}$ and $\mu^{2}$. Collecting the identities yields
\begin{equation}\label{eq:YWlawequal}
\mu^{1}=\Law_{\Q}\big(X^{1},B\big)=\Law_{\Q}\big(X^{2},B\big)=\mu^{2} .
\end{equation}
This
is \ref{ywa}, in the joint form, proved for every pair of elements of $\Gamma_{\nu}$.
The argument used only membership in that set.

For \ref{ywb} we invoke the second half of the classical Yamada--Watanabe argument, in Kurtz's form.
The constraint set is $\Gamma:=\Gamma_{\nu}$ of the statement, carried over to laws $\mu_{x_{0},X,B}$ on
$\R^{n}\times C([0,T];\R^{n})\times C([0,T];\R^{d})$ by adjoining $x_{0}=X_{0}$, a functional of $X$
that adds nothing. Its elements are the laws of $L_{m}$-conditional solutions, by
definition, so Definition~\ref{def:pathuniq} applies to them, and $\Gamma$ is non-empty by the
first paragraph. It is Borel, by the argument of
Lemma~\ref{lem:MPborel}\ref{mpb:borel}, with $J_{s,t}$ in place of $D^{\varphi}_{s,t}$, where the
reduction over test functions is not needed. The first half of \eqref{eq:epsdelta} does not come
from a monotone modulus, as it does in Lemma~\ref{lem:MPborel}, but from the bound
$\sup_{s\le t}\|J_{s,t}\|_{L_{m}}<\infty$, which \eqref{eq:davie} gives from \eqref{eq:Egrowth}, \eqref{eq:MProughgrowth}, Burkholder--Davis--Gundy and
$\|\sup_{t\le T}|X_{t}|\|_{L_{m}}<\infty$; its image in
$\mathcal P(\R^{n}\times C([0,T];\R^{n})\times C([0,T];\R^{d}))$ under $\mu\mapsto\Law(X_{0},X,B)$ is
Borel as well, that map being a continuous injection between Polish spaces and
\cite[Thm.~15.1]{Kechris95} making the image of a Borel set under such a map Borel. Borelness is not
used in the proof. It is recorded because in the terminology of \cite[\S1]{Kurtz14}
it exhibits \eqref{eq:rsde} as
a stochastic model $(\Gamma,\nu\otimes\mathcal W)$ with input $Y=(x_{0},B)$ and output $X$.
Every element of $\Gamma$ is compatible with that input, by \eqref{eq:compatinput} of
Lemma~\ref{lem:solcompat}. That is not what makes
\cite[Thm.~1.5]{Kurtz14} applicable, since that theorem imposes no condition on $\Gamma$ at all. It is
what the adaptedness part of \ref{ywb} cites \cite[Prop.~2.13]{Kurtz14} for. That
proposition is the one result of \cite{Kurtz14} the proof needs. The other appeals to that paper
are to a definition restated in full at \eqref{eq:compat}, to \cite[Lem.~1.3(c)]{Kurtz14} for a
fact proved below, and to \cite[Thm.~1.5]{Kurtz14}, which the proof does not invoke. What
Proposition~\ref{prop:gluing}\ref{gl2} uses is \eqref{eq:compat} together with the folding rule
\eqref{eq:condindrule}, and not \eqref{eq:compatinput}.

Hypothesis \ref{yw1} says the solution set is nonempty. What remains is pointwise uniqueness in the
sense of \cite[Def.~1.4]{Kurtz14}, quantifying over every triple $X_{1},X_{2},Y$ on a
common probability space with $\mu_{X_{1},Y},\mu_{X_{2},Y}\in\Gamma$, and not only over the glued $\Q$
of \eqref{eq:glue}. The gluing paragraph above yields it, at $\mu^{1}=\mu^{2}=\mu$ for the element $\mu$
of $\Gamma$ that \ref{yw1} provides. The two
copies are then conditionally independent given $Y$ with the same conditional law
$\mu(\mathrm dx\mid y)$, so $X^{1}=X^{2}$ $\Q$-a.s.\ reads
\[
\int\big(\mu(\cdot\mid y)\otimes\mu(\cdot\mid y)\big)\big(\{x^{1}\ne x^{2}\}\big)\,
(\nu\otimes\mathcal W)(\mathrm dy)\ =\ 0 ,
\]
and a Borel probability measure $\varrho$ on a Polish space with
$\varrho\otimes\varrho(\{x^{1}\ne x^{2}\})=0$ is a Dirac mass. Indeed, otherwise $\varrho(A)\in(0,1)$
for some Borel $A$, and from $A\times A^{c}\subseteq\{x^{1}\ne x^{2}\}$ we obtain
\begin{equation*}
\varrho\otimes\varrho\big(\{x^{1}\ne x^{2}\}\big)\ \ge\
\varrho\otimes\varrho\big(A\times A^{c}\big)\ =\ \varrho(A)\big(1-\varrho(A)\big)\ >\ 0 .
\end{equation*}
Hence the following holds,
\begin{equation}\label{eq:diracdisint}
\mu(\cdot\mid y)=\delta_{\Psi(y)}
\qquad\text{for }\nu\otimes\mathcal W\text{-a.e.\ }y ,
\end{equation}
and \eqref{eq:diracdisint} is to say that $\mu$ is the law
of a strong solution, that is, $X=\Psi(Y)$ a.s.\ whenever $\mu_{x_{0},X,B}=\mu$.
This is the direction of \cite[Lem.~1.3(c)]{Kurtz14} that we use:
$\Prob[X\neq\Psi(Y)]=\int\mu\big(\{x\neq\Psi(y)\}\mid y\big)\,(\nu\otimes\mathcal W)(\d y)=0$ by
\eqref{eq:diracdisint}. Here $\Psi$ is Borel after
modification on a $\nu\otimes\mathcal W$-null set, the set of $y$ at
which the kernel is Dirac being Borel of full measure and $\delta_{x}\mapsto x$ Borel on it. It is the adaptedness part that names the completion. \cite[Prop.~2.13]{Kurtz14}
asks of $X$ that it be a strong solution and compatible with the input, and returns
$\mathcal F^{X}_{t}\subseteq\mathcal F^{Y}_{t}$ at the completed filtrations. Both hypotheses are in
hand: $X=\Psi(Y)$ by \eqref{eq:diracdisint}, and the compatibility is \eqref{eq:compatinput}.
Neither uses the growth bounds. By \ref{ywa} the measure $\mu$ is the only element of $\Gamma$,
so $X_{1}=\Psi(Y)=X_{2}$ a.s.\ for every triple as above. That is \cite[Def.~1.4]{Kurtz14} outright, so
the hypotheses of \cite[Thm.~1.5]{Kurtz14} hold. However, we do not need to invoke this theorem. What it would provide is what the two previous paragraphs have produced directly: joint uniqueness in law is \eqref{eq:YWlawequal}, and the strong solution is the map $\Psi$
built from \eqref{eq:diracdisint} and \cite[Lem.~1.3(c)]{Kurtz14}. The agreement with
\cite[Thm.~1.5]{Kurtz14} is noted because it locates the result in the standard framework. Part \ref{ywc} follows from \ref{ywa}: the solution set of the martingale
problem at $m$ lies in $\Gamma_{\nu}$ by \ref{mpe2}, and $\Gamma_{\nu}$ is a singleton, while
\ref{mpe1} places the solution of \ref{yw1} in that set under the stated extra
integrability.
\end{proof}

\begin{remark}[Composing results (A) with (B)]\label{rem:YWcompose}
Fix a $p$ meeting \eqref{eq:Ep} and let $\mathcal N:=\{\nu:\langle\nu,|x|^{p}\rangle<\infty\}$.
Under Assumptions~\ref{ass:driver}, \ref{ass:Eito}, \ref{ass:Erough} and \ref{ass:Einit}, with
$D=\R^{n}$, Theorem~\ref{thm:weakex} provides
\ref{yw1} at every $m\in[2,p]$: it produces a weak solution with initial law $\nu$ that is an
$L_{m}$-conditional solution, and \eqref{eq:Emoment} gives the moment bound of \ref{MP0}. The two
growth bounds Theorem~\ref{thm:YWtransfer}\ref{ywc} needs are \eqref{eq:Egrowth} of
Assumption~\ref{ass:Eito} and, through $\|Df\|_{\infty}<\infty$, \eqref{eq:Eroughgrowth} of
Assumption~\ref{ass:Erough}. Only \ref{yw2} is left, and it is the subject of
\cite{HuberII}. At $m\le p/3$ the composition gives the exactly one half of \ref{ywc} as well,
Theorem~\ref{thm:weakex} then supplying an $L_{3m}$-conditional solution with supremum in $L_{p}$
and $p\ge3m$. Above that only at most one is available.
\end{remark}

\color{blue}
\begin{remark}\label{rem:YWours}
\emph{We imported:} \cite[Prop.~2.13]{Kurtz14}, which returns adaptedness of the strong
solution with respect to the completed canonical filtration of the input. That is everything the
proof of Theorem~\ref{thm:YWtransfer} uses from \cite{Kurtz14}. Part \ref{ywa} is
proved by Proposition~\ref{prop:gluing} with hypothesis \ref{yw2}, and \ref{ywc} by
Proposition~\ref{prop:MPequiv}. Part \ref{ywb} builds the strong solution directly, from the Dirac
disintegration \eqref{eq:diracdisint}, and \cite[Lem.~1.3(c)]{Kurtz14} is quoted there for a one-line
fact whose needed direction the proof reproves. \emph{Different, and
necessary:} First, Lemma~\ref{lem:lawdet}, exhibiting the notion of
Definition~\ref{def:davie}, a family of conditional bounds and originally a property of a basis, as a
constraint $\Gamma$ on the law. Second, Lemma~\ref{lem:immersion}, allowing the two glued solutions to
be read as solutions on the common filtration $(\mathcal G_{t})$, as Definition~\ref{def:pathuniq}
requires. For a classical It\^o equation both are automatic, and that is why neither has an analogue in
\cite{Kurtz14}.
\end{remark}
\color{black}

\appendix

\section{Deferred proofs, and what the imported results need}

\subsection{Proof of Lemma~\ref{lem:Escalefree}}\label{app:Escalefree}
The lemma is stated at Step~\ref{step:Emoment}\ref{proof:Eproof:step2:a1} of
\S\ref{subsec:Eproof} and used in Steps~\ref{step:Emoment} and \ref{step:Etight}. The
paragraphs before Step~\ref{proof:Eproof:step2:step1} fix what its proof may and may not assume.

\begin{proof}
\emph{Preliminaries.} The floor on $m$ is $1/\gamma$ and not $2$, Step~\ref{proof:Eproof:step2:step3} below turning the sewing bound
at a fixed pair into a bound on $\sup_{r\in[s,t]}|\Phi^{\natural}_{s,r}|$ by dyadic chaining, a series
that converges only when $\gamma m>1$. The range $(1/\gamma,p]$ is non-empty, since \eqref{eq:Ep} yields $p>6>3>1/\gamma$, and, $1/\gamma$ being $<3$, it contains every $m\in[3,p]$. Moreover $\mathcal N_{s,t}<\infty$ by
Step~\ref{step:Emoment}\ref{proof:Eproof:step2:b} of \S\ref{subsec:Eproof}, and branch \ref{Ecurvaffine} enters the estimates only through
\eqref{eq:Ecurvnorm}, by $C_{\mathrm{nrm}}$. Neither bound of \eqref{eq:Jfinite} is a hypothesis of the lemma: the citation delivers that, and all we use is, finiteness at the two
exponents, which boundedness of the truncated coefficients does not give (Remark~\ref{rem:Jfinite}).

\emph{The constants are free of the horizon.} The horizon enters the statement only through
$h_{\star}=(c_{0}\Lambda_{\RZ})^{-1/\gamma}\wedge T$, and every smallness used below is
$\Lambda\tau^{\gamma}\le c_{0}^{-1}$ or $\tau\le c_{0}^{-1/\gamma}\le1$ of \eqref{eq:Ebookkeep}, a
comparison against $1$ and never against $T$, so that no power of the horizon is absorbed into $c_{0}$
or $C_{3}$. The suprema $C_{\RZ}$ and $C^{(1)}_{\RZ}$ over $[0,T]$ are data of
Assumption~\ref{ass:Erough}.

\emph{The $\Gamma^{3}$-part of $\Phi^{\natural}$ is a classical integral.} By \eqref{eq:bracketLip} the bracket
is Lipschitz, so $\int\Gamma^{3}\d\CZ$ is an ordinary Lebesgue--Stieltjes integral. Its remainder is
bounded pathwise, and by \ref{proof:Eproof:step2:a0} we obtain the following bound,
\begin{align*}
\Big|\int_{s}^{t}\Gamma^{3}(X_{r})\,\d\CZ_{r}-\Gamma^{3}(X_{s})\,\CZ_{s,t}\Big|
&\ \le\ \|\CZ\|_{1}\sup_{r\in[s,t]}|\delta\Gamma^{3}(X)_{s,r}|\,|t-s|\\
&\ \le\ C_{1}\|\CZ\|_{1}\big(\sup_{r\in[s,t]}\Phi_{r}\big)|t-s| .
\end{align*}
This is of order $1$ and linear in $\sup\Phi$, hence of order $2\gamma$ on $|t-s|\le h_{\star}\le1$, by \eqref{eq:Ebookkeep}. The order is $1$ and not $1+\gamma$ because the supremum
sits inside the norm, and $\|\sup_{r}|\delta X_{s,r}|\|_{L_{m}}$ is not among the quantities the lemma
controls. Only $(\Gamma^{1},\Gamma^{2})$ is sewn against $\RZ$, the $\CZ$-part being
classical. 
Nevertheless, $\CZ$-terms belong to the germ and to $\Phi^{\natural}$, so
the $\CZ$-paired terms of the defect in Step~\ref{proof:Eproof:step2:step2} are part of what
Lemma~\ref{lem:SSL} is provided with. What is classical here is the integral $\int\Gamma^{3}\d\CZ$ itself, not
the estimate of the defect.

The third conclusion \eqref{eq:EscaleCond} carries the conditional half of the solution notion,
\eqref{eq:daviemoduli} being two-sided. Its second modulus is an unconditional $L_{m}$ bound and
follows from $\mathcal J\le C_{3}\Lambda^{2}\mathcal N$ at \eqref{eq:HJfinal}. The first is a bound on
$\|\E_{s}J_{s,t}\|_{L_{m}}$ at rate $o(|t-s|)$, and no unconditional bound delivers it, since
\[
\|\E_{s}J_{s,t}\|_{L_{m}}\le\|J_{s,t}\|_{L_{m}}=O(|t-s|^{2\gamma}),
\qquad 2\gamma\ <\ 1 ,
\]
is short of rate $1$. We insist on exactly this distinction for the conditional-mean condition \ref{MP1} of Definition~\ref{def:roughMP}. \eqref{eq:EscaleCond} is the second conclusion of
Lemma~\ref{lem:SSL}, applied to the germ of Step~\ref{proof:Eproof:step2:step1}, $\epsilon_{1}>1$ is what makes it $o(|t-s|)$, and it
is uniform in $(k,R)$ while the approximants' own conditional moduli are not.

All three bounds are stochastic sewing, that is, Lemma~\ref{lem:SSL}, the ordinary $L_{m}$ lemma with
no weight and no conditional $L_{\infty}$ quantity, applied to germs whose defects we estimate through
the pathwise Taylor bound \eqref{eq:Taylormin} below and not through $\sup$-norms of the truncated
coefficients.
\eqref{eq:EscaleX} and \eqref{eq:EscaleNat} come from the first bound of \eqref{eq:SSLconc}, at
the germ of Step~\ref{proof:Eproof:step2:step1} and at the germ of $V$ in Step~\ref{proof:Eproof:step2:step2} respectively, while \eqref{eq:EscaleCond} comes
from the second, at the germ of Step~\ref{proof:Eproof:step2:step1}, and is read off the closed system rather than fed back
into it. The quantities estimated appear on both sides, are finite by the
truncation, and we can absorb them because the
mesh is short.

For $s\le u\le v\le t$ we denote by
\[
\mathcal H:=\!\!\sup_{s\le u\le v\le t}\!\frac{\|\delta X_{u,v}\|_{L_{m}}}{|v-u|^{\gamma}},
\qquad
\mathcal J:=\!\!\sup_{s\le u\le v\le t}\!\frac{\|J_{u,v}\|_{L_{m}}}{|v-u|^{2\gamma}},
\]
where $J$ denotes the remainder of the Davie expansion
\eqref{eq:davie}. Both are finite at fixed $(k,R)$, but not for the same
reason.

\emph{$\mathcal H<\infty$.} This is Step~\ref{step:Emoment}\ref{proof:Eproof:step2:b} of
\S\ref{subsec:Eproof}, whose "All intervals" part bounds
$\|\delta X_{u,v}\|_{L_{p}}\le K'|v-u|^{\gamma}$ at every pair, with $K'=K'(k,R)$. Since $m\le p$, the
same bound holds in $L_{m}$. The Davie modulus does not provide it by itself: it controls
$\|J_{u,v}\|_{L_{m}}$ only below a threshold depending on $(k,R)$, whereas the supremum defining
$\mathcal H$ runs up to $h_{\star}$, which does not.

\emph{$\mathcal J<\infty$ is \eqref{eq:Jfinite}.} For $J$ rather
than for $\delta X$, the same crude estimate yields only the following,
\[
\|J_{u,v}\|_{L_{m}}\le\big(\mathcal H+C(R)\Lambda^{2}\big)\tau^{\gamma},
\qquad\text{hence}\qquad
\|J_{u,v}\|_{L_{m}}/\tau^{2\gamma}\le\big(\mathcal H+C(R)\Lambda^{2}\big)\tau^{-\gamma}\to\infty
\quad\text{as }v\downarrow u ,
\]
the $\mathcal H$ coming from the $\delta X_{u,v}$ that \eqref{eq:davie} leaves on the other side.
Definition~\ref{def:davie} gives us $o(\tau^{1/2})$, weaker than $O(\tau^{2\gamma})$ since
$2\gamma>\tfrac12$ at every admissible $\gamma$: boundedness gives the exponent $\gamma$, while
$\mathcal J$ is posed at $2\gamma$. What yields $\mathcal J<\infty$ is \eqref{eq:Jfinite}, namely \cite[Prop.~4.3, Rem.~4.4]{FHL}, verified at Step~\ref{step:Ereg}. Both
$\mathcal H$ and $\mathcal J$ are unconditional $L_{m}$ quantities at the exponent of the conclusion;
no conditional $L_{\infty}$ quantity, no weight and no norm above $L_{m}$ occurs anywhere in this
proof.

\begin{remark}\label{rem:Jfinite}
Both of the bounds of \eqref{eq:Jfinite} are \cite[Prop.~4.3, Rem.~4.4]{FHL} applied to the truncated, mollified system, Step~\ref{step:Ereg} giving
the arithmetic that carries the source's two rates $2\gamma$ and $2\gamma+\bar\beta$ onto the two
exponents, at values allowed to depend on $(k,R)$ and on $m$ in any way.
Boundedness of the truncated coefficients supplies neither, obtaining from \eqref{eq:davie} only
\[
\|J_{u,v}\|_{L_{m}}\lesssim_{R}\tau^{\gamma}
\]
while the first bound divides by $\tau^{2\gamma}$, and Definition~\ref{def:davie}'s moduli
$o(\tau^{1/2})$ and $o(\tau)$ are short of $2\gamma$ and of $\epsilon_{1}>1$ respectively.

The first bound gives the finiteness of the quantity $\mathcal J$, which makes the
absorption of \ref{proof:Eproof:step2:step1:1c} a relation between finite numbers rather than $\infty\le\infty$, and it is used at
the order $2\gamma$: replacing $\|J_{u,w}\|_{L_{m}}\le\mathcal J\tau^{2\gamma}$ by the bound
$\big(\mathcal H+C(R)\Lambda^{2}\big)\tau^{\gamma}$ would drop the $J$-contribution to $\epsilon_{1}$
from
\[
\gamma+2\gamma\ =\ 3\gamma\ >\ 1
\qquad\text{to}\qquad
\gamma+\gamma\ =\ 2\gamma<1 ,
\]
and Lemma~\ref{lem:SSL}'s hypothesis $\epsilon_{1}>1$ would fail. The second is used at the
identification of \ref{proof:Eproof:step2:step1:1c}, where the uniqueness part of Lemma~\ref{lem:SSL} identifies the sewing
$\mathcal A$ with $\delta X-\int b-\int\sigma\,\d B$, both bounds of \eqref{eq:SSLconc} cutting
out the class in which that sewing is unique, at the exponents $\epsilon_{1},\epsilon_{2}$ that
\ref{proof:Eproof:step2:step1:1a}--\ref{proof:Eproof:step2:step1:1b} assemble and at some finite constants. What we assume of $\mathcal H$ and $\mathcal J$ is
finiteness at an exponent, while what \eqref{eq:HJfinal} produces is a constant free of $(k,R)$.
\end{remark}

Throughout, $\tau:=|w-u|$, $c_{i}:=1+|X_{u,i}|$, $\Lambda:=\Lambda_{\RZ}\ge1$, and every interval is
contained in $[s,t]$ with $t-s\le h_{\star}$, so that
\begin{equation}\label{eq:Ebookkeep}
\Lambda\,\tau^{\gamma}\ \le\ \Lambda h_{\star}^{\gamma}\ \le\ c_{0}^{-1},
\qquad\text{hence also}\qquad
\tau\ \le\ \big(c_{0}\Lambda\big)^{-1/\gamma}\ \le\ c_{0}^{-1/\gamma}\ \le\ 1 ,
\end{equation}
the last inequality by the normalisation $c_{0}\ge1$ of the statement. That normalisation is harmless,
$c_{0}$ being only ever enlarged below, which shortens the greedy mesh $h_{\star}$ and weakens no
conclusion. We use it, silently, at three kinds of places. First, wherever a power of
$\tau$ is traded for the smaller $\tau^{\gamma}$, as in
\[
\Lambda\tau\le1,
\qquad
\Lambda\tau^{\beta+\beta'}\le1
\]
in the $\Gamma_{2}$ bookkeeping of (1c) and in the three temporal pieces at \eqref{eq:eps2value}.
Second, wherever a power of $c_{0}$ is traded for a smaller one, as in
\[
c_{0}^{-(1+\gamma)/\gamma}\le c_{0}^{-1}
\qquad\text{at \eqref{eq:EmismatchAMGM}},
\qquad
c_{0}^{-1/(2\gamma)}\le1
\]
in that same bookkeeping. Third, in the $\Gamma^{3}$-part above, where an estimate of order $1$ is read
as one of order $2\gamma<1$. Every absorption below is made by choosing $c_{0}$ large, and we indicate
in each case which inequality of \eqref{eq:Ebookkeep} supplies the smallness.

\medskip
\begin{enumerate}[label=Step~\arabic*, ref=\arabic*, start=0, wide=0pt]
    \item \label{proof:Eproof:step2:step0} \emph{\textbf{Two elementary observations.}}
    \begin{enumerate}[label=(0\alph*), wide=0pt]
        \item \label{proof:Eproof:step2:step0:0a} For $c\ge1$ we denote by
$g_{c}(\theta):=\theta/(c+\sqrt\theta)$, $\theta\ge0$. $g_{c}$ is increasing with $g_{c}(0)=0$ and
\[
g_{c}''(\theta)\ =\ -\,\frac{3c\theta+\theta^{3/2}}{4\theta^{3/2}\big(c+\sqrt\theta\big)^{3}}\ <\ 0 ,
\]
so that $g_{c}$ is concave on $[0,\infty)$. Being concave and vanishing at $0$, it is subadditive and
satisfies $g_{c}(\lambda\theta)\le\lambda g_{c}(\theta)$ for $\lambda\ge1$. Further, it admits the
bound.
\begin{equation}\label{eq:concave}
g_{c}(\theta)\ \le\ \min\Big\{\frac{\theta}{c},\ \sqrt\theta\Big\} .
\end{equation}
Let $c$ be $\mathcal F_{u}$-measurable and let $Y\in L_{2}$. Writing $g_{c}$ as the infimum of its
tangents and evaluating at the $\mathcal F_{u}$-measurable point $\E_{u}[Y^{2}]$, we obtain
\begin{equation}\label{eq:concaveJensen}
\E_{u}\Big[\frac{Y^{2}}{c+|Y|}\Big]\ =\ \E_{u}\big[g_{c}(Y^{2})\big]\ \le\ g_{c}\big(\E_{u}[Y^{2}]\big) . 
\end{equation}
The two branches of \eqref{eq:concave} replace a dichotomy on $\Omega$. Such a dichotomy would need
moments at $3m$ for its rare branch. Nothing is split on $\Omega$, since we can split the
estimate into additive pieces and choose a branch of \eqref{eq:concave} for each, subadditivity
of $g_{c}$ making that legitimate piece by piece. What we give up instead is an exponent rather than a
moment, and we give it up once, in Step~\ref{proof:Eproof:step2:step1}\ref{proof:Eproof:step2:step1:1b}\ref{proof:Eproof:step2:step1:1b:gamma}.
\item \label{proof:Eproof:step2:step0:0b} Let
$\mathcal T_{u,w}:=\delta\big[f_{R}(X)\big]^{\mathrm{sp}}_{u,w}-Df_{R,u}(X_{u})\,\delta X_{u,w}$ denote the
second-order remainder of the expansion of $f_{R,u}$ in the state variable, where
$\delta[f_{R}(X)]^{\mathrm{sp}}_{u,w}:=f_{R,u}(X_{w})-f_{R,u}(X_{u})$. Here and below $f$ denotes the
truncated field $f_{R}$ of Step~\ref{step:Ereg}, and that is where the branch of
Assumption~\ref{ass:Erough} is visible. Two forms of the bound are available.

Under \ref{Ecurvaffine} what the truncated field has, is the norm condition
\eqref{eq:Ecurvnorm}, uniformly in $R$. In the affine
sub-case the untruncated field has $\mathcal T\equiv0$, while the truncated one does not. Even there the
argument runs through \eqref{eq:Ecurvnorm}, and that is why
Assumption~\ref{ass:Erough}\ref{Ecurvaffine} is stated at \eqref{eq:Ecurvnormhyp} rather than at
$D^{2}f\equiv0$. On $\{|\delta X_{u,w}|\le\tfrac12\Phi_{u}\}$ every point $\xi$ of the segment
$[X_{u},X_{w}]$ is such that, since $\Phi_{u}\le1+|X_{u}|$,
\[
1+|\xi|\ge(1+|X_{u}|)-|\delta X_{u,w}|\ge\tfrac12\Phi_{u} ,
\]
so that \eqref{eq:Ecurvnorm} yields $|\mathcal T_{u,w}|\le C|\delta X_{u,w}|^{2}/\Phi_{u}$. On the
complement the mean value theorem yields the following bound,
\[
|\mathcal T_{u,w}|\le2\|Df\|_{\infty}|\delta X_{u,w}|
\le4\|Df\|_{\infty}|\delta X_{u,w}|^{2}/\Phi_{u} .
\]
Hence, pathwise on all of $\Omega$, and with $C_{\mathcal T}:=\max\{4\|Df\|_{\infty},C_{\mathrm{nrm}}+3C\}$ in this branch, we obtain
\begin{equation}\label{eq:Taylorminaff}
\big|\mathcal T_{u,w}\big|\ \le\ C_{\mathcal T}\min\Big\{\frac{|\delta X_{u,w}|^{2}}{\Phi_{u}},\ |\delta X_{u,w}|\Big\},
\qquad\text{\ref{Ecurvaffine}} ,
\end{equation}
with the global weight $\Phi_{u}$. This is the same form as \eqref{eq:TaylorminV} below, and we can run
the affine branch of the proof identically to Step~\ref{proof:Eproof:step2:step2}. In particular \eqref{eq:Eacoord} is never used
under \ref{Ecurvaffine}, and the governing order there permits the larger first entry
$3\gamma$ in \eqref{eq:eps1value}, an improvement we do not use. No
coordinatewise growth bound on $f$ is available under \ref{Ecurvaffine}, e.g.~for $f(x)=(x_{2},0)$ it
is false.

Under \ref{Ecurvdiag} the field decouples across coordinates,
\[
\mathcal T^{\kappa}_{u,w,i}=q^{\kappa}_{u,i}(X_{w,i})-q^{\kappa}_{u,i}(X_{u,i})
-\dot q^{\kappa}_{u,i}(X_{u,i})\,\delta X_{u,w,i} ,
\]
and two bounds hold, each pathwise on all of $\Omega$. Applying the mean value theorem to
$\dot q$ yields the first,
\[
|\mathcal T^{\kappa}_{u,w,i}|\le2\|Df\|_{\infty}|\delta X_{u,w,i}| .
\]
For the second, let $|\delta X_{u,w,i}|\le\tfrac12c_{i}$. Then every point $\xi$ of the segment
$[X_{u,i},X_{w,i}]$ is such that $1+|\xi|\ge c_{i}-|\delta X_{u,w,i}|\ge\tfrac12c_{i}$, so that
\eqref{eq:Ecurvdiag} yields
\[
|\ddot q^{\kappa}_{u,i}(\xi)|\le2\bar C_{\mathrm{cur}}/c_{i},
\qquad\text{hence}\qquad
|\mathcal T^{\kappa}_{u,w,i}|\le \bar C_{\mathrm{cur}}(\delta X_{u,w,i})^{2}/c_{i} .
\]
If instead $|\delta X_{u,w,i}|>\tfrac12c_{i}$, the first bound yields
\[
|\mathcal T^{\kappa}_{u,w,i}|\le4\|Df\|_{\infty}(\delta X_{u,w,i})^{2}/c_{i} .
\]
With $C_{\mathcal T}:=\max\{4\|Df\|_{\infty},\bar C_{\mathrm{cur}}\}$ in this branch, we therefore obtain
\begin{equation}\label{eq:Taylormin}
\big|\mathcal T^{\kappa}_{u,w,i}\big|
\ \le\ C_{\mathcal T}\,\min\Big\{\frac{\big(\delta X_{u,w,i}\big)^{2}}{c_{i}},\ \big|\delta X_{u,w,i}\big|\Big\}
\ \le\ \frac{2\,C_{\mathcal T}\,\big(\delta X_{u,w,i}\big)^{2}}{c_{i}+\big|\delta X_{u,w,i}\big|},
\end{equation}
the last step by considering the regimes $|\delta X_{u,w,i}|\le c_{i}$ and $|\delta X_{u,w,i}|>c_{i}$.

Three features of \eqref{eq:Taylormin} matter, and the first two are shared with
\eqref{eq:Taylorminaff}. First, it is an inequality between random variables on all of $\Omega$: no
stopping time and no localization appears in this proof. Second, its weight is the coordinate $c_{i}$
and not $\Phi_{u}$, and this cannot be improved. At $X_{u}=(M,0,\dots,0)$ and
$\delta X_{u,w}=(0,\eta,0,\dots,0)$ we obtain the following,
\[
|\mathcal T_{u,w}|\asymp|\ddot q_{2}(0)|\eta^{2},
\qquad
|\delta X_{u,w}|^{2}/\Phi_{u}\asymp\eta^{2}/M ,
\]
the former independent of $M$. Third, its second branch is linear, and that is what keeps the
$\epsilon_{2}$-estimate at exponent $m$. In what follows we run the branch \ref{Ecurvdiag}, that is, the
harder one. Under \ref{Ecurvaffine} every occurrence of $c_{i}$ is replaced by
$\Phi_{u}$ and every $\ell^{2}$-sum over $i$ disappears. The four pieces are written out after
\eqref{eq:eps1value}.
\end{enumerate}%
    \item \label{proof:Eproof:step2:step1}\emph{\textbf{The germ of the equation.}} Let $A_{u,v}:=f_{u}(X_{u})\delta Z_{u,v}+(Df\,f+f')_{u}(X_{u})\ZZ_{u,v}$ denote the germ, whose defect is
\[
\delta A_{u,w,v}\ =\ -\,R_{u,w}\,\delta Z_{w,v}\ -\ \delta\big[(Df\,f+f')(X)\big]_{u,w}\,\ZZ_{w,v},
\qquad
R_{u,w}:=\delta\big[f(X)\big]_{u,w}-(Df\,f+f')_{u}(X_{u})\delta Z_{u,w} .
\]
We expand $f_{w}(X_{w})-f_{u}(X_{u})$ to second order in $x$ and to first order in $t$, and we insert
the Davie expansion of $\delta X_{u,w}$. The first-order terms cancel against
$(Df\,f+f')_{u}(X_{u})\delta Z_{u,w}$. What remains is
\begin{equation}\label{eq:Rsplitfive}
R_{u,w}\ =\ Df_{u}(X_{u})\Big[\int_{u}^{w}\!b\,\dr+\delta M_{u,w}
+(Df\,f+f')_{u}(X_{u})\ZZ_{u,w}+J_{u,w}\Big]
\ +\ \mathcal T_{u,w}\ +\ \varrho_{u,w},
\end{equation}
where $\varrho=\varrho^{(1)}+\varrho^{(2)}$ denotes the two time-increment terms, i.e.~
$\varrho^{(1)}_{u,w}:=\big(f'_{u}(X_{w})-f'_{u}(X_{u})\big)\delta Z_{u,w}$ and
$\varrho^{(2)}_{u,w}:=f_{w}(X_{w})-f_{u}(X_{w})-f'_{u}(X_{w})\delta Z_{u,w}$. These have different
orders:
\begin{equation}\label{eq:rhoorders}
\big\|\varrho^{(1)}_{u,w}\big\|_{L_{m}}\le\|Df'\|_{\infty}\Lambda\,\mathcal H\,\tau^{2\gamma},
\qquad
\big\|\varrho^{(2)}_{u,w}\big\|_{L_{m}}\le \sqrt2\,C_{\RZ}\,\mathcal N_{s,t}\,\tau^{\beta+\beta'},
\end{equation}
where $C_{\RZ}$ denotes the controlled-path norm of $(f,f')$ in $\mathscr D^{\beta,\beta'}_{\RZ}$.
Step~\ref{step:Ereg} shows that the truncation does not increase it. The exponent $\beta+\beta'$
is not $2\gamma$ and need not exceed it. Here Assumption~\ref{ass:Erough} demands $\beta'>\tfrac13$ and
$\beta\le\gamma$, and that permits e.g.~
\[
\gamma=\beta=0.45,
\qquad
\beta'=0.35,
\qquad
\beta+\beta'=0.80<0.90=2\gamma .
\]
What we use, is
\begin{equation}\label{eq:rhoexp}
\beta+\beta'+\gamma\ \ge\ 2\beta+\beta'\ >\ 1 ,
\end{equation}
since $\beta\le\gamma$. The term $\varrho^{(2)}$ is therefore admissible for $\epsilon_{1}$ on its own account and not
by comparison with the second-level term. It carries $\mathcal N_{s,t}$ linearly and is not mentioned
again, while $\varrho^{(1)}$ is treated exactly as the second-level term.
We use two Lipschitz facts repeatedly. First, evaluating the global growth bound of Assumption~\ref{ass:Erough} at
$x=\xi e_{i}$ yields, under \ref{Ecurvdiag}, the following bounds
\[
|f_{i}(x)|\le C_{\mathrm{gr}}(1+|x_{i}|),
\qquad
|(Df\,f+f')_{i}(x)|\le\big(\|Df\|_{\infty}+1\big)C_{\mathrm{gr}}(1+|x_{i}|) .
\]
Second,
\begin{equation}\label{eq:DGGLip}
\big\|D\big(Df\,f+f'\big)\big\|_{\infty}\ \le\ \|Df\|^{2}_{\infty}+C_{\mathrm{gr}}\bar C_{\mathrm{cur}}+\|Df'\|_{\infty}\ =:\ C_{\mathrm{lip}}
\qquad\text{under \ref{Ecurvdiag}},
\end{equation}
Here $D(Df\,f)=(Df)^{2}+(D^{2}f)f$, and under \ref{Ecurvdiag}, the second summand has $i$-th diagonal
entry $\ddot q_{i}(x_{i})q_{i}(x_{i})$. By \eqref{eq:Ecurvdiag} and the coordinatewise growth, that entry is bounded by $\bar C_{\mathrm{cur}}C_{\mathrm{gr}}$, the same cancellation as in the
eighth term of \ref{proof:Eproof:step2:a0}. Under \ref{Ecurvaffine} it is bounded instead by
$C_{\mathrm{gr}}(C_{\mathrm{nrm}}+3C)$ in place of $C_{\mathrm{gr}}\bar C_{\mathrm{cur}}$, by
\eqref{eq:Ecurvnorm} and the linear growth of $f$.
Without a curvature hypothesis $Df\,f$ is not globally Lipschitz for a field of linear growth,
and \eqref{eq:DGGLip} would fail.

\begin{enumerate}[label=(1\alph*), wide=0pt]
    \item \label{proof:Eproof:step2:step1:1a} \emph{The hypothesis at $\epsilon_{2}$.}  We can take $L_{m}$-norms in \eqref{eq:Rsplitfive}. Applying
the Burkholder--Davis--Gundy inequality to the martingale term, we obtain the following four bounds,
\begin{align*}
\Big\|Df_{u}(X_{u})\int_{u}^{w}\!b\,\dr\Big\|_{L_{m}}
&\ \le\ \|Df\|_{\infty}\sqrt2C_{\mathrm{gr}}\mathcal N_{s,t}\tau ,\\
\big\|Df_{u}(X_{u})\,\delta M_{u,w}\big\|_{L_{m}}
&\ \le\ \|Df\|_{\infty}c_{m}\sqrt2 C_{\mathrm{gr}}\mathcal N_{s,t}\tau^{1/2} ,\\
\big\|Df_{u}(X_{u})(Df\,f+f')_{u}(X_{u})\,\ZZ_{u,w}\big\|_{L_{m}}
&\ \le\ C\Lambda^{2}\mathcal N_{s,t}\tau^{2\gamma} ,\\
\big\|Df_{u}(X_{u})\,J_{u,w}\big\|_{L_{m}}
&\ \le\ \|Df\|_{\infty}\mathcal J\tau^{2\gamma} .
\end{align*}
The two time-increment terms sit at the orders $2\gamma$ and $\beta+\beta'$ of \eqref{eq:rhoorders},
both $\ge\gamma$ since
\[
\beta+\beta'>1-\beta\ge1-\gamma>\gamma .
\]
For the Taylor term we use the second branch of \eqref{eq:Taylormin}, yielding
\begin{equation}\label{eq:Teps2}
\big\|\mathcal T_{u,w}\big\|_{L_{m}}\ \le\ C_{\mathcal T}\sqrt e\,\big\|\,|\delta X_{u,w}|\,\big\|_{L_{m}}
\ \le\ C_{\mathcal T}\sqrt e\ \mathcal H\ \tau^{\gamma}
\qquad\text{(linear in $\mathcal H$, at exponent $m$).}
\end{equation}
Since $\gamma<\tfrac12$ the determining order is $\gamma$. Collecting the estimates yields the following bounds,
\[
\|R_{u,w}\|_{L_{m}}\le C\Lambda^{2}(\mathcal N_{s,t}+\mathcal H+\mathcal J)\tau^{\gamma},
\qquad
\big\|R_{u,w}\,\delta Z_{w,v}\big\|_{L_{m}}\ \le\ C\Lambda^{3}(\mathcal N_{s,t}+\mathcal H+\mathcal J)\tau^{\gamma}|v-w|^{\gamma} ,
\]
so that against $\delta Z_{w,v}$ this contributes at order $2\gamma$.
$\delta[(Df\,f+f')(X)]_{u,w}$ still remains. It is paired with $\ZZ_{w,v}$ and carries the
full two-index increment of a time-dependent coefficient along the path. We denote by
$\mathsf G_{t}:=Df_{t}f_{t}+f'_{t}$ and split
\begin{equation}\label{eq:Gsplit}
\delta\big[\mathsf G(X)\big]_{u,w}
=\underbrace{\mathsf G_{w}(X_{w})-\mathsf G_{w}(X_{u})}_{\text{spatial}}
+\underbrace{\mathsf G_{w}(X_{u})-\mathsf G_{u}(X_{u})}_{\text{temporal}} .
\end{equation}
By \eqref{eq:DGGLip} the spatial part obeys the following bound,
\[
\big\|\mathsf G_{w}(X_{w})-\mathsf G_{w}(X_{u})\big\|_{L_{m}}\ \le\ C_{\mathrm{lip}}\mathcal H\tau^{\gamma} ,
\]
by a bound on $\|D\mathsf G\|_{\infty}$ controlling that part. The temporal part does not vanish for a
non-autonomous field, so we have to estimate it. Expanding yields
\[
\mathsf G_{w}(x)-\mathsf G_{u}(x)=\big(Df_{w}-Df_{u}\big)(x)\,f_{w}(x)+Df_{u}(x)\big(f_{w}-f_{u}\big)(x)
+\big(f'_{w}-f'_{u}\big)(x) ,
\]
and by \eqref{eq:Eroughderiv}, \eqref{eq:cvfdefect} and $\|Df\|_{\infty},\|Df'\|_{\infty}<\infty$, the
three summands yield
\begin{equation}\label{eq:Gtemporal}
\begin{gathered}
\big|\mathsf G_{w}(x)-\mathsf G_{u}(x)\big|
\ \le\ C_{\mathrm{tmp}}\big(1+|x|\big)\Big(\Lambda\tau^{\gamma}+\tau^{\beta+\beta'}\Big)+C_{\RZ}\big(1+|x|\big)\tau^{\beta'},
\\
C_{\mathrm{tmp}}:=C_{\mathrm{tmp}}\big(C_{\mathrm{gr}},C^{(1)}_{\RZ},C_{\RZ},\|Df\|_{\infty},\|Df'\|_{\infty}\big),
\end{gathered}
\end{equation}
a bound, which is linear in the state. Indeed, \eqref{eq:Eroughderiv} in its unweighted form yields
\[
|Df_{w}-Df_{u}|(x)\le\|Df'\|_{\infty}\Lambda\tau^{\gamma}+C^{(1)}_{\RZ}\tau^{\beta+\beta'} ,
\]
and $|f_{w}(x)|\le C_{\mathrm{gr}}(1+|x|)$ provides the single power. Taking $L_{m}$ norms yields,
\begin{equation}\label{eq:Gincrement}
\big\|\delta[\mathsf G(X)]_{u,w}\big\|_{L_{m}}
\ \le\ C_{\mathrm{lip}}\mathcal H\,\tau^{\gamma}
\ +\ C\,\mathcal N_{s,t}\Big(\Lambda\tau^{\gamma}+\tau^{\beta+\beta'}+\tau^{\beta'}\Big).
\end{equation}
Instead of collapsing the temporal orders into $\gamma\wedge\beta'$, we keep them separate. Only
the first one carries a factor $\Lambda$, coming from the two $\delta Z$'s inside the defects of $Df$ and
$f$, while the $\tau^{\beta+\beta'}$ and $\tau^{\beta'}$ pieces are the controlled-path defects. Attaching $\Lambda$ to all three would put $\Lambda^{2-\beta'/\gamma}$ on
the coefficient of $\mathcal N_{s,t}$ after the greedy bookkeeping
$h\le(c_{0}\Lambda)^{-1/\gamma}$, and for $\beta'<\gamma$ that exceeds the $O(\Lambda)$ allowed by
\eqref{eq:Esystem} and \eqref{eq:HJfinal}. Against $\ZZ_{w,v}=O(\Lambda\tau^{2\gamma})$, the coefficients below being written
with $\Lambda\ge1$ so that one power covers all three, the three
contribute at the orders
\[
3\gamma,
\qquad
2\gamma+\beta+\beta',
\qquad
2\gamma+\beta' ,
\]
with coefficients $\Lambda^{3}\mathcal N_{s,t}$, $\Lambda^{2}\mathcal N_{s,t}$ and
$\Lambda^{2}\mathcal N_{s,t}$, the first carrying the surplus factor $\Lambda\tau^{\gamma}\le c_{0}^{-1}$
of \eqref{eq:Ebookkeep}. Each order is $\ge\epsilon_{1}$. The first, because
$3\gamma\ge\epsilon_{1}$ as above, and the second and third because $\beta\le\gamma$ yields
\[
2\gamma+\beta'\ge\gamma+\beta+\beta'\ge\epsilon_{1} ,
\]
and $2\gamma+\beta+\beta'$ is still larger. Hence, the hypothesis of Lemma~\ref{lem:SSL} holds at
\begin{equation}\label{eq:eps2value}
\epsilon_{2}\ =\ 2\gamma\ >\ \tfrac12
\qquad\text{with}\qquad
\Gamma_{2}\ \le\ C\Big(\Lambda\,\mathcal H+\Lambda^{2}\mathcal N_{s,t}
+\Lambda h_{\star}^{\gamma}\,\mathcal J\Big),
\end{equation}
which is the list of \ref{proof:Eproof:step2:step1:1a} divided by $\tau^{2\gamma}$ and
evaluated at $\tau\le h_{\star}$: the two $\Lambda^{2}\mathcal H\tau^{\gamma}$ entries are
$\Lambda\mathcal H$ times $\Lambda\tau^{\gamma}\le c_{0}^{-1}$, and the $\Lambda^{3}\mathcal
N_{s,t}\tau^{\gamma}$ entry is $\Lambda^{2}\mathcal N_{s,t}$ times the same factor. The
$\mathcal J$-term carries the factor $\Lambda h_{\star}^{\gamma}\le c_{0}^{-1}$ of
\eqref{eq:Ebookkeep}, independent of $\Lambda$, which is what the absorption in
\eqref{eq:Esystem} needs. We write $h_{\star}$ and
not $\tau=|w-u|$ here because Lemma~\ref{lem:SSL} requires a $\Gamma_{2}$ free of the triple, and
$\tau\le h_{\star}$ on the range the lemma is applied on. The temporal half of
\eqref{eq:Gincrement} sits inside the $\mathcal N_{s,t}$-term here and in \eqref{eq:Esystem} below, its
three pieces being
\[
\Lambda^{2}\mathcal N_{s,t}\cdot\Lambda\tau^{\gamma},
\qquad
\Lambda^{2}\mathcal N_{s,t}\cdot\tau^{\beta+\beta'},
\qquad
\Lambda^{2}\mathcal N_{s,t}\cdot\tau^{\beta'} ,
\]
all bounded on $|t-s|\le h_{\star}$. The Taylor term fixes $\epsilon_{2}=2\gamma$, the
remaining contributions being of order $\tfrac12+\gamma$ or better. $2\gamma>\tfrac12$ needs
only $\gamma>\tfrac14$.
\item \label{proof:Eproof:step2:step1:1b} \emph{The hypothesis at $\epsilon_{1}$.} We apply $\E_{u}$ to \eqref{eq:Rsplitfive}. The
martingale term vanishes exactly, $Df_{u}(X_{u})$ being $\mathcal F_{u}$-measurable and $\delta M_{u,w}$ of
zero conditional mean. The remaining terms are of the orders
\[
1\ \ \text{(drift)},
\qquad
2\gamma\ \ \text{(second level and }\varrho^{(1)}\text{)},
\qquad
\beta+\beta'\ \ (\varrho^{(2)}) ,
\]
the last being admissible by \eqref{eq:rhoexp} (not because $\beta+\beta'\ge2\gamma$). Conditional Jensen yields
\[
\|\E_{u}J_{u,w}\|_{L_{m}}\le\|J_{u,w}\|_{L_{m}}\le\mathcal J\tau^{2\gamma} .
\]
The crude bound suffices, since $2\gamma+\gamma=3\gamma>1$, so that the refined conclusion of
Lemma~\ref{lem:SSL} on $\E_{u}J$ does not enter. It enters afterwards, in Step~\ref{proof:Eproof:step2:step1}\ref{proof:Eproof:step2:step1:1d},
where it produces \eqref{eq:EscaleCond}. The only term requiring work is $\mathcal T$. We can treat it by
\eqref{eq:Taylormin} and \eqref{eq:concaveJensen}. Denoting by
$\mathcal A_{i}:=\E_{u}\big[(\delta X_{u,w,i})^{2}\big]$, we obtain,
\[
\E_{u}\big|\mathcal T^{\kappa}_{u,w,i}\big|\ \le\ 2C_{\mathcal T}\,g_{c_{i}}\big(\mathcal A_{i}\big),
\qquad
\big|\E_{u}\mathcal T_{u,w}\big|\ \le\ \Big(\sum_{i,\kappa}\big(\E_{u}|\mathcal T^{\kappa}_{u,w,i}|\big)^{2}\Big)^{1/2} .
\]
Along \eqref{eq:davie} we can split $\mathcal A_{i}$ into four pieces,
\[
\mathcal A_{i}\le4\big(\mathcal A^{b}_{i}+\mathcal A^{M}_{i}+\mathcal A^{Z}_{i}+\mathcal A^{J}_{i}\big),
\]
where, the second by the conditional It\^o isometry,
\begin{align*}
\mathcal A^{b}_{i}&=\E_{u}\big(\int_{u}^{w}b_{i}\dr\big)^{2}\le\tau\,\E_{u}\!\int_{u}^{w}\!b_{i}^{2}\dr ,
&\mathcal A^{M}_{i}&=\E_{u}\!\int_{u}^{w}\!a_{ii}(r,X_{r})\dr ,\\
\mathcal A^{Z}_{i}&=\big(|f_{i}(X_{u})||\delta Z_{u,w}|+|(Df\,f+f')_{i}(X_{u})||\ZZ_{u,w}|\big)^{2} ,
&\mathcal A^{J}_{i}&=\E_{u}J^{2}_{u,w,i} .
\end{align*}
By subadditivity of $g_{c_{i}}$ and $g_{c_{i}}(4\theta)\le4g_{c_{i}}(\theta)$ we obtain
\[
g_{c_{i}}(\mathcal A_{i})\le4\sum_{\alpha}g_{c_{i}}(\mathcal A^{\alpha}_{i}) .
\]
Applying Minkowski's inequality in $\ell^{2}(i)$, the outer index being $\alpha$ so that we can choose a different
branch of \eqref{eq:concave} for each piece, it suffices to bound the four
$\ell^{2}(i)$-sums separately. All $L_{m/2}$-manipulations below are allowed because $m\ge2$.
\begin{enumerate}[label=(\greek*), wide=0pt]
    \item \label{proof:Eproof:step2:step1:1b:alpha} \emph{Rough.} By the coordinatewise growth above and \eqref{eq:Ebookkeep}, we obtain
$\mathcal A^{Z}_{i}\le C\,c^{2}_{i}\,\Lambda^{2}\tau^{2\gamma}$. On the branch $\theta/c_{i}$ this
yields
\[
g_{c_{i}}(\mathcal A^{Z}_{i})\le Cc_{i}\Lambda^{2}\tau^{2\gamma},
\qquad
\big(\sum_{i}c^{2}_{i}\big)^{1/2}\le\sqrt{2n}\,\Phi_{u},
\]
so that the $L_{m}$-norm of the $\ell^{2}$-sum obeys
\[
\Big\|\Big(\sum_{i}g_{c_{i}}\big(\mathcal A^{Z}_{i}\big)^{2}\Big)^{1/2}\Big\|_{L_{m}}
\ \le\ C\sqrt{2ne}\,\Lambda^{2}\mathcal N_{s,t}\tau^{2\gamma} .
\]
\item \label{proof:Eproof:step2:step1:1b:beta} \emph{Drift.} On the branch $\sqrt\theta$ we obtain
$\big(\sum_{i}\mathcal A^{b}_{i}\big)^{1/2}=\big(\tau\E_{u}\!\int_{u}^{w}|b(r,X_{r})|^{2}\dr\big)^{1/2}$.
Applying conditional Jensen, the triangle inequality in $L_{m/2}$ and \eqref{eq:Egrowth}, we obtain,
\[
\Big\|\Big(\sum_{i}\mathcal A^{b}_{i}\Big)^{1/2}\Big\|_{L_{m}}
=\Big\|\tau\,\E_{u}\!\int_{u}^{w}\!|b|^{2}\Big\|^{1/2}_{L_{m/2}}
\le\tau^{1/2}\Big(\int_{u}^{w}\!\big\|b(r,X_{r})\big\|^{2}_{L_{m}}\dr\Big)^{1/2}
\le\sqrt2\,C_{\mathrm{gr}}\,\mathcal N_{s,t}\,\tau .
\] 
\item \label{proof:Eproof:step2:step1:1b:gamma}  \emph{Martingale.} By \eqref{eq:Eacoord} and
$1+|X_{r,i}|\le c_{i}+|\delta X_{u,r,i}|$ we obtain
$\mathcal A^{M}_{i}\le C_{\mathrm{gr}}\big(\mathcal A^{M,1}_{i}+\mathcal A^{M,2}_{i}\big)$, where
\[
\mathcal A^{M,1}_{i}:=c_{i}\,\E_{u}\!\int_{u}^{w}\!\big(1+|X_{r}|\big)\dr,
\qquad
\mathcal A^{M,2}_{i}:=\E_{u}\!\int_{u}^{w}\!\big|\delta X_{u,r,i}\big|\big(1+|X_{r}|\big)\dr .
\]
For the first we can take the branch $\theta/c_{i}$. The bound $\E_{u}\int(1+|X_{r}|)\dr$ then no longer
depends on $i$, the $\ell^{2}$-sum produces a factor $\sqrt{n}$, and we obtain the following bound,
\[
\Big\|\Big(\sum_{i}g_{c_{i}}\big(\mathcal A^{M,1}_{i}\big)^{2}\Big)^{1/2}\Big\|_{L_{m}}
\ \le\ \sqrt{2ne}\;\mathcal N_{s,t}\tau ,
\]
On the second we can take the branch $\sqrt\theta$. Since $\sum_{i}|y_{i}|\le\sqrt n\,|y|$,
\[
\Big(\sum_{i}\mathcal A^{M,2}_{i}\Big)^{1/2}
=\Big(\E_{u}\!\int_{u}^{w}\!\Big(\sum_{i}|\delta X_{u,r,i}|\Big)\big(1+|X_{r}|\big)\dr\Big)^{1/2}
\le\Big(\sqrt n\,\E_{u}\!\int_{u}^{w}\!\big|\delta X_{u,r}\big|\big(1+|X_{r}|\big)\dr\Big)^{1/2},
\]
and conditional Jensen together with H\"older's inequality in $L_{m/2}$ yields
\begin{equation}\label{eq:Emismatch}
\Big\|\Big(\sum_{i}\mathcal A^{M,2}_{i}\Big)^{1/2}\Big\|_{L_{m}}
\ \le\ n^{1/4}\Big(\int_{u}^{w}\!\big\|\delta X_{u,r}\big\|_{L_{m}}\big\|1+|X_{r}|\big\|_{L_{m}}\dr\Big)^{1/2}
\ \le\ n^{1/4}\big(\sqrt2\,\mathcal H\,\mathcal N_{s,t}\big)^{1/2}\,\tau^{\frac{1+\gamma}{2}} .
\end{equation}
The order is $\tfrac{1+\gamma}{2}$, and this term fixes $\epsilon_{1}$. It comes from
the mismatch of evaluation points: the weight $c_{i}$ sits at $u$ while $a_{ii}$ is evaluated at
$r\in[u,w]$, and \eqref{eq:Eacoord} cancels the coordinate only at coinciding times. The square-root
branch keeps the estimate at exponent $m$. The branch $\theta/c_{i}$ would give the better order
$1+\gamma$, but it would force $\|\delta X\|_{L_{2m}}\|1+|X|\|_{L_{2m}}$, the higher moment
this proof exists to avoid.
\item \label{proof:Eproof:step2:step1:1b:delta} \emph{Remainder.} On the branch $\sqrt\theta$ we obtain
\[
\big(\sum_{i}\mathcal A^{J}_{i}\big)^{1/2}=\big(\E_{u}|J_{u,w}|^{2}\big)^{1/2},
\qquad
\Big\|\big(\E_{u}|J_{u,w}|^{2}\big)^{1/2}\Big\|_{L_{m}}=\big\|\E_{u}|J_{u,w}|^{2}\big\|^{1/2}_{L_{m/2}}\le\|J_{u,w}\|_{L_{m}}\le\mathcal J\tau^{2\gamma} .
\]
\end{enumerate}
We collect the four estimates and use $\tfrac{1+\gamma}{2}<2\gamma$ (exactly
$\gamma>\tfrac13$). The governing order among the pieces of $\mathcal T$ is $\tfrac{1+\gamma}{2}$. Against
$\delta Z_{w,v}$, and together with \eqref{eq:rhoexp}, we therefore obtain
\begin{equation}\label{eq:eps1value}
\epsilon_{1}\ =\ \min\Big\{\frac{1+3\gamma}{2},\ \beta+\beta'+\gamma\Big\}\ >\ 1 ,
\end{equation} each entry exceeding $1$ for its own reason. The first because $\gamma>\tfrac13$, the
second by \eqref{eq:rhoexp}.  In the autonomous case $f'\equiv0$, we have $\varrho\equiv0$ and $\epsilon_{1}=\tfrac12(1+3\gamma)$ outright.

The governing order under \ref{Ecurvaffine}  is higher, but $\epsilon_{1}$ always
denotes \eqref{eq:eps1value}. The four pieces are the
same, with $\mathcal A:=\E_{u}|\delta X_{u,w}|^{2}$ and with the global weight $\Phi_{u}$ of
\eqref{eq:Taylorminaff} in place of $c_{i}$, so that no $\ell^{2}$-sum over $i$ occurs:
\begin{itemize}[leftmargin=1.4em]
\item \emph{Rough.} $\mathcal A^{Z}\le C\Phi^{2}_{u}\Lambda^{2}\tau^{2\gamma}$ by
\eqref{eq:Eroughgrowth} and \eqref{eq:Ebookkeep}. On the branch $\theta/\Phi_{u}$ this is
$C\Phi_{u}\Lambda^{2}\tau^{2\gamma}$, of $L_{m}$-norm $C\Lambda^{2}\mathcal N_{s,t}\tau^{2\gamma}$,
at order $2\gamma$.
\item \emph{Drift.} On the branch $\sqrt\theta$, at order $1$, exactly as in
\ref{proof:Eproof:step2:step1:1b:beta}.
\item \emph{Martingale.} Here \eqref{eq:Eacoord} is not used. By \eqref{eq:Egrowth},
$\mathcal A^{M}=\E_{u}\!\int_{u}^{w}\!\operatorname{tr}a_{r}(X_{r})\dr$ with
$\operatorname{tr}a_{r}(X_{r})=\Frob{\sigma_{r}(X_{r})}^{2}\le C^{2}_{\mathrm{gr}}(1+|X_{r}|)^{2}$,
and $(1+|X_{r}|)^{2}\le4\Phi^{2}_{u}+4|\delta X_{u,r}|^{2}$ splits it in two. The first half goes on
the branch $\theta/\Phi_{u}$, at order $1$. The second goes on the branch $\sqrt\theta$, where
conditional Jensen and Minkowski's integral inequality in $L_{m/2}$ yield
\[
\Big\|\Big(\E_{u}\!\int_{u}^{w}\!\big|\delta X_{u,r}\big|^{2}\dr\Big)^{1/2}\Big\|_{L_{m}}
\ \le\ \Big(\int_{u}^{w}\!\big\|\delta X_{u,r}\big\|^{2}_{L_{m}}\dr\Big)^{1/2}
\ \le\ \mathcal H\,\tau^{\gamma+\frac12} ,
\]
at order $\gamma+\tfrac12$.
\item \emph{Remainder.} On the branch $\sqrt\theta$, at order $2\gamma$, as in
\ref{proof:Eproof:step2:step1:1b:delta}.
\end{itemize}
The governing order is $2\gamma$, since $2\gamma<\gamma+\tfrac12<1$ at $\gamma<\tfrac12$, and
against $\delta Z_{w,v}$ that is $3\gamma$. The mismatch of evaluation points behind
\eqref{eq:Emismatch} does not occur: the weight is global, so the split is by $\Phi_{u}$ and
$|\delta X_{u,r}|$ and leaves the second factor at degree two rather than one.

Three later places change under \ref{Ecurvaffine}, and none of them for the worse. In
\eqref{eq:Teps2} the linear branch of \eqref{eq:Taylorminaff} replaces the linear branch of
\eqref{eq:Taylormin}, so the factor $\sqrt e$ coming from the $\ell^{2}$-sum disappears and the
exponent stays $\gamma$. In the enumeration of \ref{proof:Eproof:step2:step1:1d}, kind~(4) is empty,
since \eqref{eq:Emismatch} does not arise, and the $\mathcal A^{M,2}$-half of the martingale piece,
of order $\gamma+\tfrac12$ with coefficient $\Lambda\mathcal H\le C_{3}\Lambda^{2}\mathcal N_{s,t}$,
is of kind~(5). Every $\Gamma_{1}$ contribution is then of order strictly above $2\gamma$, the half
just named at $2\gamma+\tfrac12$, so the arithmetic--geometric mean step \eqref{eq:EmismatchAMGM} is
not needed and \eqref{eq:Esystem}--\eqref{eq:HJfinal} close a fortiori.
The remaining contribution to $\|\E_{u}\delta A_{u,w,v}\|_{L_{m}}$ is
$\|\E_{u}\delta[(Df\,f+f')(X)]_{u,w}\|_{L_{m}}|\ZZ_{w,v}|$. Conditional Jensen and
\eqref{eq:Gincrement}, the latter carrying the temporal half of that increment alongside the spatial
one, yield
\[
\big\|\E_{u}\delta[(Df\,f+f')(X)]_{u,w}\big\|_{L_{m}}\big|\ZZ_{w,v}\big|
\ \le\ C_{\mathrm{lip}}\mathcal H\Lambda^{2}\tau^{3\gamma}
\ +\ C\Big(\Lambda^{3}\mathcal N_{s,t}\,\tau^{3\gamma}
+\Lambda^{2}\mathcal N_{s,t}\,\tau^{2\gamma+\beta+\beta'}
+\Lambda^{2}\mathcal N_{s,t}\,\tau^{2\gamma+\beta'}\Big),
\]
Here we keep the three temporal pieces apart exactly as in \eqref{eq:Gincrement}, since only the first
of them carries a factor $\Lambda$. All four orders are $\ge\epsilon_{1}$. For $3\gamma$ we obtain
$3\gamma>\tfrac12(1+3\gamma)\ge\epsilon_{1}$, that is, exactly $\gamma>\tfrac13$
, in both branches, $\epsilon_{1}$ always denoting \eqref{eq:eps1value}. For $2\gamma+\beta'$, and consequently for $2\gamma+\beta+\beta'$, we obtain
$2\gamma+\beta'\ge\gamma+\beta+\beta'\ge\epsilon_{1}$, since $\beta\le\gamma$. Lemma~\ref{lem:SSL} requires order $\ge\epsilon_{1}$, so that equality is admissible, and all four are absorbed with the rest.
\item\label{proof:Eproof:step2:step1:1c}   Lemma~\ref{lem:SSL}, applied at
\eqref{eq:eps1value} and \eqref{eq:eps2value}, produces a sewing $\mathcal A$ such that
\begin{equation}\label{eq:SSLtwo}
\big\|\mathcal A_{u,v}-A_{u,v}\big\|_{L_{m}}\le C\big(\Gamma_{1}|v-u|^{\epsilon_{1}}+\Gamma_{2}|v-u|^{\epsilon_{2}}\big),
\end{equation}
Further, $\mathcal A_{u,v}-A_{u,v}=J_{u,v}$, the two remainders solving the same sewing problem and its
solution being unique. What places $\delta X-\int b-\int\sigma\,\d B$ in the uniqueness class is \eqref{eq:Jfinite}, in both of its bounds. Indeed, the two estimates of \eqref{eq:SSLconc} separate that class, at the exponents \eqref{eq:eps1value} and \eqref{eq:eps2value} and at finite constants, which
Definition~\ref{def:davie} does not provide there, its $o(\tau^{1/2})$ being short of $2\gamma$ and its
$o(\tau)$ short of $\epsilon_{1}$. The uniqueness part this rests on is the class of Lemma~\ref{lem:SSL}, which is wider than the class
\cite[Thm.~2.9(i)]{FHL} prints and which the paragraph after that lemma settles. \eqref{eq:Jfinite}
is itself a citation,
\cite[Prop.~4.3, Rem.~4.4]{FHL}, verified at Step~\ref{step:Ereg}. 
Lemma~\ref{lem:SSL} delivers $|v-u|^{\min\{\epsilon_{1},\epsilon_{2}\}}$ and \eqref{eq:SSLtwo} yields
\[
\|J_{u,v}\|_{L_{m}}=O(|v-u|^{2\gamma})
\qquad\text{and not}\qquad
O(|v-u|^{3\gamma}) .
\]
Every use of \eqref{eq:EscaleX}--\eqref{eq:EscaleNat} below needs only $2\gamma>\gamma$, but the
constant in \eqref{eq:Emoment} depends on the difference.
We can divide \eqref{eq:SSLtwo} by $|v-u|^{2\gamma}$ and take the supremum. Feeding
\eqref{eq:Rsplitfive} back into the Davie expansion, whose other terms are bounded by
$C\Lambda\mathcal N_{s,t}\tau^{\gamma}$, yields,
\begin{equation}\label{eq:Hclose}
\mathcal J\ \le\ C\big(\Gamma_{1}\,h^{\epsilon_{1}-2\gamma}+\Gamma_{2}\big),
\qquad
\mathcal H\ \le\ C\Lambda\,\mathcal N_{s,t}\ +\ \mathcal J\,h^{\gamma},
\qquad
\epsilon_{1}-2\gamma\ \ge\ \tfrac{1-\gamma}{2}\wedge\big(\beta+\beta'+\gamma-2\gamma\big)\ >\ 0,
\end{equation}
where $h:=t-s\le h_{\star}$ denotes the length of the interval. It remains to see, term by term, that in the resulting inequalities the
coefficients of $\mathcal H$ and $\mathcal J$ are small and the coefficient of $\mathcal N_{s,t}$ is
$O(\Lambda)$. The powers of $\Lambda$ matter, since dropping them makes the absorption fail for large
$\|\RZ\|_{\gamma}$, that is, in the only regime the mesh is for.

\emph{From $\Gamma_{2}$.} Dividing the contributions to $\|\delta A_{u,w,v}\|_{L_{m}}$ listed in \ref{proof:Eproof:step2:step1:1a}
by $\tau^{2\gamma}$ yields the following bound,
\[
\begin{gathered}
\Gamma_{2}\ \le\ C\Big[\underbrace{\Lambda\mathcal H}_{\text{Taylor}}
+\underbrace{\Lambda^{2}\mathcal H\,\tau^{\gamma}
+\Lambda^{2}\mathcal N_{s,t}\big(\tau^{\beta+\beta'}+\tau^{\beta'}\big)}_{\delta(Df\,f+f'),\ \text{spatial and temporal}}
+\underbrace{\Lambda\mathcal N_{s,t}\big(\tau^{\frac12-\gamma}+\tau^{1-\gamma}+\tau^{\beta+\beta'-\gamma}\big)}_{\text{mart., drift, }\varrho^{(2)}}
\\
+\underbrace{\Lambda^{2}\mathcal H\,\tau^{\gamma}}_{\varrho^{(1)}}
+\underbrace{\Lambda^{3}\mathcal N_{s,t}\,\tau^{\gamma}}_{\text{second level, and the }\Lambda\text{-carrying temporal piece}}
+\underbrace{\Lambda\mathcal J\,\tau^{\gamma}}_{J}\Big],
\end{gathered}
\]
where each factor $\Lambda$ comes from one $\delta Z_{w,v}$ or one $\ZZ_{w,v}$. Where a higher
power is used, the bound is deliberately crude, $\Lambda\ge1$ making it valid. We multiply by
$h^{\gamma}$, and \eqref{eq:Ebookkeep} then yields the absorption
inequalities
\begin{align*}
\Lambda h^{\gamma}&\le c_{0}^{-1},
&\Lambda^{2}h^{2\gamma}&\le c_{0}^{-2},
&\Lambda h^{\frac12}&\le c_{0}^{-\frac1{2\gamma}}\Lambda^{1-\frac1{2\gamma}}\le1,\\
\Lambda h&\le1,
&\Lambda h^{\beta+\beta'}&\le1,
&\Lambda^{2}h^{\gamma+\beta'}&\le c_{0}^{-1}\Lambda,\\
\Lambda^{2}h^{\gamma+\beta+\beta'}&\le c_{0}^{-1}\Lambda,
&\Lambda^{3}h^{2\gamma}&\le c_{0}^{-2}\Lambda,
&\Lambda h^{2\gamma}&\le c_{0}^{-2}\Lambda^{-1} .
\end{align*}
The third holds since $\gamma<\tfrac12$ and $c_{0}\ge1$. The fourth and the fifth hold by $h\le1$ and
$\beta+\beta'>\gamma$, putting those two powers of $h$ below $h^{\gamma}$. The sixth and the seventh,
for the two temporal pieces, hold by $\Lambda h^{\gamma}\le c_{0}^{-1}$ and $h\le1$. Hence
$\Gamma_{2}h^{\gamma}$ contributes to $\mathcal H$ the coefficients
\[
Cc_{0}^{-1}\ \text{ on }\mathcal H,
\qquad
Cc_{0}^{-2}\ \text{ on }\mathcal J,
\qquad
C\Lambda\ \text{ on }\mathcal N_{s,t},
\]
as required.

\emph{From $\Gamma_{1}$.} Every contribution is of order strictly greater
than $2\gamma$ in $|v-u|$, and that is what the division by $\tau^{\epsilon_{1}}$ followed by
multiplication by $h^{\epsilon_{1}-2\gamma}$ needs. \eqref{eq:Emismatch} is treated separately for a different
reason: its coefficient $\Lambda(\mathcal H\mathcal N_{s,t})^{1/2}$ is not linear in $\mathcal H$, so
it cannot enter the system \eqref{eq:Esystem} as it stands. The list of the remaining contributions and of
their orders is given by
\begin{align*}
\text{drift of \eqref{eq:Rsplitfive}}&:\ 1+\gamma ,
&\text{second-level term and }\E_{u}J_{u,w}&:\ 3\gamma ,\\
(\alpha),\ (\delta),\ \varrho^{(1)}\ \text{against }\delta Z_{w,v}&:\ 2\gamma+\gamma=3\gamma ,
&(\beta),\ (\gamma_{1})&:\ 1+\gamma\ge3\gamma ,\\
\varrho^{(2)}&:\ \beta+\beta'+\gamma ,
&\text{the four }\ZZ\text{-paired pieces}&:\ 3\gamma,\ 3\gamma ,\\
&&&\phantom{:\ }2\gamma+\beta+\beta',\ 2\gamma+\beta' ,
\end{align*}
the entry $(\beta),(\gamma_{1})$ using $\gamma\le\tfrac12$. Every one exceeds $2\gamma$, the last two by
$\beta,\beta'>0$, and $\varrho^{(2)}$ by \eqref{eq:rhoexp} and $\gamma<\tfrac12$, which
give $\beta+\beta'+\gamma>1>2\gamma$. They do not all need to exceed $3\gamma$, and at $\gamma$ close to $\tfrac12$
they will not. That is why we state the criterion at $2\gamma$. At
e.g.~$(\gamma,\beta,\beta')=(0.49,0.49,0.34)$ both $\varrho^{(2)}$ and the $\tau^{2\gamma+\beta'}$ piece sit
at
\[
\beta+\beta'+\gamma\ =\ 2\gamma+\beta'\ =\ 1.32\ ,
\qquad
1.32<1.47=3\gamma .
\]
Each becomes a term with a positive power of $h$ after division, and a factor $\Lambda^{3}$
at worst. Using \eqref{eq:Ebookkeep} once more, we obtain,
\[
\mathcal N_{s,t}\text{-terms}:\ O(\Lambda^{2}),
\qquad
\mathcal H\text{-terms}:\ O(\Lambda),
\qquad
\mathcal J\text{-term}:\ O(c_{0}^{-1}) .
\]
The bound $O(\Lambda^{2})$ on the $\mathcal N_{s,t}$-terms is saturated by the
$\Lambda^{3}\mathcal N_{s,t}\tau^{3\gamma}$ part, and that is why \eqref{eq:HJfinal} carries
$\Lambda^{2}$ on $\mathcal J$.

The one term requiring care is \eqref{eq:Emismatch}. It is of order $\tfrac{1+3\gamma}{2}$, the first entry of \eqref{eq:eps1value} and hence at least
$\epsilon_{1}$, and therefore
contributes to $\mathcal H$, via \eqref{eq:Hclose}, the quantity
\[
C\Lambda\big(\mathcal H\mathcal N_{s,t}\big)^{1/2}h^{\frac{1+3\gamma}{2}-\gamma}
=C\Lambda\big(\mathcal H\mathcal N_{s,t}\big)^{1/2}h^{\frac{1+\gamma}{2}} .
\]
Applying the arithmetic--geometric mean inequality $xy\le\tfrac14x^{2}+y^{2}$ with
$x=\mathcal H^{1/2}$ and $y=C\Lambda h^{(1+\gamma)/2}\mathcal N^{1/2}_{s,t}$, we obtain,
\begin{equation}\label{eq:EmismatchAMGM}
C\Lambda h^{\frac{1+\gamma}{2}}\big(\mathcal H\mathcal N_{s,t}\big)^{1/2}
\ \le\ \tfrac14\,\mathcal H\ +\ C^{2}\Lambda^{2}h^{1+\gamma}\,\mathcal N_{s,t},
\qquad
\Lambda^{2}h^{1+\gamma}\ \le\ c_{0}^{-\frac{1+\gamma}{\gamma}}\,\Lambda^{2-\frac{1+\gamma}{\gamma}}\ \le\ c_{0}^{-1},
\end{equation}
the last inequality is due to $2-\tfrac{1+\gamma}{\gamma}=1-\tfrac1\gamma<0$ for $\gamma<1$ and
$\Lambda\ge1$, and because $\tfrac{1+\gamma}{\gamma}\ge1$ and $c_{0}\ge1$ yield
$c_{0}^{-(1+\gamma)/\gamma}\le c_{0}^{-1}$. The coefficient of $\mathcal H$ is therefore $\tfrac14$, free of $c_{0}$ and of $\Lambda$, and the coefficient of $\mathcal N_{s,t}$ is $O(1)$. Its
contribution to $\mathcal J$ carries instead $h^{\frac{1+3\gamma}{2}-2\gamma}=h^{(1-\gamma)/2}$, and the same
inequality yields
\[
C\Lambda h^{\frac{1-\gamma}{2}}\big(\mathcal H\mathcal N_{s,t}\big)^{1/2}
\ \le\ \tfrac14\mathcal H+C^{2}\Lambda^{2}h^{1-\gamma}\mathcal N_{s,t},
\qquad
\Lambda^{2}h^{1-\gamma}\le c_{0}^{-1}\Lambda ,
\]
the last by $3-\tfrac1\gamma\in(0,1)$ for $\gamma\in(\tfrac13,\tfrac12)$. The mismatch sits in $\Gamma_{1}$, so
\eqref{eq:Esystem} carries it in its second inequality, where $\Lambda\ge1$ lets
$C\Lambda\mathcal H$ absorb the $\tfrac14\mathcal H$. It reaches $\mathcal H$ through
$\mathcal J h^{\gamma}$.

\emph{Assembling.} Collecting the last three paragraphs, and denoting by $C$ a constant depending only
on the data listed in the statement, we obtain the following system,
\begin{equation}\label{eq:Esystem}
\mathcal H\ \le\ C\Lambda\,\mathcal N_{s,t}\ +\ c_{0}^{-1}\Lambda^{-1}\,\mathcal J,
\qquad
\mathcal J\ \le\ C\Lambda\,\mathcal H\ +\ C\Lambda^{2}\,\mathcal N_{s,t}\ +\ Cc_{0}^{-1}\,\mathcal J .
\end{equation}
The first is \eqref{eq:Hclose} together with $h^{\gamma}\le(c_{0}\Lambda)^{-1}$. In the second, the
$\mathcal H$-coefficient is $O(\Lambda)$ and not small. It comes from the
Taylor term of $\Gamma_{2}$, entering \eqref{eq:Hclose} with no power of $h$. That is the reason
the two inequalities must be combined in this order. We can take $c_{0}$ large enough such that
$Cc_{0}^{-1}\le\tfrac12$, and the second then yields
$\mathcal J\le2C\Lambda\mathcal H+2C\Lambda^{2}\mathcal N_{s,t}$. Substituting into the first yields
\[
\mathcal H\ \le\ C\Lambda\,\mathcal N_{s,t}+c_{0}^{-1}\Lambda^{-1}\big(2C\Lambda\mathcal H+2C\Lambda^{2}\mathcal N_{s,t}\big)
\ =\ \big(C+2Cc_{0}^{-1}\big)\Lambda\,\mathcal N_{s,t}\ +\ 2Cc_{0}^{-1}\,\mathcal H ,
\]
where the coefficient of $\mathcal H$ is now $2Cc_{0}^{-1}$, independent of $\Lambda$. We can enlarge $c_{0}$
once more, such that $2Cc_{0}^{-1}\le\tfrac12$. With that enlargement, and using that $\mathcal H$ and
$\mathcal J$ are finite at fixed $(k,R)$, so that these inequalities are relations between finite
numbers rather than $\infty\le\infty$, bringing the terms involving $\mathcal H$ from the right to the
left-hand-side yields the following bounds,
\begin{equation}\label{eq:HJfinal}
\mathcal H\ \le\ C_{3}\,\Lambda\,\mathcal N_{s,t},
\qquad
\mathcal J\ \le\ C_{3}\,\Lambda^{2}\,\mathcal N_{s,t} .
\end{equation}
The first is \eqref{eq:EscaleX}. Note the asymmetry: $\mathcal J$ carries $\Lambda^{2}$ and not
$\Lambda$, and that is why \eqref{eq:EscaleNat} and \eqref{eq:Eholder} are stated with $\Lambda^{2}$.
It is harmless, since
\[
\Lambda^{2}|t-s|^{2\gamma}\le c_{0}^{-1}\Lambda|t-s|^{\gamma}
\qquad\text{on }|t-s|\le h_{\star} .
\]
Here $C(R)$ enters only to make $\mathcal H$ finite, and the first bound of \eqref{eq:Jfinite} only to make $\mathcal J$ finite.
\item \label{proof:Eproof:step2:step1:1d} \emph{The conditional bound \eqref{eq:EscaleCond}.} The germ of Step~1 satisfies the
$\epsilon_{1}$-hypothesis of Lemma~\ref{lem:SSL} with the $\Gamma_{1}$ assembled in \ref{proof:Eproof:step2:step1:1b}, so that
lemma's second conclusion applies and yields,
\[
\big\|\E_{u}\big[\mathcal A_{u,v}-A_{u,v}\big]\big\|_{L_{m}}\ \le\ C\,\Gamma_{1}\,|v-u|^{\epsilon_{1}} ,
\]
and $\mathcal A_{u,v}-A_{u,v}=J_{u,v}$ by the identification made at \eqref{eq:SSLtwo}. It
remains to bound $\Gamma_{1}$ independent of $(k,R)$. By the defect formula of
Step~\ref{proof:Eproof:step2:step1}, $\E_{u}\delta A_{u,w,v}$ has two parts. The first is $\E_{u}R_{u,w}$ paired with
$\delta Z_{w,v}$, contributing one factor $\Lambda$ to every piece of $\E_{u}R_{u,w}$ assembled in
\ref{proof:Eproof:step2:step1:1b}. The second is $\E_{u}\delta[(Df\,f+f')(X)]_{u,w}$ paired with $\ZZ_{w,v}$, contributing two. That
second part has the two halves of \eqref{eq:Gincrement}: its spatial half is of kind~(5) below and its
temporal half, being linear in $\mathcal N_{s,t}$, is of kind~(2), so that the enumeration covers it.
The pieces are of five kinds, and the martingale term of \eqref{eq:Rsplitfive} contributes
nothing, vanishing under $\E_{u}$:
\begin{enumerate}[label=\normalfont(\arabic*),leftmargin=2.4em]
\item $\Lambda\,\mathcal N_{s,t}$: the drift of \eqref{eq:Rsplitfive}, the pieces \ref{proof:Eproof:step2:step1:1b:beta} and
\ref{proof:Eproof:step2:step1:1b:gamma} of $\E_{u}\mathcal T$ (the latter being its $\mathcal A^{M,1}$-half), and $\varrho^{(2)}$;
\item $\Lambda\cdot\Lambda^{2}\mathcal N_{s,t}=\Lambda^{3}\mathcal N_{s,t}$: the second-level term
$Df_{u}(X_{u})(Df\,f+f')_{u}(X_{u})\ZZ_{u,w}$ of \eqref{eq:Rsplitfive} and the piece $(\alpha)$ of
$\E_{u}\mathcal T$;
\item $\Lambda\,\mathcal J\le C_{3}\Lambda^{3}\mathcal N_{s,t}$: the $J$-term of
\eqref{eq:Rsplitfive} and the piece $(\delta)$ of $\E_{u}\mathcal T$;
\item $\Lambda\big(\mathcal H\mathcal N_{s,t}\big)^{1/2}\le C\Lambda^{3/2}\mathcal N_{s,t}$: the
martingale mismatch \eqref{eq:Emismatch}, namely the $\mathcal A^{M,2}$-half of $(\gamma)$ and the
one piece of order $\tfrac{1+3\gamma}{2}$;
\item $\Lambda^{2}\mathcal H\le C_{3}\Lambda^{3}\mathcal N_{s,t}$: two pieces, namely
$\varrho^{(1)}$, whose bound $\|Df'\|_{\infty}\Lambda\mathcal H$ in \eqref{eq:rhoorders} already carries
one $\Lambda$ before the one from $\delta Z_{w,v}$ is counted, and the $\ZZ$-contribution
$C_{\mathrm{lip}}\mathcal H\Lambda^{2}$.
\end{enumerate}
Here \eqref{eq:HJfinal} yields $\mathcal H\le C_{3}\Lambda\mathcal N_{s,t}$ and
$\mathcal J\le C_{3}\Lambda^{2}\mathcal N_{s,t}$, and $\Lambda\ge1$. Hence every one of the five is at
most a constant multiple of $\Lambda^{3}\mathcal N_{s,t}$, and collecting the five kinds yields
\[
\Gamma_{1}\ \le\ C\Lambda^{3}\mathcal N_{s,t} .
\]
The exponent $3$ is the largest of the five, attained at (2), (3) and (5), and it is not optimised;
what matters below is only that the bound is a constant multiple of $\mathcal N_{s,t}$ with a power of
$\Lambda$ and no dependence on $k$ or $R$. This is \eqref{eq:EscaleCond}, and $\epsilon_{1}>1$ by
\eqref{eq:eps1value}. Nothing in this step re-enters the calculations.

\end{enumerate}
\item \label{proof:Eproof:step2:step2} \noindent\emph{\textbf{The germ of $V$.}}
We repeat the computation for the germ
$\Gamma^{1}(X_{u})\delta Z_{u,v}+\Gamma^{2}(X_{u})\ZZ_{u,v}+\Gamma^{3}(X_{u})\CZ_{u,v}$.
Its defect is estimated exactly as in Step~\ref{proof:Eproof:step2:step1}, with $f$ replaced by $\Gamma^{1}$ and $Df\,f+f'$ by
$\Gamma^{2}$, and \ref{proof:Eproof:step2:a0} supplies the inputs used, since $\Gamma^{1}$ and $\Gamma^{2}$
are globally Lipschitz with constants independent of $(k,R)$, so that the spatial half satisfies
\[
\big\|\Gamma^{i}_{w}(X_{w})-\Gamma^{i}_{w}(X_{u})\big\|_{L_{m}}\ \le\ C_{1}\mathcal H\tau^{\gamma}
\]
at the same exponent $m$ and with no product of random factors. We treat the temporal half below, at
the three separate orders of \eqref{eq:Gincrement} and linearly in $\mathcal N_{s,t}$.

The one difference is that here the weight may be taken global. Only
$\Gamma^{1}$ needs a second-order expansion. Indeed, in the defect it is paired with
$\delta Z_{w,v}=O(|v-w|^{\gamma})$, so that its controlled-path remainder must be of order $2\gamma$,
and \eqref{eq:D2Gamma1} delivers it in contracted form. We can combine \eqref{eq:D2Gamma1} on the
event $\{|\delta X_{u,w}|\le\tfrac12\Phi_{u}\}$, where $V$ being $1$-Lipschitz yields
\[
V(X_{u}+\theta\delta X_{u,w})\ge\Phi_{u}-|\delta X_{u,w}|\ge\tfrac12\Phi_{u} ,
\]
with the global Lipschitz bound $|D\Gamma^{1}|\le C_{1}$ on its complement. Again pathwise on all of
$\Omega$, we obtain the following bound,
\begin{equation}\label{eq:TaylorminV}
\big|\mathcal T^{\Gamma}_{u,w}\big|
\ \le\ 4C_{1}\min\Big\{\frac{|\delta X_{u,w}|^{2}}{\Phi_{u}},\ |\delta X_{u,w}|\Big\}
\ \le\ \frac{8C_{1}|\delta X_{u,w}|^{2}}{\Phi_{u}+|\delta X_{u,w}|},
\end{equation}
where
$\mathcal T^{\Gamma}_{u,w}:=\Gamma^{1}_{u}(X_{w})-\Gamma^{1}_{u}(X_{u})-D\Gamma^{1}_{u}(X_{u})\delta X_{u,w}$.
As in Step~\ref{proof:Eproof:step2:step1}, $\mathcal T^{\Gamma}$ denotes the remainder of the expansion in the state variable. The time-increment of $\Gamma^{1}_{t}=\mathcal G^{\kappa}_{t}V$ is carried exactly as $\varrho$
was in \eqref{eq:rhoorders}. This increment is non-zero whenever $f_{t}$ depends on $t$, so
that eq.\ \eqref{eq:TaylorminV} would be false with the full two-index increment on the left. The pair
$(\Gamma^{1},\Gamma^{1\prime})$ with $\Gamma^{1\prime,\kappa\lambda}:=f'^{\kappa\lambda}\cdot DV$ is
controlled with the same indices $(\beta,\beta')$ and a norm controlled by $C_{\RZ}$ and
$\|DV\|_{\infty}\le1$. Hence its two time-increment terms satisfy \eqref{eq:rhoorders} with
$\mathcal N_{s,t},\mathcal H$ unchanged and contribute at order $\beta+\beta'+\gamma>1$ by
\eqref{eq:rhoexp}. In the autonomous case $f'\equiv0$ they vanish. The weight is now $\Phi_{u}$, for
the reason given after \eqref{eq:D2Gamma1}: $\partial_{i}V$ vanishes where $\ddot q_{i}$ is largest.
Hence \eqref{eq:Eacoord} is not used in this step. With $c=\Phi_{u}$ in
\eqref{eq:concaveJensen} and $\mathcal A:=\E_{u}|\delta X_{u,w}|^{2}$ decomposed as before, the
martingale piece satisfies
\[
\E_{u}\int\operatorname{tr}a(r,X_{r})\dr\le2C^{2}_{\mathrm{gr}}\E_{u}\int(1+|X_{r}|)^{2}\dr ,
\]
and the elementary splitting
\[
(1+|X_{r}|)^{2}\le4\Phi^{2}_{u}+4|\delta X_{u,r}|^{2} ,
\]
again by $V$ being $1$-Lipschitz, yields a first piece of order $1$ on the branch $\theta/\Phi_{u}$ and
a second of order $\tfrac12+\gamma$ on the branch $\sqrt\theta$, the latter because
\[
\|(\E_{u}\int|\delta X_{u,r}|^{2}\dr)^{1/2}\|_{L_{m}}\le(\int\|\delta X_{u,r}\|^{2}_{L_{m}}\dr)^{1/2}\le\mathcal H\tau^{\frac{1+2\gamma}{2}} .
\]
Since $2\gamma<\tfrac12+\gamma$ for $\gamma<\tfrac12$, the binding order among the pieces of
$\mathcal T^{\Gamma}$ is $2\gamma$, and we obtain the following exponents,
\[
\epsilon^{V}_{1}\ =\ \min\big\{3\gamma,\ \beta+\beta'+\gamma\big\}\ >\ 1,
\qquad
\epsilon^{V}_{2}\ =\ 2\gamma\ >\ \tfrac12 ,
\]
both again with constants linear in $\mathcal N_{s,t}$ and $\mathcal H$ at exponent $m$. Routing this
step through \eqref{eq:Eacoord} and the coordinatewise weight would be legitimate, but it would give
the worse $\epsilon^{V}_{1}=\tfrac12(1+3\gamma)$. We use the global weight here.

We do not perform a Taylor expansion of $\Gamma^{2}$, and none can be performed: $D^{2}\Gamma^{2}$ contains
$f\,(D^{3}f)\,DV=O(V)$, which does not decay, Assumption~\ref{ass:Erough} bounding $\|D^{3}f\|_{\infty}$ while the
curvature hypothesis constrains only $D^{2}f$, and it contains $(D^{2}f')\,DV=O(1)$, there being by
design no curvature condition on $f'$. We actually do not need one, since
$\delta\Gamma^{2}$ and $\delta\Gamma^{3}$ are paired with $\ZZ_{w,v}=O(|v-w|^{2\gamma})$ and
$\CZ_{w,v}=O(|v-w|)$, where the first-order Lipschitz bound of \ref{proof:Eproof:step2:a0} already gives order
$3\gamma>1$ for $\epsilon_{1}$, through
\[
\|\E_{u}|\delta X_{u,w}|\|_{L_{m}}\le\mathcal H\tau^{\gamma}
\]
unconditionally, and more than $\tfrac12$ for $\epsilon_{2}$.

$\Gamma^{2}$ and $\Gamma^{3}$ inherit the time dependence of $(f,f')$, so that $\delta\Gamma^{2}$ and
$\delta\Gamma^{3}$ carry a temporal increment alongside the spatial one, exactly as \eqref{eq:Gsplit}
does, and we can estimate it the same way. Differentiating the summands of $\Gamma^{2}$ in time rather
than in space produces $\delta f\cdot Df\cdot DV$, $f\cdot\delta(Df)\cdot DV$,
$\delta f\cdot f\cdot D^{2}V$ and $\delta f'\cdot DV$. All four are linear in the state at the
relevant order. The first and the fourth are, because $\|DV\|_{\infty}\le1$ and $\delta f,\delta f'$
are weighted once by \eqref{eq:cvfdefect}. The third is, because \eqref{eq:Vjet} yields the following bound,
\[ |f|^{2}\Op{D^{2}V}\le C_{\mathrm{gr}}^{2}(1+|x|)^{2}V^{-1}\le CV ,
\]
the Lyapunov weight in the denominator absorbing the second power. The second is, because
\eqref{eq:Eroughderiv} is unweighted in $Df$, so that
\[
|f|\,|\delta(Df)|\le C_{\mathrm{gr}}(1+|x|)\big(\|Df'\|_{\infty}\Lambda\tau^{\gamma}+C^{(1)}_{\RZ}\tau^{\beta+\beta'}\big) .
\]
For
$\Gamma^{3}=\tfrac12\mathcal Q^{\kappa\lambda}V=\tfrac12\textstyle\sum_{i,j}f^{\kappa}_{i}f^{\lambda}_{j}\partial^{2}_{ij}V$, being half of one summand of $\Gamma^{2}$ and not a copy of it, only the third of the four forms
occurs and the bound holds consequently. The resulting temporal orders are the three of
\eqref{eq:Gincrement}, namely $\gamma$ with a factor $\Lambda$ and $\beta+\beta'$ and $\beta'$
without one, and we keep them apart here for the same reason. Against $\ZZ_{w,v}$ they give
\[
\Lambda^{3}\mathcal N_{s,t}\tau^{3\gamma},
\qquad
\Lambda^{2}\mathcal N_{s,t}\tau^{2\gamma+\beta+\beta'},
\qquad
\Lambda^{2}\mathcal N_{s,t}\tau^{2\gamma+\beta'} ,
\]
and against $\CZ_{w,v}$, whose size is $\Lambda\tau$ by \eqref{eq:bracketLip}, they give
\[
\Lambda^{2}\mathcal N_{s,t}\tau^{1+\gamma},
\qquad
\Lambda\mathcal N_{s,t}(\tau^{1+\beta+\beta'}+\tau^{1+\beta'}) .
\]
All of these are absorbed as in \ref{proof:Eproof:step2:step1:1c}.
Lemma~\ref{lem:SSL} and the same
absorption as in \ref{proof:Eproof:step2:step1:1c} then yield, for each fixed $r\in[s,t]$,
\[
\|\Phi^{\natural}_{s,r}\|_{L_{m}}\le C\Lambda^{2}\mathcal N_{s,t}|r-s|^{2\gamma} .
\]
The bound just proved is a bound on the sewing of the germ. Lemma~\ref{lem:Escalefree}
instead defines $\Phi^{\natural}$ through the integrals of Lemma~\ref{lem:roughItoLn}. The two
objects agree, and one sequence of partitions of mesh tending to $0$ shows it. Fix a pair and take
partitions containing its left endpoint. Lemma~\ref{lem:SSL} gives the sewing as the $L_{m}$ limit
of the compensated Riemann sums of the germ. In those sums the $\delta Z$ and $\ZZ$ parts converge
to $\int(\Gamma^{1},\Gamma^{2})\d\RZ$ by \cite[Thm.~3.4]{FHL}, in probability and uniformly in time.
The integral $\int(\Gamma^{1},\Gamma^{2})\d\RZ$ at the composed pair is defined by
Lemma~\ref{lem:roughItoLn}\ref{rIto:class}, whose hypotheses hold here by
Proposition~\ref{prop:Athreshuses}\ref{app:Athresh2}. The $\CZ$ part converges pathwise to
$\int\Gamma^{3}\d\CZ$, the integrator being the increment of a Lipschitz path and the integrand
almost surely continuous. A limit in probability is almost surely unique, so the two objects agree
at each pair.
\item \label{proof:Eproof:step2:step3} \emph{\textbf{ From a fixed pair to the supremum.}}
\eqref{eq:EscaleNat} requires $\|\sup_{r\in[s,t]}|\Phi^{\natural}_{s,r}|\|_{L_{m}}$, and this needs one
further ingredient. That is where the second entry of \eqref{eq:Ep} is used. For $s\le r'\le r\le t$,
and with the three-index convention of \S\ref{subsec:rp},
additivity of the sewing yields the exact identity
\[
\Phi^{\natural}_{s,r}\ =\ \Phi^{\natural}_{s,r'}+\Phi^{\natural}_{r',r}-\delta A_{s,r',r},
\qquad
\Phi^{\natural}_{u,v}:=\mathcal A_{v}-\mathcal A_{u}-A_{u,v},
\]
Let us fix $r\in[s,t]$ and denote by $\mathfrak s_{k}$ the largest dyadic point $s+j2^{-k}(t-s)$,
$0\le j\le2^{k}$, such that $\mathfrak s_{k}\le r$, so that $\mathfrak s_{0}=s$,
$\mathfrak s_{k}\uparrow r$ and $|\mathfrak s_{k+1}-\mathfrak s_{k}|\in\{0,2^{-(k+1)}(t-s)\}$. The next
step uses continuity of $v\mapsto\Phi^{\natural}_{s,v}$, and it is not the continuity part of Lemma~\ref{lem:SSL} that delivers it. By the rough It\^o formula (Lemma~\ref{lem:roughItoLn}), $\Phi^{\natural}_{s,v}$ equals $V(X_{v})-V(X_{s})$ minus the drift integral,
minus the It\^o integral, and minus the three frozen germ terms $\Gamma^{1}(X_{s})\delta Z_{s,v}$,
$\Gamma^{2}(X_{s})\ZZ_{s,v}$ and $\Gamma^{3}(X_{s})\CZ_{s,v}$. Every one of those is almost surely
continuous in $v$, the last three because $Z$, $\ZZ$ and $\CZ$ are continuous deterministic paths.
Applying the identity with $(s,r',r)=(s,\mathfrak s_{k},\mathfrak s_{k+1})$, telescoping and using that
continuity, we obtain the following identity,
\[
\Phi^{\natural}_{s,r}\ =\ \sum_{k\ge0}\Big(\Phi^{\natural}_{\mathfrak s_{k},\mathfrak s_{k+1}}
-\delta A_{s,\,\mathfrak s_{k},\,\mathfrak s_{k+1}}\Big),
\]
whence, the pair $(\mathfrak s_{k},\mathfrak s_{k+1})$ ranging over at most $2^{k+1}$ possibilities for
each $k$, we obtain the following bound,
\[
\sup_{r\in[s,t]}\big|\Phi^{\natural}_{s,r}\big|
\ \le\ \sum_{k\ge0}\ \max_{0\le j<2^{k+1}}\Big(\big|\Phi^{\natural}_{r_{k+1,j},r_{k+1,j+1}}\big|
+\big|\delta A_{s,\,r_{k+1,j},\,r_{k+1,j+1}}\big|\Big),
\qquad r_{k,j}:=s+j2^{-k}(t-s),
\]
and $\|\max_{j<2^{k+1}}|Y_{j}|\|_{L_{m}}\le2^{(k+1)/m}\max_{j}\|Y_{j}\|_{L_{m}}$. Taking $L_{m}$-norms,
the first family contributes
\[
\sum_{k}2^{k/m}\big(2^{-k}|t-s|\big)^{2\gamma} ,
\]
convergent because $2\gamma>\tfrac12\ge\tfrac1m$. The second contributes
\[
\sum_{k}2^{k/m}\big(2^{-k}\big)^{\gamma}|t-s|^{\epsilon_{2}} ,
\]
the defect carrying an explicit factor $\delta Z$, $\ZZ$ or $\CZ$ on the short interval, the last
of order $|v-w|\le|v-w|^{\gamma}$, and this converges
precisely when $\gamma m>1$. That is the second entry of \eqref{eq:Ep}, used again at the Kolmogorov--Chentsov step of Step~\ref{step:Etight}. Both sums are bounded by
$C(\gamma,m)|t-s|^{2\gamma}$. The $\Phi^{\natural}$ chained here is the one Lemma~\ref{lem:Escalefree} states, $\CZ$-terms
included, so this is \eqref{eq:EscaleNat}. Its $\CZ$-part is what the Preliminaries bound pathwise, of
order $1$ and hence of order $2\gamma$ on $|t-s|\le h_{\star}\le1$ since
$2\gamma<1$.
\end{enumerate}%
\end{proof}

\subsection{The envelope in Proposition~\ref{prop:MPequiv}: Steps 7--13}\label{app:MPenvelope}
These steps continue the proof of Proposition~\ref{prop:MPequiv}\ref{mpe1} from Step~6.

\emph{Step~7: the three terms the envelope cannot carry, and the envelope
\eqref{eq:Dphibound}.}\stepnum{7}{stp:MP7} What remains to be subtracted from \eqref{eq:MPleftover} is
$\int_{s}^{t}\mathcal L_{r}\varphi(X_{r})\dr+\delta N^{\varphi}_{s,t}$, and this is where the three
mismatches appear. Every term of \eqref{eq:MPleftover} carries at least one derivative of $\varphi$,
so $D^{\varphi}$ is expressed in terms of $J$ and of Taylor remainders of $\varphi$ of order $\le3$;
three terms of that expression are not pointwise dominated by a $\varphi$-free quantity, and all three
have to be set aside before any envelope is written. The first-order Taylor term of
\eqref{eq:MPtaylor} contributes $\nabla\varphi(X_{s})\cdot\int_{s}^{t}\sigma_{r}(X_{r})\d B_{r}$, while
\eqref{eq:roughMP} subtracts
$\delta N^{\varphi}_{s,t}=\int_{s}^{t}\nabla\varphi(X_{r})\cdot\sigma_{r}(X_{r})\d B_{r}$, and their difference is
\[
\nabla\varphi(X_{s})\cdot\int_{s}^{t}\sigma_{r}(X_{r})\d B_{r}-\delta N^{\varphi}_{s,t}
\ =\ -\,\delta\tilde N^{\varphi}_{s,t},
\]
where
\begin{equation}\label{eq:Ntilde}
\delta\tilde N^{\varphi}_{s,t}:=\int_{s}^{t}\big(\nabla\varphi(X_{r})-\nabla\varphi(X_{s})\big)
\cdot\sigma_{r}(X_{r})\,\d B_{r}
\end{equation}
denotes a stochastic integral whose integrand depends on $\varphi$. A pathwise majorant of the form
$\tnorm\varphi\cdot(\varphi\text{-free})$ is not available for it. Every other occurrence of $\int\sigma\d B$ in the expansion carries a
$\mathcal G_{s}$-measurable prefactor $D^{j}\varphi(X_{s})$ and is pointwise dominated.

The other two escape for a reason about the shape of the
envelope rather than about $\varphi$: they are the companion mismatches in the Lebesgue
integrals. Recall from \eqref{eq:gendef} that
$\mathcal L_{r}\varphi=\tfrac12a_{r}\!:\!D^{2}\varphi+b_{r}\cdot\nabla\varphi$ with
$a=\sigma\sigma^{\top}$. That same first-order Taylor term contributes
$\nabla\varphi(X_{s})\cdot\int_{s}^{t}b_{r}(X_{r})\dr$ against the $b$-half of
$\int_{s}^{t}\mathcal L_{r}\varphi(X_{r})\dr$, and the second-order term contributes
$\tfrac12D^{2}\varphi(X_{s})\!:\!\int_{s}^{t}a_{r}(X_{r})\dr$ against its $a$-half, the two differences being
\begin{align*}
\nabla\varphi(X_{s})\cdot\int_{s}^{t}b_{r}(X_{r})\dr
-\int_{s}^{t}\nabla\varphi(X_{r})\cdot b_{r}(X_{r})\dr
\ &=\ -\,\tilde P^{\varphi}_{s,t},\\
\tfrac12D^{2}\varphi(X_{s})\!:\!\int_{s}^{t}a_{r}(X_{r})\dr
-\tfrac12\int_{s}^{t}D^{2}\varphi(X_{r})\!:\!a_{r}(X_{r})\dr
\ &=\ -\,\tilde Q^{\varphi}_{s,t},
\end{align*}
where
\begin{equation}\label{eq:PQtilde}
\tilde P^{\varphi}_{s,t}:=\int_{s}^{t}\big(\nabla\varphi(X_{r})-\nabla\varphi(X_{s})\big)
\cdot b_{r}(X_{r})\dr,
\qquad
\tilde Q^{\varphi}_{s,t}:=\tfrac12\int_{s}^{t}\big(D^{2}\varphi(X_{r})-D^{2}\varphi(X_{s})\big)
\!:\!a_{r}(X_{r})\dr,
\end{equation}
and nothing else of $\int_{s}^{t}\mathcal L_{r}\varphi(X_{r})\dr$ survives the cancellation: its
$b$-half is absorbed by the first-order Taylor term up to $\tilde P^{\varphi}$, and its $a$-half by the
It\^o compensator of $\big(\int_{s}^{t}\sigma_{r}(X_{r})\d B_{r}\big)^{\otimes2}$ up to
$\tilde Q^{\varphi}$. These two are not dominated under the integral sign by a $\varphi$-free
quantity of the envelope. Their majorants follow from
$\|D^{2}\varphi\|_{\infty}\vee\|D^{3}\varphi\|_{\infty}\le\tnorm\varphi$ and from \eqref{eq:Egrowth} in
the two forms $|b_{r}(X_{r})|\le C_{\mathrm{gr}}(1+|X_{r}|)$ and
$|a_{r}(X_{r})|\le C_{\mathrm{gr}}^{2}(1+|X_{r}|)^{2}$:
\begin{align*}
\big|\tilde P^{\varphi}_{s,t}\big|
\ &\le\
\tnorm\varphi\,C_{\mathrm{gr}}\big(1+\sup_{r\in[s,t]}|X_{r}|\big)\int_{s}^{t}\big|\delta X_{s,r}\big|\dr
\ \le\ \tnorm\varphi\,C_{\mathrm{gr}}\big(1+\sup_{r\in[s,t]}|X_{r}|\big)\,|t-s|
\sup_{r\in[s,t]}\big|\delta X_{s,r}\big| ,\\
\big|\tilde Q^{\varphi}_{s,t}\big|
\ &\le\
\tfrac12\tnorm\varphi\,C_{\mathrm{gr}}^{2}\big(1+\sup_{r\in[s,t]}|X_{r}|\big)^{2}
\int_{s}^{t}\big|\delta X_{s,r}\big|\dr\\
\ &\le\ \tfrac12\tnorm\varphi\,C_{\mathrm{gr}}^{2}\big(1+\sup_{r\in[s,t]}|X_{r}|\big)^{2}\,|t-s|
\sup_{r\in[s,t]}\big|\delta X_{s,r}\big| ,
\end{align*}
that is, the majorant of $\tilde Q^{\varphi}$ is that of $\tilde P^{\varphi}$ with
$\tfrac12C_{\mathrm{gr}}^{2}(1+\sup_{r\in[s,t]}|X_{r}|)^{2}$ in place of
$C_{\mathrm{gr}}(1+\sup_{r\in[s,t]}|X_{r}|)$. Both majorants carry the running supremum
$\sup_{r\in[s,t]}|\delta X_{s,r}|$, which the envelope $\Xi_{s,t}$ below cannot accommodate. So the terms set
aside form a family of three, and what the Taylor expansion produces is the decomposition
\begin{equation}\label{eq:Dphibound}
\begin{gathered}
D^{\varphi}_{s,t}=-\,\mathcal K^{\varphi}_{s,t}+R^{\varphi}_{s,t},
\qquad
\mathcal K^{\varphi}_{s,t}:=\delta\tilde N^{\varphi}_{s,t}+\tilde P^{\varphi}_{s,t}
+\tilde Q^{\varphi}_{s,t},
\qquad
\big|R^{\varphi}_{s,t}\big|\ \le\ \tnorm\varphi\cdot\Xi_{s,t}\quad\text{pointwise},\\
\Xi_{s,t}:=C\Big(|J_{s,t}|+\sum_{j=0}^{3}\big(1+\sup_{r\in[s,t]}|X_{r}|\big)^{3-j}
\big|\delta X_{s,t}\big|^{j}\,\Theta^{(j)}_{s,t}\Big),
\end{gathered}
\end{equation}
together with, for $\delta\tilde N^{\varphi}$, the following $L_{m}$-bound, proved in
Step~\ref{stp:MP9},
\begin{equation}\label{eq:Ntildebound}
\big\|\delta\tilde N^{\varphi}_{s,t}\big\|_{L_{m}}
\ \le\ C_{m}C_{\mathrm{gr}}\,\tnorm\varphi\,
\sup_{r\in[s,t]}\big\|\delta X_{s,r}\big\|_{L_{2m}}
\Big(1+\big\|\sup_{t\le T}|X_{t}|\big\|_{L_{2m}}\Big)\,|t-s|^{1/2}
\ =\ O\big(|t-s|^{\gamma+1/2}\big).
\end{equation}
The two Lebesgue mismatches set aside are bounded in $L_{m}$ likewise, by Minkowski's integral
inequality alone \color{blue}(again Step~9)\color{black}:
\begin{equation}\label{eq:PQtildebound}
\begin{aligned}
\big\|\tilde P^{\varphi}_{s,t}\big\|_{L_{m}}+\big\|\tilde Q^{\varphi}_{s,t}\big\|_{L_{m}}
\ &\le\ C_{\mathrm{gr}}\big(1+\tfrac12C_{\mathrm{gr}}\big)\,\tnorm\varphi\,
\Big(1+\big\|\sup_{t\le T}|X_{t}|\big\|_{L_{3m}}\Big)^{2}
\sup_{r\in[s,t]}\big\|\delta X_{s,r}\big\|_{L_{3m}}\,|t-s|\\
&\ =\ O\big(|t-s|^{1+\gamma}\big).
\end{aligned}
\end{equation}
Here the $\Theta^{(j)}$ denote $\varphi$-free quantities, with $\Theta^{(j)}\equiv1$ for $j\ge2$, while
$\Theta^{(0)}$ and $\Theta^{(1)}$ are the following explicit quantities. We set
\begin{equation}\label{eq:Thetaatoms}
\mathsf{S}(X,s,t):=1+\sup_{r\in[s,t]}|X_{r}|,
\qquad
\mathsf m_{s,t}:=\frac{\big|\int_{s}^{t}\sigma_{r}(X_{r})\d B_{r}\big|}{\mathsf{S}(X,s,t)} ,
\end{equation}
and set
\begin{equation}\label{eq:Thetadef}
\Theta^{(1)}_{s,t}:=|t-s|+\big|\delta Z_{s,t}\big|+\big|\ZZ_{s,t}\big|+\mathsf m_{s,t} ,
\qquad
\Theta^{(0)}_{s,t}:=|t-s|+\big(\Theta^{(1)}_{s,t}\big)^{2} .
\end{equation}
Both are built from the four quantities $|t-s|$, $|\delta Z_{s,t}|$, $|\ZZ_{s,t}|$ and
$\mathsf m_{s,t}$, and every term of $\Theta^{(0)}$ is a product of exactly two of them with a single
exception, the factor $|t-s|$, which Remark~\ref{rem:MPtsexception} shows to be
harmless.

 As in \eqref{eq:Dphisplit} and \eqref{eq:MUcirc} of Step~\ref{stp:MP10}, we  write $R^{\varphi}=W^{\varphi}J+\mathcal M^{\varphi,\circ}
+\Upsilon^{\varphi,\circ}$. Reading
\eqref{eq:Egrowth} and \eqref{eq:MProughgrowth} off \eqref{eq:MPpieces} gives
\[
\begin{gathered}
|A_{s,t}|\le C_{\mathrm{gr}}\mathsf{S}(X,s,t)|t-s|,
\qquad
|M_{s,t}|=\mathsf{S}(X,s,t)\,\mathsf m_{s,t},
\\
|G^{1}_{s,t}|\le C_{\mathrm{gr}}\mathsf{S}(X,s,t)\big|\delta Z_{s,t}\big|,
\qquad
|G^{2}_{s,t}|\le C_{\mathrm{gr}}\mathsf{S}(X,s,t)\big|\ZZ_{s,t}\big| ,
\end{gathered}
\]
so that each of the four contributes one power of $\mathsf{S}(X,s,t)$ and one of the four quantities
above. Bounding $\|D^{j}\varphi\|_{\infty}\le\tnorm\varphi$ for $j=1,2,3$ and dividing by
$\tnorm\varphi$, the summands contribute as follows, at the stated power $j$ of
$\big|\delta X_{s,t}\big|$ and up to the constant $C$ of \eqref{eq:Dphibound}:
\begin{center}
\begin{tabular}{@{}ll@{}}
\toprule
summand of $R^{\varphi}_{s,t}$ & contributes \\
\midrule
$\nabla\varphi(X_{s})\cdot J_{s,t}$ & the term $|J_{s,t}|$\\
$D^{2}\varphi(X_{s})\big(\,\cdot\,,G^{1}_{s,t}+G^{2}_{s,t}\big)J_{s,t}$
  & $|\delta Z_{s,t}|+|\ZZ_{s,t}|$ at $j=1$;
    $\big(|\delta Z_{s,t}|+|\ZZ_{s,t}|\big)\Theta^{(1)}_{s,t}$ at $j=0$\\
$\tfrac12D^{2}\varphi(X_{s})\!:\!\big(M^{\otimes2}_{s,t}-\int_{s}^{t}a_{r}(X_{r})\dr\big)$
  & $\mathsf m^{2}_{s,t}$ and the lone $|t-s|$, both at $j=0$\\
$D^{2}\varphi(X_{s})\big(G^{1}_{s,t}+G^{2}_{s,t},M_{s,t}\big)$
  & $\big(|\delta Z_{s,t}|+|\ZZ_{s,t}|\big)\mathsf m_{s,t}$ at $j=0$\\
$D^{2}\varphi(X_{s})\big(G^{1}_{s,t},G^{2}_{s,t}\big)
   +\tfrac12D^{2}\varphi(X_{s})\!:\!\big(G^{2}_{s,t}\big)^{\otimes2}$
  & $|\delta Z_{s,t}||\ZZ_{s,t}|+|\ZZ_{s,t}|^{2}$ at $j=0$\\
$D^{2}\varphi(X_{s})\big(M_{s,t},J_{s,t}\big)$
  & $\mathsf m_{s,t}$ at $j=1$; $\mathsf m_{s,t}\Theta^{(1)}_{s,t}$ at $j=0$\\
$\tfrac12D^{2}\varphi(X_{s})\!:\!J^{\otimes2}_{s,t}$
  & $1$ at $j=2$; $\Theta^{(1)}_{s,t}$ at $j=1$; $\big(\Theta^{(1)}_{s,t}\big)^{2}$ at $j=0$\\
$D^{2}\varphi(X_{s})\big(A_{s,t},\delta X_{s,t}-\tfrac12A_{s,t}\big)$
  & $|t-s|$ at $j=1$; $|t-s|^{2}$ at $j=0$\\
$\mathcal R^{\varphi,(3)}_{s,t}$ & $1$ at $j=3$\\
\bottomrule
\end{tabular}
\end{center}
The entries at $j=1$ add up to at most $2\Theta^{(1)}_{s,t}$, the four distinct quantities
appearing there being exactly $|t-s|$, $|\delta Z_{s,t}|$, $|\ZZ_{s,t}|$ and $\mathsf m_{s,t}$.
Every entry at $j=0$ is either $|t-s|$ or bounded by $\big(\Theta^{(1)}_{s,t}\big)^{2}$,
and the entries at $j=2$ and $j=3$ are $1$. This is \eqref{eq:Thetadef}, the multiplicities being
absorbed by the constant $C$. The three rows carrying $J_{s,t}$ beside a second factor are the ones Step~\ref{stp:MP8}
treats, and are the only ones where \eqref{eq:Jresub} is used.

The bracket enters the expansion only through
$\tfrac12D^{2}\varphi(X_{s})\!:\!(G^{1}_{s,t})^{\otimes2}$ by way of \eqref{eq:brackettaylor}, and
\eqref{eq:MPcancel2c} absorbs that summand into the germ, so no $\CZ$ survives into
$R^{\varphi}$. Carrying $|\CZ_{s,t}|$ in \eqref{eq:Thetadef} would change no rate below,
\eqref{eq:bracketLip} making it $O(|t-s|)$. They are
not state-free: the last of the listed factors is a functional of the path, and the only
property used of it is that the normalisation by $1+\sup_{r\in[s,t]}|X_{r}|$ leaves it of state degree
$0$. That normalisation is what keeps the total degree at $3$: $\int_{s}^{t}\sigma_{r}(X_{r})\d B_{r}$
grows linearly in the state by \eqref{eq:Egrowth}, and an unnormalised factor would add a fourth power
to a term already carrying $(1+\sup_{r}|X_{r}|)^{3-j}|\delta X_{s,t}|^{j}$. The total degree in the
state is therefore $3$ in every term of the sum in $\Xi$, and not $6$, the power of the supremum and the power of
$\delta X$ being complementary,
\[
\underbrace{(3-j)}_{\text{from }\ (1+\sup_{r\in[s,t]}|X_{r}|)^{3-j}}
\ +\ \underbrace{j}_{\text{from }\ |\delta X_{s,t}|^{j}}\ =\ 3,
\qquad j=0,1,2,3,
\]
because substituting \eqref{eq:davie} for $\delta X$ trades one power of $|\delta X|$ for one
coefficient factor and leaves the total degree unchanged. The term carrying $|\delta X_{s,t}|^{3}$ is
the third-order Taylor remainder of $\varphi$, and carries no coefficient factor.

\emph{Step~8: the three terms in which $J$ stands beside a second factor.}\stepnum{8}{stp:MP8} The summands
$D^{2}\varphi(X_{s})(\,\cdot\,,G^{1}_{s,t}+G^{2}_{s,t})J_{s,t}$,
$D^{2}\varphi(X_{s})(M_{s,t},J_{s,t})$ and
$\tfrac12D^{2}\varphi(X_{s})\!:\!J^{\otimes2}_{s,t}$ of \eqref{eq:MPleftover} are not of the form just
considered: $|J_{s,t}|$ is no factor of $\Theta^{(0)}$ or of $\Theta^{(1)}$, each summand carries
$|J_{s,t}|$ multiplied by a further factor that the bare $C|J_{s,t}|$ of \eqref{eq:Dphibound} does not
cover, and neither of those factors is $\varphi$-free of the listed kind, $M_{s,t}$ carrying no
deterministic bound at all and $G^{1}_{s,t}+G^{2}_{s,t}$ a coefficient evaluated along the path. We
put the first two summands in the stated form by substituting \eqref{eq:davie} once more, in the form
\begin{equation}\label{eq:Jresub}
\big|J_{s,t}\big|\ \le\ \big|\delta X_{s,t}\big|+\big|A_{s,t}\big|+\big|M_{s,t}\big|
+\big|G^{1}_{s,t}\big|+\big|G^{2}_{s,t}\big| ,
\end{equation}
and bounding each of the five summands with \eqref{eq:Egrowth} and
\eqref{eq:MProughgrowth}, that is,
$|A_{s,t}|\le C_{\mathrm{gr}}(1+\sup_{r}|X_{r}|)|t-s|$, $|M_{s,t}|=(1+\sup_{r}|X_{r}|)\cdot
\big|\int_{s}^{t}\sigma\d B\big|/(1+\sup_{r}|X_{r}|)$, $|G^{1}_{s,t}|\le C_{\mathrm{gr}}
(1+|X_{s}|)|\delta Z_{s,t}|$ and $|G^{2}_{s,t}|\le C_{\mathrm{gr}}(1+|X_{s}|)|\ZZ_{s,t}|$. Every
product that results carries either one power of $|\delta X_{s,t}|$ beside one listed factor, and so
sits at $j=1$, or two listed factors and no $|\delta X_{s,t}|$, and so sits at $j=0$; in both cases the
state degree is $3$, since each substituted coefficient factor contributes exactly the power of
$1+\sup_{r}|X_{r}|$ that the power of $|\delta X_{s,t}|$ gives up. In particular no term of
$\Theta^{(0)}$ produced this way stands alone, the smallest being $|t-s|$ multiplied by
$\big|\int_{s}^{t}\sigma\d B\big|/(1+\sup_{r}|X_{r}|)$.

The third summand carries $|J_{s,t}|$ twice, and we apply \eqref{eq:Jresub} to each of the
two factors. Each of the twenty-five products that result is bounded using
\eqref{eq:Egrowth} and \eqref{eq:MProughgrowth}, exactly as above. Denote by $j\in\{0,1,2\}$ the number of factors equal to
$|\delta X_{s,t}|$. The remaining $2-j$ factors are each one of $|A_{s,t}|,|M_{s,t}|,
|G^{1}_{s,t}|,|G^{2}_{s,t}|$, and each of those contributes one power of $1+\sup_{r\in[s,t]}|X_{r}|$
together with one of the quantities listed for $\Theta^{(0)}$ and $\Theta^{(1)}$. So the product sits
at $|\delta X_{s,t}|^{j}$ with state degree $2-j$ and with $2-j$ of the listed factors beside it, and
it is dominated by the term of $\Xi_{s,t}$ at that same $j$, which carries
$(1+\sup_{r\in[s,t]}|X_{r}|)^{3-j}$, one power to spare at every $j$. At $j=2$ the product is
$|\delta X_{s,t}|^{2}$ and $\Theta^{(2)}\equiv1$; at $j=0$ it is a product of two of the listed
quantities, so it is again not the lone factor $|t-s|$ of Remark~\ref{rem:MPtsexception}.

\emph{Step~9: size, conditional mean and degree of the three terms set aside.}\stepnum{9}{stp:MP9}
Nothing is lost by setting aside the family
$\mathcal K^{\varphi}=\delta\tilde N^{\varphi}+\tilde P^{\varphi}+\tilde Q^{\varphi}$ of
\eqref{eq:Dphibound}, and three things have to be settled about the three quantities appearing on the left.

\emph{(i) Their size.}\stepnum{(i)}{stp:MP9i} Applying Burkholder--Davis--Gundy at exponent $m\ge2$ and then Minkowski's
integral inequality in $L_{m/2}$, we obtain
\begin{align*}
\big\|\delta\tilde N^{\varphi}_{s,t}\big\|_{L_{m}}
\ &\le\ C_{m}\Big\|\int_{s}^{t}\big|\nabla\varphi(X_{r})-\nabla\varphi(X_{s})\big|^{2}
\,\Frob{\sigma_{r}(X_{r})}^{2}\dr\Big\|_{L_{m/2}}^{1/2}\\
\ &\le\ C_{m}\Big(\int_{s}^{t}\big\|\,\big|\nabla\varphi(X_{r})-\nabla\varphi(X_{s})\big|^{2}
\,\Frob{\sigma_{r}(X_{r})}^{2}\,\big\|_{L_{m/2}}\dr\Big)^{1/2},
\end{align*}
with $C_{m}$ denoting the Burkholder constant. 
$|\nabla\varphi(X_{r})-\nabla\varphi(X_{s})|\le\|D^{2}\varphi\|_{\infty}|\delta X_{s,r}|\le\tnorm\varphi\,|\delta X_{s,r}|$,
and that the linear growth \eqref{eq:Egrowth} of $\sigma$ gives
$\Frob{\sigma_{r}(X_{r})}\le C_{\mathrm{gr}}(1+|X_{r}|)$ with its constant $C_{\mathrm{gr}}$. Applying
Cauchy--Schwarz at the conjugate pair $(2,2)$ inside the norm, we obtain
\[
\big\|\,\big|\delta X_{s,r}\big|^{2}\big(1+|X_{r}|\big)^{2}\,\big\|_{L_{m/2}}
\ \le\ \big\|\,\big|\delta X_{s,r}\big|^{2}\,\big\|_{L_{m}}\,\big\|\big(1+|X_{r}|\big)^{2}\big\|_{L_{m}}
\ =\ \big\|\delta X_{s,r}\big\|^{2}_{L_{2m}}\,\big\|1+|X_{r}|\big\|^{2}_{L_{2m}} ,
\]
and hence we obtain
\begin{align*}
\big\|\delta\tilde N^{\varphi}_{s,t}\big\|_{L_{m}}
\ &\le\ C_{m}C_{\mathrm{gr}}\,\tnorm\varphi\Big(1+\big\|\sup_{t\le T}|X_{t}|\big\|_{L_{2m}}\Big)
\Big(\int_{s}^{t}\big\|\delta X_{s,r}\big\|^{2}_{L_{2m}}\dr\Big)^{1/2}\\
\ &\le\ C_{m}C_{\mathrm{gr}}\,\tnorm\varphi\,\sup_{r\in[s,t]}\big\|\delta X_{s,r}\big\|_{L_{2m}}
\Big(1+\big\|\sup_{t\le T}|X_{t}|\big\|_{L_{2m}}\Big)\,|t-s|^{1/2} ,
\end{align*}
the inequality of \eqref{eq:Ntildebound}. It remains to identify the rate. By \eqref{eq:davie},
Minkowski's integral inequality for the $\dr$-integral, Burkholder--Davis--Gundy for the
$\d B$-integral, and \eqref{eq:Egrowth} together with \eqref{eq:MProughgrowth} for the coefficients,
\begin{align*}
\big\|\delta X_{s,r}\big\|_{L_{2m}}
\ &\le\ \Big\|\int_{s}^{r}b_{u}(X_{u})\du\Big\|_{L_{2m}}
+\Big\|\int_{s}^{r}\sigma_{u}(X_{u})\,\d B_{u}\Big\|_{L_{2m}}\\
&\phantom{xx}{}+\big\|f_{s}(X_{s})\big\|_{L_{2m}}\big|\delta Z_{s,r}\big|
+\big\|(Df\,f+f')(X_{s})\big\|_{L_{2m}}\big|\ZZ_{s,r}\big|
+\big\|J_{s,r}\big\|_{L_{2m}}\\
\ &\le\ C\Big(1+\big\|\sup_{t\le T}|X_{t}|\big\|_{L_{2m}}\Big)
\Big(|r-s|+|r-s|^{1/2}+|r-s|^{\gamma}+|r-s|^{2\gamma}\Big)
+\big\|J_{s,r}\big\|_{L_{2m}} ,
\end{align*}
with $C$ depending on $C_{\mathrm{gr}}$, on the H\"older norms of $\RZ$, on $T$ and on $m$. Of the four
exponents $1,\tfrac12,\gamma,2\gamma$ the third is the smallest, by $\gamma<\tfrac12<1$ and
$\gamma<2\gamma$, so that the third term dominates once the powers $T^{a-\gamma}$ are absorbed into
$C$, and we obtain
\[
\sup_{r\in[s,t]}\|\delta X_{s,r}\|_{L_{2m}}\le C(1+\|\sup_{t\le T}|X_{t}|\|_{L_{2m}})|t-s|^{\gamma}
+\sup_{r\in[s,t]}\|J_{s,r}\|_{L_{2m}}=O(|t-s|^{\gamma}),
\]
the last step by \eqref{eq:daviemoduli} at order $2m\le m_{0}$, at a modulus taken
non-decreasing near the diagonal, and, away from it, by
$\sup_{s\le r}\|J_{s,r}\|_{L_{2m}}<\infty$, which \eqref{eq:davie}, \eqref{eq:Egrowth},
\eqref{eq:MProughgrowth}, Burkholder--Davis--Gundy and the hypothesis
$\|\sup_{t\le T}|X_{t}|\|_{L_{m'}}<\infty$ of \ref{mpe1} give at $2m\le3m\le m'$. Whence the rate
$|t-s|^{\gamma}\cdot|t-s|^{1/2}=|t-s|^{\gamma+1/2}$. Minkowski's inequality does the same work for
$\tilde P^{\varphi}$ and $\tilde Q^{\varphi}$: taking the
$L_{m}$-norm inside the $\dr$-integral replaces $\sup_{r\in[s,t]}|\delta X_{s,r}|$ by
$\sup_{r\in[s,t]}\|\delta X_{s,r}\|_{L_{3m}}$, a supremum of norms at fixed pairs,
subject to the same restriction near the diagonal, now at exponent $3m$. With
$\|D^{2}\varphi\|_{\infty}\vee\|D^{3}\varphi\|_{\infty}\le\tnorm\varphi$, with \eqref{eq:Egrowth} in the
two forms $|b_{r}(X_{r})|\le C_{\mathrm{gr}}(1+|X_{r}|)$ and
$\operatorname{tr}
a_{r}(X_{r})=\Frob{\sigma_{r}(X_{r})}^{2}\le C_{\mathrm{gr}}^{2}(1+|X_{r}|)^{2}$, using
$|S\!:\!a|=|\operatorname{tr}(Sa)|\le\Op{S}\operatorname{tr}a$ for $a\succeq0$, and with H\"older at the conjugate pair
$(3,\tfrac32)$ inside the norm, we obtain
\begin{align*}
\big\|\tilde P^{\varphi}_{s,t}\big\|_{L_{m}}
\ &\le\ \int_{s}^{t}\big\|\big(\nabla\varphi(X_{r})-\nabla\varphi(X_{s})\big)\cdot b_{r}(X_{r})\big\|_{L_{m}}\dr
\ \le\ C_{\mathrm{gr}}\tnorm\varphi\int_{s}^{t}\big\|\,\big|\delta X_{s,r}\big|
\big(1+|X_{r}|\big)\,\big\|_{L_{m}}\dr\\
\ &\le\ C_{\mathrm{gr}}\tnorm\varphi\int_{s}^{t}\big\|\delta X_{s,r}\big\|_{L_{3m}}
\big\|1+|X_{r}|\big\|_{L_{3m/2}}\dr\\
\ &\le\ C_{\mathrm{gr}}\tnorm\varphi
\Big(1+\big\|\sup_{t\le T}|X_{t}|\big\|_{L_{3m}}\Big)^{2}
\sup_{r\in[s,t]}\big\|\delta X_{s,r}\big\|_{L_{3m}}\,|t-s| ,
\end{align*}
and similarly we obtain, the factor $\tfrac12$ of \eqref{eq:PQtilde} being kept,
\begin{align*}
\big\|\tilde Q^{\varphi}_{s,t}\big\|_{L_{m}}
\ &\le\ \tfrac12\int_{s}^{t}\big\|\big(D^{2}\varphi(X_{r})-D^{2}\varphi(X_{s})\big)\!:\!a_{r}(X_{r})\big\|_{L_{m}}\dr\\
\ &\le\ \tfrac12C_{\mathrm{gr}}^{2}\tnorm\varphi\int_{s}^{t}\big\|\,\big|\delta X_{s,r}\big|
\big(1+|X_{r}|\big)^{2}\,\big\|_{L_{m}}\dr\\
\ &\le\ \tfrac12C_{\mathrm{gr}}^{2}\tnorm\varphi\int_{s}^{t}\big\|\delta X_{s,r}\big\|_{L_{3m}}
\big\|1+|X_{r}|\big\|^{2}_{L_{3m}}\dr\\
\ &\le\ \tfrac12C_{\mathrm{gr}}^{2}\tnorm\varphi
\Big(1+\big\|\sup_{t\le T}|X_{t}|\big\|_{L_{3m}}\Big)^{2}
\sup_{r\in[s,t]}\big\|\delta X_{s,r}\big\|_{L_{3m}}\,|t-s| .
\end{align*}
Adding the two yields the constant $C_{\mathrm{gr}}+\tfrac12C_{\mathrm{gr}}^{2}
=C_{\mathrm{gr}}(1+\tfrac12C_{\mathrm{gr}})$ of \eqref{eq:PQtildebound}. Further,
$\sup_{r\in[s,t]}\|\delta X_{s,r}\|_{L_{3m}}=O(|t-s|^{\gamma})$ by the computation just performed, read
at exponent $3m$ instead of $2m$, whence the rate $|t-s|^{1+\gamma}$. Since $\gamma>0$,
$\delta\tilde N^{\varphi}$ is $o(|t-s|^{1/2})$ and \ref{MP2} is untouched; but
$\gamma+\tfrac12<1$, so it is \emph{not} $o(|t-s|)$ and \ref{MP1} would be false without (ii). By
contrast $\tilde P^{\varphi}$ and $\tilde Q^{\varphi}$, at rate $|t-s|^{1+\gamma}$, are $o(|t-s|)$
outright and need no cancellation at all.

\emph{(ii) Their conditional means.}\stepnum{(ii)}{stp:MP9ii}
$\delta\tilde N^{\varphi}_{s,t}$ denotes a stochastic integral over $[s,t]$ against a Brownian motion
of the ambient filtration with adapted, square-integrable integrand, so
$\E[\delta\tilde N^{\varphi}_{s,t}\mid\mathcal F_{s}]=0$. Hence, by the tower property and
$\mathcal G_{s}\subseteq\mathcal F_{s}$,
\[
\E\big[\delta\tilde N^{\varphi}_{s,t}\mid\mathcal G_{s}\big]
\ =\ \E\Big[\,\E\big[\delta\tilde N^{\varphi}_{s,t}\mid\mathcal F_{s}\big]\ \Big|\ \mathcal G_{s}\Big]
\ =\ 0 :
\]
it joins $\mathcal M^{\varphi}$ in \eqref{eq:Dphisplit} and does not affect \ref{MP1}. We do not claim this for $\tilde P^{\varphi}$ and $\tilde Q^{\varphi}$, Lebesgue integrals of non-centred
integrands. They join $\Upsilon^{\varphi}$, and by \ref{stp:MP9i} require no
cancellation.
\emph{(iii) Their degrees.}\stepnum{(iii)}{stp:MP9iii} The total degree in the state is
\[
\delta\tilde N^{\varphi}:\ \underbrace{1}_{\delta X}+\underbrace{1}_{\sigma}=2,
\qquad
\tilde P^{\varphi}:\ \underbrace{1}_{\delta X}+\underbrace{1}_{b}=2,
\qquad
\tilde Q^{\varphi}:\ \underbrace{1}_{\delta X}+\underbrace{2}_{a=\sigma\sigma^{\top}}=3,
\]
the last entry because \eqref{eq:Egrowth} makes $a=\sigma\sigma^{\top}$ quadratic and not linear in the
state. Accordingly \eqref{eq:Ntildebound} controls $\delta\tilde N^{\varphi}$ by two factors in
$L_{2m}$ and \eqref{eq:PQtildebound} controls $\tilde P^{\varphi},\tilde Q^{\varphi}$ by three factors
in $L_{3m}$,
\[
\frac1m=\frac1{2m}+\frac1{2m},
\qquad
\frac1m=\frac1{3m}+\frac1{3m}+\frac1{3m},
\]
using $\|\sup_{t\le T}|X_{t}|\|_{L_{m'}}<\infty$ at $m'\ge3m$ and the Davie moduli at order $3m$; the $2m$ that $\tilde P^{\varphi}$ alone
would need is written as $3m$ only so that one display serves both. All three lie at or below the top
degree $3$, $\tilde Q^{\varphi}$ attaining it and the other two below it, and $3m\le m_{0}$ with
$3m\le m'$, so the printed hypotheses of \ref{mpe1} are enough for the whole family.

\emph{Step~10: the exact decomposition \eqref{eq:Dphisplit}.}\stepnum{10}{stp:MP10}
A pointwise bound cannot see cancellation, so we also need the decomposition behind
\eqref{eq:Dphibound}; it is \eqref{eq:MPleftover}, regrouped. From \eqref{eq:MPleftover} we
subtract $\int_{s}^{t}\mathcal L_{r}\varphi(X_{r})\dr+\delta N^{\varphi}_{s,t}$,
which produces $-\tilde P^{\varphi}$, $-\delta\tilde N^{\varphi}$ and $-\tilde Q^{\varphi}$ from the
first three summands, the last of these through It\^o's isometry in the form
\[
\tfrac12D^{2}\varphi(X_{s})\!:\!M^{\otimes2}_{s,t}
=\tfrac12D^{2}\varphi(X_{s})\!:\!\int_{s}^{t}\!a_{r}(X_{r})\dr
+\tfrac12D^{2}\varphi(X_{s})\!:\!\Big(M^{\otimes2}_{s,t}-\int_{s}^{t}\!a_{r}(X_{r})\dr\Big) .
\]
Sorting the remaining summands of \eqref{eq:MPleftover} by whether they are conditionally centred, we
obtain
\begin{equation}\label{eq:Dphisplit}
D^{\varphi}_{s,t}=W^{\varphi}_{s,t}\,J_{s,t}+\mathcal M^{\varphi}_{s,t}+\Upsilon^{\varphi}_{s,t},
\qquad
W^{\varphi}_{s,t}\ \mathcal G_{s}\text{-measurable},\qquad
\E\big[\mathcal M^{\varphi}_{s,t}\mid\mathcal G_{s}\big]=0 ,
\end{equation}
with $\mathcal M^{\varphi}_{s,t}=-\delta\tilde N^{\varphi}_{s,t}+\mathcal M^{\varphi,\circ}_{s,t}$,
$\Upsilon^{\varphi}_{s,t}=-\tilde P^{\varphi}_{s,t}-\tilde Q^{\varphi}_{s,t}
+\Upsilon^{\varphi,\circ}_{s,t}$, and with the two remainders given explicitly by
\begin{equation}\label{eq:MUcirc}
\begin{aligned}
\mathcal M^{\varphi,\circ}_{s,t}
:={}&\tfrac12D^{2}\varphi(X_{s})\!:\!\Big(M^{\otimes2}_{s,t}-\int_{s}^{t}\!a_{r}(X_{r})\dr\Big)
\ +\ D^{2}\varphi(X_{s})\big(G^{1}_{s,t}+G^{2}_{s,t},\,M_{s,t}\big),\\[2pt]
\Upsilon^{\varphi,\circ}_{s,t}
:={}&D^{2}\varphi(X_{s})\big(G^{1}_{s,t},G^{2}_{s,t}\big)
+\tfrac12D^{2}\varphi(X_{s})\!:\!\big(G^{2}_{s,t}\big)^{\otimes2}
+D^{2}\varphi(X_{s})\big(M_{s,t},J_{s,t}\big)\\
&\phantom{xx}{}+\tfrac12D^{2}\varphi(X_{s})\!:\!J^{\otimes2}_{s,t}
+D^{2}\varphi(X_{s})\big(A_{s,t},\delta X_{s,t}-\tfrac12A_{s,t}\big)
+\mathcal R^{\varphi,(3)}_{s,t} ,
\end{aligned}
\end{equation}
in the notation \eqref{eq:MPpieces}. Both terms of $\mathcal M^{\varphi,\circ}$ are conditionally
centred: the first is the It\^o compensator of $M^{\otimes2}$, and in the second the factor
$G^{1}_{s,t}+G^{2}_{s,t}$ is $\mathcal G_{s}$-measurable while $\E[M_{s,t}\mid\mathcal G_{s}]=0$;
together with $\E[\delta\tilde N^{\varphi}_{s,t}\mid\mathcal G_{s}]=0$ this is the centring asserted in
\eqref{eq:Dphisplit}. We further obtain
$|\mathcal M^{\varphi,\circ}_{s,t}|\vee|\Upsilon^{\varphi,\circ}_{s,t}|\le\tnorm\varphi\cdot\Xi_{s,t}$
pointwise, the family $\mathcal K^{\varphi}$ being controlled instead by \eqref{eq:Ntildebound} and
\eqref{eq:PQtildebound}, and for $\Upsilon^{\varphi,\circ}$ the sharper
$|\Upsilon^{\varphi,\circ}_{s,t}|\le\tnorm\varphi\cdot\Xi^{\circ}_{s,t}$ by reading
$\|D^{2}\varphi\|_{\infty}\vee\|D^{3}\varphi\|_{\infty}\le\tnorm\varphi$ off each summand of
\eqref{eq:MUcirc}, where
\begin{equation}\label{eq:Xicirc}
\Xi^{\circ}_{s,t}:=\big|G^{1}_{s,t}\big|\big|G^{2}_{s,t}\big|
+\tfrac12\big|G^{2}_{s,t}\big|^{2}
+\big|M_{s,t}\big|\big|J_{s,t}\big|
+\tfrac12\big|J_{s,t}\big|^{2}
+\big|A_{s,t}\big|\Big(\big|\delta X_{s,t}\big|+\tfrac12\big|A_{s,t}\big|\Big)
+\tfrac16\big|\delta X_{s,t}\big|^{3} .
\end{equation}
Applying H\"older's inequality to each summand of \eqref{eq:Xicirc} at the splittings
$\tfrac1m=\tfrac1{2m}+\tfrac1{2m}$ and $\tfrac1m=3\cdot\tfrac1{3m}$, we obtain
\[
\begin{gathered}
\big\|\Xi^{\circ}_{s,t}\big\|_{L_{m}}
\ \le\ C\Big(\underbrace{|t-s|^{3\gamma}}_{G^{1}G^{2}}
+\underbrace{|t-s|^{4\gamma}}_{(G^{2})^{2}}
+\underbrace{|t-s|^{1/2}o\big(|t-s|^{1/2}\big)}_{MJ}
+\underbrace{o\big(|t-s|\big)}_{J^{2}}
\\
+\underbrace{|t-s|^{1+\gamma}}_{A(\delta X-\frac12A)}
+\underbrace{|t-s|^{3\gamma}}_{|\delta X|^{3}}\Big)
\ =\ o\big(|t-s|\big) ,
\end{gathered}
\]
every exponent exceeding $1$ because $\gamma>\tfrac13$, save the two $J$-entries, which are
$o(|t-s|)$ by \eqref{eq:daviemoduli} at order $2m\le3m$; the constant $C$ collects
$(1+\|\sup_{t\le T}|X_{t}|\|_{L_{3m}})^{3}$, the H\"older norms of $\RZ$ and $C_{\mathrm{gr}}$. Of the three members of
$\mathcal K^{\varphi}$ only $\delta\tilde N^{\varphi}$ is conditionally centred, so it alone belongs in
$\mathcal M^{\varphi}$ and $\tilde P^{\varphi},\tilde Q^{\varphi}$ belong in $\Upsilon^{\varphi}$; it
is $\Upsilon^{\varphi,\circ}$ and not $\Upsilon^{\varphi}$ that carries a pointwise $\varphi$-free
majorant. Here $W^{\varphi}$ collects every term in which $J_{s,t}$ is multiplied by
$\mathcal G_{s}$-measurable factors alone,
\[
W^{\varphi}_{s,t}=\nabla\varphi(X_{s})+D^{2}\varphi(X_{s})
\big(\,\cdot\,,f_{s}(X_{s})\delta Z_{s,t}+(Df\,f+f')(X_{s})\ZZ_{s,t}\big),
\]
the expansion being explicit to second order and every higher occurrence of $J$ sitting inside the
third-order remainder of \eqref{eq:MPtaylor}. Since $|\delta Z_{s,t}|=O(|t-s|^{\gamma})$ and
$|\ZZ_{s,t}|=O(|t-s|^{2\gamma})=O(|t-s|^{\gamma})$ on $[0,T]$,
\begin{align*}
\big\|W^{\varphi}_{s,t}-\nabla\varphi(X_{s})\big\|_{L_{m}}
\ &\le\ \|D^{2}\varphi\|_{\infty}\Big(\big\|f_{s}(X_{s})\big\|_{L_{m}}\big|\delta Z_{s,t}\big|
+\big\|(Df\,f+f')(X_{s})\big\|_{L_{m}}\big|\ZZ_{s,t}\big|\Big)\\
\ &\le\ C\tnorm\varphi\big(1+\|\sup_{t\le T}|X_{t}|\|_{L_{m}}\big)|t-s|^{\gamma}.
\end{align*}
The cross term
$D^{2}\varphi(X_{s})(f_{s}(X_{s})\delta Z_{s,t},J_{s,t})$ has to stay inside $W^{\varphi}J$ and can neither go
into $\mathcal M^{\varphi}$ nor $\Upsilon^{\varphi}$; Remark~\ref{rem:MPgrouping} gives
the reasoning.

\emph{Step~11: the four facts the estimates rest on.}\stepnum{11}{stp:MP11} The
estimates of \eqref{eq:Dphibound} and of the three groups of \eqref{eq:Dphisplit} rest on the
following four facts.
\begin{enumerate}
    \item No germ factor occurs alone. The germ
of \eqref{eq:davie} is $f_{s}(X_{s})\delta Z_{s,t}+(Df\,f+f')(X_{s})\ZZ_{s,t}$, and $\CZ$ joins it
through the carr\'e du champ \eqref{eq:carre}; each of $|\delta Z_{s,t}|$, $|\ZZ_{s,t}|$ and
$|\CZ_{s,t}|$ is subtracted in \eqref{eq:roughMP} and survives only inside a product with a second
factor, so no germ factor stands alone. The one factor that would otherwise have stood alone is
$\nabla\varphi(X_{s})\cdot\int_{s}^{t}\sigma_{r}(X_{r})\d B_{r}$, and it does not: against
$-\delta N^{\varphi}_{s,t}$ it forms $-\delta\tilde N^{\varphi}_{s,t}$, which is set aside, is no
longer part of $R^{\varphi}$ and is governed by \eqref{eq:Ntildebound} instead.
\color{blue}Remark~\ref{rem:MPfactsfail} records what a lone germ factor, and this factor left in
place, would have done to \ref{MP1} and \ref{MP2}.\color{black}

What the first fact delivers is that every term
of $\Theta^{(0)}$ of rate below $1$
survives only by cancellation, those terms being exactly the ones carrying one factor
$\int_{s}^{t}\sigma\d B$ beside a germ factor, of rate $|t-s|^{\gamma+1/2}$, which the third fact centres.
\item The surviving products of two or more rough increments have rate above $1$, and the
qualifier cannot be dropped.  The surviving products of
$\mathcal G_{s}$-measurable factors are $\delta Z\otimes\ZZ$, $\ZZ^{\otimes2}$ and their companions,
together with everything the third-order Taylor remainder of \eqref{eq:MPtaylor} carries. Since
$\gamma>\tfrac13$ and $\CZ$ is Lipschitz by \eqref{eq:bracketLip}, the rates are
\[
\begin{aligned}
\delta Z\otimes\ZZ&:\ \gamma+2\gamma=3\gamma>1,
&\quad \ZZ^{\otimes2}&:\ 4\gamma>\tfrac43>1,\\
\text{third-order Taylor remainder}&:\ 3\gamma>1,
&\quad \CZ\ \text{beside a germ factor}&:\ 1+\gamma>1,\\
\textstyle\int_{s}^{t}b_{r}(X_{r})\dr\ \text{beside a germ factor}&:\ 1+\gamma>1,
&\quad \delta Z^{\otimes2}&:\ 2\gamma<1 .
\end{aligned}
\]
No $\CZ$ survives into $R^{\varphi}$, by
Step~\ref{stp:MP7}. It is listed because carrying $|\CZ_{s,t}|$ there would change no rate.
The first five go into $\Upsilon^{\varphi}$. The sixth, $\delta Z^{\otimes2}$, is the reason for the
qualifier. By \eqref{eq:brackettaylor} the second-order Taylor coefficient
$\tfrac12D^{2}\varphi(X_{s})\big(f_{s}(X_{s})\delta Z_{s,t}\big)^{\otimes2}$ is precisely what the $\ZZ$-
and $\CZ$-terms of \eqref{eq:roughMP} subtract. It is also the only product of two or more germ
factors of rate $\le1$: every other one carries either three germ factors, and so sits inside the
third-order remainder at rate $3\gamma$, or a factor $\ZZ$ beside a further germ factor (rate
$3\gamma$ or better), or a factor $|t-s|$, $\CZ$ or $\int_{s}^{t}b_{r}(X_{r})\dr$ (rate $1+\gamma$ or
better).
\item The one surviving family of rate below $1$ is conditionally centred.  It consists of the terms
carrying exactly one factor $\int_{s}^{t}\sigma_{r}(X_{r})\d B_{r}$ against otherwise
$\mathcal G_{s}$-measurable factors ($\delta Z$ and $\ZZ$ are deterministic, and $f_{s}(X_{s})$,
$(Df\,f+f')(X_{s})$ and $D^{j}\varphi(X_{s})$ are $\mathcal G_{s}$-measurable), of rate
$|t-s|^{\gamma+1/2}$, together with the compensated square
$(\int_{s}^{t}\sigma\d B)^{\otimes2}-\int_{s}^{t}\sigma\sigma^{\top}\dr$, of rate $|t-s|$. Since
$\gamma+\tfrac12<1$ these are \emph{not} $o(|t-s|)$ and \ref{MP1} is false without the centring; with
it they contribute $0$ to $\E[\,\cdot\mid\mathcal G_{s}]$ and $o(|t-s|^{1/2})$ to \ref{MP2}.
\item The remaining terms containing $J$ are either small in $L_{m}$ or absorbed by $W^{\varphi}$. By H\"older at the conjugate pair $(2,2)$, by \eqref{eq:Egrowth} with Burkholder--Davis--Gundy for
$\int_{s}^{t}\sigma\d B$, and by \eqref{eq:daviemoduli} at order $2m$,
\begin{align*}
\big\|\,|J_{s,t}|^{2}\,\big\|_{L_{m}}
\ &=\ \big\|J_{s,t}\big\|^{2}_{L_{2m}}\ =\ o\big(|t-s|\big),\\
\Big\|J_{s,t}\otimes\int_{s}^{t}\sigma_{r}(X_{r})\d B_{r}\Big\|_{L_{m}}
\ &\le\ \big\|J_{s,t}\big\|_{L_{2m}}\Big\|\int_{s}^{t}\sigma_{r}(X_{r})\d B_{r}\Big\|_{L_{2m}}\\
\ &=\ o\big(|t-s|^{1/2}\big)\cdot O\big(|t-s|^{1/2}\big)\ =\ o\big(|t-s|\big),\\
\Big\|J_{s,t}\otimes\int_{s}^{t}b_{r}(X_{r})\dr\Big\|_{L_{m}}
\ &\le\ \big\|J_{s,t}\big\|_{L_{2m}}\Big\|\int_{s}^{t}b_{r}(X_{r})\dr\Big\|_{L_{2m}}\\
\ &=\ o\big(|t-s|^{1/2}\big)\cdot O\big(|t-s|\big)\ =\ o\big(|t-s|\big) ,
\end{align*}
so $J^{\otimes2}$, $J\otimes\int_{s}^{t}\sigma\d B$ and $J\otimes\int_{s}^{t}b\dr$ are $o(|t-s|)$ in
$L_{m}$, the first using $\|J_{s,t}\|_{L_{2m}}=o(|t-s|^{1/2})$. Similarly, denoting
$\mathsf G:=(Df\,f+f')(X_{s})$,
\[
\big\|J_{s,t}\otimes\mathsf G\,\ZZ_{s,t}\big\|_{L_{m}}
\ \le\ \big\|J_{s,t}\big\|_{L_{2m}}\big\|\mathsf G\big\|_{L_{2m}}\big|\ZZ_{s,t}\big|
\ =\ o\big(|t-s|^{1/2+2\gamma}\big),
\qquad
\tfrac12+2\gamma>\tfrac12+\tfrac23=\tfrac76>1 ,
\]
so $J\otimes\mathsf G\ZZ$ has rate above $1$; and every occurrence of $J$ inside the third-order Taylor
remainder is carried by the pointwise bound $|\delta X_{s,t}|^{3}$, of rate $3\gamma>1$. The sole
exception is $J\otimes f_{s}(X_{s})\delta Z_{s,t}$, which is exactly what $W^{\varphi}$ was introduced
for.
\end{enumerate}

\emph{Step~12: the H\"older split for $W^{\varphi}J$.}\stepnum{12}{stp:MP12} $W^{\varphi}_{s,t}$ is
$\mathcal G_{s}$-measurable, so one would like to condition on $\mathcal G_{s}$ and take it out of
the norm, which would leave $W^{\varphi}$ at the exponent $m$ and cost no H\"older loss.
Remark~\ref{rem:MPWsplit} shows that this needs the uniform conditional modulus
\eqref{eq:daviemoduliinfty}, which \eqref{eq:Egrowth} does not give. We therefore run the step as a
H\"older split at two exponents,
\[
\big\|W^{\varphi}_{s,t}J_{s,t}\big\|_{L_{m}}\ \le\ \big\|W^{\varphi}_{s,t}\big\|_{L_{r}}
\big\|J_{s,t}\big\|_{L_{r'}},
\qquad \tfrac1m=\tfrac1r+\tfrac1{r'} .
\]
Three inputs fix $(r,r')$. First, both appeals need $J$ at order $r'$: \ref{MP1} needs
$\|\E[J\mid\mathcal G_{s}]\|_{L_{r'}}=o(|t-s|)$, the first modulus of \eqref{eq:daviemoduli} at
$r'$, and \ref{MP2} needs $\|J\|_{L_{r'}}=o(|t-s|^{1/2})$, which by the tower property is the
second modulus at $r'$. Both are available exactly when $r'\le m_{0}$. Second, the state
degree of $W^{\varphi}$ is $1$, it being $\nabla\varphi(X_{s})$, of degree $0$, plus the single
second-order term displayed after \eqref{eq:Dphisplit}, whose one state factor is the germ coefficient
and whose bound was recorded there and holds at any exponent $r\le m'$. Hence
\[
\big\|W^{\varphi}_{s,t}\big\|_{L_{r}}\ \le\ C\tnorm\varphi\Big(1+\big\|\sup_{t\le T}|X_{t}|\big\|_{L_{r}}\Big)
\]
and what this requires is $r\le m'$. Third, we can take $r'=m_{0}$ as large as the Davie moduli
allow, which makes $r$ as small as possible, $\tfrac1r=\tfrac1m-\tfrac1{m_{0}}$, so a split exists if
and only if
\begin{equation}\label{eq:Wsplit}
\frac{mm_{0}}{m_{0}-m}\ \le\ m' .
\end{equation}
At the printed Davie order $m_{0}=3m$ the left side is
\[
\frac{mm_{0}}{m_{0}-m}\bigg|_{m_{0}=3m}\ =\ \frac{3m^{2}}{2m}\ =\ \tfrac32m ,
\]
and part~\ref{mpe1} asks $m'\ge3m$, so \eqref{eq:Wsplit} holds with room to spare. The exponents are
$r'=3m$ and $r=\tfrac32m$, that is $\tfrac1m=\tfrac2{3m}+\tfrac1{3m}$, with $r=\tfrac32m<3m\le m'$; and
$r=\tfrac32m\ge\tfrac32\cdot2=3>1$
at every $m\ge2$, so the split is legitimate.

\emph{Step~13: taking norms, and \ref{MP1}--\ref{MP2}.}\stepnum{13}{stp:MP13}
The remaining norms are taken by generalised H\"older: a term of
\eqref{eq:Dphibound} of total degree
$N\le3$ is bounded in $L_{m}$ by a product of $N$ factors in $L_{Nm}$,
\[
\Big\|\prod_{k=1}^{N}U_{k}\Big\|_{L_{m}}\ \le\ \prod_{k=1}^{N}\big\|U_{k}\big\|_{L_{Nm}},
\qquad
\frac1m=\underbrace{\frac1{Nm}+\dots+\frac1{Nm}}_{N\ \text{summands}} ,
\]
and by \eqref{eq:davie}, \eqref{eq:Egrowth} and Burkholder--Davis--Gundy we obtain
\[
\|\delta X_{s,t}\|_{L_{3m}}\le C\big(1+\|\sup_{t\le T}|X_{t}|\|_{L_{3m}}\big)|t-s|^{\gamma}
+\|J_{s,t}\|_{L_{3m}} .
\]
What is needed is therefore $\|\sup_{t\le T}|X_{t}|\|_{L_{3m}}<\infty$, together with the Davie moduli
at order $3m$. The first of these is $m'\ge3m$, and it is the degree count; the second is
$m_{0}\ge3m$. It is what the
generalised H\"older at $N=3$ requires of the Davie modulus, $\delta X$ being one of the
three factors placed in $L_{3m}$ and carrying $J$ with it. Both are used at the top
degree $3$, which after the clearing below is every term of the sum in $\Xi$. The family set aside,
of total degree at most $3$, needs no more.

The count cannot be applied to \eqref{eq:Dphibound} as it stands. The factor
$\mathsf m_{s,t}$ of \eqref{eq:Thetadef} is of state degree $0$ but has no pathwise bound, so it
cannot be taken out of the norm. We clear the normalisation first. Write $\#$ for the
number of factors $\mathsf m_{s,t}$ in a term of $\Theta^{(j)}$ and $\theta_{s,t}$ for the product
of its remaining, deterministic, factors. Every term of $\Xi_{s,t}$ other than $|J_{s,t}|$ is then
\[
\mathsf S^{3-j}\big|\delta X_{s,t}\big|^{j}\,\mathsf m^{\#}_{s,t}\,\theta_{s,t}
\ =\ \mathsf S^{3-j-\#}\big|\delta X_{s,t}\big|^{j}\big|M_{s,t}\big|^{\#}\,\theta_{s,t},
\qquad \mathsf S:=\mathsf S(X,s,t),
\]
by \eqref{eq:Thetaatoms}. By \eqref{eq:Thetadef}, $\#\le2$ at $j=0$, $\#\le1$ at $j=1$ and
$\#=0$ at $j=2,3$, so $j+\#\le3$ and the exponent $3-j-\#$ is non-negative. The right side is
therefore a product of three random factors of state degree $1$, which is what the count requires, and
\[
\big\|\mathsf S\big\|_{L_{3m}}=O(1),
\qquad
\big\|\delta X_{s,t}\big\|_{L_{3m}}=O\big(|t-s|^{\gamma}\big),
\qquad
\big\|M_{s,t}\big\|_{L_{3m}}=O\big(|t-s|^{1/2}\big),
\]
the second by the term above, by $\|J_{s,t}\|_{L_{3m}}=o(|t-s|^{1/2})$ and by $\gamma<\tfrac12$,
the third by Burkholder--Davis--Gundy against \eqref{eq:Egrowth}. Apart from $\mathsf S$, which is
$O(1)$, the factors available are $|\delta X_{s,t}|$, $|M_{s,t}|$, $|t-s|$, $|\delta Z_{s,t}|$ and
$|\ZZ_{s,t}|$, at $\gamma$, at $\tfrac12$, at $1$, at $\gamma$ and at $2\gamma$. Every
term carries two of them, except $|\delta X_{s,t}|^{3}$ at $j=3$, which carries three, and the lone
$|t-s|$ of \eqref{eq:Thetadef}, which carries one. The slowest runs at $\gamma$, so every term is
$O(|t-s|^{2\gamma})$, the lone one too because $2\gamma<1$, and $2\gamma>\tfrac23$. With
$\|J_{s,t}\|_{L_{m}}=o(|t-s|^{1/2})$ we obtain
$\|\Xi_{s,t}\|_{L_{m}}=o(|t-s|^{1/2})$, while
$\|\Xi^{\circ}_{s,t}\|_{L_{m}}=o(|t-s|)$ is the bound stated just after \eqref{eq:Xicirc}; the term $\delta\tilde N^{\varphi}$ set aside above is
$O(|t-s|^{\gamma+1/2})=o(|t-s|^{1/2})$ in $L_{m}$ by \eqref{eq:Ntildebound} and contributes $0$ to the
conditional mean, while $\tilde P^{\varphi}$ and $\tilde Q^{\varphi}$ are
$O(|t-s|^{1+\gamma})=o(|t-s|)$ in $L_{m}$ by \eqref{eq:PQtildebound} and enter the conditional mean
through the contraction $\|\E[\,\cdot\mid\mathcal G_{s}]\|_{L_{m}}\le\|\cdot\|_{L_{m}}$ alone.
Collecting the estimates, we obtain from \eqref{eq:Dphisplit}
\[
\begin{gathered}
\big\|\E[D^{\varphi}_{s,t}\mid\mathcal G_{s}]\big\|_{L_{m}}
\le\big\|W^{\varphi}_{s,t}\big\|_{L_{3m/2}}\big\|\E[J_{s,t}\mid\mathcal G_{s}]\big\|_{L_{3m}}
+\big\|\tnorm\varphi\,\Xi^{\circ}_{s,t}\big\|_{L_{m}}
+\big\|\tilde P^{\varphi}_{s,t}\big\|_{L_{m}}+\big\|\tilde Q^{\varphi}_{s,t}\big\|_{L_{m}}\\
=\ \tnorm\varphi\cdot o(|t-s|),
\qquad
\big\|D^{\varphi}_{s,t}\big\|_{L_{m}}\le\tnorm\varphi\cdot o(|t-s|^{1/2}),
\end{gathered}
\]
the first because $W^{\varphi}$ is $\mathcal G_{s}$-measurable, so that
$\E[W^{\varphi}_{s,t}J_{s,t}\mid\mathcal G_{s}]=W^{\varphi}_{s,t}\E[J_{s,t}\mid\mathcal G_{s}]$, and the
second because $\|W^{\varphi}_{s,t}J_{s,t}\|_{L_{m}}\le\|W^{\varphi}_{s,t}\|_{L_{3m/2}}
\|J_{s,t}\|_{L_{3m}}$ by \eqref{eq:Wsplit}. The second bound reads
\[
\big\|D^{\varphi}_{s,t}\big\|_{L_{m}}
\le\big\|W^{\varphi}_{s,t}\big\|_{L_{3m/2}}\big\|J_{s,t}\big\|_{L_{3m}}
+\big\|\delta\tilde N^{\varphi}_{s,t}\big\|_{L_{m}}
+2\,\tnorm\varphi\,\big\|\Xi_{s,t}\big\|_{L_{m}}
+\big\|\tilde P^{\varphi}_{s,t}\big\|_{L_{m}}+\big\|\tilde Q^{\varphi}_{s,t}\big\|_{L_{m}} ,
\]
the five summands being $O(1)\cdot o(|t-s|^{1/2})$, $O(|t-s|^{\gamma+1/2})$,
$\tnorm\varphi\cdot o(|t-s|^{1/2})$ and $O(|t-s|^{1+\gamma})$ twice over. Each is also
finite uniformly over $s\le t$: every factor is a norm of a power of $\sup_{t}|X_{t}|$ or of a term of
\eqref{eq:davie}, bounded under the hypothesis $m'\ge3m$ of \ref{mpe1} together with
\eqref{eq:Egrowth} and \eqref{eq:MProughgrowth}, and every
rate is bounded on $[0,T]$. That is the first half of \eqref{eq:epsdelta}, which \ref{MP1} and
\ref{MP2} needs alongside the rates. These are \ref{MP1} and
\ref{MP2}. Uniformity over $\tnorm\varphi\le\Lambda$ is automatic here; see
Remark~\ref{rem:MPstrong}(i).

\begin{lemma}[Two facts about the mixed norm]\label{lem:mixednorm}
Let $\mathcal F\subseteq\mathcal A$ be a sub-$\sigma$-field and, following \cite[\S2]{FHL}, denote
$\|\xi\mid\mathcal F\|_{q}:=\big(\E[|\xi|^{q}\mid\mathcal F]\big)^{1/q}$ and
$\|\xi\|_{q,r}:=\big\|\,\|\xi\mid\mathcal F\|_{q}\,\big\|_{L_{r}}$ for $1\le q\le r$, with $q<\infty$ and $r\le\infty$, the outer norm at $r=\infty$ being the essential supremum. Then
the following hold:
\begin{enumerate}[label=\normalfont(\roman*),leftmargin=2.4em]
\item\label{mx1} for every $\theta\in[1,\infty)$ and $p\in[1,\infty)$,
\begin{equation}\label{eq:mixedidentity}
\big\|\,\E\big[|\xi|^{\theta}\mid\mathcal F\big]\,\big\|_{L_{p}}\ =\ \big\|\xi\big\|^{\theta}_{\theta,\,\theta p} ;
\end{equation}
\item\label{mx2} $q\mapsto\|\xi\|_{q,r}$ is non-decreasing on $[1,r]$.
\end{enumerate}
\end{lemma}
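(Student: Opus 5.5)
Part (i) is an unwinding of the definitions. By definition $\|\xi\mid\mathcal F\|_{\theta}^{\theta}=\E[|\xi|^{\theta}\mid\mathcal F]$, so
\[
\big\|\xi\big\|^{\theta}_{\theta,\theta p}
=\big\|\,\|\xi\mid\mathcal F\|_{\theta}\,\big\|^{\theta}_{L_{\theta p}} .
\]
For $p<\infty$, writing $W:=\|\xi\mid\mathcal F\|_{\theta}\ge0$, the identity $\|W\|^{\theta}_{L_{\theta p}}=\big(\E[W^{\theta p}]\big)^{1/p}=\|W^{\theta}\|_{L_{p}}$ gives \eqref{eq:mixedidentity} at once. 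Both sides may be $+\infty$ simultaneously, and the identity is to be read in $[0,\infty]$, so no integrability of $\xi$ is required beyond what makes the conditional expectation of the nonnegative variable $|\xi|^{\theta}$ well defined. I will also note the constraint $\theta\le\theta p$, that is $p\ge1$, under which the mixed norm is defined in the convention $q\le r$. This is the case $\theta=m$, $p=1$ that underlies the equality in \eqref{eq:MP2form}.

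For part (ii), fix $1\le q_{1}\le q_{2}\le r$. The key step is conditional Jensen (equivalently, conditional Lyapunov) applied to the convex map $u\mapsto u^{q_{2}/q_{1}}$ on $[0,\infty)$:
\[
\big(\E[|\xi|^{q_{1}}\mid\mathcal F]\big)^{q_{2}/q_{1}}\le\E[|\xi|^{q_{2}}\mid\mathcal F]\quad\text{a.s.},
\]
that is, $\|\xi\mid\mathcal F\|_{q_{1}}\le\|\xi\mid\mathcal F\|_{q_{2}}$ almost surely. Applying the monotone outer norm $\|\cdot\|_{L_{r}}$ to both sides yields $\|\xi\|_{q_{1},r}\le\|\xi\|_{q_{2},r}$. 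This covers $r=\infty$ as well, since the essential supremum is monotone.

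The only real care point is justifying conditional Jensen for possibly non-integrable nonnegative variables. I will handle this by truncation: apply Jensen to $|\xi|\wedge N$ and pass to the limit $N\to\infty$ by conditional monotone convergence on both sides, so that the almost sure inequality holds in $[0,\infty]$. With that in hand, both parts require no further input beyond \cite[\S2]{FHL}'s definitions.
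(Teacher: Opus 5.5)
Your proposal is correct and follows the paper's own proof. Part (i) is the same unwinding of the definitions, and part (ii) is conditional Jensen followed by monotonicity of the outer $L_{r}$-norm. Your truncation argument for non-integrable $\xi$ is a harmless refinement that the paper leaves implicit.
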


\begin{proof}
$\theta\ge1$ and $p\ge1$ is the range in
which $\|\cdot\|_{\theta,\theta p}$ is defined above. By definition,
\[
\big\|\xi\big\|^{\theta}_{\theta,\theta p}
=\big(\E\big[\big(\E[|\xi|^{\theta}\mid\mathcal F]^{1/\theta}\big)^{\theta p}\big]\big)^{\theta/(\theta p)}
=\big(\E\big[\E[|\xi|^{\theta}\mid\mathcal F]^{p}\big]\big)^{1/p} ,
\]
and the right-hand side is the left-hand side of \eqref{eq:mixedidentity}.
\ref{mx2} follows from conditional Jensen, $\|\xi\mid\mathcal F\|_{q}\le\|\xi\mid\mathcal F\|_{q'}$ for
$q\le q'$, together with monotonicity of the $L_{r}$-norm. Neither is printed in
\cite{FHL}.
\end{proof}

\color{blue}
\begin{proposition}[The Taylor step, without a Riemann sum]\label{prop:Taylorsew}
Let $\gamma\in(\tfrac13,\tfrac12)$, $\varkappa_{\varphi}\in(1/\gamma,3]$ and
$\mfn >\max\{4,2/\gamma\}$. Let $\varphi$ have $D^{j}\varphi$ bounded for
$1\le j\le N:=\lceil\varkappa_{\varphi}\rceil-1$ with $D^{N}\varphi$
globally H\"older of order $\varkappa_{\varphi}-N\in(0,1]$, and let $Y$
satisfy the hypotheses of Lemma~\ref{lem:roughItoLn}, so that in particular
\begin{equation}\label{eq:dYholder}
\|\delta Y\|_{\gamma;4,\mfn }\ :=\ \sup_{s<t}\frac{\|\delta Y_{s,t}\|_{4,\mfn }}{|t-s|^{\gamma}}\ <\ \infty .
\end{equation}
Let $A_{s,t}$ denote the germ of \cite[proof of Thm.~4.13]{FHL} and denote
$\Phi_{t}:=\varphi(Y_{t})-\varphi(Y_{0})$. Set $\theta:=\min\{\varkappa_{\varphi},\mfn /2\}$. Then
$\theta\in(1/\gamma,3]$, $2\theta\le \mfn $, and the following estimates are satisfied,
\begin{equation}\label{eq:Taylorsew}
\big\|\Phi_{t}-\Phi_{s}-A_{s,t}\big\|_{L_{2}}\le\|D^{2}\varphi\|_{\infty}\|\delta Y\|^{2}_{\gamma;4,\mfn }|t-s|^{2\gamma},
\qquad
\big\|\E_{s}\big[\Phi_{t}-\Phi_{s}-A_{s,t}\big]\big\|_{L_{2}}\le C_{\varphi}\|\delta Y\|^{\theta}_{\gamma;4,\mfn }|t-s|^{\theta\gamma},
\end{equation}
with $2\gamma>\tfrac12$ and $\theta\gamma>1$. Hence $\Phi$ lies in the class in which
Lemma~\ref{lem:SSL} states uniqueness for the germ $A$ at $m=2$; so does the right-hand side of the
rough It\^o formula, by the four defect estimates of \cite[proof of Thm.~4.13]{FHL} read with
Step~\ref{stp:rIto2}'s interpolation, and the two coincide.
\end{proposition}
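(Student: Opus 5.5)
The plan starts from the identity established in Step~\ref{stp:rIto3} of the proof of Lemma~\ref{lem:roughItoLn}: the two $\ZZ$-terms of the germ cancel, so $A_{s,t}=P_{s}(Y_{t})-\varphi(Y_{s})$ with $P_{s}$ the second-order Taylor polynomial of $\varphi$ at $Y_{s}$. Hence $\Phi_{t}-\Phi_{s}-A_{s,t}=R_{s,t}$ is exactly the Taylor remainder, and no Riemann sum is formed. First the elementary facts about $\theta$ are recorded. Since $\varkappa_{\varphi}>1/\gamma$ and $\mfn /2>1/\gamma$ by $\mfn >2/\gamma$, we get $\theta>1/\gamma$. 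Next, $\theta\le\varkappa_{\varphi}\le3$, and $2\theta\le\mfn $ by construction. As $\theta\le\varkappa_{\varphi}$, the H\"older class of $\varphi$ gives the two pathwise remainder bounds
\[
|R_{s,t}|\ \le\ \|D^{2}\varphi\|_{\infty}|\delta Y_{s,t}|^{2},
\qquad
|R_{s,t}|\ \le\ C_{\varphi}|\delta Y_{s,t}|^{\theta}.
\]
The second is obtained by interpolating the $\varkappa_{\varphi}$-bound against the quadratic one, exactly as in Step~\ref{stp:rIto2}, so that $C_{\varphi}$ involves only $[\varphi]_{2}$ and $[\varphi]_{\varkappa_{\varphi}}$, never $\|\varphi\|_{\infty}$.

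For the unconditional bound, the quadratic branch gives $\|R_{s,t}\|_{L_{2}}\le\|D^{2}\varphi\|_{\infty}\|\delta Y_{s,t}\|^{2}_{L_{4}}$. The embedding \eqref{eq:rItoLnembed} then bounds this by $\|\delta Y\|^{2}_{\gamma;4,\mfn }|t-s|^{2\gamma}$. This is the first half of \eqref{eq:Taylorsew}, and $2\gamma>\tfrac12$.

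The conditional bound is the point of the proposition and the main obstacle: one must avoid the $L_{2\varkappa_{\varphi}}$ moment flagged in \eqref{eq:rItoLnL2gap}. The plan is to bound $|\E_{s}R_{s,t}|\le C_{\varphi}\E_{s}|\delta Y_{s,t}|^{\theta}$ and then apply Lemma~\ref{lem:mixednorm}\ref{mx1} with $p=2$:
\[
\|\E_{s}|\delta Y_{s,t}|^{\theta}\|_{L_{2}}=\|\delta Y_{s,t}\|^{\theta}_{\theta,2\theta}.
\]
Since $\theta\le3<4$ and $2\theta\le\mfn $, Lemma~\ref{lem:mixednorm}\ref{mx2} and monotonicity in the outer index give $\|\cdot\|_{\theta,2\theta}\le\|\cdot\|_{4,\mfn }$. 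To use monotonicity in the outer index one compares at the same inner index first: $\|\xi\|_{\theta,2\theta}\le\|\xi\|_{\theta,\mfn }\le\|\xi\|_{4,\mfn }$, where the second step requires $\theta\le 4\le\mfn $. This yields $C_{\varphi}\|\delta Y\|^{\theta}_{\gamma;4,\mfn }|t-s|^{\theta\gamma}$ with $\theta\gamma>1$. Here the cap $\theta\le\mfn /2$ is what makes the outer index admissible, and this is where $\mfn >2/\gamma$ enters.

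Finally, both bounds place the adapted increment family $\Phi_{t}-\Phi_{s}$ in the uniqueness class of Lemma~\ref{lem:SSL} at $m=2$, with exponents $\epsilon_{2}=2\gamma$ and $\epsilon_{1}=\theta\gamma$ and finite constants, which is the wider class settled after that lemma. The right-hand side $\mathcal I$ of the rough It\^o formula lies in the same class by the defect estimates of Step~\ref{stp:rIto2}, including the $J^{5}$ term and the interpolation of \eqref{eq:rItoLnJ4}; their orders exceed $\tfrac12$ and $1$ under \eqref{eq:paramsIto}. Uniqueness in Lemma~\ref{lem:SSL} then forces $\Phi=\mathcal I$ up to modification, and continuity upgrades this to indistinguishability.
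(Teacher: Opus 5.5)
Your proposal is correct and follows the paper's proof step for step. The steps are: the cancellation of the $\ZZ$-terms, which identifies $\Phi_t-\Phi_s-A_{s,t}$ with the second-order Taylor remainder; the quadratic branch in $L_4$ for the unconditional bound; conditional Jensen with Lemma~\ref{lem:mixednorm}\ref{mx1} at $p=2$ for the conditional one; and uniqueness in Lemma~\ref{lem:SSL}. The paper invokes that uniqueness at the common pair of minimal exponents, a point you leave implicit but which is harmless.

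The only difference is cosmetic. You raise the outer index before the inner one, $\|\cdot\|_{\theta,2\theta}\le\|\cdot\|_{\theta,\mfn}\le\|\cdot\|_{4,\mfn}$, whereas the paper passes through $\|\cdot\|_{4,2\theta}$, which uses $2\theta\ge4$. Both orders are valid.
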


\begin{proof}
\emph{Step 1:}\stepnum{1}{stp:Tsew1} $\Phi_{t}-\Phi_{s}-A_{s,t}$ is the second-order Taylor remainder. The germ of
\cite[proof of Thm.~4.13]{FHL} is
$A_{s,t}=\langle D\varphi(Y_{s}),\delta Y_{s,t}\rangle+\langle D^{2}\varphi(Y_{s}),Y'^{\otimes2}_{s}\ZZ_{s,t}\rangle
+\tfrac12\langle D^{2}\varphi(Y_{s}),\delta Y^{\otimes2}_{s,t}-2Y'^{\otimes2}_{s}\operatorname{Sym}\ZZ_{s,t}\rangle$.
Since $D^{2}\varphi$ is symmetric and $Y'^{\otimes2}_{s}\big(\ZZ_{s,t}-\operatorname{Sym}\ZZ_{s,t}\big)$
antisymmetric, the two $\ZZ$-terms cancel and $A_{s,t}$ is exactly the second-order Taylor
polynomial of $\varphi$ at $Y_{s}$ evaluated at $Y_{t}$. Therefore $R_{s,t}:=\Phi_{t}-\Phi_{s}-A_{s,t}$
is that remainder, and we obtain
\begin{equation}\label{eq:Rbranches}
|R_{s,t}|\ \le\ \|D^{2}\varphi\|_{\infty}\,|\delta Y_{s,t}|^{2},
\qquad
|R_{s,t}|\ \le\ C_{\varphi}\,|\delta Y_{s,t}|^{\varkappa_{\varphi}},
\qquad\text{hence}\qquad
|R_{s,t}|\ \le\ C_{\varphi}'\,|\delta Y_{s,t}|^{\theta}
\end{equation}
for every $\theta\in[2,\varkappa_{\varphi}]$, by $\min\{a^{2},a^{\varkappa}\}\le a^{\theta}$ for
$a\ge0$. No sup-norm of $\varphi$ appears, allowing Lemma~\ref{lem:roughItoLn}'s unbounded test
functions.

\emph{Step 2: the unconditional bound, from the quadratic branch, at $L_{4}$ only.}\stepnum{2}{stp:Tsew2}
Applying the first branch of \eqref{eq:Rbranches}, the identity
$\|\xi^{2}\|_{L_{2}}=\|\xi\|^{2}_{L_{4}}$ and $\|\cdot\|_{L_{4}}\le\|\cdot\|_{4,\mfn }$
\textup{(\cite[Prop.~2.3(ii)]{FHL})}, we obtain
\[
\|R_{s,t}\|_{L_{2}}\ \le\ \|D^{2}\varphi\|_{\infty}\big\|\,|\delta Y_{s,t}|^{2}\big\|_{L_{2}}
=\|D^{2}\varphi\|_{\infty}\|\delta Y_{s,t}\|^{2}_{L_{4}}
\ \le\ \|D^{2}\varphi\|_{\infty}\|\delta Y_{s,t}\|^{2}_{4,\mfn },
\]
and with \eqref{eq:dYholder} this is the first display of \eqref{eq:Taylorsew}. The exponent
is $2\gamma$, and $2\gamma>\tfrac12$ since $\gamma>\tfrac13$. Nothing above $L_{4}$ was required.

\emph{Step 3: the conditional bound.}\stepnum{3}{stp:Tsew3} Applying conditional Jensen,
$|\E_{s}R_{s,t}|\le\E_{s}|R_{s,t}|$, then the third branch of \eqref{eq:Rbranches},
\eqref{eq:mixedidentity} at $p=2$, Lemma~\ref{lem:mixednorm}\ref{mx2} and $2\theta\le \mfn $, we obtain
\[
\big\|\E_{s}R_{s,t}\big\|_{L_{2}}
\ \le\ C'_{\varphi}\big\|\E_{s}\big[|\delta Y_{s,t}|^{\theta}\big]\big\|_{L_{2}}
\ =\ C'_{\varphi}\big\|\delta Y_{s,t}\big\|^{\theta}_{\theta,\,2\theta}
\ \le\ C'_{\varphi}\big\|\delta Y_{s,t}\big\|^{\theta}_{4,\,2\theta}
\ \le\ C'_{\varphi}\big\|\delta Y_{s,t}\big\|^{\theta}_{4,\,\mfn },
\]
the third step legitimate because $\theta\le\varkappa_{\varphi}\le3<4$, the fourth by
monotonicity of $L_{r}$ in $r$. With \eqref{eq:dYholder} this is the second term of
\eqref{eq:Taylorsew}. The $\theta$-th power demands inner index $\theta\le3$, below
the class's inner index $4$ and so free, and outer index $2\theta$, its own parameter $\mfn $.
The step one must not take is
\[
\big\|\E_{s}|\xi|^{\theta}\big\|_{L_{2}}\le\big\|\,|\xi|^{\theta}\big\|_{L_{2}}=\|\xi\|^{\theta}_{L_{2\theta}} .
\]
It discards the conditioning and reintroduces $L_{2\varkappa_{\varphi}}$, as the $L_{2}$ reading of a
Riemann sum does.

\emph{Step 4: the exponent.}\stepnum{4}{stp:Tsew4} Since $\theta=\min\{\varkappa_{\varphi},\mfn /2\}$, we obtain
\[
\theta\gamma>1
\quad\Longleftrightarrow\quad
\theta>1/\gamma
\quad\Longleftrightarrow\quad
\varkappa_{\varphi}>1/\gamma\ \text{ and }\ \mfn /2>1/\gamma,\ \text{ i.e.~}\mfn >2/\gamma .
\]
The first is a standing hypothesis and the second is required of $\mfn $. Both hold.

\emph{Step 5: uniqueness.}\stepnum{5}{stp:Tsew5} $\Phi$ is adapted with $\Phi_{0}=0$, and
$\Phi_{t}-A_{0,t}=R_{0,t}\in L_{2}$ by Step~\ref{stp:Tsew2}, using only $\delta Y\in L_{4}$. In particular
$\varphi(Y_{t})\in L_{2}$ is not required. Denote by $\mathcal I$ the rough It\^o formula's
right-hand side. By the four estimates on $J^{1},\dots,J^{4}$ in \cite[proof of Thm.~4.13]{FHL},
read with the interpolation of Step~\ref{stp:rIto2} at the one summand that needs it and therefore at
$1+\gamma\lambda$ in place of $3\gamma$, $\mathcal I$ satisfies the same two
bounds at $\gamma+\beta$ and
$\mu:=\min\{\gamma+\beta+\beta'',1+\gamma\lambda,1+\gamma\}$, both $>\tfrac12$ and
$>1$ by \eqref{eq:paramsIto} and stated in $L_{2}$ directly, the index needed. Those four estimates
are independent of the Taylor step of \cite{FHL}, which is the step this proposition replaces, so no
circle arises. We can place
both in the class of Lemma~\ref{lem:SSL} at the common pair
\[
\epsilon_{2}:=\min\{2\gamma,\gamma+\beta\}=\gamma+\beta>\tfrac12,
\qquad
\epsilon_{1}:=\min\{\theta\gamma,\mu\}>1 ,
\]
the weakening being harmless on $|t-s|\le T$. The germ $A$
meets Lemma~\ref{lem:SSL}'s two hypotheses, by $\delta A_{s,u,t}=-D_{s,t}+D_{s,u}+D_{u,t}$ with
$D_{s,t}:=\mathcal I_{t}-\mathcal I_{s}-A_{s,t}$ and
$\|\E_{s}D_{u,t}\|_{L_{2}}=\|\E_{s}\E_{u}D_{u,t}\|_{L_{2}}\le\|\E_{u}D_{u,t}\|_{L_{2}}$. Uniqueness
yields $\Phi=\mathcal I$, both continuous, so the identity is up to indistinguishability.
\end{proof}
\color{black}

\begin{proposition}\label{prop:Athreshuses}
The hypotheses of Lemma~\ref{lem:roughItoLn} and the
parameter conditions \eqref{eq:paramsIto} hold at each of the following two places, which are the
places at which this paper applies that lemma.
\begin{enumerate}[label=\normalfont(\arabic*),leftmargin=2.4em]
\item\label{app:Athresh1} \emph{Step~\ref{step:Emoment}\ref{proof:Eproof:step2:a2} of
Theorem~\ref{thm:weakex}}, at $V(X^{k,R})$ for fixed $(k,R)$, where the truncated coefficients are
bounded. No demand on $m$ arises, the integrability index being unconstrained.
\item\label{app:Athresh2} \emph{Steps~\ref{proof:Eproof:step2:step2} and
\ref{proof:Eproof:step2:step3} of Lemma~\ref{lem:Escalefree}'s proof}, at the same
$V(X^{k,R})$ and the same fixed $(k,R)$. No demand on $m$ arises.
\end{enumerate}
\end{proposition}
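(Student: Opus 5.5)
The statement is a checklist: at both places of use, Lemma~\ref{lem:roughItoLn} is applied with $\varphi=V$, $Y=X^{k,R}$, $Y'=f_{R}(X^{k,R})$, $Y''=(Df_{R}f_{R}+f'_{R})(X^{k,R})$, at fixed $(k,R)$. The plan verifies its hypotheses in three groups: the test function, the moment and seminorm bounds \eqref{eq:rItoLn}--\eqref{eq:rItoLnsemi}, and the index arithmetic \eqref{eq:paramsIto}. Everything at fixed $(k,R)$ may carry $(k,R)$-dependent constants, since the lemma only asks for finiteness.

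\emph{Test function.} Take $\varkappa_{\varphi}=3$, so $N=2$. By \eqref{eq:Vjet}, $DV$ and $D^{2}V$ are bounded and $D^{2}V$ is globally Lipschitz, since $\|D^{3}V\|\le C$. No bound on $V$ itself is needed, by Remark~\ref{rem:rItoLnextra}. Also $3>1/\gamma$ because $\gamma>\tfrac13$.

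\emph{Moments and seminorms.} The truncated data $b^{k}_{R},\sigma^{k}_{R},f_{R},Df_{R}f_{R}+f'_{R}$ are bounded at fixed $(k,R)$, so \eqref{eq:rItoLn} and the second entry of \eqref{eq:rItoLnsemi} hold at every $\mfn<\infty$. For the remaining three entries I will use item (ii) of Step~\ref{step:Ereg}. There \cite[Rem.~4.4, Lem.~3.11]{FHL} place the composed pair in $\mathscr D^{\gamma,\bar\beta}_{\RZ}L_{m,\infty}$ with $\bar\beta$ from \eqref{eq:Jbarbeta}, simultaneously for every $m$. Through the embedding \eqref{eq:rItoLnembed} and \cite[Prop.~2.3]{FHL}, this gives $\|\delta Y'\|_{4,\mfn}\lesssim|t-s|^{\gamma}$, $\|\E_{s}R^{Y'}\|_{L_{\mfn}}\lesssim|t-s|^{\gamma+\bar\beta}$ and $\|\E_{s}\delta Y''\|_{L_{\mfn}}\lesssim|t-s|^{\bar\beta}$. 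So the lemma runs with $(\beta,\beta')=(\gamma,\bar\beta)$, and $0<\bar\beta\le\gamma$ holds as required. Progressive measurability and the continuity clauses come from \cite[Thm.~4.6]{FHL}'s solution class.

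\emph{Index arithmetic.} Choose $\mfn=8$. Then $\beta''=\min\{\gamma,\gamma(2-1),\bar\beta\}=\bar\beta$. The first condition is $\gamma+\gamma=2\gamma>\tfrac12$. The second is $2\gamma+\bar\beta>1$, checked in the two cases of \eqref{eq:Jbarbeta}: either it equals $3\gamma>1$, or it equals $\gamma+\beta+\beta'\ge2\beta+\beta'>1$ by \eqref{eq:rhoexp}.

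The two places differ only in what is read off. At \ref{app:Athresh1} the identity feeds \eqref{eq:Egronwallineq}. At \ref{app:Athresh2}, part \ref{rIto:class} is needed to define $\int(\Gamma^{1},\Gamma^{2})\d\RZ$, the same composed pair $(\mathcal T V,\mathcal T'V)(X^{k,R})$, to match the sewing. That item is the conclusion of the lemma under the same hypotheses, so no new check is needed.

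Integrability index $m$ never appears, because $\mfn$ is chosen freely given bounded data. This is the assertion "no demand on $m$".

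The main obstacle is the seminorm transfer. \cite[Rem.~4.4]{FHL} delivers conditional $L_{m,\infty}$ seminorms, and I must convert them into the $\|\cdot\|_{4,\mfn}$ and $L_{\mfn}$ forms at $\mfn=8$. If the mixed-norm comparison is delicate, the fallback is to use boundedness at fixed $(k,R)$ directly: $\|\cdot\|_{4,8}\le\|\cdot\|_{L_{8}}\le\|\cdot\|_{m,\infty}$ with $m=8$ and the inner conditional norm dominating.
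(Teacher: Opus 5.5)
Your proposal is correct and follows the paper's proof essentially step for step. It takes $\varkappa_{\varphi}=3$ from \eqref{eq:Vjet} and uses boundedness of the truncated data at fixed $(k,R)$ for \eqref{eq:rItoLn}. It places the composed pair in $\mathscr D^{\gamma,\bar\beta}_{\RZ}L_{m,\infty}$ by item~(ii) of Step~\ref{step:Ereg}, then takes $\mfn=8$, so that $\beta''=\bar\beta$, and settles the two cases of \eqref{eq:Jbarbeta} via \eqref{eq:rhoexp} or $\gamma>\tfrac13$.

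The step you flag as the main obstacle is not really one. The paper resolves it exactly as your fallback does: take the Davie index $m\ge4$, which is free because $X^{k,R}$ is an $L_{m,\infty}$-solution at every $m$, and use $\|\cdot\|_{L_{\mfn}}\le\|\cdot\|_{L_{\infty}}$ together with Lemma~\ref{lem:mixednorm}\ref{mx2}.
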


\begin{proof}
At both places the test function admits $\varkappa_{\varphi}=3$, so the first entry of the minimum in
\eqref{eq:paramsIto} is $\gamma$. The pair fed to the formula is not $(f,f')$ but
$\big(f_{R}(X^{k,R}),(Df_{R}f_{R}+f'_{R})(X^{k,R})\big)$, placed by \eqref{eq:Jbarbeta} in
$\mathscr D^{\gamma,\bar\beta}_{\RZ}L_{m,\infty}$ with
$\bar\beta=\min\{\beta+\beta'-\gamma,\gamma\}$: first index $\gamma$, third entry $\bar\beta$, so
$\beta''=\min\{\gamma,\gamma(\tfrac \mfn 4-1),\bar\beta\}$. That class embeds in the $L_{4,\mfn}$ the
lemma requires, at every $\mfn>4$, once $m$ is taken at $4$ or above, by
$\|\cdot\|_{L_{\mfn}}\le\|\cdot\|_{L_{\infty}}$ and then Lemma~\ref{lem:mixednorm}\ref{mx2} at outer
index $\infty$. This choice of $m$ costs nothing, Step~\ref{step:Ereg} producing the solution simultaneously at
every $m\ge2$. The first condition in \eqref{eq:paramsIto} therefore states $2\gamma>\tfrac12$,
automatic, and the second
\begin{equation}\label{eq:paramsdischarge}
2\gamma+\min\big\{\gamma,\gamma(\tfrac \mfn 4-1),\bar\beta\big\}\ >\ 1 ,
\qquad\text{which for }\ \bar\beta>1-2\gamma\ \text{ reads }\qquad
\mfn \ >\ \frac4\gamma-4 .
\end{equation}
Both cases of that minimum are attained on an open set, and the two feed different rates.
$\bar\beta=\gamma$ exactly when $\beta+\beta'\ge2\gamma$, as e.g.~at
$\gamma=\tfrac{11}{30}$, $\beta=\tfrac7{20}$, $\beta'=\tfrac25$; otherwise
$\bar\beta=\beta+\beta'-\gamma<\gamma$, as e.g.~at $\gamma=\beta=0.45$, $\beta'=0.35$, where
$\bar\beta=0.35$. So \eqref{eq:paramsdischarge} is not a bound on $\mfn$ here. The
solution is $L_{m,\infty}$, so $\mfn$ may be chosen as large as we want, and the minimum is then
attained at $\bar\beta$. On the branch $\bar\beta=\beta+\beta'-\gamma$ the condition follows from
\eqref{eq:rhoexp}, and on the branch $\bar\beta=\gamma$ from $\gamma>\tfrac13$. Taking $\varkappa_{\varphi}=3$ requires the least, not the most, since lowering
$\varkappa_{\varphi}$ lowers $\beta''$ through the first entry, and $\varkappa_{\varphi}>1/\gamma$
keeps that entry from being restrictive.

\emph{\ref{app:Athresh1}: absorbed.} The formula is applied to $V(X^{k,R})$ at fixed $(k,R)$, where the
truncated $b^{k}_{R},\sigma^{k}_{R},f_{R},f'_{R}$ are bounded, so \eqref{eq:rItoLn} holds immediately and
only the test-function condition of Lemma~\ref{lem:roughItoLn} binds: $V(x)=(1+|x|^{2})^{1/2}$ is unbounded, while $V\in C^{\infty}$ is such that
$\|D^{j}V\|_{\infty}<\infty$ for $j=1,2,3$ by \eqref{eq:Vjet}, admitting $\varkappa_{\varphi}=3$. The
integrability index is unconstrained, the pair
$\big(f_{R}(X^{k,R}),(Df_{R}f_{R}+f'_{R})(X^{k,R})\big)$ being bounded at fixed $(k,R)$, hence in
$L_{4,\mfn }$ for every $\mfn <\infty$. At $\mfn =8$ the second entry in \eqref{eq:paramsIto} is $\gamma$.
The third-seminorm cap that \cite{HuberII} carries is not felt either: at fixed $(k,R)$ the truncated
coefficients are bounded and $X^{k,R}$ is an $L_{m,\infty}$-solution simultaneously at every $m\ge2$,
so every summand of the Davie expansion \eqref{eq:davie} of $\delta X^{k,R}_{u,w}$, the remainder $J$
included, lies in $L_{q}$ at every finite $q$ with order $\gamma$, and the outer index
$(1+\theta)\mfn $ of the interpolation of \cite{HuberII} is available without further hypothesis. Its H\"older index is not unconstrained: boundedness in $L_{4,\mfn }$ says nothing about
the H\"older exponent, and \eqref{eq:Jbarbeta} fixes it at $\beta=\gamma$, $\beta'=\bar\beta$,
$\beta''=\min\{\gamma,\gamma,\bar\beta\}=\bar\beta$. The conclusion survives by \eqref{eq:rhoexp},
not \eqref{eq:paramsdischarge}: in \eqref{eq:paramsIto} the first condition reads
$\gamma+\gamma=2\gamma>\tfrac12$, automatic, the second $2\gamma+\bar\beta>1$, holding
in both cases of $\bar\beta$, since
\[
2\gamma+\bar\beta=\gamma+\beta+\beta'\ \ge\ 2\beta+\beta'\ >\ 1
\quad\text{if }\bar\beta=\beta+\beta'-\gamma,
\qquad
2\gamma+\bar\beta=3\gamma\ >\ 1
\quad\text{if }\bar\beta=\gamma ,
\]
the first by $\beta\le\gamma$ and the second by $\gamma>\tfrac13$. Nothing moves here.

\emph{\ref{app:Athresh2}: absorbed, by the sentence absorbing \ref{app:Athresh1}.} The formula is
applied to the same $V(X^{k,R})$, at the same fixed $(k,R)$, on the same truncated, mollified system
of Step~\ref{step:Ereg}, now for its conclusion's integral form at
Step~\ref{proof:Eproof:step2:step2} and for the continuity in the upper variable that same
conclusion gives at Step~\ref{proof:Eproof:step2:step3}, not a
Gr\"onwall. Every hypothesis, \eqref{eq:rItoLn}, the index $\mfn =8$, the test-function condition \eqref{eq:Vjet}, the H\"older index of \eqref{eq:Jbarbeta}, is verified as at \ref{app:Athresh1}, and
\eqref{eq:paramsIto} again by \eqref{eq:rhoexp}.
\end{proof}

\subsection{What \texorpdfstring{\cite[Thm.~4.6]{FHL}}{[FHL, Thm.~4.6]} asks, and what is provided}\label{app:FHL46}

On the rough data \cite[Thm.~4.6]{FHL} asks
\begin{equation}\label{eq:FHL46hyp}
(f,f')\in\mathscr D^{2\alpha}_{\RZ}L_{m,\infty}\mathcal C^{\varkappa}_{b},\quad
(Df,Df')\in\mathscr D^{\alpha,\alpha''}_{\RZ}L_{m,\infty}\mathcal C^{\varkappa-1}_{b},
\qquad \varkappa>\tfrac1\alpha,\quad 2\alpha+\alpha''>1 ,
\end{equation}
with $\mathscr D^{2\alpha}:=\mathscr D^{\alpha,\alpha}$, $\alpha$ an index at which the driver is a
rough path, $b,\sigma$ bounded and Lipschitz \cite[Def.~4.1(b)]{FHL}. Both memberships can fail at the driver's own index, and \eqref{eq:FHL46alphamax} below says when. We apply \eqref{eq:FHL46hyp} at
\begin{equation}\label{eq:FHL46alpha}
\alpha\ :=\ \beta\wedge\beta'\ \in\ (\tfrac13,\gamma] ,
\end{equation}
a $\gamma$-rough path being an $\alpha$-rough path for $\alpha\le\gamma$ on a
bounded interval, Chen's relation carrying no index and
$|t-s|^{\gamma}\le T^{\gamma-\alpha}|t-s|^{\alpha}$. The
rows read
\begin{center}
\begin{tabular}{lcc} {condition} & {order required} & {order available}\\\hline
$[\![\delta f]\!]_{\alpha}$ & $\alpha$ & $\gamma\wedge(\beta+\beta')$\\
$[\![\delta f']\!]_{\alpha}$ & $\alpha$ & $\beta'$\\
$[\![\delta Df]\!]_{\alpha}$ & $\alpha$ & $\gamma\wedge(\beta+\beta')$\\
$\E_{s}R^{f}_{s,t}$ & $2\alpha$ & $\beta+\beta'$\\
$\E_{s}R^{Df}_{s,t}$ & $2\alpha$ & $\beta+\beta'$\\
$\delta Df'$ & $\alpha$ & $\beta'$\\
$\delta D^{2}f$ & $\alpha$ & $\beta'$\\
$\sup_{s}|f_{s}|_{\varkappa}$, $\sup_{s}|f'_{s}|_{\varkappa-1}$ & & Step~\ref{step:Ereg}, at a constant depending on $(k,R)$\\
$\sup_{s}|Df_{s}|_{\varkappa-1}$, $\sup_{s}|Df'_{s}|_{\varkappa-2}$ & & Step~\ref{step:Ereg}, likewise
\end{tabular}
\end{center}
The first seven rows are the union over the two memberships of
\eqref{eq:FHL46hyp}. Condition~(b) of \cite[Def.~3.7]{FHL} names three quantities,
$[\![\delta f]\!]_{\beta}$, $[\![\delta f']\!]_{\beta'}$ and $[\![\delta Df]\!]_{\beta'}$, all of
them plain increments, and its condition~(c) names one compensated defect, $\E_{s}R^{f}$. The first
membership therefore gives rows one to four. The second, which is \eqref{eq:FHL46hyp} applied to
$(Df,Df')$, gives $\E_{s}R^{Df}$ from~(c) and $\delta Df'$ and $\delta D^{2}f=\delta D(Df)$
from~(b), which are rows five to seven. A compensated defect of a second derivative would need a
third membership, and \cite[Def.~3.7]{FHL} states none.

The last two rows are condition~(a) of \cite[Def.~3.7]{FHL}, applied once to $(f,f')$ at
$(\varkappa,\varkappa-1)$ and once to $(Df,Df')$ at $(\varkappa-1,\varkappa-2)$. They are a real
restriction, since the fields of Assumption~\ref{ass:Erough} are unbounded. They hold for the
truncated field $f_{R}$ of Step~\ref{step:Ereg} and not for $f$, at the same $(k,R)$-dependent
constant as $b$ and $\sigma$ below. The entry $\sup_{s}|Df_{s}|_{\varkappa-1}$ follows from
$\sup_{s}|f_{s}|_{\varkappa}$ and is listed only for completeness.

Two consequences of the table are worth recording. First, it verifies
$(Df,Df')\in\mathscr D^{\alpha,\alpha''}$ at $\alpha''=\alpha$, which is more than
\cite[Thm.~4.6]{FHL} requires. Then $2\alpha+\alpha''=3\alpha>1$ holds automatically, and the second
parameter condition of \eqref{eq:FHL46hyp} never has to be discussed. Second, $\varkappa>1/\alpha$
at $\alpha=\beta\wedge\beta'$, which may lie close to $\tfrac13$, forces $\varkappa=3$; this is
admissible only because $\alpha>\tfrac13$ strictly.
The rows permit exactly
\begin{equation}\label{eq:FHL46alphamax}
\alpha\ \le\ \min\Big\{\beta',\ \tfrac{\beta+\beta'}2,\ \gamma\Big\} ,
\end{equation}
a bound $\ge\beta\wedge\beta'$, so $\alpha=\beta\wedge\beta'$ is available at every triple, the whole
demand being $\beta\wedge\beta'>\tfrac13$. Step~\ref{step:Ereg} supplies the It\^o data and the
weights.

The rows are checked for the truncated pair $(f_{R},f'_{R})$ of
Step~\ref{step:Ereg}, not for $(f,f')$. For the truncated pair the $\delta D^{2}f$ row holds at
order $\gamma\wedge\beta'$, still at least $\alpha=\beta\wedge\beta'$, and every row but the third and
the fifth carries the weight $1+2R$ of the truncation, which is harmless at fixed $R$. Those two are
unweighted, by the first bound of \eqref{eq:Eroughderiv} and the bound on $Df'$ of
\eqref{eq:Eroughgrowth}. We use \cite[Thm.~4.6]{FHL} for three things: the
existence of $X^{k,R}$; its uniqueness in the $L_{m,\infty}$ class, which makes the family well
defined in Step~\ref{step:Etight}; and the two moduli at every $m\ge2$ at once, which lets
\S\ref{subsec:Eproof} run at $m=p$. The moment bound
$\|\sup_{t\le T}|X^{k,R}_{t}|\|_{L_{p}}<\infty$ is not taken from the theorem.
Step~\ref{step:Emoment}\ref{proof:Eproof:step2:b} proves it from the second modulus of
\eqref{eq:daviemoduli} and the truncation. \eqref{eq:FHL46alphamax} says at which indices $\alpha$
the solution is integrable, and says nothing about that moment bound.

 The table covers only the rough data
\eqref{eq:FHL46hyp}. Of the two remaining hypotheses of \cite[Thm.~4.6]{FHL}, the boundedness
and Lipschitz continuity of $b,\sigma$ required by \cite[Def.~4.1(b)]{FHL} is supplied by the truncation
of Step~\ref{step:Ereg}, at a constant depending on $(k,R)$, which is harmless: the theorem is
applied at fixed $(k,R)$, and the limit $(k,R)\to\infty$ is taken only after the a priori estimate of
Step~\ref{step:Emoment}. On the initial condition, the source asks only
$\xi\in L_{0}(\mathcal F_{0})$, so the
$\mathcal F_{0}$-measurable $X_{0}\sim\nu$ of Assumption~\ref{ass:Einit} qualifies.

\bibliographystyle{alpha}
\bibliography{paper_A}

\end{document}